\documentclass[11pt, a4paper]{scrartcl}

\usepackage[utf8]{inputenc}
\usepackage[T1]{fontenc}
\usepackage[english]{babel}
\usepackage{verbatim}
\usepackage{abstract}
\usepackage{url}
\usepackage{caption}
\usepackage{appendix}

\usepackage[nottoc]{tocbibind}
\usepackage[hidelinks]{hyperref}
\usepackage{aliascnt}

\usepackage[pdftex]{graphicx}
\usepackage{latexsym}
\usepackage{amsmath,amssymb,amsthm, mathtools,tikz}
\usepackage{stmaryrd}
\usetikzlibrary{cd}
\usepackage{pifont}
\usepackage[a4paper, left=2.7cm, right=2.7cm, bottom=3.2cm, top=3cm]{geometry}
\usepackage{makecell}
\usepackage{slashed}

\usepackage{float}

\DeclareMathOperator{\vol}{vol}

\newcommand{\Hom}{\textup{Hom}}
\DeclareMathOperator{\End}{End}
\newcommand{\Ad}{\textup{Ad}}

\newcommand{\pr}{\textup{pr}}

\newcommand{\Fr}{\textup{Fr}}
\newcommand{\ob}{\textup{ob}}

\newcommand{\SU}{\textup{SU}}
\newcommand{\U}{\textup{U}}
\newcommand{\GL}{\textup{GL}}
\newcommand{\Spin}{\textup{Spin}}

\newcommand{\Stab}{\textup{Stab}}
\newcommand{\coker}{\textup{coker}}
\newcommand{\coimage}{\textup{coimage}}
\newcommand{\image}{\textup{image}}
\newcommand{\e}{\textup{e}}
\newcommand{\id}{\textup{Id}}

\renewcommand{\Im}{\textup{Im}\thinspace}
\renewcommand{\Re}{\textup{Re}\thinspace}
\newcommand{\col}{\colon \thinspace}
\newcommand\diff{\mathop{}\!\textup{d}}
\newcommand\Diff{\textup{D}}

\newcommand{\VertC}[2]{\Vert_{C^{{#1}}{#2}}}
\newcommand{\VertWC}[3]{\Vert_{C^{{#1}}_{{#2}}{#3}}}
\newcommand{\vertWC}[3]{\vert_{C^{{#1}}_{{#2}}{#3}}}

\newcommand{\VertWH}[3]{\Vert_{C^{{#1},\alpha}_{{#2}}{#3}}}
\newcommand{\WsH}[3]{]_{C^{{#1},\alpha}_{{#2}}{#3}}}

\title{The moduli space of conically singular instantons over an SU(3)-manifold}

\author{Dominik Gutwein \and Yuanqi Wang}

\date{\today}

\numberwithin{equation}{section}

\newcommand{\mynewtheorem}[2]{
  \newaliascnt{#1}{dummycounter}
  \newtheorem{#1}[#1]{#2}
  \aliascntresetthe{#1}
  \expandafter\def\csname #1autorefname\endcsname{#2}
}

\theoremstyle{plain}
\mynewtheorem{theorem}{Theorem}
\mynewtheorem{proposition}{Proposition}
\mynewtheorem{lemma}{Lemma}
\mynewtheorem{corollary}{Corollary}
\mynewtheorem{observation}{Observation}
\mynewtheorem{introtheorem}{Theorem}

\mynewtheorem{introproposition}{Proposition}

\theoremstyle{definition}
\mynewtheorem{definition}{Definition}
\mynewtheorem{assumption}{Assumption}
\mynewtheorem{notation}{Notation}
\mynewtheorem{example}{Example}

\theoremstyle{remark}
\mynewtheorem{remark}{Remark}
\newtheorem*{introremark}{Remark}

\addto\extrasenglish{

}

\begin{document}
\maketitle

\begin{abstract}
In this article we study the moduli space of conically singular instantons (or Hermitian Yang--Mills connections) with prescribed tangent connections over a 6-manifold equipped with an $\SU(3)$-structure. That is, we develop a Fredholm deformation theory for such $\SU(3)$-instantons in which we fix the tangent connection but allow the underlying principal bundle (and, in particular, the singular set) to vary. This leads to the existence of a Kuranishi structure for this moduli space. Moreover, we investigate the cokernel of the instanton deformation operator and give under certain assumptions a formula for its dimension. Ultimately, we apply our results to conically singular instantons with structure group $\mathbb{P}\U(n)$ and give a formula for the virtual dimension of their moduli space in terms of sheaf cohomology of certain vector bundles over $\mathbb{P}^2$.
\end{abstract}

\section{Introduction}

An $\SU(3)$-manifold is a (real) 6-manifold $Z$ equipped with a symplectic and a holomorphic volume form $(\omega,\Omega)$, which are point-wise modelled upon the standard Kähler and holomorphic volume forms over $\mathbb{C}^3$ (cf. \autoref{def: SU(3) structure}). An instanton (called $\SU(3)$-instanton in this article) over such an $\SU(3)$-manifold $(Z,\omega,\Omega)$ is a connection $A$ on a principal $G$-bundle whose curvature satisfies \[ \Lambda_\omega F_A = 0 \quad \textup{and} \quad F_A \wedge \Im \Omega = 0, \] where $\Lambda_\omega$ denotes the dual Lefschetz operator. Note that the holomorphic volume form $\Omega$ induces an almost complex structure on $Z$ and that the $\SU(3)$-instanton equation is simply the Hermitian Yang--Mills condition $\Lambda_\omega F_A = 0$ and $F_A^{0,2} =0$ (where we have tacitly assumed that the structure group $G$ of the principal bundle is real). 

In their highly influential article \cite{DonaldsonThomas-higherdimensionalGaugeTheory} Donaldson and Thomas initiated a program aimed to develop gauge theoretic invariants for special holonomy spaces in dimension 6, 7, and 8 that (formally) mirror the familiar Casson--Floer picture in dimension 2, 3, and 4 (see also \cite{DonaldsonSegal-higherGaugeTheory}). These proposed invariants are based on the moduli space $\mathcal{M}\equiv \mathcal{M}_{\SU(3)/\textup{G}_2/\Spin(7)}$ of instantons over the respective spaces (including $\SU(3)$-instantons in dimension 6).  However, a rigorous definition of such invariants (even a complete understanding of the list of its ingredients) is met with great analytic difficulties due to various non-compactness phenomena related to the instanton equation (cf. \cite{Tian-gaugetheory_calibrated}, \cite{DonaldsonSegal-higherGaugeTheory}, \cite{Haydys-instantons_and_SW}, \cite{DoanWalpuski--CountingAssociatives}, \cite{DoanWalpuski-ExistenceZ2Spinors}). 

In dimension 6, when $(Z,\omega,\Omega)$ is Calabi--Yau (i.e. $\omega$ and $\Omega$ are both closed and therefore parallel), the Donaldson--Uhlenbeck--Yau Theorem identifies the moduli space of $\SU(3)$-instantons with the moduli space of (slope (poly-) stable) locally free sheaves over $Z$ and algebraic geometry may be used to compactify this space (cf. \cite[Chapter~3]{Thomas-CalabiYauGaugeTheoryThesis}). In this situation, a holomorphic Casson invariant can be defined and is now known as the Donaldson--Thomas invariant in algebraic geometry (cf. \cite{Thomas-CalabiYauGaugeTheoryThesis} and \cite{Thomas-holomorphicCassonInvariant}). However, when $(\omega,\Omega)$ is non-integrable (or if one considers instantons over 7-dimensional $\textup{G}_2$- or 8-dimensional $\Spin(7)$-manifolds) these methods do not apply and one needs to construct a compactification $\overline{\mathcal{M}}$ of the instanton moduli space geometric-analytically. The works of Uhlenbeck~\cite{Uhlenbeck-ConnectionswithLpbounds}, Price~\cite{Price-monotonicityformula}, Nakajima~\cite{Nakajima-compactness_higherYangMills}, and Tian~\cite{Tian-gaugetheory_calibrated} (see also \cite{Riviere-variationsofYMLagrangian} for a summary and \cite{ChenWentworth-OmegaYangMills} for an extension of Tian's results) show that a sequence $([A_n])_{n\in \mathbb{N}} \subset \mathcal{M}$ converges to another instanton $[A]$ (in a $C^\infty_{\textup{loc}}$-sense outside the so-called blow-up locus) with the following two phenomena possibly occurring:

\begin{enumerate}
\item Some of the energy of the instantons $([A_n])_{n \in \mathbb{N}}\subset \mathcal{M}$ may get lost (in the limit) due to ASD-instantons bubbling of transverse to a (possibly singular) calibrated submanifold.
\item The limiting instanton $[A]$ may only be defined outside of a subset $S\subset Z$ and may not be extendible over $S$. That is, the limiting instanton may have non-removable singularities.
\end{enumerate}

The previous result shows that a compactification of $\mathcal{M}$ needs to contain (amongst other data) the moduli space of singular instantons $\mathcal{M}_{\textup{sing}} \subset \partial \overline{\mathcal{M}}$ as part of the boundary. However, little is currently known about such a space in general. In this paper, we construct and study the moduli space of such singular instantons, in the case where the singularities are isolated and of conical nature (and where the tangent connection has been fixed). 

\textbf{Conically singular instantons:} The simplest(non-trivial) singular set $S$ for an instanton $A$ consists of a finite number of isolated points $S = \{s_1,\dots,s_N\}$. Around any point $s_i \in S$ we may use a coordinate system $\Upsilon_i \col B_R(0) \to Z$ centred at $s_i$ to regard $\Upsilon_i^*A$ as a connection over $B_R(0)\setminus \{0\}$. Rescaling this connection via a sequence $r_n \to 0$, one obtains a sequence of connections $\delta_{r_n}^*(\Upsilon_i^*A)$ that converges (outside of the blow-up locus and up to taking subsequences and gauge transformations) to a radially invariant $\SU(3)$-instanton $A_{s_i}$ over $\mathbb{C}^3\setminus S_0$ (cf. \cite[Discussion prior to Lemma~5.3.1]{Tian-gaugetheory_calibrated}). The limit $A_{s_i}$ is called a tangent connection or tangent cone for $A$ at $s_i\in S$. Note, that $A_{s_i}$ may not be unique even up to gauge transformations. We call $A$ conically singular if for each $s_i\in S$, we may find such a tangent connection $A_{s_i}$ that is defined over $\mathbb{C}^3\setminus \{0\}$ (i.e. $S_0 = \{0\}$ in the notation above) and if the convergence of $\Upsilon_i^*A$ to $A_{s_i}$ occurs at a polynomial rate, that is \[ \big\vert \nabla^k \big( \Upsilon_i^*A - A_{s_i}\big)\big\vert = \mathcal{O}(r^{\mu_i-k}) \quad \textup{for every $k\in \mathbb{N}_0$ as $r\to 0$} \] for some $\mu_i>-1$. Note that due to the polynomial rate of convergence, the tangent connections $A_{s_i}$ for a conically singular connection $A$ are, in fact, unique up to gauge. (See also \cite{Yang-TangentConesUniqueness}, \cite{AdamSaEarpWalpuski-tangent-cones-of-HYM-connections}, and \cite{CaniatoParise-TangentConesUniqueness} for conditions on $A$ such that it is conically singular around $s_i\in S$. Moreover, \cite{ChenSun-tangents_of_HYM_and_refl_sheaves} gives a complete algebraic geometric characterisation of the analytic tangent of a general singular Hermitian Yang--Mills connection over a Kähler manifold.)

\textbf{The moduli space of conically singular instantons with prescribed tangent connections:} In this article we study the moduli space of conically singular instantons over an $\SU(3)$-manifold $(Z,\omega,\Omega)$. We hereby allow for varying underlying principal $G$-bundles and, in particular, varying singular sets of the instantons. However, for simplicity we fix a set of radially invariant connections over $\mathbb{C}^3\setminus \{0\}$ that we prescribe as the tangent connections of the singular instantons. That is, we pre-fix the model cone-connections that we wish to exhibit at each singular point. A 'full' moduli theory should of course allow for varying tangent cones and we have added remarks on our expectations regarding the generalisations of our results to such a comprehensive moduli theory (cf. \autoref{rem: alternative interpretation of rotations}, \autoref{rem: overcoming obstructions via deformations of tangent cone}, and \autoref{rem: only Fubini--Study connections appear generically}).

The main contribution of the article at hand is the development of a Fredholm deformation theory for conically singular $\SU(3)$-instantons (with prescribed tangent cones) with which we prove the existence of a Kuranishi structure for the aforementioned moduli space (see also \autoref{intro-thm: Kuranishi structure for moduli space} for a precise statement and the next paragraph for a detailed summary of our results). The difficulty when allowing varying underlying principal $G$-bundles is to show that the space of bundles up to suitable isomorphisms is finite dimensional. As an intermediate step we therefore introduce in \autoref{sec: moduli space of framed conically singular connections} the moduli space of \textit{framed} conically singular connections in which an (ungeometric) choice of framing has been added to the collected data and carefully investigate the deformations of the underlying framed bundle modulo isomorphisms. This study together with the well-known Fredholm theory of conically singular elliptic differential operators then leads to the deformation theory of conically singular instantons (with prescribed tangent connections).

Note that the methods used to develop said deformation theory are not specific to 6-manifolds with $\SU(3)$-structures but should apply to other instantons as well. In fact, \autoref{thm: local structure of unframed B Phi is homeo} together with the results proven in \cite[Chapter~1]{SoleFarre-thesis} should immediately give rise to a Kuranishi structure for the moduli spaces of conically singular $\textup{G}_2$- and $\Spin(7)$-instantons with prescribed tangent connections.

\textbf{Summary and statement of results:} We begin in \autoref{sec: SU(3) structures and conically instantons} by reviewing the necessary background on $\SU(3)$-structures and dilation-invariant $\SU(3)$-instantons over $\mathbb{C}^3\setminus \{0\}$ which subsequently serve as singularity models for more general conically singular instantons. In \autoref{sec: conically singular connections} we give the definition of (framed) conically singular connections over a fixed bundle and prove in \autoref{prop: change of framing} that the set of compatible framings for such a conically singular connection is (essentially) a torsor over the compact Lie group given in~\eqref{equ: definition Stab_SU(3)(A_s)}. Note that the $\SU(3)$-instanton equation is a priori overdetermined. We therefore prove in \autoref{prop: augmented instanton equation} that whenever the $\SU(3)$-structure $(\omega,\Omega)$ on $Z$ satisfies $\diff^*\omega=0$ and $\diff \Omega = w_1 \omega^2$ for some $w_1\in \mathbb{R}$, then the $\SU(3)$-instanton equation can be augmented to an elliptic system (modulo gauge equivalence).

We subsequently begin our study of the moduli space of conically singular $\SU(3)$-instantons. For this, we define in \autoref{sec: moduli space of framed conically singular connections} the moduli space of framed conically singular connections and describe its local structure. In \autoref{sec: space of (unframed) cs connections} these results are then extended to the space of (unframed) conically singular connections. In \autoref{sec: moduli space of cs SU(3)-instantons} we finally define the moduli space of conically singular $\SU(3)$-instantons and use our local description of the moduli space of conically singular connections together with the well-known Fredholm theory of conically singular elliptic differential operators to show that this instanton moduli space admits a Kuranishi structure. More precisely, we prove:

\begin{introtheorem}[cf. \autoref{thm: Kuranishi structure} and \autoref{thm: relating different rates}]\label{intro-thm: Kuranishi structure for moduli space}
Assume that the $\SU(3)$-structure $(\omega,\Omega)$ on $Z$ satisfies $\diff^*\omega=0$ and $\diff \Omega = w_1 \omega^2$ for some $w_1\in \mathbb{R}$ (so that the $\SU(3)$-instanton equation can be augmented to an elliptic system) and let $G$ be a compact Lie group with finite center. Moreover, fix $N\in \mathbb{N}$ and for every $i=1,\dots,N$ a principal $G$-bundle $\pi_i \col P_i \to S^5$ together with an irreducible connection\footnote{In fact, it suffices that all $A_i$ are infinitesimally irreducible, i.e. the only elements $\xi_i\in \Omega^0(S^5,\mathfrak{g}_{P_i})$ satisfying $\diff_{A_i} \xi_i = 0$ are $\xi_i=0$. However, then one needs to include the (discrete) stabiliser groups of the conically singular instanton into the statement (cf. \autoref{thm: Kuranishi structure}).} $A_i \in \mathcal{A}(P_i)$ satisfying~\eqref{equ: cone reduction of instanton equation}. For $\mu \in (-1,\Bar{\mu}_1)\times \dots \times (-1,\Bar{\mu}_N)$, where $\bar{\mu}_i \coloneqq \min \{((-1,0)\cap \mathcal{D}(L_{A_i})) \cup \{0\}\}$ (with $\mathcal{D}(L_{A_i})$ as in \autoref{def: critical rates}), let $\mathcal{M}_\mu(\{P_i,A_i\})$ be the moduli space of conically singular $\SU(3)$-instantons with $N$ singularities and prescribed tangent connections $\{(P_i,A_i)_{i=1,\dots,N}\}$ as defined and topologised in \autoref{sec: moduli space and its topology}. 
\begin{enumerate}
\item For every $[\mathbb{A}] \in \mathcal{M}_\mu(\{P_i,A_i\})$ there exist two finite dimensional vector spaces $W_1,W_2$ and a smooth map $\ob_{\mathbb{A}} \col W_1 \to W_2$ with $\ob_{\mathbb{A}}(0)=0$, such that a neighbourhood of $[\mathbb{A}]$ in $\mathcal{M}_\mu(\{P_i,A_i\})$ is homeomorphic to a neighbourhood of $0$ in $\ob_{\mathbb{A}}^{-1}(0)$.
\item For $W_1$ and $W_2$ as in the previous point we have 
\begin{align*}
\textup{virt-dim}\big(\mathcal{M}_\mu(\{P_i,A_i\})\big) &\coloneqq \dim W_1 - \dim W_2 \\
&= \sum_{i=1}^N 6 + (8 - \dim(\Stab_{\SU(3)}(A_i))) - \hspace*{-20pt} \sum_{\scriptscriptstyle \nu_i \in \mathcal{D}(L_{A_i}) \cap (-5/2,\mu_i)} \hspace*{-20pt} \dim \mathcal{K}(L_{A_i})_{\nu_i}
\end{align*}
where $\Stab_{\SU(3)}(A_i)$ is defined in \eqref{equ: definition Stab_SU(3)(A_s)} and $\mathcal{K}(L_{A_i})_{\nu_i}$ in \eqref{equ: definition homogeneous kernel}.
\item The moduli space $\mathcal{M}_\mu(\{P_i,A_i\})$ is homeomorphic to $\mathcal{M}_{\mu^\prime}(\{P_i,A_i\})$ for any other rate $\mu^\prime \in (-1,\Bar{\mu}_1)\times \dots \times (-1,\Bar{\mu}_N)$.
\end{enumerate}
\end{introtheorem}

\begin{introremark}
The assumption that $G$ has a finite center and that the tangent connections are (infinitesimally) irreducible makes the presentation in \autoref{sub: Coulomb gauge} simpler. However, \cite[Chapter~I.5]{SoleFarre-thesis} shows that these conditions can be removed (see also \autoref{rem: Coulomb gauge and slice for groups with positive center} and \autoref{rem: Kuranishi chart for groups with positive center}).
\end{introremark}

\begin{introremark}
The assumption $\diff^* \omega=0$ and $\diff \Omega = w_1 \omega^2$ for some $w_1\in \mathbb{R}$ on the $\SU(3)$-structure is used in the previous theorem to argue that the instanton equation may be augmented by introducing two further unknowns by $\xi_1,\xi_2 \in \Omega^0(Z,\mathfrak{g}_P)$ to the elliptic (modulo gauge) system 
\begin{align}\label{intro-equ: augmented instanton equation}
\Lambda_\omega F_A = 0 \quad \textup{and} \quad *(F_A \wedge \Im \Omega) + \diff_A \xi_1 + J^*(\diff_A \xi_2)=0. 
\end{align} 
That is, if $(A,\xi_1,\xi_2)$ solves~\eqref{intro-equ: augmented instanton equation}, then $\diff_A\xi_i=0$ for $i=1,2$ and $A$ is an $\SU(3)$-instanton, i.e. $A$ solves the non-augmented equation (cf. \autoref{prop: augmented instanton equation}). If the assumption on $(\omega,\Omega)$ is dropped, then $\diff_A \xi_i = 0$ does not need to hold for solutions of \eqref{intro-equ: augmented instanton equation} anymore, and the augmented equation becomes an honest equation for $(A,\xi_1,\xi_2)$. If one is willing to accept~\eqref{intro-equ: augmented instanton equation} as an equation for $(A,\xi_1,\xi_2)$, then the Fredholm deformation theory (in particular, the virtual-dimension formula) discussed in the previous theorem still applies. Note that Equation~\eqref{intro-equ: augmented instanton equation} could give rise to a symplectic approach to Donaldson--Thomas invariants (cf. \cite[Discussion at the beginning of Section~3]{Thomas-holomorphicCassonInvariant}, \cite{Tanaka-symplecticDT}, and \cite{BallOliveira-almostHermitian-DT}).
\end{introremark}

In \autoref{sec: obstruction space} we then study the obstruction space of a conically singular instanton, that is, the cokernel of its (full) deformation operator. More precisely, we define in \autoref{prop: obstruction pairing} a pairing between the cokernel of the instanton deformation operator over a fixed bundle and the deformation space of the underlying bundle. We then explain in \autoref{thm: non-degenerateness of pairing} how (under certain assumptions) some of the obstructions arising from the deformation problem over a fixed bundle may be overcome by deforming the underlying bundle. This is used in \autoref{cor: estimate dimension cokernel} to show that under the same assumptions the (non-positive) virtual dimension of $\mathcal{M}_\mu(\{P_i,A_i\})$ given in \autoref{intro-thm: Kuranishi structure for moduli space} is precisely the negative of the dimension of the obstruction space.

Finally, in \autoref{sec: structure group PU(n)} we consider instantons with structure group $G=\mathbb{P}\U(n)$ and use the results of the second named authors in \cite{Wang-AtiyahClasses} and \cite{Wang-spectrum_of_operator_for_instantons} to prove the following:

\begin{introtheorem}[cf. \autoref{cor: virtual dimension for structure group PU(n)} and \autoref{thm: non-positive dimension for PU(n) connections}]\label{introthm: moduli space for PU(n)-connections}
Assume we are in the set-up of the previous theorem where now $G= \mathbb{P}\U(n)$ (with $n>1$). The discussion prior to \autoref{cor: virtual dimension for structure group PU(n)} associates to each $(\pi_i \col P_i \to S^5,A_i)$ a holomorphic vector bundle $E_i \to \mathbb{P}^2$ over $\mathbb{P}^2 = S^5/\U(1)$. The virtual dimension of the moduli space $\mathcal{M}_\mu(\{P_i,A_i\})$ is then given by 
\begin{align*}
\textup{virt-dim}\big(\mathcal{M}_\mu(\{P_i,A_i\})\big) 
= \sum_{i=1}^N 6 &+ (8 - \dim(\Stab_{\SU(3)}(A_i))) - 2\mathrm{h}^{1}(\mathbb{P}^{2}, \End E_i) \\
&-2\mathrm{h}^{1}(\mathbb{P}^{2}, (\End E_i)(-1)) 
\end{align*}
where $\mathrm{h}^1(\mathbb{P}^2,F) \coloneqq \dim_{\mathbb{C}}(\mathrm{H}^{0,1}(\mathbb{P}^2,F))$ for any holomorphic vector bundle $F\to \mathbb{P}^2$. Because of \cite[Proposition~4.1]{Wang-AtiyahClasses} this implies \[\textup{virt-dim}\big(\mathcal{M}_\mu(\{P_i,A_i\})\big) \leq 0 \] with equality if and only if $G= \mathbb{P}\U(2)$ and all tangent connections $(\pi_i \col P_i \to S^5,A_i)$ are isomorphic to the pull back of the Fubini--Study connection $(\mathbb{P}\U(T\mathbb{P}^2,h_{\textup{FS}}) \to \mathbb{P}^2 ,A_{\textup{FS}})$.
\end{introtheorem}

\begin{introremark}
If $(Z,\omega,\Omega)$ is Calabi--Yau, then (singular) instantons with structure group $\U(n)$ correspond to slope (poly-) stable reflexive sheaves over $Z$ (cf. \cite[Theorem~3]{BandoSiu-PHYM-over-reflexive-sheaves}). Moreover, \cite[Corollary~10]{Vermeire-Moduli_of_reflexive_sheaves} proves that the expected dimension of the moduli space of reflexive sheaves over $Z$ with fixed Chern-class is zero. Thus, our virtual dimension for the moduli space of conically singular instantons\footnote{note that while the previous theorem is stated for instantons with structure group $\mathbb{P}\U(n)$, one can show that the same virtual dimension formula also hold for instantons with structure group $\U(n)$ (see also the remark after \autoref{intro-thm: Kuranishi structure for moduli space})} whose tangent connections are all modelled on the Fubini--Study connection over $T\mathbb{P}^2$ agrees with the one given in \cite{Vermeire-Moduli_of_reflexive_sheaves}. For singular instantons whose tangents are not all modelled on the Fubini--Study connection, our virtual dimension is strictly negative (even after taking the deformations of the tangent connection into account; cf. \autoref{rem: only Fubini--Study connections appear generically}). Of course, we only restrict to deformations that preserve the singularity, whereas \cite{Vermeire-Moduli_of_reflexive_sheaves} considers deformations as general reflexive sheaves. It appears to us that some of the deformations in \cite{Vermeire-Moduli_of_reflexive_sheaves} could possibly 'smooth out' the corresponding singular instanton to a (degenerating family) of non-singular instantons.
\end{introremark}

\begin{introremark}
With regard to the previous remark (or the observation that the expected dimension of the moduli space of \textit{smooth} $\SU(3)$-instantons is always zero) it seems interesting to investigate if and how conically singular instantons whose tangents are modelled on the Fubini--Study connection on $\mathbb{P}\U(T\mathbb{P}^2,h_{\textup{FS}})$ can appear as the limit of smooth instantons (possibly with higher instanton number).
\end{introremark}

We end this article by collecting in \autoref{app-sec: analytic preliminaries} numerous well-known facts about conically singular elliptic differential operators used throughout this article.

\textbf{Comparison to previous results:} Prior to our article, the deformation problem of conically singular instantons has been considered in \cite{Wang-CSG2Instantons} and \cite{SoleFarre-thesis} for instantons over $\textup{G}_2$- and $\Spin(7)$-manifolds. Note that in contrast to the article at hand both references fixed the underlying bundle and the tangent cones of the singular instantons. The results obtained in the present paper are therefore an extension of the work in \cite{Wang-CSG2Instantons} and \cite{SoleFarre-thesis}. (In fact -- as already mentioned above -- \autoref{thm: local structure of unframed B Phi is homeo} together with the results in \cite[Chapter~1]{SoleFarre-thesis} should give rise to the analogue of \autoref{intro-thm: Kuranishi structure for moduli space} for the moduli spaces of conically singular $\textup{G}_2$- and $\Spin(7)$-instantons.) Moreover, the results in \autoref{introthm: moduli space for PU(n)-connections} build on the work \cite{Wang-spectrum_of_operator_for_instantons} and \cite{Wang-AtiyahClasses} of the second named author on the model operator for dilation invariant instantons over $\mathbb{C}^3\setminus \{0\}$ arising as pullbacks from $\mathbb{P}^2$.

The moduli theory of conically singular calibrated submanifolds, on the other hand, goes back to the work of Joyce \cite{Joyce-Moduli_of_cs-slag} and has by now been developed for all classes of calibrated submanifolds appearing naturally inside exceptional holonomy manifolds (cf. \cite{Lotay-cs_coassociatives}, \cite{Englebert-cs_cayleys}, and \cite{Bera-cs_associatives}). In fact, many of our results and definitions are inspired by their respective analogues for conically singular submanifolds. Note, however, that when working with conically singular connections the equivalence relation posed by bundle isomorphisms (compatible with the singular structure) introduces an additional difficulty not present in the deformation theory of submanifolds.

\subsection{Acknowledgments}

The work for this article was initiated while the authors were in residence at the Simons Laufer Mathematical Sciences Institute (formerly MSRI) in Berkeley, California, during the Fall 2024 semester and is supported by the National Science Foundation under Grant No. DMS-1928930. The authors would like to thank the SLMath for its hospitality and for creating such a vibrant research environment. D.G. would also like to express his gratitude towards Gorapada Bera, Lorenzo Foscolo, Thibault Langlais, Jason Lotay, Viktor Majewski, Jacek Rzemieniecki, Enric Solé-Farré, and Thomas Walpuski for various helpful discussions related to this article. Moreover, D.G. is supported by the Deutsche Forschungsgemeinschaft (DFG, German Research Foundation) under SFB-Geschäftszeichen 1624 – Projektnummer 506632645.

\section{SU(3)-structures and conically singular SU(3)-instantons}
\label{sec: SU(3) structures and conically instantons}

In this section we first review the necessary background on $\SU(3)$-structures and set our conventions. We then discuss dilation-invariant $\SU(3)$-instantons over $\mathbb{C}^3\setminus \{0\}$ which serve as singularity models. In the final subsection we treat singular connections whose singularities are of conical nature.

\subsection{SU(3)-structures}

Throughout this article we consider $\mathbb{C}^{3}$ with coordinates $z_{\alpha}=x_{\alpha}+ix_{\alpha+1}$ for  $\alpha=1,3,5$ together with its standard Kähler and holomorphic volume forms 
\begin{equation}\label{equ standard C3} 
\omega_0=\tfrac{i}{2}\big(\diff z_{1}\wedge \diff \overline{z}_{1}+\diff z_{2}\wedge \diff \overline{z}_{2}+\diff z_{3}\wedge \diff \overline{z}_{3}\big),\quad \Omega_0=\diff z_{1}\wedge \diff z_{2}\wedge \diff z_{3}.
\end{equation}
Identifying $\mathbb{C}^3 \cong \mathbb{R}^6$ these become
\begin{align*}
\omega_0&=\varepsilon^{12}+\varepsilon^{34}+\varepsilon^{56} \\
\Omega_0&= \big(\varepsilon^{135}-\varepsilon^{245}-\varepsilon^{236}-\varepsilon^{146} \big) + i \big( -\varepsilon^{246}+\varepsilon^{235}+\varepsilon^{145}+\varepsilon^{136} \big),
\end{align*}
where $\varepsilon^1,\dots, \varepsilon^6$ is the standard (dual) basis of $(\mathbb{R}^6)^*$ and $\varepsilon^{\alpha \beta \gamma} \coloneqq \varepsilon^{\alpha} \wedge \varepsilon^\beta \wedge \varepsilon^\gamma$.

\begin{definition}\label{def: SU(3) structure}
An $\SU(3)$-structure on a (real) $6$-manifold $Z$ is a tuple of differential forms 
\begin{equation}\label{equ tuple of forms}
(\omega,\Omega)\in \Omega^2(Z) \times \Omega^3(Z,\mathbb{C})
\end{equation} 
such that over every point $z  \in Z$, there exists a (real vector space-) isomorphism $f \col \mathbb{R}^6 \to T_zZ$ such that the pullback satisfies $f^{*}(\omega_z,\Omega_z)= (\omega_0, \Omega_0)$.  
\end{definition}
\begin{remark}
Since the stabiliser $\Stab_{\GL(\mathbb{R}^6)}(\omega_0,\Omega_0)$ of the pair $(\omega_0,\Omega_0)$ in $\GL(\mathbb{R}^6)$ is equal to $\SU(3)$, the previous definition is equivalent to an $\SU(3)$-reduction of the principal frame bundle $\mathcal{F}(TZ)$ (cf. \cite[Chapter~1]{Salamon-Holonomy-book}). Given a pair $(\omega,\Omega)$ as in the previous definition, then the corresponding $\SU(3)$-subbundle of $\mathcal{F}(TZ)$ is defined by \[\{(f \col \mathbb{R}^6 \to T_zZ) \in \mathcal{F}(TZ) \mid f^*(\omega_z,\Omega_z) = (\omega_0,\Omega_0) \}.\] In fact, the common stabiliser of $\omega_0$ and $\Re \Omega_0$ in $\GL(\mathbb{R}^6)$ is already $\SU(3)$ (cf. \cite[Proposition~3.1]{Banos-classification-of-3-forms} or \cite[Remark~31]{Bryant-geometry-of-almost-cpx-6-mfds}). We could have therefore equivalently defined an $\SU(3)$-structure to be a pair of differential forms $(\omega, \Psi) \in \Omega^2(Z)\times \Omega^3(Z)$ which are over each point linearly equivalent to $(\omega_0, \Re \Omega_0)$. Moreover, Banos \cite[Section~2 and~3]{Banos-classification-of-3-forms} (see also \cite[Appendix~A]{Bryant-geometry-of-almost-cpx-6-mfds}) proved that up to isomorphism there are only two pairs consisting of a symplectic form $\tilde{\omega} \in \Lambda^2(\mathbb{R}^6)^*$ and a complex volume form $\tilde{\Omega} \in \Lambda^3 (\mathbb{R}^6)^* \otimes \mathbb{C}$ (i.e. $\tilde{\Omega}$ is decomposable and $\tilde{\Omega}\wedge \overline{\tilde{\Omega}} \neq 0$) over $\mathbb{R}^6$ which satisfy 
\begin{equation} \label{equ: algebraic conditions for SU(3)-structure}
\tilde{\omega} \wedge \tilde{\Omega} = 0 \qquad \textup{and} \qquad \tfrac{1}{6}\tilde{\omega}^3 = \tfrac{i}{8} \tilde{\Omega}\wedge \overline{\tilde{\Omega}}. 
\end{equation}
One of these pairs results in an $\SU(3)$-structure and the other one in an $\SU(1,2)$-structure. Thus, an $\SU(3)$-structure on $Z$ is given by a symplectic form $\omega$ and a complex volume form $\Omega$ satisfying \eqref{equ: algebraic conditions for SU(3)-structure} such that the resulting inner product is positive definite.
\end{remark}

\begin{remark}\label{remark: almost complex structure associated to (omega,Omega)}
As a consequence of the previous remark, an $\SU(3)$-structure induces an almost complex structure $J$ and a compatible Riemannian metric $g$ on $Z$ such that $\omega(\cdot,\cdot) = g(J \cdot,\cdot).$ With respect to this $J$, the no-where vanishing 3-form $\Omega$ is of type $(3,0)$. 
\end{remark}

The following proposition decomposes the differentials $\diff \omega$ and $\diff \Omega$ into irreducible $\SU(3)$-representations and can be found in \cite{ChiossiSalamon-intrinsic-torsion-of-SU3} or \cite{LarforsLukasRuehle-CY-manifolds-and-SU3-structures}. Its proof follows immediately from the decomposition of differential forms into primitives (cf. \cite[Proposition~1.2.30]{Huybrechts-complex-Geometry}) and the identity $\Omega \wedge \omega = 0$.

\begin{proposition}[{cf. \cite[Section~1]{ChiossiSalamon-intrinsic-torsion-of-SU3} or \cite[Section~2.1]{LarforsLukasRuehle-CY-manifolds-and-SU3-structures}}] \label{prop: decomposition of intrinsic torsion}
Let $(\omega,\Omega)$ be an $\SU(3)$-structure on $Z$. Define $w_1 \in C^\infty(Z,\mathbb{C})$ and $w_4,w_5 \in \Omega^1(Z)$ as
\begin{align*}
w_1 \coloneqq -\tfrac{i}{6} \big\langle\! \diff \omega , \overline{\Omega} \big\rangle_{\mathbb{C}}, \quad  w_4 \coloneqq \tfrac{1}{2} \Lambda_\omega \diff \omega, \quad \textup{and} \quad w_5 \coloneqq -\tfrac{1}{2} i_{\Re \Omega} \diff (\Re \Omega)
\end{align*}
where $\Lambda_\omega$ denotes the dual Lefschetz operator. Then there exists a primitive form $w_2 \in \Omega^2(Z,\mathbb{C})$ (i.e. $\Lambda_\omega w_2 =0$) and a primitive form $w_3 \in \Omega^3(Z)$ that additionally satisfies $\langle \Omega ,w_3 \rangle_\mathbb{C}=0$ such that 
\begin{align*}
\diff \omega &=\tfrac{3i}{4}(w_{1}\overline{\Omega}-\overline{w}_{1}\Omega)+w_{4}\wedge \omega+w_{3},\\
\diff \Omega&=w_{1}\omega^{2}+w_{2}\wedge \omega+w_{5}\wedge \Omega. 
\end{align*}
\end{proposition}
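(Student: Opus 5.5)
The plan is to decompose $\diff\omega\in\Omega^3(Z)$ and $\diff\Omega\in\Omega^4(Z,\mathbb{C})$ pointwise into irreducible $\SU(3)$-components, and then identify the scalar and 1-form coefficients by pairing with $\overline{\Omega}$, $\omega$ and $\Re\Omega$. At a point we work in an adapted frame, so $(\omega,\Omega)=(\omega_0,\Omega_0)$ and everything reduces to linear algebra on $\Lambda^\bullet(\mathbb{R}^6)^*\otimes\mathbb{C}$, split by $J$ into types $(p,q)$.

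\textbf{Decomposition of $\diff\omega$.} Since $\omega$ is real, $\diff\omega$ has components of types $(3,0)+(2,1)+(1,2)+(0,3)$, with the $(0,3)$ part the conjugate of the $(3,0)$ part.

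1. The spaces $\Lambda^{3,0}$ and $\Lambda^{0,3}$ are spanned by $\Omega$ and $\overline\Omega$. So the $(3,0)+(0,3)$ part can be written as $c\,\overline{\Omega}+\bar c'\,\Omega$. Reality of $\diff\omega$ forces this to have the form $\tfrac{3i}{4}(w_1\overline\Omega-\overline{w}_1\Omega)$ for some complex function $w_1$.
2. Lefschetz decomposition gives $\Lambda^{2,1}\oplus\Lambda^{1,2}=(\Lambda^1\wedge\omega)\oplus\Lambda^3_{\mathrm{prim},(2,1)+(1,2)}$. Hence the remaining part is $w_4\wedge\omega+w_3$, with $w_3$ primitive.
3. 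The condition $\langle\Omega,w_3\rangle=0$ holds automatically by type, because $w_3$ has no $(3,0)$ or $(0,3)$ part.
4. To identify $w_1$, pair with $\overline\Omega$. The terms $\langle w_4\wedge\omega,\overline\Omega\rangle$ and $\langle w_3,\overline\Omega\rangle$ vanish by type. Using $|\Omega|^2=8$ under the normalisation $\tfrac16\omega^3=\tfrac i8\Omega\wedge\overline\Omega$, and fixing the convention of $\langle\cdot,\cdot\rangle_{\mathbb{C}}$ (linear in the first slot), this gives $w_1=-\tfrac i6\langle\diff\omega,\overline\Omega\rangle_{\mathbb{C}}$.
5. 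To identify $w_4$, apply $\Lambda_\omega$. We have $\Lambda_\omega\Omega=\Lambda_\omega\overline\Omega=0$ and $\Lambda_\omega w_3=0$, and in dimension $6$ one has $\Lambda_\omega(\alpha\wedge\omega)=2\alpha$ for a 1-form $\alpha$ (from $[\Lambda,L]=(3-k)$ on $k$-forms). This gives $w_4=\tfrac12\Lambda_\omega\diff\omega$.

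\textbf{Decomposition of $\diff\Omega$.} Since $\Omega$ has type $(3,0)$, $\diff\Omega$ has only $(3,1)$ and $(2,2)$ components; there is no $(4,0)$ part in complex dimension $3$.

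1. The $(3,1)$ part lies in $\Lambda^{3,0}\wedge\Lambda^{0,1}$, so it can be written $w_5^{0,1}\wedge\Omega$.
2. The $(2,2)$ part decomposes by Lefschetz as $w_1'\omega^2+w_2\wedge\omega$, with $w_2$ a primitive $(1,1)$-form.
3. Differentiating $\omega\wedge\Omega=0$ gives $\diff\omega\wedge\Omega=\omega\wedge\diff\Omega$. Taking the $(3,3)$ component:
   (a) On the left only the $\overline\Omega$-term of $\diff\omega$ contributes, giving $\tfrac{3i}{4}w_1\,\overline\Omega\wedge\Omega$.
   (b) On the right, $\omega\wedge w_2\wedge\omega=0$ by primitivity, and $\omega\wedge w_5\wedge\Omega$ has no $(3,3)$ part. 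This leaves $w_1'\omega^3$.
   (c) Using $\tfrac16\omega^3=\tfrac i8\Omega\wedge\overline\Omega$, this yields $w_1'=w_1$.
4. A real 1-form $w_5$ with $w_5\wedge\Omega=w_5^{0,1}\wedge\Omega$ exists and is unique, because $\alpha\mapsto\alpha^{0,1}$ is a real isomorphism $\Lambda^1\to\Lambda^{0,1}$. Absorbing into it the $(3,1)$ part gives the stated formula for $\diff\Omega$.

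\textbf{Formula for $w_5$.} It remains to show $w_5=-\tfrac12 i_{\Re\Omega}\diff(\Re\Omega)$. Take real parts: $\diff\Re\Omega=\Re(w_1)\omega^2+\Re(w_2)\wedge\omega+w_5\wedge\Re\Omega$. Then contract with $\Re\Omega$ in the 4-form/3-form sense, using the metric.

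This final contraction is where I expect the actual work; the rest is bookkeeping of types and constants. The contractions $i_{\Re\Omega}(\omega^2)$ and $i_{\Re\Omega}(\Re w_2\wedge\omega)$ should vanish by $\SU(3)$-equivariance: Schur's lemma applies since $\Lambda^1$ does not occur in the relevant tensor products, or it can be checked directly with $\omega\wedge\Re\Omega=0$. The contraction $i_{\Re\Omega}(\alpha\wedge\Re\Omega)$ equals $c\,\alpha$ for a universal constant $c$, again by Schur's lemma. I would compute $c=-2$ on the standard frame with $\alpha=\varepsilon^1$, using the explicit expression for $\Re\Omega_0$. The sign depends on the contraction convention, so at this step I would fix conventions consistent with the stated normalisation.
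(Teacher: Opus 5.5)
Your proposal is correct and is essentially the paper's argument: the paper only remarks that the statement follows from the Lefschetz decomposition into primitive forms together with $\Omega\wedge\omega=0$, and you carry this out type by type, identifying $w_1$, $w_4$, $w_5$ by pairing with $\overline{\Omega}$, applying $\Lambda_\omega$, and contracting with $\Re\Omega$. Two corrections. First, when you match the two $w_1$'s, the Leibniz rule gives $\diff\omega\wedge\Omega=-\omega\wedge\diff\Omega$, not $+$; only together with $\overline{\Omega}\wedge\Omega=-\Omega\wedge\overline{\Omega}=\tfrac{4i}{3}\omega^3$ does this give $w_1'=w_1$, whereas your identity with a plus sign would force $w_1'=-w_1$. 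Second, since $\Lambda^1\cong\mathbb{C}^3$ is of complex type, Schur's lemma only yields $i_{\Re\Omega}(\alpha\wedge\Re\Omega)=a\alpha+bJ^*\alpha$; your explicit check on $\varepsilon^1$, which gives $-2\varepsilon^1$ for the convention $\langle i_\phi\eta,\beta\rangle=\langle\eta,\phi\wedge\beta\rangle$, then settles $b=0$ and $a=-2$.
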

\begin{remark}
\cite[Theorem~1.1]{ChiossiSalamon-intrinsic-torsion-of-SU3} identifies the tensors $w_1,\dots,w_5$ in the previous proposition with the five classes of the intrinsic torsion of the $\SU(3)$-structure.
\end{remark}
\begin{remark}\label{rmk: convenient class of SU(3)-structures}
In the following we will restrict to $\SU(3)$-structures for which $w_2=w_4=w_5=0$. This has the advantage that the overdetermined $\SU(3)$-instanton equation can be augmented to an elliptic system modulo gauge (cf. \autoref{prop: augmented instanton equation}). Note that $w_4=0$ is equivalent to $\diff (\omega^2) = 0$ and therefore also to $\diff^* \omega=0$. Thus, $w_2=w_4=w_5=0$ is equivalent to
\begin{align*}
\diff^* \omega &= 0 \\
\diff \Omega &= w_1 \omega^2.
\end{align*}  
\end{remark}
\begin{proposition}\label{prop: modifying the SU(3)-structure such that dIm Omega = 0}
Assume that $(\omega,\Omega)$ is an $\SU(3)$-structure on $Z$ with $w_2=w_4=w_5=0$. Then there exists a $\vartheta \in \U(1)$ such that $(\omega,\vartheta\cdot\Omega)$ is an $\SU(3)$-structure which satisfies $w_2=w_4=w_5=0$ and additionally $\diff \Im 
\Omega = 0$.
\end{proposition}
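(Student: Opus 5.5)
Under the hypotheses $w_2=w_4=w_5=0$, \autoref{prop: decomposition of intrinsic torsion} gives $\diff\Omega = w_1\omega^2$ with $w_1\in C^\infty(Z,\mathbb{C})$. The plan is to show that $w_1$ is a constant times a fixed unit complex number, and then rotate $\Omega$ so that this constant becomes real.

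\textbf{Step 1: $w_1$ is constant.} Differentiate $\diff\Omega = w_1\omega^2$ to get
\[
0=\diff w_1\wedge\omega^2+w_1\,\diff(\omega^2).
\]
Since $w_4=0$, \autoref{rmk: convenient class of SU(3)-structures} gives $\diff(\omega^2)=0$. Hence $\diff w_1\wedge\omega^2=0$. By the Lefschetz isomorphism $L^2\colon\Lambda^1\to\Lambda^5$ in real dimension $6$, wedging with $\omega^2$ is injective on 1-forms, so $\diff w_1=0$. For connected $Z$ (as is implicitly assumed), $w_1=c\in\mathbb{C}$ is a constant. If $Z$ is not connected, one works componentwise; the statement asks for a single $\vartheta$, so connectedness is the natural standing assumption.

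\textbf{Step 2: the effect of a rotation.} Replace $\Omega$ by $\vartheta\Omega$ with $\vartheta\in\U(1)$. This is again an $\SU(3)$-structure: the pointwise model $(\omega_0,\vartheta\Omega_0)$ is $\SU(3)$-equivalent to $(\omega_0,\Omega_0)$ via the diagonal unitary map $\mathrm{diag}(\vartheta^{-1},1,1)$ (up to choosing a cube root, or simply this matrix), which preserves $\omega_0$ and sends $\Omega_0$ to $\vartheta^{-1}\Omega_0$. The conditions are also preserved:
\begin{itemize}
\item $\diff^*\omega=0$ is unchanged, since $\omega$ and the metric are unchanged.
\item $\diff(\vartheta\Omega)=\vartheta c\,\omega^2$, so the new torsion satisfies $w_2=w_5=0$ and has $w_1'=\vartheta c$.
\end{itemize}
Note that the metric and $J$ depend on $\Omega$ only through its phase-invariant data, so they are indeed unchanged.

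\textbf{Step 3: choosing $\vartheta$.} Choose $\vartheta$ so that $\vartheta c\in\mathbb{R}$; if $c=0$, take any $\vartheta$. Since $\omega^2$ is real,
\[
\diff\Im(\vartheta\Omega)=\Im(\vartheta c)\,\omega^2=0.
\]

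The only step with real content is the injectivity of $\alpha\mapsto\alpha\wedge\omega^2$ on 1-forms. This is standard hard Lefschetz at the linear-algebra level, and can be checked directly in the standard basis at a point.
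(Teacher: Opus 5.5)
Your proof is correct and follows the paper's argument. Both differentiate $\diff\Omega=w_1\omega^2$, use $\diff(\omega^2)=0$ and the injectivity of $\alpha\mapsto\alpha\wedge\omega^2$ on 1-forms to conclude that $w_1$ is constant, and then rotate $\Omega$ by a phase $\vartheta$ making $\vartheta w_1$ real. One minor wording point: $\mathrm{diag}(\vartheta^{-1},1,1)$ lies in $\U(3)$ rather than $\SU(3)$, but a real isomorphism is all the definition requires. Your explicit check that $(\omega,\vartheta\Omega)$ is again an $\SU(3)$-structure and your connectedness remark are details the paper leaves implicit.
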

\begin{proof}
In the previous remark we have seen that $w_2=w_4=w_5=0$ is equivalent to
\begin{align*}
\diff \omega^2 &= 0 \\
\diff \Omega &= w_1 \omega^2.
\end{align*}
Applying the de Rham differential to the second equation and using the first, we find that $(\diff w_1) \wedge \omega^2 =0$ and therefore $\diff w_1 =0$ (cf. \cite[Proposition~1.2.30]{Huybrechts-complex-Geometry}). The complex valued function $w_1$ is constant and there exists a $\vartheta \in \U(1)$ such that $\vartheta \cdot w_1$ is real. The modified $\SU(3)$-structure $(\omega,\vartheta\cdot\Omega)$ now satisfies 
\begin{align*}
\diff \omega^2 &= 0 \\
\diff (\vartheta \cdot \Omega) &= \vartheta \cdot w_1 \omega^2 \in \Omega^4(Z,\mathbb{R})
\end{align*}
which finishes the proof.
\end{proof}

We end this section with a definition for a distinguished coordinate chart that will be useful throughout this article.

\begin{definition}\label{def: SU(3)-coordinate system}
An $\SU(3)$-coordinate chart centered at any $z \in Z$ is a diffeomorphism $\Upsilon \col B_R(0)\subset \mathbb{C}^3 \to U\subset Z$ for some $R>0$ such that $\Upsilon(0)=z$ and $\Upsilon^*(\omega_z,\Omega_z)=(\omega_0,\Omega_0)$ at $z$.     
\end{definition}

\subsection{Dilation-invariant SU(3)-instantons over $\mathbb{C}^3\setminus\{0\}$}

This section discusses dilation-invariant $\SU(3)$-instantons over $\mathbb{C}^3\setminus \{0\}$. It turns out that for dilation-invariant connections, the $\SU(3)$-instanton equation reduces to an equation over $S^5$ (and in certain cases further to an equation over $\mathbb{P}^2 \coloneqq S^5/\U(1)$). First, we therefore recall some facts about the induced Sasaki--Einstein structure on $S^5$.

\subsubsection{The canonical Sasaki--Einstein structure on $S^5$}
\label{subsec: Sasaki--Einstein structure on S5}

In this subsection we collect some well-known facts about the canonical Sasaki--Einstein structure on the unit sphere $S^5\subset \mathbb{C}^3$ needed for the second part of this section.
\begin{enumerate}
\item Let $X_\theta \in \Gamma(TS^5)$ be the infinitesimal generator of the canonical $\U(1)$-action on $S^5 \subset \mathbb{C}^3$. Furthermore, denote by $\theta \coloneqq g_{S^5}(X_\theta,\cdot)$ its dual 1-form, where $g_{S^5}$ is the standard metric on the unit sphere $S^5\subset \mathbb{C}^3$. In fact, $\theta$ is a contact 1-form, which defines the standard contact distribution $H \coloneqq \ker \theta$ on $S^5$, with $X_\theta$ being its associated Reeb vector field.
\item The contact distribution $H$ is invariant under the canonical complex structure $J$ on $\mathbb{C}^3$ and we therefore obtain an induced complex structure $J_1$ on $H$. Its associated Hermitian form $\omega_1 \coloneqq g_{S^5}(J_1\cdot,\cdot) \in \Gamma(\Lambda^2H^*)$ is equal to the restriction of the Kähler form $\omega_0 \in \Omega^2(\mathbb{C}^3)$ to $S^5$, that is, $\omega_1 = {\omega_0}_{\vert S^5}$. In fact, over $\mathbb{C}^3\setminus \{0\}$ we have 
\begin{equation}\label{equ: relation Kähler form on C^3 and omega1 on S5}
\omega_0 = r \diff r \wedge \pr_{S^5}^*\theta + r^2 \pr_{S^5}^* \omega_1 
\end{equation} 
where $\pr_{S^5} \col \mathbb{C}^3\setminus \{0\} \to S^5$ denotes the radial projection.
\item Let $\Lambda^{2,0}H^*_\mathbb{C} \to S^5$ be the bundle of (complexified) 2-covectors of $H$, which are of type $(2,0)$ with respect to $J_1$. The restriction of the canonical bundle $\Lambda^{3,0}T^*_\mathbb{C}\mathbb{C}^3$ to $S^5$ splits as \[ \Lambda^{3,0}T^*_{\mathbb{C}}\mathbb{C}^3_{\vert S^5} \cong \underline{\mathbb{C}} \otimes (\Lambda^{2,0}H^*_{\mathbb{C}})\] where the trivial bundle is generated by $\diff r + i \theta$. Therefore, there exist $\omega_2,\omega_3 \in \Gamma_{S^5}(\Lambda^2H^*))$ such that 
\begin{align*}
\langle \omega_2+i\omega_3\rangle_\mathbb{C} &= \Lambda^{2,0}H^*_\mathbb{C}  \\
\textup{and} \quad \Omega_0 &= r^2 (\diff r + i r \pr_{S^5}^* \theta ) \wedge (\pr_{S^5}^*\omega_2 + i \pr_{S^5}^*\omega_3). 
\end{align*}
\item The quadruple $(\theta,\omega_1,\omega_2,\omega_3)$ defines a special $\SU(2)$-structure on $S^5$ that can be regarded as a Sasaki--Einstein structure (cf. \cite[Section~3.1]{FoscoloHaskinsNordström-G2-from-acon-CY3} and the references therein).
\item \label{bul: relationship SE structure S5 and K structure on P2} The quotient $S^5/\U(1)$ is given by the complex projective space $\mathbb{P}^2$ and the contact 1-form $\theta$ defines a connection on the $\U(1)$-bundle $\pr_{\mathbb{P}^2} \col S^5 \to \mathbb{P}^2$ (a complex Hopf-bundle) with horizontal distribution $H$. The differential of the projection $\pr_{\mathbb{P}^2}$ restricts to a complex linear bundle isomorphism \[(\Diff\pr_{\mathbb{P}^2})_{\vert H} \col H \to \pr_{\mathbb{P}^2}^*T\mathbb{P}^2\] (where $H$ is equipped with $J_1$ and $T\mathbb{P}^2$ with its standard complex structure). This implies that the pullback of the canonical bundle $\pr_{\mathbb{P}^2}^*(\Lambda^{2,0}T^*_\mathbb{C}\mathbb{P}^2)$ is isomorphic to $\Lambda^{2,0}H^*_\mathbb{C}$. Moreover, one can verify that up to a positive factor (which we fix to be one) $\pr_{\mathbb{P}^2}^*\omega_{\textup{FS}} = \omega_1$, where $\omega_{\textup{FS}}$ denotes the Fubini--Study form on $\mathbb{P}^2$. (Alternatively, one may simply define $\omega_{\textup{FS}}$ by this equation.)
\end{enumerate}

\subsubsection{Dilation-invariant instantons over $\mathbb{C}^3\setminus \{0\}$}

In this section we consider $\mathbb{C}^3\setminus\{0\}$ equipped with its canonical flat (Calabi--Yau) $\SU(3)$-structure~\eqref{equ standard C3}. Let $\pi \col P \to \mathbb{C}^3\setminus\{0\}$ be a principal $G$-bundle where $G$ is a compact Lie group whose Lie algebra $\mathfrak{g}$ has been equipped with an $\Ad$-invariant inner product. We denote by $\mathcal{A}(P)$ the set of connections on $P$.

\begin{definition}
A connection $A \in \mathcal{A}(P)$ is called $\SU(3)$-instanton if it satisfies 
\begin{equation} \label{equ: SU(3)-instanton}
    \Lambda_{\omega_0} F_A =0 \quad \textup{and} \quad F_A \wedge \Im \Omega_0 = 0 
\end{equation} 
where $\Lambda_{\omega_0}$ is the dual Lefschetz operator associated to $\omega_0$.
\end{definition}

\begin{remark}
In the following we tacitly assume that $G$ is a real Lie group. The second condition in the previous definition is then equivalent to $F_A^{0,2}=0$, where we have complexified the adjoint bundle $\mathfrak{g}_P$ in order to project to the $(0,2)$-component.
\end{remark}

We now restrict to bundles and connections which are pulled back from the unit sphere $S^5\subset \mathbb{C}^3\setminus\{0\}$. For this recall the canonical Sasaki--Einstein structure $(\theta,\omega_1,\omega_2,\omega_3)$ on $S^5$ discussed in \autoref{subsec: Sasaki--Einstein structure on S5}.
\begin{proposition}\label{prop: dilation invariant instantons}
Assume that $\pr_{S^5}^*\pi \col \pr_{S^5}^*P \to \mathbb{C}^3\setminus \{0\}$ is the pullback of a bundle $\pi \col P \to S^5$. The pullback $\pr_{S^5}^*A$ of a connection $A \in \mathcal{A}(P)$ is an $\SU(3)$-instanton over $\mathbb{C}^3\setminus\{0\}$ if and only if $A$ satisfies 
\begin{align} \label{equ: cone reduction of instanton equation}
F_A \wedge \omega_i = 0 \textup{ for $i=1,2,3$.}
\end{align}
Note that if the curvature satisfies $F_A \wedge \omega_i =0$ for some $i=1,2,3$, then $i_{X_\theta} F_A =0$, where $X_\theta \in \Gamma(TS^5)$ denotes the Reeb vector field associated to $\theta$.
\end{proposition}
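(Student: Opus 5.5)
The plan is to write everything in terms of the cone decomposition
\[
\mathbb{C}^3\setminus\{0\} \cong \mathbb{R}_{>0}\times S^5,
\]
and to compare forms degree by degree in $\diff r$.

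Since $\pr_{S^5}^*A$ is pulled back, its curvature is $F\coloneqq \pr_{S^5}^*F_A$, which has no $\diff r$-component and is independent of $r$. I will use the two formulas
\[
\omega_0 = r\diff r\wedge\theta + r^2\omega_1, \qquad \Omega_0 = r^2(\diff r + ir\theta)\wedge(\omega_2+i\omega_3),
\]
suppressing pullbacks. From the second one gets
\[
\Im\Omega_0 = r^2\diff r\wedge\omega_3 + r^3\theta\wedge\omega_2 .
\]

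\textbf{The condition $F\wedge\Im\Omega_0=0$.} Splitting
\[
F\wedge\Im\Omega_0 = r^2\diff r\wedge(F\wedge\omega_3) + r^3\, F\wedge\theta\wedge\omega_2
\]
into its part containing $\diff r$ and its part without, this vanishes if and only if both $F_A\wedge\omega_3=0$ and $F_A\wedge\theta\wedge\omega_2=0$ hold on $S^5$.

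\textbf{The condition $\Lambda_{\omega_0}F=0$.} In dimension 6 this is equivalent to $F\wedge\omega_0^2=0$, since $\Lambda\beta$ corresponds under Hodge star to $\beta\wedge\omega^2/2$ up to a constant. Expanding,
\[
\omega_0^2 = 2r^3\diff r\wedge\theta\wedge\omega_1 + r^4\omega_1^2 .
\]
The $\diff r$-part gives $F_A\wedge\theta\wedge\omega_1=0$. The remaining part gives $F_A\wedge\omega_1^2=0$, and this is automatic: $\omega_1^2$ is a top form on $H$, so $F_A\wedge\omega_1^2$ is a 6-form on the 5-manifold $S^5$.

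So the instanton equation reduces on $S^5$ to the system
\[
F_A\wedge\theta\wedge\omega_1=0,\qquad F_A\wedge\theta\wedge\omega_2=0,\qquad F_A\wedge\omega_3=0 .
\]
It remains to show this system is equivalent to \eqref{equ: cone reduction of instanton equation}. For this I decompose
\[
F_A = \theta\wedge\alpha + \beta, \qquad \alpha\in\Gamma(H^*)\otimes\mathfrak{g}_P,\quad \beta\in\Gamma(\Lambda^2H^*)\otimes\mathfrak{g}_P .
\]
Then $F_A\wedge\theta\wedge\omega_i = \theta\wedge\beta\wedge\omega_i$, which vanishes iff the 4-form $\beta\wedge\omega_i$ on the rank-4 bundle $H$ vanishes, i.e. iff $\langle\beta,\omega_i\rangle=0$. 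On the other hand,
\[
F_A\wedge\omega_i = \theta\wedge\alpha\wedge\omega_i + \beta\wedge\omega_i .
\]
Here $\alpha\wedge\omega_i$ is a 3-form on $H$, and wedging with the nondegenerate form $\omega_i$ is injective $\Lambda^1H^*\to\Lambda^3H^*$ (Lefschetz on a 4-dimensional space). Hence $F_A\wedge\omega_i=0$ is equivalent to $\alpha=0$ together with $\langle\beta,\omega_i\rangle=0$. This is exactly the final claim of the proposition, since $i_{X_\theta}F_A=\alpha$.

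\textbf{Closing the equivalence.} If $F_A\wedge\omega_3=0$, then $\alpha=0$ and $\beta\perp\omega_3$. The other two conditions give $\beta\perp\omega_1,\omega_2$, hence \eqref{equ: cone reduction of instanton equation} holds for all $i$ because $\alpha=0$. The converse is immediate from the same computation.

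The main obstacle is bookkeeping: the sign and normalisation conventions in $\Im\Omega_0$, and the equivalence $\Lambda_{\omega_0}F=0 \Leftrightarrow F\wedge\omega_0^2=0$. None of these constants affect the vanishing statements, so I expect no genuine difficulty beyond checking them.
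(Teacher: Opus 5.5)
Your proposal is correct and takes essentially the same route as the paper. The paper uses the cone formulas for $\omega_0$ and $\Omega_0$ to reduce $\Lambda_{\omega_0}F=0$ and $F\wedge\Im\Omega_0=0$ to $F_A\wedge\theta\wedge\omega_1=0$, $F_A\wedge\theta\wedge\omega_2=0$ and $F_A\wedge\omega_3=0$ on $S^5$, then uses injectivity of $a\mapsto a\wedge\omega_3$ on 1-forms to get $i_{X_\theta}F_A=0$ and close the equivalence; your decomposition $F_A=\theta\wedge\alpha+\beta$ just writes out the bookkeeping the paper leaves implicit.
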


\begin{proof}
The curvature of $\pr_{S^5}^*A$ over $\mathbb{C}^3\setminus \{0\}$ satisfies $F_{\pr_{S^5}^*A}= \pr_{S^5}^*F_A$. By \cite[Proposition~1.2.30]{Huybrechts-complex-Geometry} and \eqref{equ: relation Kähler form on C^3 and omega1 on S5} we therefore obtain that $\Lambda_{\omega_0} F_{\pr_{S^5}^*A} = 0$ is equivalent to \[F_A \wedge \omega_1 \wedge \theta =0.\] Similarly, $F_{\pr_{S^5}^*A} \wedge \Im \Omega_0 = 0$ is equivalent to \[ F_A \wedge \omega_3 = 0 \quad \textup{and} \quad F_A \wedge \omega_2 \wedge \theta = 0.\] Since $a \mapsto a \wedge \omega_3$ is an injective map for $a \in \Omega^1(S^5,\mathfrak{g}_P)$ (cf. \cite[Proposition~1.2.30]{Huybrechts-complex-Geometry}), the equation $F_A \wedge \omega_3=0$ implies $i_{X_\theta} F_A =0$. The other two equations then reduce to $F_A \wedge \omega_i =0$ for $i=1,2$.
\end{proof}

Recall from \autoref{bul: relationship SE structure S5 and K structure on P2} of \autoref{subsec: Sasaki--Einstein structure on S5} that the quotient map $\pr_{\mathbb{P}^2} \col S^5 \to S^5/\U(1) = \mathbb{P}^2$ satisfies $\pr_{\mathbb{P}^2}^*(\Lambda^{2,0}T^*_\mathbb{C}\mathbb{P}^2) = \langle \omega_2 + i \omega_3 \rangle_{\mathbb{C}}$ and $\pr_{\mathbb{P}^2}^*\omega_{\textup{FS}} = \omega_1$, where $\omega_{\textup{FS}} \in \Omega^2(\mathbb{P}^2)$ denotes the Fubini--Study form. Since the complexification of the bundle of self-dual forms is given by \[(\Lambda^2_+ T^* \mathbb{P}^2)_{\mathbb{C}} \cong \Lambda^{2,0}T^*_\mathbb{C}\mathbb{P}^2 \oplus \mathbb{C} \cdot \omega_{\textup{FS}} \oplus \Lambda^{0,2} T^*_\mathbb{C}\mathbb{P}^2 \] (cf. \cite[Lemma~2.1.57]{DonaldsonKronheimer-4-manifolds}), we immediately have:

\begin{corollary}\label{cor: ASD-instantons = pulled back SU(3)-instantons}
Let $A$ be a connection on a bundle (with real structure group) over $\mathbb{P}^2$. The pullback of $A$ to $S^5$ satisfies~\eqref{equ: cone reduction of instanton equation} (that is, the pullback of $A$ to $\mathbb{C}^3\setminus \{0\}$ is a dilation-invariant $\SU(3)$-instanton) if and only if $A$ is an ASD instanton over $\mathbb{P}^2$ with respect to the Fubini--Study metric and the orientation induced by the complex structure.
\end{corollary}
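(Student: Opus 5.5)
The plan is to reduce everything to \autoref{prop: dilation invariant instantons} and then use a pointwise linear-algebra statement on $\mathbb{P}^2$. Let $B$ be a connection on a bundle $Q \to \mathbb{P}^2$, and write $A = \pr_{\mathbb{P}^2}^* B$ on $\pr_{\mathbb{P}^2}^*Q \to S^5$. By \autoref{prop: dilation invariant instantons}, the pullback of $A$ to $\mathbb{C}^3\setminus\{0\}$ is an $\SU(3)$-instanton if and only if $F_A \wedge \omega_i = 0$ for $i=1,2,3$. So it suffices to show that these three equations hold if and only if $F_B$ is anti-self-dual.

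First, $F_A = \pr_{\mathbb{P}^2}^* F_B$. This form is basic: it is horizontal, so $i_{X_\theta}F_A = 0$ holds automatically. Next I would use the identifications recalled before the statement. These are $\omega_1 = \pr_{\mathbb{P}^2}^*\omega_{\textup{FS}}$, and $\omega_2 + i\omega_3$ spans $\pr_{\mathbb{P}^2}^*\Lambda^{2,0}T^*_\mathbb{C}\mathbb{P}^2$. The second says that at each point $p \in S^5$, $\omega_2+i\omega_3 = \pr_{\mathbb{P}^2}^*\sigma$ for some nonzero $(2,0)$-covector $\sigma$ at $\pr_{\mathbb{P}^2}(p)$. (Globally $\sigma$ is only defined up to a phase, but the argument is pointwise, so this does not matter.) Since $F_A$ and $\omega_i$ are pulled back from $H^* \cong \pr_{\mathbb{P}^2}^*T^*\mathbb{P}^2$, the $4$-forms $F_A\wedge\omega_i$ are also pullbacks of $4$-forms on $\mathbb{P}^2$. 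The pullback map $\Lambda^4 T^*_x\mathbb{P}^2 \to \Lambda^4 T_p^*S^5$ is injective because $\Diff\pr_{\mathbb{P}^2}$ is surjective. Therefore the three equations are equivalent to
\[ F_B \wedge \omega_{\textup{FS}} = 0, \qquad F_B \wedge \sigma = 0, \qquad F_B\wedge\bar\sigma = 0 \]
pointwise on $\mathbb{P}^2$. Here I complexify and take the real and imaginary parts of $F_B\wedge(\omega_2 + i \omega_3)$, using that $F_B$ is real-valued in the real Lie algebra.

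Finally, I would invoke the decomposition $(\Lambda^2_+)_\mathbb{C} = \Lambda^{2,0}\oplus\mathbb{C}\omega_{\textup{FS}}\oplus\Lambda^{0,2}$ quoted before the statement. Since $\omega_{\textup{FS}}$ is a Kähler form, its orientation agrees with the complex one, and $\omega_{\textup{FS}}$ together with $\Re\sigma$ and $\Im\sigma$ span $\Lambda^2_+$ at each point. For a $2$-form $\alpha$ and $\beta \in \Lambda^2_+$ we have $\alpha\wedge\beta = \langle \alpha, *\beta\rangle \vol = \langle\alpha,\beta\rangle\vol$. Hence the three wedge conditions say exactly that the self-dual part of $F_B$ (componentwise in $\mathfrak{g}$) vanishes, i.e.\ $B$ is ASD. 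The only point needing care is the orientation and sign check, namely that it is $\Lambda^2_+$ rather than $\Lambda^2_-$ that is spanned by $\omega_{\textup{FS}},\Re\sigma,\Im\sigma$ for the complex orientation. I expect this to be the main, though mild, obstacle, and it is settled by the cited lemma. Everything else is formal.
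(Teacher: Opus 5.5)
Your proposal is correct and is essentially the paper's argument. The paper also reduces to \autoref{prop: dilation invariant instantons}, uses $\omega_1=\pr_{\mathbb{P}^2}^*\omega_{\textup{FS}}$ and $\langle\omega_2+i\omega_3\rangle_\mathbb{C}=\pr_{\mathbb{P}^2}^*\Lambda^{2,0}T^*_\mathbb{C}\mathbb{P}^2$, and concludes from the decomposition $(\Lambda^2_+T^*\mathbb{P}^2)_\mathbb{C}\cong\Lambda^{2,0}\oplus\mathbb{C}\,\omega_{\textup{FS}}\oplus\Lambda^{0,2}$; you simply make explicit, and handle correctly, the steps it leaves implicit (pointwise injectivity of pullback on $4$-forms and $\alpha\wedge\beta=\langle\alpha,\beta\rangle\vol$ for self-dual $\beta$).
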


The following partial converse to this corollary is due to Baraglia and Hekmati:
\begin{proposition}[{\cite[Proposition~2.8]{BaragliaHekmati-contact-instantons}}]\label{prop: tangent cones pulled back from P2}
Let $\pi \col P \to S^5$ be a principal $G$-bundle and assume that $G$ has a trivial center. Furthermore, let $A\in \mathcal{A}(P)$ be an irreducible connection (i.e. the only gauge transformation preserving $A$ is the identity) that satisfies \eqref{equ: cone reduction of instanton equation}. Then there exists a principal $G$-bundle $\pi^\prime \col P^\prime \to \mathbb{P}^2$ and an ASD instanton $A^\prime \in \mathcal{A}(P^\prime)$ with $\pr_{\mathbb{P}^2}^*P^\prime = P$ and $\pr_{\mathbb{P}^2}^*A^\prime = A$.
\end{proposition}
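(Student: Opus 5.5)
The plan is to descend $P$ along the Hopf fibration $\pr_{\mathbb{P}^2}\colon S^5\to\mathbb{P}^2$ by lifting the Reeb flow to $P$ using $A$ itself.

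\textbf{Step 1: the lifted flow.} By \autoref{prop: dilation invariant instantons}, \eqref{equ: cone reduction of instanton equation} gives $i_{X_\theta}F_A=0$. Let $\tilde X$ be the $A$-horizontal lift of $X_\theta$ to $P$. It is $G$-invariant, so its flow $\tilde\phi_t$ consists of principal bundle automorphisms covering the Reeb flow $\phi_t$ on $S^5$. Since $i_{\tilde X}F_A=0$ and $\tilde X$ is horizontal, we have $\mathcal{L}_{\tilde X}A=i_{\tilde X}\diff A+\diff(i_{\tilde X}A)=i_{\tilde X}F_A=0$. Hence every $\tilde\phi_t$ preserves $A$.

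\textbf{Step 2: the holonomy of the circle action is trivial.} Since $\phi_{2\pi}=\id$, the map $\tilde\phi_{2\pi}$ covers the identity. It is therefore a gauge transformation $g$ of $P$, and by Step 1 it fixes $A$. Irreducibility then forces $g=\id$.

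The only caveat is the convention for irreducibility. If it means "stabiliser equals the center", then $g$ is a central element; since $G$ has trivial center, again $g=\id$. This is exactly where the hypothesis on the center enters.

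Concretely, $g$ is the fibrewise holonomy of $A$ around the Hopf circles. Its conjugacy class is locally constant along $S^5$ because $F_A(\tilde X,\cdot)=0$. So no separate continuity argument is needed beyond the identification of $g$ with an element of the gauge group.

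\textbf{Step 3: descent.} By Step 2 the flow $\tilde\phi_t$ defines a free $\U(1)$-action on $P$ commuting with the $G$-action and covering the free Hopf action. Set $P'\coloneqq P/\U(1)$. This is a principal $G$-bundle over $S^5/\U(1)=\mathbb{P}^2$, and $P\to\pr_{\mathbb{P}^2}^*P'$ is an isomorphism: it is $G$-equivariant and bijective on fibres, since each Hopf orbit in $P$ meets each fibre of $P$ exactly once.

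The connection form $A$ is $\U(1)$-invariant by Step 1. It is also basic for the $\U(1)$-action, because $A(\tilde X)=0$ as $\tilde X$ is horizontal. Therefore $A$ descends to a connection $A'$ on $P'$ with $\pr_{\mathbb{P}^2}^*A'=A$. Finally, \autoref{cor: ASD-instantons = pulled back SU(3)-instantons} shows that $A'$ is ASD with respect to the Fubini--Study metric.

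\textbf{Main obstacle.} Steps 1 and 3 are routine. The main point is Step 2: identifying the time-$2\pi$ lifted flow as a gauge transformation fixing $A$, which is what lets irreducibility and the trivial center kill it. Without the center hypothesis one would only get a central element, and one would have to pass to a cover of the circle action, with $P'$ defined over an orbifold or after twisting.
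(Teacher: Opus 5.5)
Your proposal is correct. You lift $X_\theta$ horizontally and use $i_{X_\theta}F_A=0$ from \autoref{prop: dilation invariant instantons} to see that the lifted flow preserves $A$. Its time-$2\pi$ map is then an $A$-preserving gauge transformation, which irreducibility with trivial center forces to be the identity, and $P$ and $A$ descend along the resulting free $\U(1)$-action; this is the standard argument.

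The paper gives no proof of its own here and only cites \cite[Proposition~2.8]{BaragliaHekmati-contact-instantons}. Note that your Step~2 genuinely needs the strong notion of irreducibility assumed in the statement. With mere infinitesimal irreducibility, the time-$2\pi$ map would only be a finite-order $A$-preserving gauge transformation.
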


We end this section with the following important class of examples:

\begin{example}
Let $\pi \col E \to \mathbb{P}^2$ be a holomorphic vector bundle which is slope-stable (or, more generally, slope-polystable) with respect to the Fubini--Study form $\omega_{\textup{FS}}$. The Donaldson--Uhlenbeck--Yau Theorem \cite[Theorem~1]{Donaldson-ASD-over-algebraic-surfaces} (and \cite[Main Theorem]{UhlenbeckYau-Donaldson-Uhlenbeck-Yau} for general compact Kähler manifolds) gives rise to an Hermitian metric $h$ on $E$ and a projective unitary connection $A\in \mathcal{A}(\mathbb{P}\textup{U}(E,h))$ that satisfies \[ \Lambda_{\omega_{\textup{FS}}} F_A = 0 \quad \textup{and} \quad F_A^{0,2}=0. \] Since \[(\Lambda^2_+ T^* \mathbb{P}^2)_{\mathbb{C}} \cong \Lambda^{2,0}T^*_\mathbb{C}\mathbb{P}^2 \oplus \mathbb{C} \cdot \omega_{\textup{FS}} \oplus \Lambda^{0,2} T^*_\mathbb{C}\mathbb{P}^2 \] (cf. \cite[Lemma~2.1.57]{DonaldsonKronheimer-4-manifolds}), \autoref{cor: ASD-instantons = pulled back SU(3)-instantons} implies that the pullback of $A$ to $\mathbb{C}^3\setminus\{0\}$ gives rise to a dilation-invariant $\SU(3)$-instanton.
\end{example}

\subsection{Conically singular connections}\label{sec: conically singular connections}

Let $Z^6$ be a compact 6-manifold with an $\SU(3)$-structure $(\omega,\Omega)$. Assume that $S \subset Z$ is a finite subset and $\pi \col P \to Z\setminus S$ is a principal $G$-bundle where $G$ is a compact Lie group whose Lie algebra $\mathfrak{g}$ has been equipped with an $\Ad$-invariant inner product. We denote the space of connections on $P$ by $\mathcal{A}(P)$. For the following discussion we fix for every $s\in S$
\begin{itemize}
\item a rate $\mu_s\in(-1,0)$,
\item a principal $G$-bundle $\pi_s \col P_s \to S^5$,
\item a connection $A_s \in \mathcal{A}(P_s)$ that satisfies~\eqref{equ: cone reduction of instanton equation}.
\end{itemize}

\begin{definition}\label{def: framed CS connections}
For each $s \in S$ let $\Upsilon_s\col B_R(0)\to Z$ be an $\SU(3)$-coordinate system (as in \autoref{def: SU(3)-coordinate system}) centered at $s$ and $\tilde{\Upsilon}_s \col \pr_{S^5}^*P_s \to P$ be a bundle isomorphism covering $\Upsilon_s$. We call a connection $A\in \mathcal{A}(P)$ conically singular with respect to $\{(\Upsilon_s,\tilde{\Upsilon}_s)\}_{s\in S}$ with rates $\mu\coloneqq \{\mu_s\}_{s\in S}$ and tangent cones $\{(P_s,A_s)\}_{s\in S}$
if 
\begin{equation}\label{equ: decay condition on framing}
\big\vert \nabla^k_{\pr^*_{S^5}A_s} (\tilde{\Upsilon}_s^*A-\pr_{S^5}^*A_s) \big\vert = \mathcal{O}(r^{\mu_s -k}) \quad \textup{for every $k\in \mathbb{N}_0$ as $r\to 0$.}
\end{equation} 
The set of all such conically singular connections will be denoted by $\mathcal{A}_\mu^{\textup{Fr}}(P,\{\Upsilon_s,\tilde{\Upsilon}_s,A_s\})$. Finally, we will call a bundle isomorphism $\tilde{\Upsilon}_s \col \pr_{S^5}^*P_s \to P$ covering an $\SU(3)$-coordinate system $\Upsilon_s$ that satisfies~\eqref{equ: decay condition on framing} a framing of $(\pi \col P \to Z\setminus S,A)$ at $s$.
\end{definition}

\begin{definition}\label{def: conically singular connections on fixed bundle}
A connection $A\in \mathcal{A}(P)$ is called conically singular with rates $\mu\coloneqq \{\mu_s\}_{s\in S}$ and tangent cones $\{(P_s,A_s)\}_{s\in S}$ if there exists a set framings $\{(\Upsilon_s,\tilde{\Upsilon}_s)\}_{s\in S}$ as in the previous definition, such that $A \in \mathcal{A}_\mu^{\textup{Fr}}(P,\{\Upsilon_s,\tilde{\Upsilon}_s,A_s\})$. The set of all conically singular connections on $P$ with given rates $\mu$ and tangent cones $\{(P_s,A_s)\}_{s\in S}$ will be denoted by $\mathcal{A}_\mu(P,\{P_s,A_s\})$. Moreover, $A$ is called a conically singular $\SU(3)$-instanton if $A$ is conically singular and satisfies~\eqref{equ: SU(3)-instanton}.
\end{definition}

\begin{remark}
The condition $\mu_s>-1$ ensures that $\pr_{S^5}^*A_s$ is the (up to gauge unique) tangent cone connection of $A\in \mathcal{A}_\mu(P,\{P_s,A_s\})$ at $s$. If $A \in \mathcal{A}_\mu(P,\{P_s,A_s\})$ is a conically singular $\SU(3)$-instanton, the tangent cones $A_s$ necessarily need to satisfy~\eqref{equ: cone reduction of instanton equation}.
\end{remark}

\begin{example}
Let $(Z,\omega,\Omega)$ be a compact Calabi--Yau 3-fold\footnote{Note that the results in this example hold for general compact Kähler manifolds (possibly without $\SU(3)$-structure) of any dimension. The relevant equation in this case is the Hermitian Yang--Mills equation $F_A^{0,2}=0$ and $i\Lambda_{\omega}F_A = \lambda \cdot \textup{id}$ for some $\lambda\in \mathbb{R}$ (when $\lambda = 0$ this is equivalent to \eqref{equ: SU(3)-instanton} in the presence of an $\SU(3)$-structure).} (i.e. $\diff \omega=0$ and $\diff \Omega=0$) and $\mathcal{E}$ be a reflexive sheaf on $Z$. Assume that $\mathcal{E}$ is the sum of slope-stable reflexive sheaves, which are locally free on the complement of a finite set $S \subset Z$. Furthermore, assume that around each $s\in S$ there are holomorphic coordinates $\varphi \col B_R(0) \to Z$ with $\varphi(0)=s$ such that $\varphi^*\mathcal{E} = \pr_{\mathbb{P}^2}^* \mathcal{F}$, where $\pr_{\mathbb{P}^2} \col B_R(0)\setminus \{0\} \hookrightarrow \mathbb{C}^3\setminus \{0\} \to \mathbb{P}^2$ is the canonical projection and $\mathcal{F}$ is a locally free sheaf on $\mathbb{P}^2$ which is the sum of slope-stable locally free sheaves. Bando and Siu \cite[Theorem~3]{BandoSiu-PHYM-over-reflexive-sheaves} proved that the holomorphic vector bundle $E \to Z\setminus S$ associated to $\mathcal{E}_{\vert Z\setminus S}$ admits an Hermitian inner product $h_E$ and a projective unitary connection $A \in \mathcal{A}(\mathbb{P}\U(E,h_E))$, which is an $\SU(3)$-instanton. Moreover,  Jacob, Sá Earp, and Walpuski proved \cite[Theorem~1.2]{AdamSaEarpWalpuski-tangent-cones-of-HYM-connections} that for every $s\in S$ there exists a rate $\mu_s>-1$ and a connection $A_s \in \mathcal{A}(\mathbb{P}\U(F,h_F))$, where $F \to \mathbb{P}^2$ is the vector bundle associated to $\mathcal{F}$ and $\mathbb{P}\U(F,h_F)$ is the projective unitary bundle assoicated to a suitable Hermitian inner product $h_F$, such that $A$ is conically singular with rate $\{\mu_s\}$ and tangent cones given by the respective pullbacks of $\{(\mathbb{P}\U(F,h_F),A_s)\}$ to $S^5$.
\end{example}

Assume that $A \in \mathcal{A}(P)$ is conically singular and that $s \in S$ is a singular point. The following proposition shows that the set of framings $(\Upsilon_s,\tilde{\Upsilon}_s)$ at $s$ (as in \autoref{def: framed CS connections}), is up to terms of order $\mathcal{O}(r^{\mu_s+1})$ an $\Stab_{\SU(3)}(A_s)$-torsor, where 
\begin{equation} \label{equ: definition Stab_SU(3)(A_s)}
\Stab_{\SU(3)}(A_s) \coloneqq \{ \tilde{U} \col P_s \xrightarrow{\sim} P_s \mid \textup{$\tilde{U}$ covers an element in $\SU(3)$ and $\tilde{U}^*A_s = A_s$}\}. 
\end{equation}

\begin{proposition}\label{prop: change of framing}
Let $A\in \mathcal{A}_\mu(P,\{P_s,A_s\})$ be a conically singular connection. Furthermore, let $\Upsilon_i \col B_R(0) \to Z$ for $i=1,2$ be two $\SU(3)$-coordinate systems both centered at (the same) $s\in S$ and $\Tilde{\Upsilon}_i \col \pr_{S^5}^*P_s \to \Upsilon_i^*P$ be two isomorphisms that both satisfy~\eqref{equ: decay condition on framing}. Then there exists a bundle isomorphism $\Tilde{U} \col P_s \to P_s$ covering $U \coloneqq D_0(\Upsilon_2^{-1} \circ \Upsilon_1) \in \SU(3)$ such that $\Tilde{U}^*A_s=A_s$ and  
\begin{equation}\label{equ: estimate on change of framing}
\big\vert \nabla_{\pr_{S^5}^*A_s}^k\big(\Tilde{\Upsilon}_2^{-1}\circ \Tilde{\Upsilon}_1 - \pr_{S^5}^*\Tilde{U}\big)\big\vert = \mathcal{O}(r^{\mu_s+1-k}) \quad \textup{for every $k \in \mathbb{N}_0$ as $r \to 0$.}
\end{equation}
Here, we assumed that $G$ is a subgroup of $\GL(W)$ for some vector space $W$ and $\tilde{\Upsilon}_2^{-1} \circ \tilde{\Upsilon}_1$ and $\pr_{S^5}^*\tilde{U}$ can therefore both be regarded as vector bundle homomorphisms $P_s \times_G W\to P_s \times_G W$. Moreover, we concatenate $\tilde{\Upsilon}_2^{-1} \circ \tilde{\Upsilon}_1$ with the parallel transport over the straight line that connects any $(\Upsilon_2^{-1}\circ \Upsilon_1)(z)$ with $Uz$. Then both homomorphisms are sections of the linear bundle $\Hom(P_s \times_G W,U^*P_s \times_G W)$, so that the difference and the covariant derivatives are indeed well-defined.
\end{proposition}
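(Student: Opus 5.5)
Consider the gauge transformation $g \coloneqq \tilde{\Upsilon}_2^{-1}\circ\tilde{\Upsilon}_1$, which relates the two framings. It is a bundle isomorphism $\pr_{S^5}^*P_s \to \pr_{S^5}^*P_s$ covering $\phi \coloneqq \Upsilon_2^{-1}\circ\Upsilon_1$ near $0$. Since both framings satisfy \eqref{equ: decay condition on framing}, we have
\[ g^*(\tilde{\Upsilon}_2^*A) = \tilde{\Upsilon}_1^*A, \]
where $\tilde{\Upsilon}_i^*A = \pr_{S^5}^*A_s + a_i$ with $|\nabla^k a_i| = \mathcal{O}(r^{\mu_s-k})$. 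The diffeomorphism $\phi$ fixes $0$, and $D_0\phi = U$ lies in $\SU(3)$ because both charts pull back $(\omega,\Omega)$ to $(\omega_0,\Omega_0)$ at the origin. Hence $\phi = U + \mathcal{O}(r^2)$ with all derivatives scaling accordingly.

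Using the parallel-transport identification from the statement, write $g = h\cdot(\text{transport})$, where $h$ is a section of $\Hom(P_s\times_G W, U^*P_s\times_G W)$ over the punctured ball. The equation above then becomes
\[ \nabla_{\pr^*A_s} h = \mathcal{O}(r^{\mu_s}) \]
in the sense that $h^{-1}\nabla h$ is bounded by $r^{\mu_s}$ with the natural derivative scaling. Here I also use that pulling $\pr^*A_s$ back by $\phi$ instead of by $U$ only introduces an error of order $\mathcal{O}(r)\cdot|F|$, which is $\mathcal{O}(1)\lesssim r^{\mu_s}$ since $\mu_s<0$. Because $G$ is compact, $h$ is bounded.

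Next I exploit the scaling. Dilate by $r$: the rescaled sections $h_r(x) \coloneqq h(rx)$ on the annulus $\{1\le|x|\le2\}$ satisfy $|\nabla h_r| = \mathcal{O}(r^{\mu_s+1})$. So $h_r$ is Cauchy as $r\to0$, since $\mu_s+1>0$ makes $\sum_j 2^{-j(\mu_s+1)}$ converge. Integrating radially, $h$ converges along rays to a limit $h_0$, which is a dilation-invariant section, i.e.\ $h_0 = \pr_{S^5}^*\tilde{U}$ for a bundle map $\tilde{U}\colon P_s\to P_s$ covering $U$. The tail sum gives $|h-\pr^*\tilde{U}| = \mathcal{O}(r^{\mu_s+1})$. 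Passing to the limit in the rescaled gauge equation gives $\tilde{U}^*A_s = A_s$, so $\tilde{U}$ lies in $\Stab_{\SU(3)}(A_s)$.

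The higher derivative estimates \eqref{equ: estimate on change of framing} follow by applying the same argument on annuli with the $C^k$ bounds on $a_i$ and $\phi$. Concretely, $\nabla^k h = \nabla^{k-1}(h\cdot\mathcal{O}(r^{\mu_s}))$, and one proceeds inductively, noting that $\nabla^k\pr^*\tilde{U}$ is homogeneous of degree $-k$ in $r$ and is matched by the derivatives of $h$ up to the error term.

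I expect the main obstacle to be the bookkeeping around the fact that $g$ covers $\phi$ rather than $U$. One has to check that the parallel-transport correction and the discrepancy $\phi^*\pr^*A_s - U^*\pr^*A_s$ are of order $r^{\mu_s}$ in the connection. Both should be $\mathcal{O}(r)$ times curvature-size terms, i.e.\ $\mathcal{O}(1)$, which is acceptable because $\mu_s<0$. Once that is settled, the radial-integration argument is routine.
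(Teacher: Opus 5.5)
Your argument is correct and follows essentially the same route as the paper. The paper first absorbs the straight-line parallel transport into $\tilde{\Upsilon}_2$ so that the transition map covers $U$, and derives the $\mathcal{O}(r^{\mu_s-k})$ bound on the pulled-back connection difference. It then obtains $\tilde U$ as the limit of the rescaled maps $\tilde g_r$ on $S^5$, using Arzel\`a--Ascoli together with radial integration of $|\partial_r \tilde g_r| = \mathcal{O}(r^{\mu_s})$, and gets the higher-derivative estimates from the fact that $\tilde U$ is parallel, so $\nabla^k \pr_{S^5}^*\tilde U = 0$ for $k\geq 1$. Your dyadic Cauchy argument and your induction are the same mechanism.

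One small bookkeeping slip: the displacement $|\phi(z)-Uz|$ is $\mathcal{O}(r^2)$, so the correction is $\mathcal{O}(r^2)\cdot|F_{\pr_{S^5}^*A_s}| = \mathcal{O}(r^2\cdot r^{-2}) = \mathcal{O}(1)$. Written as $\mathcal{O}(r)\cdot|F|$ it would be $\mathcal{O}(r^{-1})$, which is too large. Your conclusion $\mathcal{O}(1)\lesssim r^{\mu_s}$ is nevertheless the right one.
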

\begin{proof}
By pre-composing $\tilde{\Upsilon}_2$ with the parallel transport over the straight line that connects any $Uz \in \mathbb{C}^3\setminus \{0\}$ with $(\Upsilon_2^{-1} \circ \Upsilon_1)(z)$, we may in the following assume that $\tilde{\Upsilon}_2^{-1} \circ \tilde{\Upsilon}_1$ covers $U$. Note that since $\mu_s<1$, the modified $\tilde{\Upsilon}_2$ still satisfies~\eqref{equ: decay condition on framing}. 

Since the isomorphisms $\tilde{\Upsilon}_1$ and $\tilde{\Upsilon}_2$ both satisfy~\eqref{equ: decay condition on framing}, we have for every $k \in \mathbb{N}_0$
\begin{align*}
\big\vert \nabla_{\pr_{S^5}^*A_s}^k \big((\Tilde{\Upsilon}_2^{-1}\circ \Tilde{\Upsilon}_1)^*\pr_{S^5}^*A_s-\pr_{S^5}^*A_s\big)\big\vert &= \big\vert \nabla_{\pr_{S^5}^*A_s}^k \big(\Tilde{\Upsilon}_1^*(A+a)-\pr_{S^5}^*A_s\big)\big\vert \\
&\leq \big\vert \nabla_{\pr_{S^5}^*A_s}^k \big(\Tilde{\Upsilon}_1^*A-\pr_{S^5}^*A_s\big)\big\vert + \big\vert \nabla_{\pr_{S^5}^*A_s}^k \big(\Tilde{\Upsilon}_1^*a\big)\big\vert
\end{align*}
where $a \coloneqq (\tilde{\Upsilon}_2^{-1})^*\pr_{S^5}^*A_s - A$ satisfies $\vert \nabla^k(\Tilde{\Upsilon}_2^*a)\vert= \mathcal{O}(r^{\mu_s-k})$. Therefore, 
\begin{equation}\label{equ: auxiliary estimate on change of framing}
\big\vert \nabla_{\pr_{S^5}^*A_s}^k \big((\Tilde{\Upsilon}_2^{-1}\circ \Tilde{\Upsilon}_1)^*\pr_{S^5}^*A_s-\pr_{S^5}^*A_s\big)\big\vert = \mathcal{O}(r^{\mu_s-k}) \quad \textup{for every $k \in \mathbb{N}_0$ as $r \to 0$.}
\end{equation}

Next, we define for sufficiently small $r>0$ the following 1-parameter family of bundle isomorphisms on $P_s \to S^5$: \[\Tilde{g}_r \coloneqq \Tilde{\delta}_r^* (\Tilde{\Upsilon}_2^{-1} \circ \Tilde{\Upsilon}_1)_{\vert S^5_r} = \Tilde{\delta}^{-1}_r \circ (\Tilde{\Upsilon}_2^{-1} \circ \Tilde{\Upsilon}_1)_{\vert S^5_r} \circ \Tilde{\delta}_r \] where $\delta_r\col S_1^5\to S^5_r$ for $r\in (0,R)$ denotes the dilation map from the sphere of radius 1 onto the sphere of radius $r$ and $\Tilde{\delta}_r$ is its canonical lift to $\pr_{S^5}^*P_s$. Equation~\eqref{equ: auxiliary estimate on change of framing} implies that for every $k\geq 1$: \[ \big\vert \nabla^k_{A_s} \Tilde{g}_r \big\vert = \mathcal{O}(r^{\mu_s+1}).\] Since the $C^0$-norm of $\Tilde{g}_r$ is bounded due to the compactness of $G$, the Arzelà--Ascoli Theorem implies that there exists an isomorphism $\Tilde{U}\col P_s \to P_s$ covering $U=D_0(\Upsilon_2^{-1}\circ \Upsilon_1)$ such that $\Tilde{g}_r \to \Tilde{U}$ in $C^\infty$ on $S^5$ as $r\to 0$. Furthermore, since $\mu_s+1>0$, the isomorphism $\Tilde{U}$ is parallel with respect to $A_s$. 

Equation~\eqref{equ: auxiliary estimate on change of framing} with $k=0$ implies $\vert \partial_r \Tilde{g}_r \vert = \mathcal{O}(r^{\mu_s})$ and therefore \[\big\vert \Tilde{g}_r - \Tilde{U} \big\vert \leq \int_0^r \big\vert \partial_t\Tilde{g}_t \big\vert \diff t = \mathcal{O}(r^{\mu_s+1}). \] Dilating back, this implies~\eqref{equ: estimate on change of framing} with $k=0$. Since $\Tilde{U}$ is parallel, Equation~\eqref{equ: estimate on change of framing} with $k\geq 1$ follows from~\eqref{equ: auxiliary estimate on change of framing} with $k-1$.
\end{proof}
If $(W_\rho,\rho)$ is any $G$-representation, then we will denote in the following by $P\times_{\rho}W_\rho$ its associated vector bundle.
\begin{definition}
Let $\{(\Upsilon_s,\tilde{\Upsilon}_s)\}_{s\in S}$ be a set of framings as in \autoref{def: framed CS connections}. For any set of rates $\lambda \coloneqq \{\lambda_s\}_{s\in S}\in \mathbb{R}^{ S }$ and any $G$-representation $(W_\rho,\rho)$ let 
\begin{align*}
\Omega^\ell_\lambda(Z\setminus S,P\times_\rho W_\rho;\{\Upsilon_s,\tilde{\Upsilon}_s\}) &\coloneqq \big\{ \eta \in \Omega^\ell(Z\setminus S, P\times_\rho W_\rho)  \ \big\vert \\
& \qquad \qquad  \qquad \quad \big\vert \nabla^k_{\pr_{S^5}^*A_s} (\tilde{\Upsilon}_s^*\eta) \big\vert = \mathcal{O}(r^{\lambda_s-k}) \textup{ for $\forall s\in S$} \big\}
\end{align*} 
where the norm and the connection on $\Upsilon_s^*(\Lambda^\ell T^*Z) \cong \Lambda^\ell T^*(B_R(0)\setminus \{0\})$ are induced by the flat metric on $B_R(0)\setminus \{0\} \subset \mathbb{C}^3 \setminus \{0\}$.
\end{definition}
\begin{remark}
Whenever the set $\{(\Upsilon_s,\tilde{\Upsilon}_s)\}_{s\in S}$ of $\SU(3)$-coordinate systems and bundle isomorphisms in the previous definition is clear from the context, then we will remove it from the notation and simply write $\Omega^\ell_\lambda(Z\setminus S,P\times_\rho W_\rho)$.
\end{remark}
The following proposition shows that the definition of $\Omega^\ell_\lambda(Z\setminus S,P\times_\rho W_\rho;\{\Upsilon_s,\tilde{\Upsilon}_s\})$ only depends on the $\times_{s\in S} \Stab_{\SU(3)}(A_s)$-orbit of $\{(\Upsilon_s,\tilde{\Upsilon}_s)\}_{s\in S}$, where $\Stab_{\SU(3)}(A_s)$ was defined in~\eqref{equ: definition Stab_SU(3)(A_s)}.
\begin{proposition}
Let $\Upsilon_i \col B_R(0) \to Z$ for $i=1,2$ be two $\SU(3)$-coordinate systems both centered at (the same) $s\in S$ and $\Tilde{\Upsilon}_i \col \pr_{S^5}^*P_s \to \Upsilon_i^*P$ be two isomorphisms such that there exists a bundle isomorphism $\Tilde{U} \col P_s \to P_s$ covering $U \coloneqq D_0(\Upsilon_2^{-1} \circ \Upsilon_1) \in \SU(3)$ with $\Tilde{U}^*A_s=A_s$ and  
\begin{equation}
\big\vert \nabla_{\pr_{S^5}^*A_s}^k\big(\Tilde{\Upsilon}_2^{-1}\circ \Tilde{\Upsilon}_1 - \pr_{S^5}^*\Tilde{U}\big)\big\vert = \mathcal{O}(r^{\mu_s+1-k}).
\end{equation}
Assume that $\eta \in \Omega^\ell(Z\setminus S,P\times_\rho W_\rho)$ satisfies $\vert \nabla^k_{\pr_{S^5}^*A_s} (\Tilde{\Upsilon}_1^*\eta) \vert = \mathcal{O}(r^{\lambda_s-k})$ for every $k\in \mathbb{N}_0$ and some $\lambda_s\in \mathbb{R}$. Then $\eta$ satisfies also $\vert \nabla^k_{\pr_{S^5}^*A_s} (\Tilde{\Upsilon}_2^*\eta) \vert = \mathcal{O}(r^{\lambda_s-k})$ for every $k\in \mathbb{N}_0$.
\end{proposition}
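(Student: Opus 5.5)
We write $\Phi \coloneqq \Tilde{\Upsilon}_2^{-1}\circ\Tilde{\Upsilon}_1$ (composed with the straight-line parallel transport, as in \autoref{prop: change of framing}). By the same parallel-transport trick we may assume that $\Phi$ covers the linear map $U$ exactly. Since $\mu_s<0$, the extra parallel transport only introduces errors of order $\mathcal{O}(r^{\mu_s+1-k})$, which are harmless. Then $\Tilde{\Upsilon}_1^*\eta = \Phi^*\Tilde{\Upsilon}_2^*\eta$, equivalently $\Tilde{\Upsilon}_2^*\eta = (\Phi^{-1})^*\Tilde{\Upsilon}_1^*\eta$. We split $\Phi^{-1} = \pr_{S^5}^*\Tilde{U}^{-1} + E$ with $\vert\nabla^k E\vert = \mathcal{O}(r^{\mu_s+1-k})$. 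This decomposition follows from the hypothesis, because $\Phi$ takes values in the compact group $G$ (so it is bounded) and inversion is smooth. Differentiating $\Phi\Phi^{-1}=\id$ and inducting on $k$ gives the derivative bounds on $\Phi^{-1}$.

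\textbf{Main term.} Because $\Tilde{U}$ covers $U\in\SU(3)$ and preserves $A_s$, the map $\pr_{S^5}^*\Tilde{U}$ is an isometric bundle map covering an isometry of the flat cone $B_R(0)\setminus\{0\}$ that preserves $r$. It also intertwines the connections $\nabla_{\pr_{S^5}^*A_s}$ on both sides. Hence
\[
\big\vert \nabla^k_{\pr_{S^5}^*A_s}\big((\pr_{S^5}^*\Tilde{U}^{-1})^*\zeta\big)\big\vert(z) = \big\vert \nabla^k_{\pr_{S^5}^*A_s}\zeta\big\vert(U^{-1}z)
\]
for any $\zeta$. Taking $\zeta = \Tilde{\Upsilon}_1^*\eta$ shows that this term is $\mathcal{O}(r^{\lambda_s-k})$. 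Note that pulling back the form part along $U$ is isometric because $U$ is linear and orthogonal.

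\textbf{Error term.} The difference between $(\Phi^{-1})^*\zeta$ and the main term is, pointwise, the action of $E$ on the bundle values of $\zeta$. Its $k$-th covariant derivative is expanded by the Leibniz rule into terms $\nabla^{j}E * \nabla^{k-j}\zeta$, which are bounded by
\[
\mathcal{O}\big(r^{\mu_s+1-j}\, r^{\lambda_s-k+j}\big)=\mathcal{O}\big(r^{\lambda_s-k+\mu_s+1}\big)=\mathcal{O}\big(r^{\lambda_s-k}\big),
\]
since $\mu_s+1>0$ and $r<R$. The main obstacle is bookkeeping: the homomorphisms in question are sections of $\Hom(P_s\times_G W, U^*P_s\times_G W)$, and $\zeta$ must be pulled back along $U$ before $E$ acts on it. We handle this by viewing everything as sections over the same base point via $U$, which commutes with $\nabla$ by the intertwining above. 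The representation $\rho$ is dealt with by applying $\Diff\rho$ and $\rho$ (smooth on compact $G$) to $\Phi^{-1}$, which preserves the estimates. Adding the main and error terms yields $\vert\nabla^k(\Tilde{\Upsilon}_2^*\eta)\vert=\mathcal{O}(r^{\lambda_s-k})$ for all $k$.
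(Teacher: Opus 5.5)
Your proposal is correct and is essentially the paper's argument. You split the transition map into the $A_s$-preserving limit $\pr_{S^5}^*\tilde U$, whose pullback preserves the pointwise norms of all covariant derivatives, plus an error whose $k$-th derivatives are $\mathcal{O}(r^{\mu_s+1-k})$, and you absorb the Leibniz cross terms using $\mu_s+1>0$. The paper's two-line proof does exactly this; you additionally spell out points it glosses over, namely inverting the transition map, the straight-line parallel transport to the linear map $U$, and passing to a general representation $\rho$.
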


\begin{proof}
Dropping the subscript $\pr^*A_s$ from the covariant derivative for notational convenience, we obtain
\begin{align*}
\vert \nabla^k(\Tilde{\Upsilon}_2^*\eta)\vert &\leq \vert \nabla^k((\Tilde{\Upsilon}_1^{-1}\circ\Tilde{\Upsilon}_2- \Tilde{U})^*\Tilde{\Upsilon}_1^*\eta)\vert + \vert \nabla^k(\Tilde{U}^*\Tilde{\Upsilon}_1^*\eta) \vert \\
&= \mathcal{O}(r^{\mu_s+1+\lambda_s-k}) + \mathcal{O}(r^{\lambda_s-k}).
\end{align*}
Since $\mu_s+1>0$, this term is $\mathcal{O}(r^{\lambda_s-k})$.
\end{proof}

We end this section by showing that whenever the $\SU(3)$-structure $(\omega,\Omega)$ on $Z$ satisfies $\diff^* \omega= 0$ and  $\diff \Omega = w_1 \omega^2$ for $w_1 = -\frac{i}{6} \langle \diff \omega , \overline{\Omega} \rangle_\mathbb{C} \in \mathbb{C}$, then the $\SU(3)$-instanton equation can be augmented to an elliptic equation modulo gauge.

\begin{proposition}\label{prop: augmented instanton equation}
Fix a conically singular connection $A \in \mathcal{A}_\mu(P,\{P_s,A_s\})$ and define $\Omega^\ell_\mu(Z\setminus S,\mathfrak{g}_P)$ with respect to any framing $\{(\Upsilon_s,\tilde{\Upsilon}_s)\}_{s\in S}$ of $A$. Assume that $(\omega,\Omega)$ satisfies $\diff^*\omega=0$ and $\diff \Omega = w_1 \omega^2$. Then $A+a$ for $a\in \Omega^1_\mu(Z\setminus S,\mathfrak{g}_P)$ is a conically singular $\SU(3)$-instanton in Coulomb gauge relative to $A$ if and only if there are $\xi_1,\xi_2 \in \Omega^0_\mu(Z\setminus S, \mathfrak{g}_P)$ such that \begin{equation} \label{equ: augmented instanton equations}
\Lambda_{\omega} F_{A+a} = 0, \quad *(F_{A+a} \wedge \Im \Omega) + \diff_{A+a} \xi_1 + J^*(\diff_{A+a} \xi_2) = 0, \quad \textup{and} \quad \diff_A^*a = 0 
\end{equation}
where $J$ is the almost complex structure associated to $(\omega,\Omega)$ (cf. \autoref{remark: almost complex structure associated to (omega,Omega)}). Moreover, the two sections $\xi_1,\xi_2$ in the latter case satisfy $\diff_{A+a} \xi_1 = \diff_{A+a}\xi_2 =0$.
\end{proposition}
\begin{remark}
Equation~\eqref{equ: augmented instanton equations} is up to a zeroth-order term, which we have chosen to discard, the dimensional reduction of the $\textup{G}_2$-monopole equation in Coulomb gauge.
\end{remark}
\begin{proof}
First note that the previous proposition and \autoref{prop: change of framing} imply that the definition of $\Omega^\ell_\mu(Z\setminus S,\mathfrak{g}_P)$ is independent of the chosen $\{(\Upsilon,\Tilde{\Upsilon})\}_{s \in S}$. 

If $A+a$ is an $\SU(3)$-instanton in Coulomb gauge relative to $A$, then it satisfies the three equations with $\xi_1=\xi_2=0$. Conversely, assume that $a,\xi_1,$ and $\xi_2$ satisfy the equations in~\eqref{equ: augmented instanton equations}. Applying $\diff_{A+a}^*$ to the second equation and using $*(F_{A+a} \wedge \diff \Im \Omega) = 2 \Im(w_1) \Lambda_\omega F_{A+a} = 0$ by the first equation, $\frac{1}{2}* (\diff_{A+a} \xi_2 \wedge \omega \wedge \omega) = -J^*\diff_{A+a} \xi_2$, and $\diff (\omega \wedge \omega) = * \diff^*\omega = 0$ we obtain \[ 0 = \diff_{A+a}^*\diff_{A+a} \xi_1 - [\Lambda_\omega F_{A+a},\xi_2]= \diff_{A+a}^*\diff_{A+a} \xi_1. \] An integration by parts argument, which is justified since $\langle \diff_{A+a}\xi_1, \xi_1 \rangle = \mathcal{O}(r^{2 \mu_s -1})$ at any $s\in S$ with $\mu_s>-1$, then gives $\diff_{A+a} \xi_1 =0$. After further multiplication by $J^*$, the second equation reduces to \[ *(F_{A+a} \wedge \Re \Omega) - \diff_{A+a}\xi_2 =0. \] Again applying $\diff_{A+a}^*$ and using integration by parts as above gives $\diff_{A+a}\xi_2=0$.
\end{proof}

Now define the map
\begin{align*}
\Theta_A \col &\Omega^0_\mu(Z\setminus S, \mathfrak{g}_P \oplus \mathfrak{g}_P \oplus T^*Z\otimes \mathfrak{g}_P) \to \Omega^0_\mu(Z\setminus S, \mathfrak{g}_P \oplus \mathfrak{g}_P \oplus T^*Z\otimes \mathfrak{g}_P) \\
&(\xi_1,\xi_2,a) \mapsto \big(\diff_A^*a, \Lambda_{\omega} F_{A+a}, *(F_{A+a} \wedge \Im \Omega) + \diff_{A+a} \xi_1 + J^*(\diff_{A+a} \xi_2) \big).
\end{align*}
Then \[\Theta_A(\xi_1,\xi_2,a) = \Theta_A(0) + L_A(\xi_1,\xi_2,a) + Q_A(\xi_1,\xi_2,a) \] for
\begin{align*}
L_A \coloneqq 
\begin{pmatrix}
0 & 0 & \diff_A^* \\
0 & 0 & \Lambda_{\omega} \diff_A \\
\diff_A & J^* \diff_A & *(\Im \Omega \wedge\diff_A) 
\end{pmatrix}
\end{align*}
and \[Q_A(\xi_1,\xi_2,a) \coloneqq \big(0, \tfrac{1}{2} \Lambda_\omega [a\wedge a], *(\tfrac{1}{2}[a\wedge a] \wedge \Im \Omega) + [a,\xi_1]+J^*[a,\xi_2]\big). \] The following proposition follows from a straight forward calculation similar to the proof of \autoref{prop: augmented instanton equation}.
\begin{proposition}\label{prop: adjoint of model operator}
Let $(\omega,\Omega)$ be an $\SU(3)$-structure with $\diff^* \omega = 0$ and $\diff \Omega = w_1 \omega^2$. The linear operator $L_A$ is elliptic and its formal adjoint is given by \[L_A^*(\xi_1,\xi_2,a) = L_A(\xi_1,\xi_2,a) + (0,0,2 \Im (w_1) J^*a).\] Thus, if $w_1$ is real, then $L_A$ is formally self-adjoint.
\end{proposition}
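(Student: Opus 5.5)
We split the claim into ellipticity and the computation of the formal adjoint, treating $L_A$ as a first-order operator on $\Omega^0(\mathfrak{g}_P)\oplus\Omega^0(\mathfrak{g}_P)\oplus\Omega^1(\mathfrak{g}_P)$.

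\textbf{Ellipticity.} We compute the principal symbol pointwise. At $z\in Z$ we may pick a frame in which $(\omega,\Omega)=(\omega_0,\Omega_0)$, so it suffices to work on $\mathbb{C}^3$ with constant coefficients. For a covector $\zeta\neq 0$ the symbol is
\[
\sigma_\zeta(\xi_1,\xi_2,a) = \big(-\iota_{\zeta^\sharp}a,\ \Lambda_{\omega_0}(\zeta\wedge a),\ \zeta\xi_1 + \xi_2 J^*\zeta + *(\Im\Omega_0\wedge\zeta\wedge a)\big),
\]
up to a factor of $i$. This is a map between spaces of equal real rank $2+6$, tensored with $\mathfrak{g}$. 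To show injectivity, we use $\SU(3)$-invariance to reduce to $\zeta=\varepsilon^1$. Decompose $a = a_1\varepsilon^1 + a_2\varepsilon^2 + a'$ with $a'\in\langle\varepsilon^3,\dots,\varepsilon^6\rangle$. Then:
\begin{itemize}
\item the first component gives $a_1$;
\item the second gives $a_2$, since $\Lambda(\varepsilon^1\wedge\varepsilon^2)=1$;
\item the third component is $\xi_1\varepsilon^1 \pm \xi_2\varepsilon^2 + *(\Im\Omega_0\wedge\varepsilon^1\wedge a')$.
\end{itemize}
The last term acts on $a'$ as a complex-antilinear isomorphism of $\langle\varepsilon^3,\dots,\varepsilon^6\rangle$, because it is essentially the cross product from $\Im\Omega_0$. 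We verify this with the explicit formula for $\Omega_0$. Hence the symbol is an isomorphism. Equivalently, we could identify $\sigma_\zeta$ with Clifford multiplication, as in the dimensional reduction of the $\mathrm{G}_2$ deformation operator, and note that $\sigma_\zeta^2=-|\zeta|^2$.

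\textbf{Formal adjoint.} We take $L^2$ pairings against compactly supported sections and treat the entries of the matrix pairwise.
\begin{itemize}
\item The pairs $\diff_A$ and $\diff_A^*$ in positions $(3,1)$ and $(1,3)$ are adjoint to each other.
\item For the pair in positions $(2,3)$ and $(3,2)$ we need $(\Lambda_\omega\diff_A)^* = J^*\diff_A$ up to lower-order terms. Write $\Lambda_\omega\diff_A a = *(\tfrac12\omega^2\wedge\diff_A a)$, integrate by parts, and use $\diff(\omega^2)=0$, which holds because $\diff^*\omega=0$. This gives $(\Lambda_\omega\diff_A)^*\xi = \pm*(\diff_A\xi\wedge\tfrac12\omega^2) = J^*\diff_A\xi$. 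This is the identity already used in the proof of \autoref{prop: augmented instanton equation}, and it holds with no zeroth-order error.
\item For the self-adjoint block $a\mapsto *(\Im\Omega\wedge\diff_A a)$ we compute
\[
\langle *(\Im\Omega\wedge\diff_A a), b\rangle = \int \operatorname{tr}(\Im\Omega\wedge\diff_A a\wedge b).
\]
Integrating by parts produces the symmetric term $\int \operatorname{tr}(\Im\Omega\wedge a\wedge\diff_A b)$ together with the error $\int\operatorname{tr}(\diff\Im\Omega\wedge a\wedge b)$. Since $\diff\Im\Omega = \Im(w_1)\,\omega^2$, the error equals $\Im(w_1)\int\operatorname{tr}(\omega^2\wedge a\wedge b)$. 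The pointwise identity $*(\omega^2\wedge a)=2J^*a$ (up to sign) turns this into $\langle 2\Im(w_1)J^*a,b\rangle$, which is the claimed correction term.
\end{itemize}

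The main obstacle is the sign bookkeeping in this last step: the sign of the error term, the convention for $J^*$, and the factor $2$. We fix these by checking on $\mathbb{R}^6$ with $\omega_0$ and the explicit basis. Once the correction term is established, formal self-adjointness for real $w_1$ is immediate.
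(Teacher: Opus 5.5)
Your proposal is correct and is essentially the calculation the paper has in mind: the paper only says the result follows by a computation like the proof of \autoref{prop: augmented instanton equation}, using exactly your ingredients $\diff(\omega^2)=0$, $\diff\Im\Omega=\Im(w_1)\,\omega^2$ and $\tfrac12*(\alpha\wedge\omega^2)=-J^*\alpha$. The signs you left open do come out as claimed: with $J^*\alpha=\alpha\circ J$ and $a\wedge\beta=-\langle a,*\beta\rangle\vol$ for a $1$-form $a$ and a $5$-form $\beta$ on $Z^6$, one gets $(\Lambda_\omega\diff_A)^*=+J^*\diff_A$ and the correction term $+2\Im(w_1)J^*a$.
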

\begin{remark}
In the following, we will always assume that $\Im(w_1) =0$ so that $L_A$ is formally self-adjoint. By \autoref{prop: modifying the SU(3)-structure such that dIm Omega = 0} this does not pose a (significant) additional restriction on the $\SU(3)$-structure. 
\end{remark}

\section{The moduli space of framed conically singular connections}
\label{sec: moduli space of framed conically singular connections}

The aim of the upcoming sections is to define the moduli space of conically singular $\SU(3)$-instantons with prescribed tangent cones and to describe its local structure. As a first step, this section investigates the local structure of the space of conically singular connections in which the framing $\{(\Upsilon_s,\tilde{\Upsilon}_s)\}$ in \autoref{def: framed CS connections} is taken as part of the collected data.

\subsection{Definition of the space and its topology}
\label{sec: space of framed connections and its topology}

Throughout this section, $Z^6$ is a compact 6-manifold with an $\SU(3)$-structure $(\omega,\Omega)$. Furthermore, $G$ is a compact Lie group whose Lie algebra $\mathfrak{g}$ has been equipped with an $\Ad$-invariant inner product.

First, we define an equivalence relation on the set of framings $\{(\Upsilon_s,\tilde{\Upsilon}_s)\}_{s\in S}$ appearing in \autoref{def: framed CS connections}.

\begin{definition}\label{def: equivalence relation framing}
Let $\pi \col P \to Z\setminus \{s\}$ be a principal $G$-bundle and let $\pi_s \col P_s \to S^5$ and $A_s \in \mathcal{A}(P_s)$ be chosen. Furthermore, let $\Upsilon_1 \col B_{R_1}(0) \to Z$ and $\Upsilon_2 \col B_{R_2}(0) \to Z$ be two $\SU(3)$-coordinate systems centered at $s$ and $\Tilde{\Upsilon}_i \col \pr_{S^5}^*P_s \to P$ be two framings covering $\Upsilon_i$, respectively. We call $(\Upsilon_1,\Tilde{\Upsilon}_1)$ and $(\Upsilon_2,\Tilde{\Upsilon}_2)$ equivalent at rate $\mu_s \in \mathbb{R}$ with respect to $(P_s,A_s)$ if $D_0\Upsilon_1 = D_0\Upsilon_2$ and \[ \vert \nabla^k_{\pr_{S^5}^*A_s}(\Tilde{\Upsilon}_2^{-1}\circ \Tilde{\Upsilon}_1 - \textup{Id} )\vert = \mathcal{O}(r^{\mu_s+1-k}) \quad \textup{for every $k\in \mathbb{N}_0$ as $r \to 0$.}\]
\end{definition}

\begin{definition}[{\textbf{Moduli space of framed connections}}] \label{def: moduli space of framed connections}
Let $N\in \mathbb{N}$ be the number of singular points, $\mu\coloneqq \{\mu_i\}_{i\in \{1,\dots,N\}}$ for $\mu_i \in (-1,0)$ be a set of rates, and $\{(P_i,A_i)\}_{i\in \{1,\dots,N\}}$ be a set of (prescribed) tangent cones. With these we define the following:
\begin{enumerate}
\item Let $\mathcal{A}_\mu^{\textup{Fr}}(\{P_i,A_i\})$ be the set consisting of elements of the form: \[\big(S,\pi \col P \to Z\setminus S,\{[\Upsilon_i,\tilde{\Upsilon}_i]\}_{i\in \{1,\dots,N\}},A \big)\] where 
\begin{itemize}
\item $S= \{s_1,\dots,s_N\} \subset Z$ is a totally ordered subset,
\item $\pi \col P \to Z \setminus S$ is a principal $G$-bundle,
\item for each $i=1,\dots,N$, $[\Upsilon_i,\tilde{\Upsilon}_i]$ is an equivalence class (with respect to the relation in \autoref{def: equivalence relation framing}) of
\begin{itemize}
\item an $\SU(3)$-coordinate system $\Upsilon_i \col B_R(0) \to Z$ centered at $s_i$,
\item a bundle isomorphism $\Tilde{\Upsilon}_i \col \pr_{S^5}^*P_{i} \to \Upsilon_i^*P$
\end{itemize} 
at rate $\mu_i$ with respect to $(P_i,A_i)$. 
\item $A \in \mathcal{A}_\mu^{\textup{Fr}}(P,\{\Upsilon_i,\tilde{\Upsilon}_i,A_i\})$ (as in \autoref{def: framed CS connections}, where $(\Upsilon_i,\tilde{\Upsilon}_i)$ is any representative of the equivalence class $[\Upsilon_i,\tilde{\Upsilon}_i]$).
\end{itemize}
\item Let $\mathcal{B}_\mu^{\textup{Fr}}(\{P_i,A_i\}) \coloneqq \mathcal{A}_\mu^{\textup{Fr}}(\{P_i,A_i\})/\sim$ where the equivalence relation $\sim$ is defined by \[\big(S,\pi\col P \to Z\setminus S, \{[\Upsilon_{i},\Tilde{\Upsilon}_{i}]\}_{i=1,\dots,N}, A\big) \sim \big(S^\prime,\pi^\prime\col P^\prime \to Z\setminus S^\prime, \{[\Upsilon_{i}^\prime,\Tilde{\Upsilon}_{i}^\prime]\}_{i=1,\dots,N}, A^\prime\big)\] if $S = S^\prime$ (as ordered sets), $D_{0}(\Upsilon_{i}^{-1}\circ \Upsilon_{i}^\prime)= \textup{Id}$ for all $i=1,\dots, N$, and there exists an isomorphism $F \col P^\prime \to P$ (covering the identity) that satisfies
\begin{itemize}
\item $\big\vert \nabla_{\pr^*_{S^5}A_{s_i}}^k \big( \Tilde{\Upsilon}_{i}^{-1} \circ F \circ \Tilde{\Upsilon}_{i}^\prime - \textup{Id} \big) \big\vert = \mathcal{O}(r^{\mu_i+1-k})$ for every $k\in \mathbb{N}_0$ and $i=1,\dots,N$,
\item $F^*A=A^\prime$.
\end{itemize}
In the formulation above we again assumed that $G$ is a subgroup of $\GL(W)$ for some vector space $W$ and $\Tilde{\Upsilon}_{i}^{-1} \circ F \circ \Tilde{\Upsilon}_{i}^\prime$ can therefore be regarded as a vector bundle homomorphism $P_i \times_G W\to P_i \times_G W$. Moreover, we use parallel transport over straight lines in order to regard $\Tilde{\Upsilon}_{i}^{-1} \circ F \circ \Tilde{\Upsilon}_{i}^\prime$ as a section of the linear bundle $\End(P_s \times_G W)$, so that the difference and the covariant derivatives are well-defined.
\end{enumerate}
\end{definition}

\begin{remark}
First, note that the definition of $\mathcal{A}_\mu^{\textup{Fr}}(P,\{\Upsilon_i,\tilde{\Upsilon}_i,A_i\})$ in \autoref{def: framed CS connections} remains unchanged when replacing any pair $(\Upsilon_i,\tilde{\Upsilon}_i)$ with an equivalent (in the sense of \autoref{def: equivalence relation framing}) pair $(\Upsilon_i^\prime,\tilde{\Upsilon}_i^\prime)$. For the definition of $\mathcal{A}^\Fr_\mu(\{P_i,A_i\})$ the choice of representative of $[\Upsilon_i,\tilde{\Upsilon}_i]$ is therefore irrelevant.
\end{remark}

\begin{remark}
In the following we will equip the spaces in the previous definition with a topology such that they are non-connected. If one wishes to restrict to a connected component, then one should additionally assume that all bundles in the definition above are isomorphic to a fixed bundle via an isomorphism that covers a diffeomorphism that is isotopic to the identity.
\end{remark}

\begin{remark}\label{rem: totally ordered singular set framed case}
In the previous definition we take the singular set $S$ to be totally ordered because we want to prescribe the tangent cone $(P_i,A_i)$ at each singularity in advance. A 'full' moduli theory of conically singular connections (and instantons) should of course take the tangent cone around any singularity as part of the data collected in $\mathcal{A}_\mu^{\textup{Fr}}$. As a step toward such a moduli space with variable tangent cones, one should also identify a singular connection \[ (S, \pi \col P \to Z \setminus S, \{[\Upsilon_i,\tilde{\Upsilon}_i]\}_{i\in \{1,\dots,N\}},A) \] in the definition of $\mathcal{B}_{\mu}^{\textup{Fr}}(\{P_i,A_i\})$ given above with \[(\sigma(S), \pi \col P \to Z \setminus S, \{[\Upsilon_i,\tilde{\Upsilon}_i]\}_{i\in \{1,\dots,N\}},A), \] where $\sigma \in S_N$ is any permutation that satisfies $\mu_i = \mu_{\sigma(i)}$ and $(P_i,A_i) = (P_{\sigma(i)},A_{\sigma(i)})$ for every $i=1,\dots,N$ and $\sigma(S) \coloneqq \{\sigma(s_1),\dots,\sigma(s_N)\}$ (as a totally ordered set). Note, however, that because the symmetric group $S_N$ is finite, dividing out this additional group action will not change the local structure of $\mathcal{B}_{\mu}^{\textup{Fr}}(\{P_i,A_i\})$. 
\end{remark}

Next, we define topologies on $\mathcal{A}_\mu^\Fr(\{P_i,A_i\})$ and $\mathcal{B}_\mu^\Fr(\{P_i,A_i\})$. For this we first define a topology on the set $\mathcal{A}_\mu^\Fr(P,\{\Upsilon_i,\tilde{\Upsilon}_i,A_i\})$ of all conically singular connections on a fixed bundle with respect to a fixed set of framings $\{(\Upsilon_i,\tilde{\Upsilon}_i)\}$.

\begin{definition}\label{def: C^infty_mu topology}
Let $S \coloneqq \{s_1,\dots,s_N\} \subset Z$ be a totally ordered set, and let $\{(P_{i},A_{i})\}_{i=1,\dots,N}$ and $\{(\Upsilon_i,\tilde{\Upsilon}_i)\}_{i=1,\dots,N}$ be as in \autoref{def: framed CS connections}. Furthermore, we assume that each coordinate system $\Upsilon_i$ is defined over a ball $B_R(0)\subset \mathbb{C}^3$ of the same radius $R$. For any $\lambda \coloneqq \{\lambda_1,\dots,\lambda_N\}$ we define $\rho \col Z \setminus S \to (0,\infty)$ (a distance function) and $w_\lambda \col Z \to \mathbb{R}$ (a possibly non-continuous rate function) via
\begin{align*}
\rho(x) \coloneqq \begin{cases}
\big\vert \Upsilon_i^{-1}(x) \big\vert \quad &\textup{if $x \in \Upsilon_i(B_R(0))$,} \\
R &\textup{else,}
\end{cases}
\qquad \textup{and} \qquad 
w_\lambda(x) \coloneqq \begin{cases}
\lambda_i \quad &\textup{if $x \in \Upsilon_i(B_R(0))$,} \\
-1 &\textup{else.}
\end{cases}
\end{align*}
For some fixed $A_0 \in \mathcal{A}_\mu^\Fr(P,\{\Upsilon_i,\tilde{\Upsilon}_i,A_i\})$, we define the $C^\infty_\mu$-topology on $\mathcal{A}_\mu^\Fr(P,\{\Upsilon_i,\tilde{\Upsilon}_i,A_i\})$ to be the topology generated by the following set of semi-norms: \[ \vert A_0+ a \vertWC{k}{\mu}{} \coloneqq \sup \vert \rho^{-w_{\mu-k}} \nabla_{A_0}^k a \vert \quad \textup{where $k\in \mathbb{N}_0$ and $A_0+a \in \mathcal{A}_\mu^\Fr(P,\{\Upsilon_i,\tilde{\Upsilon}_i,A_i\})$} \] (where $\mu-k \coloneqq \{\mu_1-k,\dots,\mu_N-k\}$).
\end{definition}

\begin{remark}\label{rmk: independence of C^infty_mu-topology from choices}
A moment's thought reveals that the definition of the $C^\infty_\mu$-topology is independent of the choice of radius $R>0$ and base connection $A_0 \in \mathcal{A}_\mu^\Fr(P,\{\Upsilon_i,\tilde{\Upsilon}_i,A_i\})$. Moreover, if $\{(\Upsilon_i^\prime,\tilde{\Upsilon}_i^\prime)\}_{i=1,\dots,N}$ is another set of framings which is equivalent to $\{(\Upsilon_i,\tilde{\Upsilon}_i)\}_{i=1,\dots,N}$ in the sense of \autoref{def: equivalence relation framing}, then $\mathcal{A}_\mu^\Fr(P,\{\Upsilon_i,\tilde{\Upsilon}_i,A_i\}) = \mathcal{A}_\mu^\Fr(P,\{\Upsilon_i^\prime,\tilde{\Upsilon}_i^\prime,A_i\})$ as topological spaces (equipped with their respective $C^\infty_\mu$-topologies).
\end{remark}

With the $C^\infty_\mu$-topology at hand, we now define topologies on $\mathcal{A}_\mu^\Fr(\{P_i,A_i\})$ and $\mathcal{B}_\mu^\Fr(\{P_i,A_i\})$.

\begin{definition}\label{def: base of topology}
For fixed rates $\mu = \{\mu_i\}_{i=1,\dots,N}$ and tangent cones $\{(P_i,A_i)\}_{i=1,\dots,N}$, let $\mathcal{A}_\mu^{\textup{Fr}}(\{P_i,A_i\})$ and $\mathcal{B}_\mu^{\textup{Fr}}(\{P_i,A_i\})$ be as in \autoref{def: moduli space of framed connections}. We first define the following collection $\mathcal{C}$ of subsets of $\mathcal{A}_\mu^\Fr(\{P_i,A_i\})$ which will subsequently serve as the basis for a topology.

Let $\mathbb{A} \coloneqq (S, \pi \col P \to Z\setminus S, \{[\Upsilon_i,\Tilde{\Upsilon}_i]\} ,A) \in \mathcal{A}_\mu^{\textup{Fr}}(\{P_i,A_i\})$ be any element. Furthermore, assume that we have have chosen
\begin{itemize}
\item An open neighbourhood $V_1 \subset \{f \in \textup{Diff}(Z) \mid f^*(\omega,\Omega)_{f(s_i)}=(\omega,\Omega)_{s_i}$ \textup{ for all $i=1,\dots,N$ and $S=\{s_1,\dots,s_N\}$}\} of the identity (with respect to the $C^\infty$-topology). Furthermore, we assume $\textup{dist}(s_i,f(s_i))<\varepsilon$ for every $f\in V_1$ and $s_i\in S$ where $\varepsilon\ll \textup{dist}(s_i,s_j)$ for all $s_i\neq s_j \in S$.
\item An open neighbourhood $V_2 \subset \times_{i=1}^N \textup{Stab}_{\SU(3)}(A_i)$ of the identity (with respect to the $C^\infty$-topology) where $\Stab_{\SU(3)}{(A_i)}$ is as in \eqref{equ: definition Stab_SU(3)(A_s)}.
\item An open neighbourhood $V_3 \subset \mathcal{A}_\mu^\Fr(P,\{\Upsilon_i,\tilde{\Upsilon}_i,A_i\})$ of $A$ with respect to the $C^\infty_{\mu}$-topology.
\end{itemize}
We then define $V_{\mathbb{A}}(V_1,V_2,V_3) \subset \mathcal{A}_\mu^\Fr(\{P_i,A_i\})$ as
\begin{align*}
V_{\mathbb{A}}(V_1,V_2,V_3) \coloneqq \Big\{\big(S^\prime, f \circ \pi \col P \to Z\setminus S^\prime, \{[f \circ \Upsilon_i\circ U, \Tilde{\Upsilon}_i & \circ \Tilde{U}_i]\}_{i} ,A^\prime\big) \in \mathcal{A}^\Fr_\mu(\{P_i,A_i\}) \ \Big\vert \\
&(f,\{U_i,\Tilde{U}_i\}_i,A^\prime) \in V_1\times V_2 \times V_3 \Big\}
\end{align*} 
and $\mathcal{C}$ as the collection of all such subsets, i.e. $ \mathcal{C} \coloneqq \cup V_{\mathbb{A}}(V_1,V_2,V_3)$ (where the union is taken over all $\mathbb{A}$, $V_1$, $V_2$, and $V_3$ as above).

We now equip $\mathcal{A}_\mu^\Fr$ with the topology generated by $\mathcal{C}$ and $\mathcal{B}_\mu^{\textup{Fr}}$ with the quotient topology.
\end{definition}

\begin{remark}
Let $S\subset Z$ be a subset and $\pi \col P \to Z\setminus S$ be a principal $G$-bundle. Furthermore, let $f \col Z \to Z$ be a diffeomorphism with $S^\prime \coloneqq f(S)$. The principal $G$-bundle $f \circ \pi \col P \to Z\setminus S^\prime$ (where the total space $P$ and the $G$-action are the same as for $\pi \col P \to Z\setminus S$ and the projection is concatenated with $f$) is isomorphic to the push-forward bundle $f_*\pi \col f_*P \to Z\setminus S^\prime$. This isomorphism identifies the (same) connection $A$ (now considered over $f \circ \pi \col P \to Z\setminus S^\prime$) with the push-forward connection over $f_*P$. Thus, the neighbourhood $V_\mathbb{A}(V_1,V_2,V_3) \in \mathcal{C}$ defined above consists of conically singular connections on the push-forward bundles of $P$ by certain diffeomorphisms. Moreover, note that by \autoref{rmk: independence of C^infty_mu-topology from choices}, the definition of $V_\mathbb{A}(V_1,V_2,V_3)$ is independent of the choice of representative for the equivalence classes $\{[\Upsilon_i,\tilde{\Upsilon}_i]\}_{i=1,\dots,N}$.
\end{remark}

\begin{proposition}
The collection $\mathcal{C}$ in the previous definition is closed under finite intersections and defines therefore a basis for the topology on $\mathcal{A}_\mu^\Fr(\{P_i,A_i\})$.   
\end{proposition}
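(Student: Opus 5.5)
The standard criterion for a basis is what I would prove: the sets in $\mathcal{C}$ cover $\mathcal{A}^\Fr_\mu(\{P_i,A_i\})$, and for any two sets $V_{\mathbb{A}}(V_1,V_2,V_3)$, $V_{\mathbb{A}'}(V_1',V_2',V_3')$ and any point $\mathbb{B}$ in their intersection there is a set $V_{\mathbb{B}}(W_1,W_2,W_3)\in\mathcal{C}$ containing $\mathbb{B}$ and contained in the intersection. (This is how ``closed under finite intersections'' should be read for generating the topology.)

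Covering is immediate, since $\mathbb{A}\in V_{\mathbb{A}}(V_1,V_2,V_3)$ via $(\textup{Id},\textup{Id},A)$.

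The key step is a \emph{re-centering lemma}: if $\mathbb{B}\in V_{\mathbb{A}}(V_1,V_2,V_3)$, then $\mathbb{B}$ has a basic neighbourhood $V_{\mathbb{B}}(W_1,W_2,W_3)\subset V_{\mathbb{A}}(V_1,V_2,V_3)$. Applying this to $\mathbb{A}$ and $\mathbb{A}'$ and intersecting the resulting triples $W_j\cap W_j'$ then finishes the proof. The intersected triple is again admissible: an intersection of two open neighbourhoods of the identity, or of $B$, is again one.

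To prove the lemma, write
\[
\mathbb{B}=\big(f_0(S),\,f_0\circ\pi,\,\{[f_0\circ\Upsilon_i\circ U_{0,i},\,\tilde\Upsilon_i\circ\tilde U_{0,i}]\},\,B\big)
\]
with $(f_0,\{U_{0,i},\tilde U_{0,i}\},B)\in V_1\times V_2\times V_3$. A general element of $V_{\mathbb{B}}(W_1,W_2,W_3)$ has data
\[
\big(g f_0(S),\,g f_0\circ\pi,\,\{[g f_0\Upsilon_i U_{0,i}U_i,\,\tilde\Upsilon_i\tilde U_{0,i}\tilde U_i]\},\,B'\big).
\]
This is the element of $V_{\mathbb{A}}$ with parameters $(g\circ f_0,\{U_{0,i}U_i,\tilde U_{0,i}\tilde U_i\},B')$. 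So we need $W_1,W_2,W_3$ such that $g\circ f_0\in V_1$, $(U_{0,i}U_i,\tilde U_{0,i}\tilde U_i)\in V_2$, and $B'\in V_3$.

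Continuity of composition in $\textup{Diff}(Z)$ with the $C^\infty$-topology gives $W_1$. We must check that $g\circ f_0$ still fixes $(\omega,\Omega)$ at $s_i$, i.e.\ lies in the constraint set defining $V_1$. This holds because $g$ preserves the structure at $f_0(s_i)$ and $f_0$ at $s_i$; the $\varepsilon$-condition is handled by shrinking. Continuity of multiplication in the compact Lie group $\times_i\Stab_{\SU(3)}(A_i)$ gives $W_2$.

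For $W_3$, note that $\mathcal{A}^\Fr_\mu(P,\{\cdot\})$ for the framings of $\mathbb{B}$ and for those of $\mathbb{A}$ coincide as sets and as topological spaces. The $\tilde U_{0,i}$ part is covered by Proposition~\ref{prop: change of framing} and the subsequent proposition on independence of $\Omega^\ell_\lambda$. The $f_0$-part changes the coordinates by a fixed diffeomorphism, whose differential at $s_i$ preserves the $\SU(3)$-structure, so the weighted norms are uniformly equivalent. Remark~\ref{rmk: independence of C^infty_mu-topology from choices} then lets us take $W_3=V_3$ after identifying $P$ over $f_0$; since $V_3$ is open and contains $B$, it is a neighbourhood of $B$.

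The main obstacle I expect is this last identification of the $C^\infty_\mu$-topologies under the diffeomorphism $f_0$. Equivalence of framings requires $D_0$ to agree, but the new coordinate systems $f_0\circ\Upsilon_i\circ U_{0,i}$ differ from $\Upsilon_i$ by a diffeomorphism that is merely smooth with $D$ in $\SU(3)$ at the centre. I would first show that composing with such a map changes $\nabla^k$ by terms of order $O(r^{1-k})$ times lower derivatives, using the same parallel-transport-along-straight-lines device and the argument in the proof of Proposition~\ref{prop: change of framing} (with $\mu_i+1>0$). This yields equivalent seminorm families, and hence the same topology.
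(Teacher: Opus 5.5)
Your proposal is correct and follows essentially the paper's route: re-centre each basic set at a point of the intersection, then intersect the parameter neighbourhoods. The only difference is that the paper uses the exact translates $V_1\circ f^{-1}$, $\{(U_i^{-1},\tilde U_i^{-1})\}\circ V_2$ and $V_3$ to get the equality $V_{\mathbb{A}}(V_1,V_2,V_3)=V_{\mathbb{A}'}(V_1',V_2',V_3')$, which is slightly stronger than your containment obtained from continuity. The obstacle you anticipate for $W_3=V_3$ is milder than you expect: the total-space maps $\tilde\Upsilon_i\circ\tilde U_{0,i}$ are unchanged, the weight $\rho$ is transported exactly by $f_0$ (because $U_{0,i}$ is an isometry), and only $g$ is effectively replaced by $f_0^*g$, which alters the seminorms by bounded lower-order terms on the compact $Z$.
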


\begin{proof}
Let $\mathbb{A}\coloneqq (S, \pi \col P \to Z\setminus S, \{[\Upsilon_i,\Tilde{\Upsilon}_i]\},A)$ be an element in $\mathcal{A}_\mu^{\textup{Fr}}(\{P_i,A_i\})$ and let $V_{\mathbb{A}}(V_1,V_2,V_3)\subset \mathcal{A}_\mu^{\textup{Fr}}(\{P_i,A_i\})$ be an open neighbourhood (associated to open sets $V_1,V_2,V_3$) of the form described in the previous definition. We will first show that for any $\mathbb{A}^\prime \in V_{\mathbb{A}}(V_1,V_2,V_3)$ there exist $V_1^\prime$, $V_2^\prime$, $V_3^\prime$ as in the previous definition such that $V_{\mathbb{A}}(V_1,V_2,V_3)=V_{\mathbb{A}^\prime}(V_1^\prime,V_2^\prime,V_3^\prime)$.

For this, note that by the definition of $V_{\mathbb{A}}(V_1,V_2,V_3)$ any $\mathbb{A}^\prime \in V_{\mathbb{A}}(V_1,V_2,V_3)$ can be written as \[ \mathbb{A}^\prime = \big(f(S),f\circ \pi \col P \to Z \setminus S, \{[f \circ\Upsilon_i\circ U_i,\tilde{\Upsilon}_i \circ \tilde{U}_i]\},A^\prime\big)\] where $(f,\{(U_i,\tilde{U}_i)\},A^\prime) \in V_1\times V_2 \times V_3$. Since any element in $V_1$ can be written as $(f^\prime \circ f^{-1}) \circ f$ and any element in $V_2$ can be written as $\{(U_i, \tilde{U}_i)\} \circ \{(U_i^{-1}\circ U_i^\prime, \tilde{U}_i^{-1} \circ \tilde{U}_i^\prime) \}$, we immediately have $V_{\mathbb{A}}(V_1,V_2,V_3)=V_{\mathbb{A}^\prime}(V_1^\prime,V_2^\prime,V_3^\prime)$ for $V_1^\prime \coloneqq V_1 \circ f^{-1}$, $V_2^\prime \coloneqq \{(U_i^{-1},\tilde{U}_i^{-1})\} \circ V_2$, and $V_3^\prime \coloneqq V_3$.

In order to prove that $\mathcal{C}$ is closed under intersections take two elements in $\mathcal{C}$ which are not disjoint from one another. By the previous argument we may write these two sets as $V_{\mathbb{A}}(V_1,V_2,V_3)$ and  $V_{\mathbb{A}}(V_1^\prime,V_2^\prime,V_3^\prime)$ for some $\mathbb{A} \in \mathcal{A}_\mu^\Fr(\{P_i,A_i\})$ and $V_1,V_1^\prime,V_2,V_2^\prime,V_3,V_3^\prime$ as in \autoref{def: base of topology}. Then $V_{\mathbb{A}}(V_1,V_2,V_3)\cap V_{\mathbb{A}}(V_1^\prime,V_2^\prime,V_3^\prime) = V_{\mathbb{A}}(V_1\cap V_1^\prime,V_2\cap V_2^\prime,V_3\cap V_3^\prime)$, and the result follows.
\end{proof}

\subsection{The local structure of $\mathcal{B}_\mu^\Fr(\{P_i,A_i\})$}

In this section we prove that $\mathcal{B}_\mu^{\Fr}(\{P_i,A_i\})$ (as defined in the previous section) is locally homeomorphic to an open neighbourhood in a product of the form \[(\mathbb{C}^3)^N\times (\times_i\mathfrak{m}_i) \times \big(\times_i \Stab_{\SU(3)}(A_i) \times \mathcal{A}_\mu^{\textup{Fr}}(P,\{\Upsilon_i,\Tilde{\Upsilon}_i,A_i\})\big)/\mathcal{G}_{\mu+1}.\] Here, the framed bundle $\pi \col P \to Z \setminus S$ is fixed (but depends on the neighbourhood in $\mathcal{B}_\mu^{\Fr}(\{P_i,A_i\})$), the groups $\Stab_{\SU(3)}(A_i)$ are as in \eqref{equ: definition Stab_SU(3)(A_s)}, $\mathfrak{m}_i\subset \mathfrak{su}(3)$ is a subspace complementary to the image of (the Lie algebra) $ \mathfrak{stab}_{\SU(3)}(A_i)$ under the canonical projection $\mathfrak{stab}_{\SU(3)}(A_i) \to \mathfrak{su}(3)$, and $\mathcal{G}_{\mu+1}$ is defined by
\begin{align*}
\mathcal{G}_{\mu+1} \coloneqq \big\{ g \in \mathcal{G}(P) &\mid \vert \nabla^k (\tilde{\Upsilon}_i^{-1} \circ g \circ \tilde{\Upsilon}_i-\pr_{S^5}^*\tilde{U}_i) \vert = \mathcal{O}(r^{\mu_i+1-k}) \textup{ for every $i=1,\dots,N$,}  \\ 
& \quad \textup{$k \in \mathbb{N}_0$, and a $\tilde{U}_i \in \mathcal{G}(P_i)$ that preserves $A_i$} \big\}
\end{align*}
(where we again assume that $G\subset \GL(W)$ for some finite dimensional vector space $W$).

Geometrically, the elements in $\mathbb{C}^3$ in the product above correspond to (locally) moving the singular points of $\pi \col P \to Z \setminus S$. Similarly, the elements in $\mathfrak{m}_i \subset \mathfrak{su}(3)$ 'rotate' the bundle around $s_i$ and elements in $\Stab_{\SU(3)}(A_i)$ correspond to changing the framing at $s_i$.

We begin in \autoref{sec: construction of bundle isomorphisms} by showing that bundles of the form $f_1 \circ \pi \col P \to Z\setminus f_1(S)$ and $f_2\circ \pi \col P \to Z \setminus f_2(S)$, where $f_1,f_2 \col Z \to Z$ are diffeomorphisms satisfying certain properties, may be identified in a way that is compatible with their conically singular structure. In \autoref{sec: local parametrisation of framed cs connections} we then define a suitable parametrisation of $\mathcal{B}_\mu^{\textup{Fr}}(\{P_i,A_i\})$ and prove that this parametrisation indeed defines a local homeomorphism.

\subsubsection{Construction of suitable bundle isomorphisms}\label{sec: construction of bundle isomorphisms}
 
Recall from \autoref{def: base of topology} that the (framed) bundles $\pi \col P \to Z\setminus S$ in a sufficiently small neighbourhood in $\mathcal{A}^\Fr_\mu(\{P_i,A_i\})$ are the push-forwards of a fixed bundle by certain diffeomorphisms close to the identity. When going to the quotient $\mathcal{B}^\Fr_\mu(\{P_i,A_i\})$, we first have to determine which of these framed bundles are identified by an isomorphism that respects their respective conically singular structure.

It is well-known that two diffeomorphisms $f_0,f_1 \col Z \to Z$ which are both sufficiently close to the identity and agree at the singular points $S$ give rise to isomorphic push-forward bundles $f_i \circ \pi \col P \to Z \setminus f_i(S)$. The following proposition shows that when $f_0$ and $f_1$ agree at $S$ to first order, then this bundle isomorphism may be chosen to respect their singular structure.

\begin{proposition}\label{prop: isomorphism from isotopy}
Let $S=\{s_1,\dots,s_N\}$ be a totally ordered set and $\pi \col P \to Z \setminus S$ be a bundle together with a set of framings $\{(\Upsilon_i,\Tilde{\Upsilon}_i)\}_{i=1,\dots,N}$ around each $s_i\in S$ as in \autoref{def: framed CS connections}. Assume that $f_0,f_1\col Z \to Z$ are two diffeomorphisms which are both sufficiently close with respect to the $C^1$-norm to the identity and satisfy \[(f_0)_{\vert S} = (f_1)_{\vert S} \quad \textup{and} \quad (Df_0)_{\vert S}=(Df_1)_{\vert S}.\] Then there exists an isomorphism \[F \col (f_0\circ \pi \col P \to Z\setminus f_0(S)) \to (f_1\circ \pi \col P \to Z\setminus f_1(S))\] covering the identity with the property that \[ \vert \nabla^k(\Tilde{\Upsilon}_i^{-1} \circ F \circ \Tilde{\Upsilon}_i - \textup{Id})\vert = \mathcal{O}(r^{1-k}) \] for all $k\in \mathbb{N}_0$ and every $i=1,\dots,N$.
\end{proposition}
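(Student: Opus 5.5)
Set $h\coloneqq f_1^{-1}\circ f_0$, a diffeomorphism $C^1$-close to the identity with $h_{\vert S}=\textup{Id}$ and $(Dh)_{\vert S}=\textup{Id}$. A bundle isomorphism $F$ covering the identity between $f_0\circ\pi$ and $f_1\circ\pi$ is the same as a $G$-equivariant diffeomorphism $\tilde h\col P\to P$ covering $h$, i.e.\ $\pi\circ\tilde h=h\circ\pi$; then $F\coloneqq\tilde h$ satisfies $f_1\circ\pi\circ F=f_1\circ h\circ\pi=f_0\circ\pi$. The plan is to join $h$ to the identity by an isotopy $h_t$ that fixes $S$ to first order, and to lift it by parallel transport with respect to a connection that is radially trivial near $S$ in the framings.

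\textbf{Step 1: the isotopy.} Since $h$ is $C^1$-close to the identity, fix a background metric and put $h_t(x)\coloneqq\exp_x(t\exp_x^{-1}h(x))$, or use the straight-line homotopy $t\Upsilon_i^{-1}h\Upsilon_i+(1-t)\textup{Id}$ inside each chart $\Upsilon_i$, patched with the exponential construction away from $S$ by a cutoff. For small $C^1$-distance every $h_t$ is a diffeomorphism, $h_t(s_i)=s_i$, and $D_{s_i}h_t=\textup{Id}$. Let $X_t\coloneqq(\partial_th_t)\circ h_t^{-1}$ be the time-dependent generating field. In the chart $\Upsilon_i$ we have $\Upsilon_i^{-1}h\Upsilon_i(z)-z=\mathcal{O}(|z|^2)$ with all derivatives, so $|\nabla^kX_t|=\mathcal{O}(r^{2-k})$.

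\textbf{Step 2: the lift.} Choose a connection $B$ on $P$ that equals $\tilde\Upsilon_i^{-1\,*}\pr_{S^5}^*A_i$ near each $s_i$. Take the horizontal lift $\tilde X_t$ of $X_t$ with respect to $B$ and let $\tilde h_t$ be its flow. This flow exists for all $t\in[0,1]$ on $P$ over $Z\setminus S$, because $X_t$ vanishes at $S$ and $h_t$ preserves $Z\setminus S$. It is $G$-equivariant, covers $h_t$, and we set $F\coloneqq\tilde h_1$.

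\textbf{Step 3: the estimate, which is the main work.} In the framing, $\tilde\Upsilon_i^{-1}\circ F\circ\tilde\Upsilon_i$ is parallel transport of $\pr_{S^5}^*A_i$ along the curves $t\mapsto h_t(z)$. Compare it with $\textup{Id}$ using the parallel transport along straight lines, as in \autoref{def: moduli space of framed connections}. The difference is the holonomy of $\pr_{S^5}^*A_i$ around the thin loop formed by the path $h_t(z)$ and the segment from $h(z)$ back to $z$. This loop lies at radius $\sim r$, has length $\mathcal{O}(r^2)$, and spans an area $\mathcal{O}(r^4)$ (the path deviates from the segment by at most $\mathcal{O}(r^2)$). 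Since $|F_{\pr^*A_i}|=\mathcal{O}(r^{-2})$, the holonomy deviates from $\textup{Id}$ by $\mathcal{O}(r^{2})$, which is certainly $\mathcal{O}(r)$. Even the crude bound $\int_0^1|\pr^*A_i(\partial_t h_t)|$, in a radial gauge where $|\pr^*A_i|=\mathcal{O}(r^{-1})$, gives $\mathcal{O}(r^{-1}\cdot r^2)=\mathcal{O}(r)$, which is exactly the $k=0$ claim.

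For the higher derivatives, rescale. Conjugating by the dilation $\tilde\delta_r$, the connection $\pr_{S^5}^*A_i$ is invariant, while the isotopy becomes $\delta_r^{-1}h_t\delta_r$, which is $\mathcal{O}(r)$-close to the identity in $C^\ell$ on the annulus $\{1/2\le|z|\le2\}$ for every $\ell$. Smooth dependence of the solution of the parallel transport ODE on the path then gives $C^\ell$ bounds of order $r$ on the annulus. Scaling back, these become $|\nabla^k(\cdot-\textup{Id})|=\mathcal{O}(r^{1-k})$.

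The main obstacle is keeping Step 3 uniform in $r$: the rescaled family must have bounded geometry, which is why $B$ is chosen to coincide exactly with the cone connection near $S$, and the patching of the straight-line and exponential isotopies must be done carefully.
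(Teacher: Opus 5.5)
Your proposal is correct and follows essentially the same route as the paper: you join the identity to $f_1^{-1}\circ f_0$ by an isotopy fixing the $1$-jets at $S$ (your $h_t$ is the paper's $f_t^{-1}\circ f_0$), and you lift it by parallel transport with respect to a connection that coincides with $\pr_{S^5}^*A_i$ near each $s_i$, which is exactly the time-dependent horizontal flow described in the paper's remark following the proof. The only difference is in the estimates. The paper bounds the horizontal lift by $\mathcal{O}(|z|)$ for $k=0$ and controls the first derivative through the curvature integral $\int_0^1 |\pr_{S^5}^*F_{A_i}(\partial_t h_t,\cdot)|\,\diff t=\mathcal{O}(1)$, leaving higher derivatives as analogous. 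Your dilation-rescaling argument uses the exact dilation invariance of the cone connection to handle all $k$ at once.
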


\begin{proof}
Since $f_0$ and $f_1$ are sufficiently close with respect to the $C^1$-norm and agree up to first order at $S$, there exists an isotopy of diffeomorphisms $f \col [0,1] \times Z\to Z$ such that $(f_t)_{\vert S}$ and $(Df_t)_{\vert S}$ is constant in $t$ (see for example \cite[Proof of Theorem~6.26]{Lee-SmoothManifolds}).

We consider the principal $G$-bundle 
\begin{align*}
\pi^\prime \col [0,1] \times P &\to [0,1] \times Z\setminus f_0(S) \\
(t,p) & \mapsto (t,f_t(\pi(p))).
\end{align*}
(Note that this bundle is isomorphic to the pullback $h^*P$ where $h\col [0,1]\times Z\setminus f_0(S) \to Z\setminus S$ is the isotopy defined by $h_t\coloneqq f_t^{-1}$ for every $t\in [0,1]$.) Equip $P$ with a connection $A \in \mathcal{A}(P)$ that satisfies $\Tilde{\Upsilon}_i^*A = \pr_{S^5}^*A_i$ for every $i=1,\dots,N$. Its pullback $A^\prime \coloneqq \pr_{P}^*A$ defines a connection on $\pi^\prime \col [0,1] \times P \to [0,1] \times Z\setminus f_0(S)$. 
    
Now define $F \col P \to P$ by \[F(p) \coloneqq \pr_P(\textup{tra}_{[0,1]}^{A^\prime}(0,p))\] where $\textup{tra}_{[0,1]}^{A^\prime}(0,p)$ denotes the parallel transport of $(0,p)$ on $\pi^\prime \col [0,1]\times P \to [0,1]\times Z\setminus f_0(S)$ with respect to $A^\prime$ over the path \[[0,1] \ni t \mapsto (t,f_0(\pi(p))) \in [0,1]\times Z\setminus f_0(S).\] 

In order to prove \[ \vert \nabla^k(\Tilde{\Upsilon}_i^{-1} \circ F \circ \Tilde{\Upsilon}_i - \textup{Id})\vert = \mathcal{O}(r^{1-k}) \] we first make the following definitions: Let $\pi_i^\prime\col [0,1]\times \pr_{S^5}^*P_i \to [0,1]\times B_R(0)\setminus \{0\}$ be the pullback bundle which we equip with the pullback connection $A^\prime_i \coloneqq \pr_{P_i}^*A_i$. Furthermore, define for $z\in B_R(0)$ and $t\in[0,1]$
\begin{align*}
f_0^\prime(z) &\coloneqq (\Upsilon_i^{-1} \circ f_0 \circ \Upsilon_i)(z) \\
h_t^\prime(z) &\coloneqq (\Upsilon_i^{-1} \circ h_t \circ \Upsilon_i)(z)
\end{align*}
where $h_t =f_t^{-1}$. Then \[(\Tilde{\Upsilon}_i^{-1} \circ F \circ \Tilde{\Upsilon}_i)(p) = \pr_{\pr_{S^5}^*P_i} (\textup{tra}^{A^\prime_i}_{[0,1]}(0,p))\] where $\textup{tra}^{A^\prime_i}_{[0,1]}$ is the parallel transport with respect to $A^\prime_i$ over the path $t \mapsto (t,h_t^\prime(f_0^\prime(\pi(p))))$. Since $f_t$ agrees for all $t\in [0,1]$ with $f_0$ up to first order at $S$, we have $h_t^\prime(f_0^\prime(z)) = z + \mathcal{O}(t \vert z\vert^2)$. This implies that the $A^\prime_i$-horizontal lift of $\partial_t (t,h^\prime_t(f^\prime_0(z)))\in T_{(t,h^\prime_t(f^\prime_0(z)))}([0,1]\times B_R(0))$ to $([0,1] \times \pr_{S^5}^*P_i) \cong ([0,1]\times (0,R) \times P_i)$ satisfies 
\begin{align*}
\textup{Lift}^{A^\prime_i} \big(\partial_t ,(\partial_t h^\prime)_t(f^\prime_0(z))\big) &= \bigg(\partial_t, \partial_t \vert h^\prime_t(f^\prime_0(z)) \vert, \textup{Lift}^{A_i}\big(\partial_t \tfrac{h^\prime_t(f^\prime_0(z))}{\vert h^\prime_t(f^\prime_0(z)) \vert}\big)\bigg) \\
&=  (\partial_t,0,0) + \mathcal{O}(\vert z \vert). 
\end{align*}
Since $ \textup{tra}^{A^\prime_i}_{[0,1]}(0,p) $ is the flow of this horizontal lift, we obtain $\vert \Tilde{\Upsilon}_i^{-1} \circ F \circ \Tilde{\Upsilon}_i - \textup{Id}\vert = \mathcal{O}(r)$. In order to estimate the derivatives, we note that 
\begin{align*}
\vert \nabla(\Tilde{\Upsilon}_i^{-1} \circ F \circ \Tilde{\Upsilon}_i) \vert &\leq c \vert (\textup{tra}_{[0,1]}^{A^\prime_i}(0,\cdot))^*A_i^\prime - A_i^\prime \vert \\
& \leq c \int_{0}^1 \vert (\pr_{S^5}^*F_{A_i})(\partial_t h^\prime_t(f_0^{\prime}(\cdot)),\cdot) \vert \diff t \\
&\leq \mathcal{O}(r^{-2+2}).
\end{align*}
The higher derivatives can be estimated analogously.
\end{proof}
\begin{remark}\label{rmk: family of isomorphisms from isotopy}
Note that the isomorphism $F$ constructed in the previous proof is the time-1 flow of a time-dependent vector field $(X^H_t)_{t\in[0,1]} \in \Gamma(TP)$ over $P$ given as follows: First define the time-dependent vector field $(X_t)_{t\in[0,1]} \in \Gamma(TZ)$ for $t_{0}\in [0,1]$ and $z\in Z$ as \[X_{t_0}(z) \coloneqq -Df_{t_0}^{-1}\big(\partial_t f_t(z)_{\vert t={t_0}}\big), \] where $f_t \col Z \to Z$ is the isotopy between $f_0$ and $f_1$ used in the previous proof. Next, let $X^H_t \in \Gamma(TP)$ be the $A$-horizontal lift of $X_t$ to the bundle $\pi \col P \to Z\setminus S$ (where $A \in \mathcal{A}(P)$ is also as in the previous proof). Then $F(p) = \textup{Flow}_{[0,1]}^{X^H_t}(p)$ (where $\textup{Flow}_{[0,1]}^{X^H_t}$ denotes the time-dependent flow of $X^H_t$ that starts at $t=0$ and ends at $t=1$).

Since the solutions of ordinary differential equations depend continuously on the right-hand side, this description of $F$ as a time dependent flow implies the following strengthening of the previous proposition: Let $V$ be any topological space and $f_i \col V \times Z \to Z$ for $i=1,2$ be two continuous families of diffeomorphisms (i.e. $f_i(v,\cdot)$ is a diffeomorphism for every $v\in V$) such that all $f_i(v,\cdot)$ are sufficiently close to the identity with respect to the $C^1$-norm and satisfy \[f_0(v,\cdot)_{\vert S} = f_1(v,\cdot)_{\vert S} \quad \textup{and} \quad D(f_0(v,\cdot))_{\vert S}=D(f_1(v,\cdot))_{\vert S}.\] Furthermore, we assume that all derivatives of $f_i(v,\cdot)$ depend uniformly on $v\in V$ (in the sense that $\Vert f_i(v_n,\cdot) - f_i(v,\cdot)\VertC{k}{} \to 0$ for all $k\in \mathbb{N}$ whenever $v_n \to v$). Then the collection of bundle isomorphisms constructed in the previous proposition yields a continuous map \[ F \col V \times P \to P\] such that all derivatives of $F(v,\cdot)$ depend uniformly on $v\in V$. In fact, since all $f_i(v,\cdot)$ agree on $S$ up to first order, one can prove that $\Vert F(v_n, \cdot) - F(v,\cdot) \VertWC{k}{1}{} \to 0$ for every $k \in \mathbb{N}$, whenever $v_n \to v$, where $\Vert \cdot \VertWC{k}{1}{}$ is as in \autoref{def: C^infty_mu topology}.
\end{remark}

The next proposition addresses the situation when two diffeomorphisms $f_0,f_1 \col Z \to Z$ agree only to zeroth order at any $s\in S$ but differs at first order by a rotation in $\SU(3)$ which lies in the image of the canonical map $\Stab_{\SU(3)}(A_i) \to \SU(3)$.

\begin{proposition}\label{prop: isomorphism from stabiliser}
Assume that we are in the same situation as in \autoref{prop: isomorphism from isotopy} with the exception that we only assume that the diffeomorphisms $f_0,f_1\col Z \to Z$ agree to zeroth order at $S$. Additionally assume that there exists now a collection $\{(U_i,\tilde{U}_i)\}_{i=1,\dots,N}$ of $A_i$-preserving bundle isomorphisms $\tilde{U}_i\in \textup{Stab}_{\SU(3)}(A_i)$ (as defined in \eqref{equ: definition Stab_SU(3)(A_s)}) covering $U_i \in \SU(3)$ such that \[ D_0(\Upsilon_i^{-1} \circ f_1^{-1} \circ f_0 \circ \Upsilon_i) = U_i^{-1} \quad \textup{for every $i=1,\dots,N$.}\] Furthermore, we assume that each $\tilde{U}_i$ lies in the image of the (Lie group) exponential map on $\textup{Stab}_{\SU(3)}(A_i)$. Then there exists an isomorphism \[F \col (f_0\circ \pi \col P \to Z\setminus f_0(S)) \to (f_1\circ \pi \col P \to Z\setminus f_1(S))\] covering the identity with the property that \[ \vert \nabla^k(\Tilde{\Upsilon}_i^{-1} \circ F \circ \Tilde{\Upsilon}_i \circ \tilde{U}_i - \textup{Id})\vert = \mathcal{O}(r^{1-k}) \] for every $k\in \mathbb{N}_0$ and every $i=1,\dots,N$.
\end{proposition}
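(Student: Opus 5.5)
The plan is to reduce to \autoref{prop: isomorphism from isotopy} by first constructing, near each $s_i$, an explicit isomorphism realising the rotation, and then correcting the remaining first-order-trivial discrepancy. Since $\tilde{U}_i = \exp(\tilde{\xi}_i)$ for some $\tilde{\xi}_i$ in the Lie algebra of $\Stab_{\SU(3)}(A_i)$, we get a one-parameter subgroup $\tilde{U}_i(t)\coloneqq \exp(t\tilde{\xi}_i)$ of $A_i$-preserving bundle automorphisms of $P_i$ covering $U_i(t)\coloneqq\exp(t\xi_i)\in\SU(3)$, with $\xi_i$ the image of $\tilde{\xi}_i$ in $\mathfrak{su}(3)$. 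Pulling back along $\pr_{S^5}$ gives dilation-invariant automorphisms $\pr_{S^5}^*\tilde{U}_i(t)$ of $\pr_{S^5}^*P_i$ covering the linear maps $U_i(t)$ of $B_R(0)$, and these preserve $\pr_{S^5}^*A_i$.

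Next I construct a diffeomorphism $g\col Z\to Z$, close to the identity, with $g(s_i)=s_i$ and $g = \Upsilon_i\circ U_i^{-1}\circ\Upsilon_i^{-1}$ on a small ball around each $s_i$. Concretely, I cut off the vector fields $\Upsilon_{i*}(\xi_i\cdot z)$ with a bump function equal to one near $0$ and take the time-one flow; this gives an isotopy $g_t$ with $g_t=\Upsilon_i\circ U_i(t)^{-1}\circ\Upsilon_i^{-1}$ near $s_i$. Lifting the flow using $\Tilde{\Upsilon}_i\circ\pr_{S^5}^*\tilde{U}_i(t)^{-1}\circ\Tilde{\Upsilon}_i^{-1}$ near $s_i$ and an $A$-horizontal lift elsewhere, with $A$ as in the proof of \autoref{prop: isomorphism from isotopy} satisfying $\Tilde{\Upsilon}_i^*A=\pr_{S^5}^*A_i$, gives a bundle automorphism $\tilde{g}\col P\to P$ covering $g$. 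Concretely, $\tilde{g}$ is the time-one flow of a vector field on $P$ which near $s_i$ is the generator of $\pr_{S^5}^*\tilde{U}_i(t)$ and globally is a cut-off of this plus the horizontal lift. Then $\tilde{g}$ is an isomorphism from $f_0\circ\pi$ to $f_0\circ g^{-1}\circ\pi$, equal to $\Tilde{\Upsilon}_i\circ\pr_{S^5}^*\tilde{U}_i^{-1}\circ\Tilde{\Upsilon}_i^{-1}$ exactly near $s_i$ (with the appropriate sign convention for $U_i$).

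Now $f_0\circ g^{-1}$ and $f_1$ agree at $S$ to first order, since $D_0(\Upsilon_i^{-1}\circ f_1^{-1}\circ f_0\circ\Upsilon_i)=U_i^{-1}$ and $D(g^{-1})$ at $s_i$ corresponds to $U_i$. Both are $C^1$-close to the identity once $f_0,f_1$ are and $U_i$ is small; for large $U_i$ I would compose along the path $t\mapsto U_i(t)$. So \autoref{prop: isomorphism from isotopy} gives $F'$ from $f_0\circ g^{-1}\circ\pi$ to $f_1\circ\pi$ with $\Tilde{\Upsilon}_i^{-1}\circ F'\circ\Tilde{\Upsilon}_i=\mathrm{Id}+\mathcal{O}(r)$ in the weighted $C^k$ sense. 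Set $F\coloneqq F'\circ\tilde{g}$, which covers the identity because the projections were composed accordingly. Near $s_i$, $\Tilde{\Upsilon}_i^{-1}\circ F\circ\Tilde{\Upsilon}_i\circ\tilde{U}_i = (\Tilde{\Upsilon}_i^{-1}F'\Tilde{\Upsilon}_i)\circ\pr_{S^5}^*\tilde{U}_i^{-1}\circ\pr_{S^5}^*\tilde{U}_i$, which is $\mathrm{Id}+\mathcal{O}(r^{1-k})$ at the $k$-th derivative. Since $\pr_{S^5}^*\tilde{U}_i$ is parallel and dilation-invariant, composing with it does not change the weighted estimates.

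The main obstacle I expect is the bookkeeping. Here $F'$ acts on bundles whose projections differ by $g$, and in the estimate of \autoref{prop: isomorphism from isotopy} the framing $\Tilde{\Upsilon}_i$ is used with respect to $f_0\circ g^{-1}$ rather than $f_0$. I must check that $U_i$ versus $U_i^{-1}$ appear consistently, and that the $C^1$-smallness hypothesis survives; that is the point where exponential-image is used to build the isotopy.
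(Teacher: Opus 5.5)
Your proposal is correct and follows essentially the same proof as the paper. Your $g^{-1}$ is the paper's cut-off rotation $f_0'$, and your flow-lift $\tilde g$, which equals $\pr_{S^5}^*\tilde U_i^{-1}$ near $s_i$, plays the role of the paper's isomorphism built by parallel transport for a connection $A'$ on $[0,1]\times P$ with $\tilde\Upsilon^*A'=(\tilde U')^*A_s$; both arguments then finish with \autoref{prop: isomorphism from isotopy} applied to $f_0\circ g^{-1}$ (resp.\ $f_0\circ f_0'$) and $f_1$. On the $C^1$-smallness you flag: $U_i$ is automatically close to the identity under the hypotheses, so what must be small for $g$ to be $C^1$-close to the identity is the Lie algebra element $\xi_i$ rather than $U_i$, and the paper's proof makes the same tacit assumption.
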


\begin{proof}
For simplicity we will assume $N=1$ and drop the subscripts to ease notation. The general case can be proven analogously. In the following we will show that there exists a diffeomorphism $f_0^\prime$ (sufficiently close to the identity) and an isomorphism \[F\col (f_0 \circ \pi \col P \to Z\setminus f_0(S)) \to (f_0 \circ f_0^\prime \circ \pi \col P \to X\setminus f_0(S))\] that satisfy
\begin{itemize}
\item $f_0^\prime(s) = s$ and $D_s(f_0\circ f_0^\prime) = D_s f_1$ (where $S=\{s\}$)
\item $\tilde{\Upsilon}^{-1} \circ F \circ \tilde{\Upsilon} = \tilde{U}^{-1}$ on $B_{R/2}(0)$.
\end{itemize}
The proposition then follows from \autoref{prop: isomorphism from isotopy} applied to $(f_0\circ f_0^\prime)$ and $f_1$.

Let $t \mapsto \tilde{U}_t\in \textup{Stab}(A_s)$ for $t\in [0,1]$ be a path connecting $\tilde{U}$ to $\textup{Id}$. Furthermore, since $\tilde{U}$ lies in the image of the exponential map, we may assume that there is a path $t \mapsto u_t\in \mathfrak{su}(3)$ such that $\tilde{U}_t$ covers $\exp(u_t)$. We then define $f_t^\prime \col \Upsilon(B_R(0))\to \Upsilon(B_R(0))$ via \[(\Upsilon^{-1}\circ f_t^\prime\circ \Upsilon)(z)) \coloneqq \exp(\chi(\vert z \vert) \cdot u_t) \cdot z \] where $\chi$ is a non-increasing cut-off function with $\chi(r)=1$ for $r\leq R/2$ and $\chi(r)=0$ for $r\geq 3R/4$. Furthermore, We extend $f_t^\prime$ to be the identity map on $Z\setminus \Upsilon(B_R(0))$.

The isomorphism $F^{-1}$ is now constructed as the isomorphism in the proof of \autoref{prop: isomorphism from isotopy}. The isotopy of diffeomorphisms is hereby given by $f_t^\prime$ and the parallel transport is taken with respect to a connection $A^\prime\in \mathcal{A}([0,1]\times P)$ that satisfies $ \tilde{\Upsilon}^*A^\prime = (\tilde{U}^\prime)^* A_s$ where \[\tilde{U}^\prime \col [0,1]\times P_s \to [0,1]\times P_s, \quad (t,p) \mapsto (t,\tilde{U}_t(p)). \qedhere \]
\end{proof}

\begin{remark}\label{rem: diffeos only contribute zeroth order and first order up to stabiliser}
The previous two propositions may be interpreted as follows: (small) open neighbourhoods in $\mathcal{A}^\Fr_\mu(\{P_i,A_i\})$ are parametrised  (among other data) by diffeomorphisms $f\col Z \to Z$ close to the identity. \autoref{prop: isomorphism from isotopy} shows that when going to the quotient $\mathcal{B}^\Fr_\mu(\{P_i,A_i\})$ one only needs to remember the behaviour of such $f$ to first order around $S$. Moreover, \autoref{prop: isomorphism from stabiliser} shows that the framed bundles $(f_0 \circ \pi \col P \to Z\setminus f_0(S),\{(\Upsilon_i \circ U_i,\tilde{\Upsilon}_i\circ \tilde{U}_i)\})$ and $(f_1 \circ \pi \col P \to Z\setminus F_1(S),\{(\Upsilon_i,\tilde{\Upsilon}_i)\})$ are isomorphic whenever $f_0$ and $f_1$ are sufficiently close, agree to zeroth order at $S$ and differ to first order by elements $U_i \in \SU(3)$ that lie in the image of the canonical map $\textup{Stab}_{\SU(3)}(A_i) \to \SU(3)$. Thus, when going to the quotient $\mathcal{B}^\Fr_\mu(\{P_i,A_i\})$, one only needs remember the zeroth order term of such a diffeomorphism $f \col Z \to Z$ at any $s_i\in S$ and the first order term at the same $s_i \in S$ up to elements in $\textup{image}(\Stab_{\SU(3)}(A_i) \to \SU(3))$.
\end{remark}

\subsubsection{A local parametrisation of $\mathcal{B}_\mu^{\textup{Fr}}(\{P_i,A_i\})$}\label{sec: local parametrisation of framed cs connections}

In this section we will define a local parametrisation of $\mathcal{B}_\mu^{\textup{Fr}}(\{P_i,A_i\})$ and prove that it is indeed a (local) homeomorphism. Recall from \autoref{def: base of topology} that open subsets in $\mathcal{A}^\Fr_\mu(\{P_i,A_i\})$ are parametrised by certain diffeomorphisms (among other data). Moreover, we have seen in the previous section (cf. \autoref{rem: diffeos only contribute zeroth order and first order up to stabiliser}) that when going to the quotient $\mathcal{B}_\mu^{\textup{Fr}}(\{P_i,A_i\})$, one only remembers the behaviour of such a diffeomorphism at every $s_i \in S$ to first order and the first order term only up to elements in $\textup{image}(\Stab_{\SU(3)}(A_i) \to \SU(3))$. In the following we will therefore define a family of diffeomorphisms that realises any fixed translation of $s_i\in S$  (as zeroth order term) and rotation around $s_i$ transverse to $\textup{image}(\Stab_{\SU(3)}(A_i) \to \SU(3))$ (as first order term). This family of diffeomorphisms will subsequently be used to parametrise small neighbourhoods in $\mathcal{B}_\mu^\Fr(\{P_i,A_i\})$. All diffeomorphisms will be the time-1 flow of the following vector fields:

\begin{proposition}\label{prop: construction of families of vector fields}
Let $S\coloneqq \{s_1,\dots,s_N\}$ be a totally ordered set and for each $i=1,\dots,N$ let $\Upsilon_i\col B_R(0) \to Z$ be a $\SU(3)$-coordinate system centered around $s_i$. Moreover, let $\mathfrak{m}\subset (\mathfrak{su}(3))^N$ be any linear subspace. There exists an $\varepsilon>0$ and three smooth maps 
\begin{align*}
\mathfrak{vec}_0 \col \mathfrak{m} &\to \Gamma(TZ)\\
\mathfrak{vec}_k \col (B_\varepsilon(0) \subset \mathbb{C}^3)^N &\to \Gamma(TZ) \quad k=1,2
\end{align*}
with the following properties:
\begin{enumerate}
\item $B_{4\varepsilon}(s_i)\cap B_{4\varepsilon}(s_j)= \emptyset$ for all $i \neq j$,
\item $\textup{supp}(\mathfrak{vec}_0(\Vec{u})) \subset \cup_i B_{4\varepsilon}(s_i)$ and $\textup{supp}(\mathfrak{vec}_k(\Vec{v})) \subset \cup_i B_{4\varepsilon}(s_i)$ for $k=1,2$ and every $\vec{u}\in \mathfrak{m}$ and $\Vec{v} \in B_{\varepsilon}(0)^N$,
\item \label{bul: properties of vec0} The family $\mathfrak{vec}_0$ satisfies the following:
\begin{itemize}
\item for each $\vec{u} = (u_1,\dots,u_N) \in \mathfrak{m}$ and $t \in [-1,1]$ we have \[ \mathfrak{vec}_0(t \cdot\vec{u}) = t\cdot \mathfrak{vec}_0(\vec{u}).\]
\item for every $\vec{u}=(u_1,\dots,u_N) \in \mathfrak{m}$ and $i\in \{1,\dots,N\}$ we have \[ \mathfrak{vec}_0(\vec{u})(s_i)=0\] and, more generally, \[ \mathfrak{vec}_0(\vec{u})_{\vert B_{2\varepsilon}(s_i)} = (\Upsilon_i)_* \hat{u}_i \quad \textup{in the neighbourhood $B_{2\varepsilon}(s_i)$ of $s_i$,} \] where $\hat{u}_i \in \Gamma(T\mathbb{C}^3)$ is the vector field induced by the infinitesimal rotation $u_i \in \mathfrak{su}(3)$ (i.e. $\hat{u}_i(z) \coloneqq u_i \cdot z \in \mathbb{C}^3 = T_z \mathbb{C}^3$). This implies \[ D_0(\Upsilon_i^{-1} \circ \textup{Flow}_1^{\mathfrak{vec}_0(\vec{u})} \circ \Upsilon_i) = \exp(u_i) \] where $\textup{Flow}^{\mathfrak{vec}_0(\Vec{u})}_t$ denotes the flow of of $\mathfrak{vec}_0(\vec{u})$ at time $t$ and $\exp(u_i) \in \SU(3)$ denotes the ordinary (matrix) exponential.
\end{itemize}
\item \label{bul: properties of vec1} The family $\mathfrak{vec}_1$ satisfies the following:
\begin{itemize}
\item for each $\Vec{v}=(v_1,\dots,v_N)\in B_\varepsilon(0)^N$ and $t \in [-1,1]$ we have \[\mathfrak{vec}_1(t \cdot \Vec{v}) = t \cdot \mathfrak{vec}_1(\Vec{v}).\]
\item for every $\vec{v} = (v_1,\dots,v_N) \in B_\varepsilon(0)^N$ and $i=1,\dots,N$ we have \[\mathfrak{vec}_1(\vec{v})_{\vert B_{2\varepsilon}(s_i)} = (\Upsilon_i)_* \hat{v}_i \quad \textup{in the neighbourhood $B_{2\varepsilon}(s_i)$ of $s_i$,}\] where $\hat{v}_i \in \Gamma(T\mathbb{C}^3)$ denotes the constant vector field in the direction of $v_i \in \mathbb{C}^3$ (i.e. $\hat{v}_i(z) = v_i \in \mathbb{C}^3 = T_z \mathbb{C}^3$ at $z \in \mathbb{C}^3$). This implies that the flow of $\mathfrak{vec}_1(\vec{v})$ satisfies \[\textup{Flow}^{\mathfrak{vec}_1(\Vec{v})}_1(s_i) = \Upsilon_i(v_i) \quad \textup{for every $i=1,\dots,N$.}\]
\end{itemize}  
\item \label{bul: properties of vec2} The family $\mathfrak{vec}_2$ satisfies the following:
\begin{itemize}
\item $\mathfrak{vec}_2(\Vec{0})=0$ where $\vec{0}=(0,\dots,0)$,
\item for each $\vec{v}=(v_1,\dots,v_N)\in (\mathbb{C}^3)^N$ and any $s_i \in S$ we have \[ (\partial_{\vec{v}}\mathfrak{vec}_2(0))(s_i) \coloneqq (D_0 \mathfrak{vec}_2)(\vec{v})(s_i) = 0\] where we regard the derivative $\partial_{\vec{v}}\mathfrak{vec}_2(0)=(D_0 \mathfrak{vec}_2)(\vec{v}) \in \Gamma(TZ)$ again as a vector field on $Z$.
\item for each $\Vec{v}=(v_1,\dots,v_N)\in B_\varepsilon(0)^N$ and $i\in \{1,\dots,N\}$ we have 
\begin{align*}
\textup{Flow}^{\mathfrak{vec}_2(\Vec{v})}_1(\Upsilon_i(v_i)) &= \Upsilon_i(v_i)
\end{align*} 
and \[ (\textup{Flow}_1^{\mathfrak{vec}_2(\Vec{v})} \circ \textup{Flow}_1^{\mathfrak{vec}_1(\Vec{v})})^*(\omega,\Omega)_{\Upsilon_i(v_i)}= (\omega,\Omega)_{s_i}.\]
\end{itemize}
\end{enumerate}
\end{proposition}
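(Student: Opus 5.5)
The three families can be built explicitly in the coordinate charts $\Upsilon_i$, cut off radially, and summed over $i$. First choose $\varepsilon>0$ so small that $4\varepsilon<R$ and the balls $B_{4\varepsilon}(s_i)$ are pairwise disjoint. Here I interpret $B_{r}(s_i)$ as $\Upsilon_i(B_r(0))$; if the metric ball is meant, shrink $\varepsilon$ and compare the two via the bi-Lipschitz constant of $\Upsilon_i$. Fix a cut-off function $\chi\col[0,\infty)\to[0,1]$ with $\chi\equiv1$ on $[0,2\varepsilon]$ and $\chi\equiv0$ on $[3\varepsilon,\infty)$. Define
\[
\mathfrak{vec}_0(\vec u)\coloneqq\sum_i(\Upsilon_i)_*\big(\chi(|z|)\,\hat u_i\big), \qquad \mathfrak{vec}_1(\vec v)\coloneqq\sum_i(\Upsilon_i)_*\big(\chi(|z|)\,\hat v_i\big),
\]
both extended by zero. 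Both maps are linear, hence smooth and homogeneous, and the support conditions and the prescribed form on $B_{2\varepsilon}(s_i)$ hold by construction.

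The flow of $\hat u_i$ is $z\mapsto\exp(tu_i)z$, which preserves $|z|$. The flow of $\mathfrak{vec}_0(\vec u)$ is therefore exactly this linear rotation on $\Upsilon_i(B_{2\varepsilon}(0))$, fixes $s_i$, and gives $D_0(\Upsilon_i^{-1}\circ\textup{Flow}_1\circ\Upsilon_i)=\exp(u_i)$. For $\mathfrak{vec}_1$, the integral curve starting at $0$ is $t\mapsto tv_i$, which stays inside $B_{2\varepsilon}(0)$ for $t\in[0,1]$ because $|v_i|<\varepsilon$. Hence $\textup{Flow}_1(s_i)=\Upsilon_i(v_i)$.

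The real content is $\mathfrak{vec}_2$. Set $\phi_{\vec v}\coloneqq\textup{Flow}_1^{\mathfrak{vec}_1(\vec v)}$. Near $s_i$ this is the translation $z\mapsto z+v_i$ in the chart, so $D_0(\Upsilon_i^{-1}\circ\phi_{\vec v}\circ\Upsilon_i)=\textup{Id}$. The pulled-back structure $\phi_{\vec v}^*(\omega,\Omega)_{\Upsilon_i(v_i)}$ therefore equals $(\Upsilon_i^*(\omega,\Omega))_{v_i}$, transported to $0$ by translation. This is an $\SU(3)$-structure on $\mathbb{C}^3\cong T_0$ that equals $(\omega_0,\Omega_0)$ when $v_i=0$ and depends smoothly on $v_i$.

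I then need a smooth map $v_i\mapsto M_i(v_i)\in\GL(\mathbb{R}^6)$ with $M_i(0)=\textup{Id}$ and $M_i(v_i)^*\big((\Upsilon_i^*(\omega,\Omega))_{v_i}\big)=(\omega_0,\Omega_0)$. The $\GL(\mathbb{R}^6)$-orbit of $(\omega_0,\Omega_0)$ is the homogeneous space $\GL(\mathbb{R}^6)/\SU(3)$. Any smooth local section of $\GL(\mathbb{R}^6)\to\GL(\mathbb{R}^6)/\SU(3)$ through $\textup{Id}$ yields such an $M_i$ for small $v_i$, after shrinking $\varepsilon$. Alternatively, one can use an explicit formula: polar-decompose the frame sending the metric $g_{v_i}$ to the standard metric, then correct by a unitary element. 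I expect this to be the main obstacle, since smoothness and the normalisation $M_i(0)=\textup{Id}$ are needed simultaneously. I would choose the section $\sigma$ with $\sigma(e\SU(3))=\textup{Id}$, tangent to a complement of $\mathfrak{su}(3)$, by taking $\sigma(\exp(X)\SU(3))=\exp(X)$ for $X\in\mathfrak{su}(3)^\perp$.

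Now write $M_i(v_i)=\exp(X_i(v_i))$ with $X_i$ smooth and $X_i(0)=0$, which is possible for small $v_i$ by the local invertibility of $\exp$. Define
\[
\mathfrak{vec}_2(\vec v)\coloneqq\sum_i(\Upsilon_i)_*\big(\chi(|z-v_i|)\,X_i(v_i)(z-v_i)\big).
\]
This field vanishes at $\Upsilon_i(v_i)$, so its flow fixes that point. Near $\Upsilon_i(v_i)$ the flow is the linear map $\exp(X_i(v_i))$ centred at $v_i$, so its differential there is $M_i(v_i)$. Composing with $\phi_{\vec v}$ gives $(\textup{Flow}_1^{\mathfrak{vec}_2(\vec v)}\circ\phi_{\vec v})^*(\omega,\Omega)_{\Upsilon_i(v_i)}=(\omega_0,\Omega_0)$, which equals $(\omega,\Omega)_{s_i}$ in the chart since $\Upsilon_i$ is an $\SU(3)$-chart at $s_i$.

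It remains to check the conditions on $\mathfrak{vec}_2$. We have $\mathfrak{vec}_2(\vec 0)=0$ because $X_i(0)=0$. The support lies in $B_{4\varepsilon}(s_i)$ because $|v_i|<\varepsilon$. Smoothness in $\vec v$ is clear. Finally, differentiating at $\vec v=0$ gives $(D_0\mathfrak{vec}_2)(\vec v)(s_i)=\chi(0)\,(DX_i(0)v_i)\cdot 0=0$, since the term coming from differentiating the base point $z-v_i$ is multiplied by $X_i(0)=0$.
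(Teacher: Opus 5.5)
Your proposal is correct and follows essentially the same construction as the paper. Like the paper, you use radially cut-off pushforwards of $\hat u_i$ and $\hat v_i$, and for $\mathfrak{vec}_2$ the cut-off linear field $X_i(v_i)(z-v_i)$ whose time-one flow $\exp(X_i(v_i))$ pulls $\Upsilon_i^*(\omega,\Omega)_{\Upsilon_i(v_i)}$ back to $(\omega_0,\Omega_0)$; your $X_i$ is exactly the paper's $A(v)$. The differences are cosmetic: the paper obtains $A(v)$ from the Implicit Function Theorem rather than from a local section of $\GL(\mathbb{R}^6)\to\GL(\mathbb{R}^6)/\SU(3)$, and it centres the cut-off for $\mathfrak{vec}_2$ at $0$ rather than at $v_i$.
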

\begin{remark}
The time-1 flow of these vector fields gives rise to a fixed family of diffeomorphisms. Their respective roles are as follows: $\textup{Flow}_1^{\mathfrak{vec}_0(\vec{u})}$ gives rise to a rotation by $\exp(u_i) \in \SU(3)$ around any $s_i \in S$. The flow $\textup{Flow}_1^{\mathfrak{vec}_1(\vec{v})}$ translates $s_i$ by $v_i \in \mathbb{C}^3$, and $\textup{Flow}_1^{\mathfrak{vec}_2(\vec{v})}$ ensures that the translated coordinate system $\textup{Flow}_1^{\mathfrak{vec}_2(\vec{v})}\circ \textup{Flow}_1^{\mathfrak{vec}_1(\vec{v})} \circ \Upsilon_i$ centered at the translated singular point $\textup{Flow}_1^{\mathfrak{vec}_1(\vec{v})}(s_i) = \Upsilon_i(v_i)$ still pulls-back $(\omega,\Omega)$ at $(\textup{Flow}_1^{\mathfrak{vec}_1(\vec{v})}(s_i))$ to the flat $\SU(3)$-structure $(\omega_0,\Omega_0)$ on $\mathbb{C}^3$.
\end{remark}
The existence of such vector fields is well-known (see, for example, \cite[Theorem~5.2]{Joyce-Moduli_of_cs-slag}, \cite[Section~6.2]{Lotay-cs_coassociatives}, \cite[Section~4.3]{Englebert-cs_cayleys}, or \cite[Definition~5.8]{Bera-cs_associatives}). For the convenience of the reader we have included a proof.
\begin{proof}
For simplicity we assume $N=1$ and drop the subscripts so that $S = \{s\}$ together with the $\SU(3)$-coordinate system $\Upsilon\col B_R(0) \to Z$ around $s$. The proof for a general $N$ is analogous. 
    
For $\varepsilon < R/3$ we define $\mathfrak{vec}^\prime_1 \col B_\varepsilon(0) \to \Gamma(TB_R(0))$ where the vector field $\mathfrak{vec}^\prime_1(v)\in \Gamma(TB_R(0))$ for $v\in B_{\varepsilon}(0)$ at the point $z\in B_R(0)$ is given by $ (\mathfrak{vec}^\prime_1(v))(z) \coloneqq \chi(\vert z \vert) \cdot \hat{v}$ where $\chi$ is a fixed non-increasing cut-off function with $\chi(r)=1$ for $r \leq 2 \varepsilon$ and $\chi(r)= 0$ for $r>3\varepsilon$ and where $\hat{v}\in \Gamma(T\mathbb{C}^3)$ denotes the constant vector field $\hat{v}(z) = v \in \mathbb{C}^3=T_z\mathbb{C}^3$ for every $z\in \mathbb{C}^3$. The map $\mathfrak{vec}_1$ is then defined by $\Upsilon_* \circ \mathfrak{vec}^\prime_1$ on $\Upsilon(B_R(0))$ and extended by zero outisde of $\Upsilon(B_R(0))$.

Pulling $(\omega,\Omega)$ back via $\Upsilon$ gives rise to a smooth map $B_R(0) \to \Lambda^2 (\mathbb{C}^3) \oplus \Lambda^3_\mathbb{C}(\mathbb{C}^3)$ with $\Upsilon^*(\omega,\Omega)_p=(\omega_0,\Omega_0)$. The Implicit Function Theorem implies (after possibly shrinking $\varepsilon$) that there exists a smooth map $A \col B_\varepsilon(0) \to \mathfrak{gl}_{\mathbb{R}}(\mathbb{C}^3)$ (into the space of real $6\times 6$-matrices) with $A(0)=0$ such that \[ \exp(-A(v))^*(\omega_0,\Omega_0) = \Upsilon^*(\omega,\Omega)_{\Upsilon(v)}\] for every $v\in B_{\varepsilon}(0)$. 

As above, we first define $\mathfrak{vec}_2^\prime \col B_{\varepsilon}(0) \to \Gamma(TB_R(0))$, where the vector field $\mathfrak{vec}_2^\prime(v)\in \Gamma(TB_R(0))$ for $v\in B_{\varepsilon}(0)$ at the point $z\in B_R(0)$ is given by \[(\mathfrak{vec}_2^\prime(v))(z) \coloneqq \chi(\vert z \vert) \cdot (\tfrac{\diff}{\diff t} (\exp(tA(v)) \cdot (z-v)+v)_{\vert t=0}) \in T_zB_R(0). \] The map $\mathfrak{vec}_2$ is then again defined as the push-forward $\Upsilon_*\circ \mathfrak{vec}_2^\prime$ on $\Upsilon(B_R(0))$ and extended by zero outside of $\Upsilon(B_R(0))$).

\noindent The third map $\mathfrak{vec}_0 \col \mathfrak{m} \to \Gamma(TZ)$ is constructed analogously.

We will only verify the property \[ (\textup{Flow}_1^{\mathfrak{vec}_2(\Vec{v})} \circ \textup{Flow}_1^{\mathfrak{vec}_1(\Vec{v})})^*(\omega,\Omega)_{\Upsilon_i(v_i)}= (\omega,\Omega)_{s_i}.\]
For this, note that the flow at time $t\in[0,1]$ of $\mathfrak{vec}_2^\prime(v)\in \Gamma(TB_R(0))$ at $z\in B_{R}(0)$ with $\vert z -v \vert$ sufficiently small is given by \[ \textup{Flow}_t^{\mathfrak{vec}_2^\prime(v)}(z) = \exp(t  A(v)) \cdot (z-v) + v \] where $\exp$ in this context denotes the ordinary matrix-exponential. Thus, by the construction of $A(v)$ \[ (\textup{Flow}_1^{\mathfrak{vec}_2^\prime(v)})^*(\Upsilon^*(\omega,\Omega)_{\Upsilon(v)}) = (\omega_0,\Omega_0)_v\] which together with $(\textup{Flow}^{\mathfrak{vec}_1^{\prime}(v)}_1)^*(\omega_0,\Omega_0)_{v} = (\omega_0,\Omega_0)_0$ and $\Upsilon^*(\omega,\Omega)_{s}=(\omega_0,\Omega_0)_0$ implies the claim.
\end{proof}

\begin{definition}\label{def: family of diffeomorphism parametrising open neighbourhood}
Let $\mathfrak{m}_i \subset \mathfrak{su}(3)$ for every $i=1,\dots,N$ be a complement of the image of $\mathfrak{stab}_{\SU(3)}(A_i)$ (the Lie algebra to the Lie group defined in ~\eqref{equ: definition Stab_SU(3)(A_s)}) in $\mathfrak{su}(3)$ under the canonical projection $\mathfrak{stab}_{\SU(3)}(A_i) \to \mathfrak{su}(3)$. Moreover, let $\varepsilon>0$ be as in the previous proposition. For any  $\vec{v} = (v_1,\dots,v_N) \in (B_\varepsilon(0))^N$ and any $\vec{u}=(u_1,\dots,u_N)\in (\times_i \mathfrak{m}_i)$, we denote by $f_{\vec{v},\vec{u}}\col Z \to Z$ the diffeomorphism given by \[f_{\Vec{v},\vec{u}} \coloneqq \textup{Flow}_1^{\mathfrak{vec}_2(\Vec{v})} \circ \textup{Flow}_1^{\mathfrak{vec}_1(\Vec{v})}\circ \textup{Flow}_1^{\mathfrak{vec}_0(\Vec{u})},\] where $\mathfrak{vec}_k$ for $k=0,1,2$ are the families of vector fields constructed in the previous proposition.
\end{definition}

\begin{definition}\label{def: local structure of B definition of Psi}
Assume that $\mathbb{A}\coloneqq (S,\pi \col P \to Z \setminus S, \{[\Upsilon_i,\tilde{\Upsilon}_i]\},A) \in \mathcal{A}_\mu^{\textup{Fr}}(\{P_i,A_i\})$ is a fixed element. Moreover, let $\mathfrak{m}_i \subset \mathfrak{su}(3)$, $\varepsilon>0$, and $f_{\vec{v},\vec{u}} \col Z \to Z$ for $\vec{v} \in (B_\varepsilon(0))^N$ and $\vec{u}\in (\times \mathfrak{m}_i)$ be as in the previous definition (where all vector fields $\mathfrak{vec}_k$ are constructed with respect to $\{\Upsilon_i\}$ around $s_i \in S$). In the following we will denote by $B_\varepsilon^{\mathfrak{m}_i}(0) \subset \mathfrak{m}_i$ the $\varepsilon$-ball around $0$ in $\mathfrak{m}_i$. With these notions at hand, we define the following map:
\begin{align*}
\Psi_{\mathbb{A}} \col (B_{\varepsilon}(0))^N \times \big(\times_i B_{\varepsilon}^{\mathfrak{m}_i}(0)\big) \times (\times_i \textup{Stab}_{\SU(3)}(A_i)) \times \mathcal{A}_\mu^{\textup{Fr}}(P,\{\Upsilon_i,\tilde{\Upsilon}_i,A_i\}) \to \mathcal{A}_\mu^{\textup{Fr}}(\{P_i,A_i\})\\
\big(\vec{v},\vec{u}, \{(U_i,\tilde{U}_i)\},A^\prime\big) \mapsto \big(f_{\Vec{v},\vec{u}}(S), f_{\Vec{v},\vec{u}} \circ \pi \col P \to Z\setminus f_{\vec{v},\vec{u}}(S), \{[f_{\vec{v},\vec{u}}\circ \Upsilon_i \circ U_i,\Tilde{\Upsilon}_i\circ \tilde{U}_i]\}, A^\prime\big)
\end{align*}
\end{definition}

The following is the main result of this section.

\begin{theorem}\label{thm: local structure of B Psi is homeo}
Let $\mathbb{A}\coloneqq (S,\pi \col P \to Z \setminus S, \{[\Upsilon_i,\tilde{\Upsilon}_i]\},A) \in \mathcal{A}_\mu^{\textup{Fr}}(\{P_i,A_i\})$ be a fixed element and $\Psi_{\mathbb{A}}$ be as in the previous definition. Furthermore, let $\mathfrak{q}\col \mathcal{A}_\mu^{\textup{Fr}}(\{P_i,A_i\}) \to \mathcal{B}_\mu^{\textup{Fr}}(\{P_i,A_i\})$ be the quotient map. Then $\mathfrak{q}\circ \Psi_\mathbb{A}$ descends to a map $\overline{\mathfrak{q} \circ \Psi_\mathbb{A}}$ on the quotient
\begin{align*}
(B_{\varepsilon}(0))^N &\times \big(\times_i B_\varepsilon^{\mathfrak{m}_i}(0)\big) \times \big(\times_i \textup{Stab}_{\SU(3)}(A_i) \times \mathcal{A}_\mu^{\textup{Fr}}(P,\{\Upsilon_i,\tilde{\Upsilon}_i,A_i\})/\mathcal{G}_{0,\mu+1})\big) \big/\big(\mathcal{G}_{\mu+1}/\mathcal{G}_{0,\mu+1}\big) \\
&\to \mathcal{B}_\mu^{\textup{Fr}}(\{P_i,A_i\}).
\end{align*} 
Here,
\begin{align*}
\mathcal{G}_{0,\mu+1} \coloneqq \big\{ g \in \mathcal{G}(P) &\mid \vert \nabla^k (\tilde{\Upsilon}_i^{-1} \circ g \circ \tilde{\Upsilon}_i-\textup{Id}) \vert = \mathcal{O}(r^{\mu_i+1-k}) \textup{ for every $i=1,\dots,N$} \\
& \qquad \qquad \qquad \qquad \qquad\qquad\qquad\qquad\qquad\qquad\quad \qquad \textup{and $k \in \mathbb{N}_0$}\big\} \\
\mathcal{G}_{\mu+1} \coloneqq \big\{ g \in \mathcal{G}(P) &\mid \vert \nabla^k (\tilde{\Upsilon}_i^{-1} \circ g \circ \tilde{\Upsilon}_i-\pr_{S^5}^*\tilde{U}_i) \vert = \mathcal{O}(r^{\mu_i+1-k}) \textup{ for every $i=1,\dots,N$,} \\
& \quad \textup{$k \in \mathbb{N}_0$, and a $\tilde{U}_i \in \mathcal{G}(P_i)$ that preserves $A_i$} \big\}
\end{align*} 
(where $\mathcal{G}(P) = \Gamma(P\times_G G) \subset \Gamma(P \times_G\End(W))$ for $G \subset \GL(W)$ denotes the group of bundle-isomorphisms) and where $g\in \mathcal{G}_{\mu+1}$ acts on \[\big(\{(U_i,\tilde{U}_i)\},A\big) \in \big(\times_i \Stab_{\SU(3)}(A_i)\big) \times \mathcal{A}_\mu^{\textup{Fr}}(P,\{\Upsilon_i,\tilde{\Upsilon}_i,A_i\}) \] via \[ \big(\{(U_i, {\textstyle{\lim_{\tilde{\Upsilon}_i}(g)^{-1}}} \circ \tilde{U}_i)\},g^*A\big), \] where the asymptotic limit $\lim_{\tilde{\Upsilon}_i}(g) \in \mathcal{G}(P_i)$ at $s_i \in S$ is defined by $\vert \tilde{\Upsilon}_i^{-1} \circ g \circ \tilde{\Upsilon}_i-\lim_{\tilde{\Upsilon}_i}(g) \vert = \mathcal{O}(r^{\mu_i+1}).$ Moreover, if $\varepsilon>0$ in the definition of $\Psi_{\mathbb{A}}$ is sufficiently small, then $\overline{\mathfrak{q} \circ \Psi_\mathbb{A}}$ is a homeomorphism onto an open subset of $\mathcal{B}^{\textup{Fr}}_{\mu}(\{P_i,A_i\})$.
\end{theorem}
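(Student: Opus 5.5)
The argument splits into three parts: well-definedness on the quotient, continuity and openness, and injectivity. Continuity of the inverse then follows by a local-section argument.

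\textbf{Well-definedness.} First, $\mathcal{G}_{0,\mu+1}$ is normal in $\mathcal{G}_{\mu+1}$, since it is the kernel of $g\mapsto(\lim_{\tilde{\Upsilon}_i}(g))_i$. The limit exists and is $A_i$-parallel by the argument of \autoref{prop: change of framing}, so it lies in $\times_i\Stab_{\SU(3)}(A_i)$ after composing with the trivial covering; here the covering rotation is the identity because $g$ covers the identity.

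For $g\in\mathcal{G}_{\mu+1}$, we must show
\[\Psi_{\mathbb{A}}\big(\vec v,\vec u,\{(U_i,\tilde U_i)\},A'\big)\sim\Psi_{\mathbb{A}}\big(\vec v,\vec u,\{(U_i,\lim(g)^{-1}\tilde U_i)\},g^*A'\big)\]
in the sense of \autoref{def: moduli space of framed connections}. Take $F\coloneqq g$, viewed as an automorphism of $f_{\vec v,\vec u}\circ\pi$. Then
\[(\tilde{\Upsilon}_i\tilde U_i)^{-1}\circ g\circ \tilde{\Upsilon}_i\lim(g)^{-1}\tilde U_i
=\tilde U_i^{-1}\big(\lim(g)+\mathcal{O}(r^{\mu_i+1})\big)\lim(g)^{-1}\tilde U_i
=\textup{Id}+\mathcal{O}(r^{\mu_i+1}),\]
with the same estimate on derivatives, and $g^*A'$ is the pulled-back connection. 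The relation therefore holds. In particular, the action of $\mathcal{G}_{0,\mu+1}$ on the $\mathcal{A}$-factor alone is absorbed.

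\textbf{Continuity and openness.} Continuity follows directly from \autoref{def: base of topology}. The map $(\vec v,\vec u)\mapsto f_{\vec v,\vec u}$ is continuous into $\textup{Diff}(Z)$ and preserves $(\omega,\Omega)$ at the singular points by \autoref{prop: construction of families of vector fields}, so images of product neighbourhoods are sets of the form $V_{\mathbb{A}}(V_1,V_2,V_3)$ intersected with the image.

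For openness, I show that every element of a basic neighbourhood $V_{\mathbb{A}}(V_1,V_2,V_3)$ with small $V_1$ is equivalent to something in the image of $\Psi_{\mathbb{A}}$. Given $f\in V_1$, choose $\vec v$ with $\Upsilon_i(v_i)=f(s_i)$; these are small, so $f_{\vec v,0}(s_i)=f(s_i)$. The first-order term
\[D_0\big(\Upsilon_i^{-1}f_{\vec v,0}^{-1}f\Upsilon_i\big)\in\SU(3)\]
is close to the identity, and I factor it as $\exp(u_i)\,U_i'$ with $u_i\in\mathfrak m_i$ and $U_i'$ in the image of a neighbourhood of the identity in $\Stab_{\SU(3)}(A_i)$. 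This factorisation exists by the local-diffeomorphism property of $\mathfrak m_i\times\mathfrak{stab}\to\SU(3)$, $(u,w)\mapsto \exp(u)\exp(w)$. Here I use \autoref{prop: construction of families of vector fields}\,\ref{bul: properties of vec0} and the fact that $\mathfrak{vec}_2$ has vanishing linear part at $s_i$. Then \autoref{prop: isomorphism from stabiliser} (which needs the exponential-image hypothesis, automatic near the identity) followed by \autoref{prop: isomorphism from isotopy} gives a bundle isomorphism $F$ from $f\circ\pi$ to $f_{\vec v,\vec u}\circ\pi$ that is $\textup{Id}+\mathcal{O}(r)$ relative to the framings, corrected by the appropriate $\tilde U_i$. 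Pulling the connection back by $F$ keeps it in $\mathcal{A}^\Fr_\mu$, because $\mu_i<0<1$. Continuity of this construction in $f$ (\autoref{rmk: family of isomorphisms from isotopy}) shows the resulting preimage point depends continuously on the input. This local section gives both openness of the image and continuity of the inverse, provided injectivity holds.

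\textbf{Injectivity (main obstacle).} Suppose two parameter tuples map to equivalent elements. Equality of the singular sets, together with the fact that $v\mapsto\Upsilon_i(v)$ is injective on $B_\varepsilon$, forces $\vec v=\vec v'$. The condition $D_0(\Upsilon_i^{-1}\circ\Upsilon_i')=\textup{Id}$ in the equivalence then gives
\[\exp(u_i)U_i=\exp(u_i')U_i'.\]
Since $\mathfrak m_i$ is transverse to the image of the stabiliser and $\varepsilon$ is small, this yields $u_i=u_i'$ and $U_i=U_i'$, modulo the finite ambiguity in lifting to $\tilde U_i$. Consequently $f_{\vec v,\vec u}=f_{\vec v',\vec u'}$, and the isomorphism $F$ covers the identity. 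It is therefore a gauge transformation whose framed limits equal $\tilde U_i\tilde U_i'^{-1}\in\Stab_{\SU(3)}(A_i)$, which makes it an element of $\mathcal{G}_{\mu+1}$ relating the two tuples.

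The delicate point is making the transversality argument uniform. Elements of the stabiliser that cover the identity in $\SU(3)$ (gauge automorphisms of $A_i$) are exactly what $\mathcal{G}_{\mu+1}$ absorbs, and I must check that only small rotations can occur; this is why $\varepsilon$ has to be taken small. I would handle this by noting that the map $\mathfrak m_i\times \Stab\to\SU(3)$ is injective on $B_\varepsilon^{\mathfrak m_i}\times(\text{fibres})$ modulo the kernel of the projection $\Stab\to\SU(3)$.
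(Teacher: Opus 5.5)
Your proposal is correct and is essentially the paper's proof. Your continuous local section is the paper's map $H$ in different clothing: like $H$, it is built from \autoref{prop: isomorphism from stabiliser}, \autoref{prop: isomorphism from isotopy} and \autoref{rmk: family of isomorphisms from isotopy}, and it relies on the openness of $\mathfrak{q}$ from \autoref{cor: quotient map is open}, which you use implicitly. Your injectivity argument (reading off $\vec v$ from the singular set, then $u_i$ and $U_i$ from $\exp(u_i)U_i=\exp(u_i')U_i'$, then recognising the remaining isomorphism as an element of $\mathcal{G}_{\mu+1}$ with limit $\tilde U_i(\tilde U_i')^{-1}$) is also the paper's.

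You additionally spell out well-definedness and treat a general complement $\mathfrak{m}_i$, whereas the paper reduces to $\mathfrak{m}_s=\mathfrak{su}(3)$. There are two harmless slips, neither of which your argument uses. First, the lifting ambiguity $\Stab_{\mathcal{G}(P_i)}(A_i)$ need not be finite at this level of generality. Second, only $(\partial_{\vec v}\mathfrak{vec}_2)(0)$ vanishes at $s_i$; the linear part of $\mathfrak{vec}_2(\vec v)$ generally does not.
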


The proof of the previous theorem consists of two steps: First, we show that $\mathfrak{q} \circ \Psi_{\mathbb{A}}$ is an open map and second, that it descends to an injection $\overline{\mathfrak{q} \circ \Psi_{\mathbb{A}}}$ once $\varepsilon>0$ is sufficiently small. The following two results serve as preparation of the first step. Note, that as an alternative approach, one could simply \textit{define} the topology on $\mathcal{B}_{\mu}^{\textup{Fr}}(\{P_i,A_i\})$ via the (injective) map $\overline{q \circ \Psi_{\mathbb{A}}}$ (cf. \cite[Paragraph below Definition~5.4]{Joyce-Moduli_of_cs-slag} or \cite[Definition~5.12]{Bera-cs_associatives}) and then use \autoref{prop: isomorphism from isotopy} and \autoref{prop: isomorphism from stabiliser} to argue why this is a reasonable choice for a topology. The reader may therefore prefer to skip the proof of the openness of $\mathfrak{q} \circ \Psi_{\mathbb{A}}$ and go directly to the proof of the injectivity of $\overline{\mathfrak{q} \circ \Psi_{\mathbb{A}}}$.

We begin with the following proposition whose proof is left to the reader.
\begin{proposition}
Let $\mathbb{A}\coloneqq(S, \pi \col P \to Z \setminus S, \{[\Upsilon_{i},\Tilde{\Upsilon}_{i}]\}, A)$ be an element in $\mathcal{A}_\mu^{\textup{Fr}}(\{P_i,A_i\})$ and let $V_{\mathbb{A}}(V_1,V_2,V_3) \in \mathcal{C}$ be an element in the neighbourhood basis of $\mathbb{A}$ (associated to open subsets $V_1$, $V_2$, $V_3$ as in \autoref{def: base of topology}). Assume further that $\mathbb{A}^\prime \coloneqq (S^\prime, \pi^\prime \col P^\prime \to Z \setminus S^\prime, \{[\Upsilon_{i}^\prime,\Tilde{\Upsilon}_{i}^\prime]\}, A^\prime)$ is another element of $\mathcal{A}_\mu^{\textup{Fr}}(\{P_i,A_i\})$ with $S^\prime=S$ (as totally ordered sets) and $D_{0}(\Upsilon_{i}^{-1}\circ \Upsilon_{i}^\prime)= \textup{Id}$ for every $i=1,\dots, N$. Moreover, let $F \col P^\prime \to P$ be an isomorphism (covering the identity) that satisfies
\begin{itemize}
\item $\big\vert \nabla_{\pr^*_{S^5}A_{i}}^k \big( \Tilde{\Upsilon}_{i}^{-1} \circ F \circ \Tilde{\Upsilon}_{i}^\prime - \textup{Id} \big) \big\vert = \mathcal{O}(r^{\mu_i+1-k})$ for every $k\in \mathbb{N}_0$,
\item $F^*A=A^\prime$.
\end{itemize}
Define
\begin{align*} 
F^*V_{\mathbb{A}}(V_1,V_2,V_3) \coloneqq \Big\{\! \big(S^{\prime\prime}, f \circ \pi^\prime \col P^\prime \to Z \setminus S^{\prime\prime}, \{[f \circ \Upsilon_i^\prime \circ U_i, \Tilde{\Upsilon}_i^\prime \circ \tilde{U}_i]\}, F^*A^{\prime\prime}\big)\! \in\! \mathcal{A}_\mu^{\textup{Fr}}(\{P_i,A_i\}) \Big\vert&  \\
\textup{ where $ \big(S^{\prime\prime}, f \circ \pi \col P \to Z \setminus S^{\prime\prime}, \{[f \circ \Upsilon_i \circ U_i,\Tilde{\Upsilon}_i\circ \tilde{U}_i]\}, A^{\prime\prime})\in V_{\mathbb{A}}(V_1,V_2,V_3) $} \Big\}&
\end{align*}
(where $(U_i,\tilde{U}_i) \in \Stab_{\SU(3)}(A_i)$ as in \autoref{def: base of topology}). Then $F^*V_{\mathbb{A}}(V_1,V_2,V_3) \in \mathcal{C}$.
\end{proposition}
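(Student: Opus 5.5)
The plan is to exhibit $F^*V_{\mathbb{A}}(V_1,V_2,V_3)$ directly as a basic set $V_{\mathbb{A}'}(V_1',V_2',V_3')$ centred at $\mathbb{A}'$. The natural choices are $V_1' \coloneqq V_1$ and $V_2' \coloneqq V_2$, together with
\[
V_3' \coloneqq F^*V_3 = \{F^*A'' \mid A''\in V_3\} \subset \mathcal{A}_\mu^{\textup{Fr}}(P',\{\Upsilon_i',\tilde{\Upsilon}_i',A_i\}).
\]
The two sets then agree tautologically, up to the identification of the framings. I would proceed in three steps.

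\textbf{Step 1 (the diffeomorphism data).} $V_1$ is defined using the condition $f^*(\omega,\Omega)_{f(s_i)} = (\omega,\Omega)_{s_i}$ and the $\varepsilon$-closeness to $S$. Since $S'=S$, the same $V_1$ is admissible for $\mathbb{A}'$. For $f\in V_1$, the map $F\colon P'\to P$ also covers the identity as a map $(f\circ\pi'\colon P'\to Z\setminus f(S)) \to (f\circ \pi\colon P\to Z\setminus f(S))$, because $\pi\circ F=\pi'$. Hence $F$ intertwines the corresponding bundles for every $f$.

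\textbf{Step 2 (the framings).} I must check that
\[
[f\circ\Upsilon_i'\circ U_i,\ \tilde{\Upsilon}_i'\circ\tilde{U}_i] \quad\textup{and}\quad [f\circ\Upsilon_i\circ U_i,\ F^{-1}\circ\tilde{\Upsilon}_i\circ\tilde{U}_i]
\]
define the same class under \autoref{def: equivalence relation framing}.
\begin{itemize}
\item The first-order condition holds because $D_0(\Upsilon_i^{-1}\circ\Upsilon_i')=\textup{Id}$.
\item The composite $(\tilde{\Upsilon}_i\tilde{U}_i)^{-1}\circ F\circ \tilde{\Upsilon}_i'\tilde{U}_i - \textup{Id}$ equals $\tilde{U}_i^{-1}\bigl(\tilde{\Upsilon}_i^{-1}F\tilde{\Upsilon}_i'-\textup{Id}\bigr)\tilde{U}_i$. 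This is $\mathcal{O}(r^{\mu_i+1-k})$ in $C^k$, because $\tilde{U}_i$ is $A_i$-parallel and therefore preserves the weighted norms. The same observation was used for $\tilde{U}$ in the proof of the proposition on independence of $\Omega^\ell_\lambda$ from the framing.
\item One subtlety is that $\tilde{\Upsilon}_i'$ covers $\Upsilon_i'$ while $\tilde{\Upsilon}_i$ covers $\Upsilon_i$. These differ at second order, so the comparison uses the parallel-transport convention of \autoref{def: moduli space of framed connections}. The resulting error is $\mathcal{O}(r)$ relative to $\textup{Id}$, which is better than $r^{\mu_i+1}$ since $\mu_i<0$.
\end{itemize}
Consequently $F^*A''$ is conically singular with respect to the primed framing exactly when $A''$ is with respect to the unprimed one.

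\textbf{Step 3 (openness of $F^*V_3$).} I must show that $F^*V_3$ is open in the $C^\infty_\mu$-topology on $\mathcal{A}_\mu^{\textup{Fr}}(P',\{\Upsilon_i',\tilde{\Upsilon}_i',A_i\})$ and contains $A'=F^*A$. Pullback by $F$ is an affine bijection $A''=A+a\mapsto A'+F^*a$. Its inverse is pullback by $F^{-1}$, so it suffices to show that both directions are continuous for the seminorms $|\cdot|_{C^k_\mu}$. Taking $A_0=A$ and $A_0'=A'$, we have $\nabla^k_{A'}(F^*a)=F^*(\nabla_A^k a)$ exactly, and $|F^*\cdot|=|\cdot|$ pointwise because the inner product is $\Ad$-invariant.

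This step is where I expect the main obstacle, although it is mild. The weight function $\rho$ is built from $\Upsilon_i$ and $\rho'$ from $\Upsilon_i'$, and these differ. Since $D_0(\Upsilon_i^{-1}\Upsilon_i')=\textup{Id}$, the quotient $\rho/\rho'$ is bounded above and below near $s_i$, and the balls may be shrunk to a common radius $R$. Therefore the seminorms are equivalent up to constants, and pullback by $F$ is a homeomorphism.

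Combining the three steps gives $F^*V_{\mathbb{A}}(V_1,V_2,V_3)=V_{\mathbb{A}'}(V_1,V_2,F^*V_3)\in\mathcal{C}$.
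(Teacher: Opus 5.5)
The paper leaves this proof to the reader. Your argument is correct and is the intended re-centering argument: you identify $F^*V_{\mathbb{A}}(V_1,V_2,V_3)$ with $V_{\mathbb{A}'}(V_1,V_2,F^*V_3)$ and check that pullback by $F$ is a homeomorphism from $\mathcal{A}_\mu^{\textup{Fr}}(P,\{\Upsilon_i,\tilde{\Upsilon}_i,A_i\})$ onto $\mathcal{A}_\mu^{\textup{Fr}}(P',\{\Upsilon_i',\tilde{\Upsilon}_i',A_i\})$ in the $C^\infty_\mu$-topologies, in the same spirit as the proof that $\mathcal{C}$ is closed under finite intersections. You can shorten Steps~2 and~3 by noting that the hypothesis on $F$ says exactly that $(\Upsilon_i,\tilde{\Upsilon}_i)$ and $(\Upsilon_i',F\circ\tilde{\Upsilon}_i')$ are equivalent framings of $P$; the remark after \autoref{def: moduli space of framed connections} and \autoref{rmk: independence of C^infty_mu-topology from choices} then reduce everything to the tautology that $F^*$ carries $\mathcal{A}_\mu^{\textup{Fr}}(P,\{\Upsilon_i',F\circ\tilde{\Upsilon}_i',A_i\})$ onto $\mathcal{A}_\mu^{\textup{Fr}}(P',\{\Upsilon_i',\tilde{\Upsilon}_i',A_i\})$ preserving every seminorm exactly.
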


\begin{corollary}\label{cor: quotient map is open}
The quotient map $\mathfrak{q}\col \mathcal{A}_\mu^{\textup{Fr}}(\{P_i,A_i\}) \to \mathcal{B}_\mu^{\textup{Fr}}(\{P_i,A_i\})$ is an open map.
\end{corollary}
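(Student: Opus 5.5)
To show that $\mathfrak{q}$ is open, I will use the standard criterion for quotient maps. Let $\mathcal{U}\subset \mathcal{A}_\mu^{\textup{Fr}}(\{P_i,A_i\})$ be open. Then $\mathfrak{q}(\mathcal{U})$ is open in the quotient topology if and only if its saturation $\mathfrak{q}^{-1}(\mathfrak{q}(\mathcal{U}))$ is open in $\mathcal{A}_\mu^{\textup{Fr}}(\{P_i,A_i\})$. Since $\mathcal{C}$ is a basis of the topology, it suffices to treat the case $\mathcal{U} = V_{\mathbb{A}}(V_1,V_2,V_3)\in \mathcal{C}$. I then show that every point of the saturation has a neighbourhood from $\mathcal{C}$ contained in the saturation.

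So let $\mathbb{A}^\prime \in \mathfrak{q}^{-1}(\mathfrak{q}(V_{\mathbb{A}}(V_1,V_2,V_3)))$. By definition of $\sim$ in \autoref{def: moduli space of framed connections}, there is some $\mathbb{A}^{\prime\prime} \in V_{\mathbb{A}}(V_1,V_2,V_3)$ with $\mathbb{A}^\prime\sim \mathbb{A}^{\prime\prime}$. Concretely:
\begin{itemize}
\item the singular sets agree as ordered sets;
\item $D_0(\Upsilon_i^{\prime\prime -1}\circ \Upsilon_i^\prime)=\textup{Id}$;
\item there is an isomorphism $F\col P^\prime \to P^{\prime\prime}$ covering the identity with $F^*A^{\prime\prime}=A^\prime$ and the decay $\mathcal{O}(r^{\mu_i+1-k})$ of $\tilde{\Upsilon}_i^{\prime\prime -1}\circ F\circ \tilde{\Upsilon}_i^\prime-\textup{Id}$.
\end{itemize}

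By the first step of the proof that $\mathcal{C}$ is a basis, $V_{\mathbb{A}}(V_1,V_2,V_3)=V_{\mathbb{A}^{\prime\prime}}(V_1^\prime,V_2^\prime,V_3^\prime)$ for suitable $V_1^\prime,V_2^\prime,V_3^\prime$. The preceding proposition, applied with $\mathbb{A}^{\prime\prime}$ in place of $\mathbb{A}$ and $\mathbb{A}^\prime$ in place of $\mathbb{A}^\prime$, then gives
\[
F^*V_{\mathbb{A}^{\prime\prime}}(V_1^\prime,V_2^\prime,V_3^\prime)\in \mathcal{C}.
\]
This set contains $\mathbb{A}^\prime$, namely as the element with $f=\textup{Id}$, $U_i=\textup{Id}$ and $A^{\prime\prime}$ itself. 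Its image under $F^*$ is $(S,\pi^\prime,\{[\Upsilon_i^\prime,\tilde{\Upsilon}_i^\prime]\},F^*A^{\prime\prime})=\mathbb{A}^\prime$.

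It remains to check that $F^*V_{\mathbb{A}^{\prime\prime}}(\dots)$ lies in the saturation. Take a typical element
\[
\big(S^{\prime\prime\prime}, f\circ \pi^\prime, \{[f\circ\Upsilon_i^\prime\circ U_i,\tilde{\Upsilon}_i^\prime\circ\tilde{U}_i]\},F^*B\big),
\]
paired with the element $\big(S^{\prime\prime\prime}, f\circ\pi^{\prime\prime},\{[f\circ \Upsilon_i^{\prime\prime}\circ U_i,\tilde{\Upsilon}_i^{\prime\prime}\circ \tilde{U}_i]\},B\big)$ of $V_{\mathbb{A}^{\prime\prime}}$. I claim these two are equivalent via the same $F$, now viewed as an isomorphism of $f\circ\pi^\prime$ to $f\circ\pi^{\prime\prime}$ covering the identity of $Z\setminus S^{\prime\prime\prime}$. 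The condition on $D_0$ holds because
\[
D_0\big((f\circ\Upsilon_i^{\prime\prime}\circ U_i)^{-1}\circ f\circ \Upsilon_i^\prime\circ U_i\big)=U_i^{-1}\,D_0(\Upsilon_i^{\prime\prime-1}\circ\Upsilon_i^\prime)\,U_i=\textup{Id}.
\]
For the decay condition, note that $\tilde{U}_i^{-1}\circ(\tilde{\Upsilon}_i^{\prime\prime -1}\circ F\circ\tilde{\Upsilon}_i^\prime)\circ \tilde{U}_i - \textup{Id}$ is a conjugate by the $A_i$-parallel, dilation-invariant $\pr_{S^5}^*\tilde{U}_i$ of an $\mathcal{O}(r^{\mu_i+1-k})$ quantity. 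Hence it satisfies the same bounds, the $\nabla^k$ estimates being preserved since $\tilde{U}_i$ is parallel. Finally, $F^*B$ is the pulled-back connection by construction.

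Hence the saturation is a union of basis sets and is open, so $\mathfrak{q}$ is an open map. The only delicate point is the bookkeeping in the last step: one must verify that the framing-change elements $(U_i,\tilde{U}_i)$ commute appropriately with $F$ and preserve the decay rates, which uses $\tilde{U}_i^*A_i=A_i$.
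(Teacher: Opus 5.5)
Your proposal is correct and follows the paper's proof, which writes the saturation $\mathfrak{q}^{-1}(\mathfrak{q}(V))$ as the union $\bigcup_F F^*V$ over the admissible isomorphisms $F$ and uses the preceding proposition to see that each $F^*V\in\mathcal{C}$. Your two extra steps make explicit what the paper's one-line proof leaves implicit: re-centering $V$ at an equivalent point $\mathbb{A}''$ via the first step of the basis proof (needed because that proposition asks for an equivalence with the base point of $V$), and checking that $F^*V$ lies in the saturation.
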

\begin{proof}
Let $V \subset \mathcal{A}_\mu^{\textup{Fr}}(\{P_i,A_i\})$ be an element of $\mathcal{C}$. Then $\mathfrak{q}^{-1}(\mathfrak{q}(V)) = \cup_F F^*V$ where we take the union over all isomorphisms $F$ that satisfy the requirements in the equivalence relation in $\mathcal{B}_\mu^{\textup{Fr}}(\{P_i,A_i\})$. Since $\mathcal{C}$ is a basis of the topology on $\mathcal{A}_\mu^{\textup{Fr}}(\{P_i,A_i\})$, the result follows.
\end{proof}

\begin{proof}[{Proof of \autoref{thm: local structure of B Psi is homeo}}]
In order to ease the notation we will again assume that $N=1$ and $S= \{s\}$ and drop all subscripts. Furthermore, we will assume that $\mathfrak{m}_s=\mathfrak{su}(3)$. The general case is similar and uses \autoref{prop: isomorphism from stabiliser}.
    
It is clear that $\Psi_{\mathbb{A}}$ and therefore also $\mathfrak{q} \circ \Psi_\mathbb{A}$ are continuous. Next, we prove that $\mathfrak{q} \circ \Psi_\mathbb{A}$ is an open map. For this, let $V_1 \subset B_{\varepsilon}(0)$, $V_2 \subset B_\varepsilon^{\mathfrak{m}_s}(0)$, $V_3 \subset \textup{Stab}_{\SU(3)}(A_s)$, and $V_4 \subset \mathcal{A}_\mu^{\textup{Fr}}(P,\{\Upsilon_s,\tilde{\Upsilon}_s,A_s\})$ be open subsets. We define the subset \[V^\prime \subset \{ f \in \textup{Diff}(Z) \mid f^*(\omega,\Omega)_{f(s)}=(\omega,\Omega)_s\} \] to consist of all diffeomorphisms $f\col Z \to Z$ whose $C^1$-norm is sufficiently close to the identity and which satisfy \[ v_f \coloneqq \Upsilon^{-1}(f(s)) \in V_1 \subset B_\varepsilon(0) \quad \textup{and} \quad D_0(\Upsilon^{-1} \circ f_{v_f,0}^{-1} \circ f \circ \Upsilon) \in \exp(V_2) \subset \SU(3)\] where $f_{v_f,0}$ is the diffeomorphism constructed in \autoref{def: family of diffeomorphism parametrising open neighbourhood}. Then $V^\prime$ is open (with respect to the subspace topology). Furthermore, 
\begin{align*}
V^\prime \times Z &\to Z \\
(f,x) & \mapsto f(x) \\
(f,x) & \mapsto f_{u_f,v_f}(x),
\end{align*}
where $u_f \in V_2$ satisfies $\exp(u_f)= D_0(\Upsilon^{-1} \circ f_{v_f,0}^{-1} \circ f \circ \Upsilon)$, are two continuous families of diffeomorphisms on $Z$ such that $f$ and $f_{u_f,v_f}$ are both sufficiently close to the identity and satisfy $f(s)=f_{u_f,v_f}(s)$ and $D_sf = D_sf_{u_f,v_f}$. \autoref{prop: isomorphism from isotopy} (see also \autoref{rmk: family of isomorphisms from isotopy}) gives therefore rise to a continuous map \[F \col V^\prime \times P \to P \] where for each $f\in V^\prime$ \[F_f\col (f_{u_f,v_f} \circ \pi\col P \to Z\setminus f_{u_f,v_f}(S)) \to (f \circ \pi\col P \to Z\setminus f(S)) \] is a bundle isomorphism compatible with $\{f_{u_f,v_f} \circ\Upsilon,\Tilde{\Upsilon}\}$ and $\{f \circ\Upsilon,\Tilde{\Upsilon}\}$. Moreover, all derivatives of $F_f$ depend (weighted) uniformly on $f$ (in the sense of \autoref{rmk: family of isomorphisms from isotopy}). We then obtain a continuous map
\begin{align*}
H \col V^\prime \times \mathcal{A}_\mu^{\textup{Fr}}(P,\{\Upsilon_s,\tilde{\Upsilon}_s,A_s\}) &\to \mathcal{A}_\mu^{\textup{Fr}}(P,\{\Upsilon_s,\tilde{\Upsilon}_s,A_s\}) \\
(f,A^\prime) \mapsto (F_f)^*A^\prime.
\end{align*} 
The set \[ \cup_{f\in V^\prime} (f,(F_f)_*V_4) = H^{-1}(V_4) \subset \{ f \in \textup{Diff}(Z) \mid f^*(\omega,\Omega)_{f(p)}=(\omega,\Omega)_p\} \times \mathcal{A}_\mu^{\textup{Fr}}(P,\{\Upsilon_s,\tilde{\Upsilon}_s,A_s\}) \] is therefore open and so is 
\begin{align*}
\Tilde{V} \coloneqq & \big\{\big(f(S),f \circ \pi \col P \to Z\setminus f(S), \{[\Upsilon\circ U_i,\Tilde{\Upsilon}\circ \tilde{U}_i]\},A^\prime\big) \ \big\vert\ \\
& \qquad \qquad \qquad \quad \{(U_i,\tilde{U}_i)\} \in V_3 \textup{ and } (f,A^\prime) \in H^{-1}(V_4) \big\} \subset \mathcal{A}^{\textup{Fr}}_\mu(\{P_s,A_s\}). 
\end{align*}
Furthermore, since by \autoref{cor: quotient map is open} $\mathfrak{q}$ is an open map,
\begin{align*}
(\mathfrak{q}\circ\Psi_\mathbb{A})(V_1\times V_2\times V_3\times V_4) &= \mathfrak{q} (\Tilde{V}) \subset \mathcal{B}_\mu^{\textup{Fr}}(\{P_s,A_s\})
\end{align*}
is open. The composition $\mathfrak{q}\circ \Psi_\mathbb{A}$ is therefore a continuous open map. In order to finish the proof, we are left to show that it descends to an injection $\overline{\mathfrak{q} \circ \Psi_\mathbb{A}}$ on the quotient
\begin{align*}
B_{\varepsilon}(0) &\times B_\varepsilon^{\mathfrak{m}_s}(0) \times \big(\textup{Stab}_{\SU(3)}(A_i) \times \mathcal{A}_\mu^{\textup{Fr}}(P,\{\Upsilon_i,\tilde{\Upsilon}_i,A_i\})\big) \big/ \mathcal{G}_{\mu+1} \\
= B_{\varepsilon}(0) &\times B_\varepsilon^{\mathfrak{m}_s}(0) \times \big(\textup{Stab}_{\SU(3)}(A_i) \times \mathcal{A}_\mu^{\textup{Fr}}(P,\{\Upsilon_i,\tilde{\Upsilon}_i,A_i\})/\mathcal{G}_{0,\mu+1})\big) \big/\big(\mathcal{G}_{\mu+1}/\mathcal{G}_{0,\mu+1}\big).
\end{align*} 

Assume therefore that $(\mathfrak{q} \circ \Psi_\mathbb{A})(v_1,u_1,(U_1,\tilde{U}_1),A^\prime_1) = (\mathfrak{q} \circ \Psi_\mathbb{A})(v_2,u_2,(U_2,\tilde{U}_2),A^\prime_2)$ for two $(v_i,u_i,(U_i,\tilde{U}_i),A^\prime_i) \in B_{\varepsilon}(0) \times B_{\varepsilon}^{\mathfrak{m}_s}(0) \times \textup{Stab}_{\SU(3)}(A_s) \times \mathcal{A}_\mu^{\textup{Fr}}(P,\{\Upsilon_s,\tilde{\Upsilon}_s,A_s\})$. Direct inspection of the equivalence relation divided out in the definition of $\mathcal{B}_\mu^{\textup{Fr}}(\{P_i,A_i\})$ (cf. \autoref{def: moduli space of framed connections}) implies \[f_{v_1,u_1}(S) = f_{v_2,u_2}(S) \quad \textup{and} \quad U_1^{-1} \circ  D_0(\Upsilon_s^{-1} \circ f_{v_1,u_1}^{-1} \circ f_{v_2,u_2} \circ \Upsilon_s) \circ U_2 = \id. \] The first point immediately leads to $v_1=v_2$. Since we have assumed that $\mathfrak{m}_s = \mathfrak{su}(3)$, we have that $U_1,U_2 \in \textup{image}(\Stab_{\SU(3)}(A_s) \to \SU(3)) \subset \SU(3)$ lie in a discrete subgroup. Thus, by choosing $\varepsilon>0$ sufficiently small, we must have (because $D_0(\Upsilon_s^{-1} \circ f_{v_1,u_1}^{-1} \circ f_{v_2,u_2} \circ \Upsilon_s) \in \SU(3)$ lies in a $2\varepsilon$-neighbourhood of $\id$) $U_1=U_2$ and therefore $u_1=u_2$.\footnote{If $\mathfrak{m}_s \neq \mathfrak{su}(3)$, one needs to use the fact that $\mathfrak{m}_s$ lies transverse to $\textup{image}(\mathfrak{stab}_{\SU(3)}(A_s) \to \mathfrak{su}(3))$.}

By the definition of the equivalence relation divided out in $\mathcal{B}^{\textup{Fr}}_{\mu}(\{P_s,A_s\})$, there exists a gauge transformation $F \col P \to P$ that satisfies
\begin{itemize}
\item $\big\vert \nabla^k \big(\Tilde{\Upsilon}_{s}^{-1} \circ F \circ \Tilde{\Upsilon}_{s} - \tilde{U}_1 \circ\tilde{U}_2^{-1} \big) \big\vert =\big\vert \nabla^k \big(\tilde{U}_1^{-1} \circ \Tilde{\Upsilon}_{s}^{-1} \circ F \circ \Tilde{\Upsilon}_{s} \circ \tilde{U}_2 - \textup{Id} \big) \big\vert = \mathcal{O}(r^{\mu_s+1-k})$ for every $k\in \mathbb{N}_0$, and
\item $F^*A^\prime_1=A^\prime_2$.
\end{itemize}
The first point implies that $F \equiv g \in \mathcal{G}_{\mu+1}$ and the proof follows.
\end{proof}

\begin{remark}
Taking the asymptotic limit $\lim_{\tilde{\Upsilon}_i}$ at each $s_i\in S$ embeds the group $\mathcal{G}_{\mu+1}/\mathcal{G}_{0,\mu+1}$ into the product $\times_i \Stab_{\mathcal{G}(P_i)}(A_i)$ consisting for each $i=1,\dots,N$ of gauge transformations $P_i \to P_i$ fixing $A_i$. If the center $Z(G)$ of the structure group is finite and all tangent cones $A_i$ are infinitesimally irreducible, then $\times_i \Stab_{\mathcal{G}(P_i)}(A_i)$ is discrete. Moreover, the action of $\mathcal{G}_{\mu+1}/\mathcal{G}_{0,\mu+1}$ on $\times_i \Stab_{\SU(3)}(A_i)$ is free. Thus, if $V \subset \times_i \Stab_{\SU(3)}(A_i)$ is a sufficiently small neighbourhood of the identity element, then \[ \overline{\mathfrak{q} \circ \Psi_\mathbb{A}} \col (B_{\varepsilon}(0))^N \times (\times_i B_\varepsilon^{\mathfrak{m}_i}(0)) \times V \times (\mathcal{A}_\mu^{\textup{Fr}}(P,\{\Upsilon_i,\tilde{\Upsilon}_i,A_i\})/\mathcal{G}_{0,\mu+1}) \to \mathcal{B}_\mu^{\textup{Fr}}(\{P_i,A_i\})\] is a homeomorphism onto an open subset.
\end{remark}

\section{The moduli space of (unframed) conically singular connections} 
\label{sec: space of (unframed) cs connections}

In this section, we consider the moduli space of conically singular connections in which the (ungeometric choice of) framing is removed from the collected data. We begin by giving the analogues of \autoref{def: moduli space of framed connections} and \autoref{def: base of topology} for unframed bundles. There exists a canonical $\times_i \Stab_{\SU(3)}(A_i)$-action on the space $\mathcal{B}_{\mu}^{\textup{Fr}}(\{P_i,A_i\})$ and we will prove in \autoref{prop: unframed moduli is orbit space of framed moduli} that the moduli space of unframed conically singular connections is homeomorphic to the orbit space of this action. This will directly lead to the analogue of \autoref{thm: local structure of B Psi is homeo} for unframed connections.

As in the previous sections, $Z^6$ is a compact 6-manifold with an $\SU(3)$-structure $(\omega,\Omega)$ and $G$ is a compact Lie group whose Lie algebra $\mathfrak{g}$ has been equipped with an $\Ad$-invariant inner product.

\begin{definition}[{\textbf{Moduli space of (unframed) connections}}]\label{def: spaces of unframed connections}
Let $N\in \mathbb{N}$ be the number of singular points, $\mu \coloneqq \{ \mu_i\}_{i \in \{1,\dots, N\}}$ for $\mu_i \in (-1,0)$ a set of rates, and $\{ (\pi_i \col P_i \to S^5,A_i) \}_{i \in \{1,\dots, N\}}$ be a set of tangent cones. We define the following:
\begin{enumerate}
\item Let $\mathcal{A}_\mu(\{P_i,A_i\})$ be the set consisting of elements of the form \[(S, \pi \col P \to Z\setminus S, A),\] where 
\begin{itemize}
\item $S= \{s_1,\dots,s_N\} \subset Z$ is a totally ordered subset,
\item $\pi \col P \to Z \setminus S$ is a principal $G$-bundle,
\item $A \in \mathcal{A}_\mu(P,\{P_i,A_i\})$, where $\mathcal{A}_\mu(P,\{P_i,A_i\})$ denotes the set of all conically singular connections on the fixed bundle $P$ with tangent connection $A_i$ (cf. \autoref{def: conically singular connections on fixed bundle}).
\end{itemize}
\item Let $\mathcal{B}_\mu(\{P_i,A_i\}) \coloneqq \mathcal{A}_{\mu}(\{P_i,A_i\})/ \sim$, where the equivalence relation $\sim$ is defined by \[(S,\pi\col P \to Z\setminus S, A) \sim (S^\prime,\pi^\prime\col P^\prime \to Z\setminus S^\prime, A^\prime)\]
if $S = S^\prime$ (as totally ordered sets) and there exists an isomorphism $F \col P^\prime \to P$ covering the identity that satisfies $F^*A=A^\prime$.
\end{enumerate}
\end{definition}
\begin{remark}
Assume that \[(S,\pi\col P \to Z\setminus S, A) \sim (S^\prime,\pi^\prime\col P^\prime \to Z\setminus S^\prime, A^\prime)\] via the isomorphism $F \col P^\prime \to P$ and that $(\Upsilon_i,\tilde{\Upsilon}_i)$ and $(\Upsilon^\prime_i,\tilde{\Upsilon}_i^\prime)$ are two framings of $P$ and $P^\prime$ at any $s_i \in S$, respectively, for which $A$ and $A^\prime$ satisfy~\eqref{equ: decay condition on framing}. \autoref{prop: change of framing} then implies that there exists a bundle isomorphism $\Tilde{U}_i\col P_{i} \to P_{i}$ covering $U_i\coloneqq D_0(\Upsilon_{i}^{-1} \circ \Upsilon_{i}^\prime)\in \SU(3)$ that preserves $A_{i}$ such that \[\big\vert \nabla_{\pr^*_{S^5}A_i}^k \big( \Tilde{\Upsilon}_{i}^{-1} \circ F \circ \Tilde{\Upsilon}_{i}^\prime - \Tilde{U}_i \big) \big\vert = \mathcal{O}(r^{\mu_i+1-k}) \quad \textup{for every $k\in \mathbb{N}_0$}. \]
\end{remark}

\begin{remark}
Once we topologies the spaces in the previous definition, they will again be non-connected. If one wishes to restrict to a connected component, then one should additionally assume that all bundles in the previous definition above are isomorphic to a fixed bundle via an isomorphism that covers a diffeomorphism that is isotopic to the identity.
\end{remark}

\begin{remark}\label{rem: full moduli theory for unframed connections}
We again take the singular set $S$ to be totally ordered because we want to prescribe the tangent cone $(P_i,A_i)$ at each singularity $s_i\in S$ in advance. As discussed in \autoref{rem: totally ordered singular set framed case}, we note once more that the previous definition is only a first step towards a 'full' moduli theory of conically singular connections (and instantons) in which the tangent cone at each singularity is a variable piece of data.
\end{remark}

We now equip these spaces with a topology analogously to \autoref{sec: moduli space and its topology}. An alternative approach would be to simply use \autoref{prop: unframed moduli is orbit space of framed moduli} to \textit{define} the topology on $\mathcal{A}_\mu(\{P_i,A_i\})$ and $\mathcal{B}_\mu(\{P_i,A_i\})$.

\begin{definition}\label{def: base of topology for unframed connections}
For fixed rates $\mu = \{\mu_i\}_{i=1,\dots,N}$ and tangent cones $\{(P_i,A_i)\}_{i=1,\dots,N}$, let $\mathcal{A}_\mu(\{P_i,A_i\})$ and $\mathcal{B}_\mu(\{P_i,A_i\})$ be as in \autoref{def: spaces of unframed connections}. We first define the following collection $\mathcal{C}$ of subsets of $\mathcal{A}_\mu(\{P_i,A_i\})$ which will subsequently serve as the basis for a topology.

Let $\mathbb{A} \coloneqq (S, \pi \col P \to Z\setminus S, A) \in \mathcal{A}_\mu(\{P_i,A_i\})$ be any element and let $\{(\Upsilon_i,\tilde{\Upsilon}_i)\}_{i\in \{1,\dots,N\}}$ be any set of framings such that $A \in \mathcal{A}_\mu^\Fr(P,\{\Upsilon_i,\tilde{\Upsilon}_i,A_i\})$. Furthermore, assume that we have have chosen
\begin{itemize}
\item An open neighbourhood $V_1 \subset \{f \in \textup{Diff}(Z) \mid f^*(\omega,\Omega)_{f(s_i)}=(\omega,\Omega)_{s_i}$ \textup{ for all $i=1,\dots,N$ and $S=\{s_1,\dots,s_N\}$}\} of the identity (with respect to the $C^\infty$-topology). Furthermore, we assume $\textup{dist}(s_i,f(s_i))<\varepsilon$ for every $f\in V_1$ and $s_i\in S$ where $\varepsilon\ll \textup{dist}(s_i,s_j)$ for all $s_i\neq s_j \in S$.
\item An open neighbourhood $V_2 \subset \mathcal{A}_\mu^\Fr(P,\{\Upsilon_i,\tilde{\Upsilon}_i,A_i\})$ of $A$ with respect to the $C^\infty_{\mu}$-topology.
\end{itemize}
We then define $V_{\mathbb{A}}(V_1,V_2) \subset \mathcal{A}_\mu(\{P_i,A_i\})$ as
\begin{align*}
V_{\mathbb{A}}(V_1,V_2) \coloneqq \Big\{\big(S^\prime, f \circ \pi \col P \to Z\setminus S^\prime ,A^\prime\big) \in \mathcal{A}^\Fr_\mu(\{P_i,A_i\}) \ \Big\vert\  (f,A^\prime) \in V_1\times V_2 \Big\}
\end{align*} 
and $\mathcal{C}$ as the collection of all such subsets, i.e. $ \mathcal{C} \coloneqq \cup V_{\mathbb{A}}(V_1,V_2)$ (where the union is taken over all $\mathbb{A}$, $V_1$, and $V_2$ as above).

We now equip $\mathcal{A}_\mu(\{P_i,A_i\})$ with the topology generated by $\mathcal{C}$ and $\mathcal{B}_\mu(\{P_i,A_i\})$ with the quotient topology.
\end{definition}
\begin{remark}
\autoref{prop: change of framing} and \autoref{rmk: independence of C^infty_mu-topology from choices} imply that the definition of $V_{\mathbb{A}}(V_1,V_2)$ above is independent of the particular choice of framings $\{(\Upsilon_i,\tilde{\Upsilon}_i)\}$. 
\end{remark}

Next, we show that the natural forgetful map $\mathcal{A}_{\mu}^{\textup{Fr}}(\{P_i,A_i\}) \to \mathcal{A}_{\mu}(\{P_i,A_i\})$ induces a homeomorphism between $\mathcal{B}_{\mu}(\{P_i,A_i\})$ and the quotient of $\mathcal{B}_{\mu}^{\textup{Fr}}(\{P_i,A_i\})$ by the canonical $\times_i \Stab_{\SU(3)}(A_i)$-action (where $\Stab_{\SU(3)}(A_i)$ was defined in \eqref{equ: definition Stab_SU(3)(A_s)}).

\begin{proposition}\label{prop: unframed moduli is orbit space of framed moduli}
Let $\mathcal{A}_{\mu}^{\textup{Fr}}(\{P_i,A_i\})$ be the space of framed conically singular connections from \autoref{sec: moduli space and its topology}. There exists a canonical $(\times_i \Stab_{\SU(3)}(A_i))$-action on $\mathcal{A}_{\mu}^{\textup{Fr}}(\{P_i,A_i\})$ where $\{(U_i,\tilde{U}_i)\}\in (\times_i \Stab_{\SU(3)}(A_i))$ acts on $(S,\pi \col P \to Z \setminus S, \{[\Upsilon_i,\tilde{\Upsilon}_i]\},A) \in \mathcal{A}_{\mu}^{\textup{Fr}}(\{P_i,A_i\})$ via \[ (S,\pi \col P \to Z \setminus S, \{[\Upsilon_i \circ U_i,\tilde{\Upsilon}_i \circ \tilde{U}_i]\},A). \] This action is free and the forgetful-map $\mathcal{A}_{\mu}^{\textup{Fr}}(\{P_i,A_i\}) \to \mathcal{A}_{\mu}(\{P_i,A_i\})$ induces a homeomorphism from its orbit-space $\mathcal{A}_{\mu}^{\textup{Fr}}(\{P_i,A_i\})/ (\times_i \Stab_{\SU(3)}(A_i))$ to $\mathcal{A}_{\mu}(\{P_i,A_i\})$. Moreover, this homeomorphism descends to a homeomorphism \[ \mathcal{B}_{\mu}^{\textup{Fr}}(\{P_i,A_i\})/ (\times_i \Stab_{\SU(3)}(A_i)) \cong \mathcal{B}_{\mu}(\{P_i,A_i\}).\]
\end{proposition}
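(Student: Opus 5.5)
The proof proceeds in four steps: well-definedness and freeness of the action, bijectivity of the forgetful map on the orbit space, comparison of the topologies, and descent to $\mathcal{B}$.

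\emph{Well-definedness and freeness.} Given $\{(U_i,\tilde U_i)\}$ with $\tilde U_i^*A_i=A_i$ covering $U_i\in\SU(3)$, the map $\Upsilon_i\circ U_i$ is again an $\SU(3)$-coordinate chart centred at $s_i$. The pair $\tilde\Upsilon_i\circ\pr_{S^5}^*\tilde U_i$ satisfies~\eqref{equ: decay condition on framing}, because $\nabla^k_{\pr^*A_i}$ and the pointwise norms are invariant under the $A_i$-preserving, norm-preserving map $\pr_{S^5}^*\tilde U_i$. If $(\Upsilon_i,\tilde\Upsilon_i)\sim(\Upsilon_i',\tilde\Upsilon_i')$ in the sense of \autoref{def: equivalence relation framing}, then composing both with $(U_i,\tilde U_i)$ preserves the relation, so the action is well defined on equivalence classes. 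For freeness, suppose $[\Upsilon_i\circ U_i,\tilde\Upsilon_i\circ\tilde U_i]=[\Upsilon_i,\tilde\Upsilon_i]$. Then $D_0\Upsilon_i\circ U_i=D_0\Upsilon_i$ gives $U_i=\mathrm{Id}$. Moreover $\tilde U_i-\mathrm{Id}=\mathcal{O}(r^{\mu_i+1})$, and since $\pr^*_{S^5}\tilde U_i$ is dilation invariant, this forces $\tilde U_i=\mathrm{Id}$.

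\emph{Bijectivity of the forgetful map on orbits.} Surjectivity holds by \autoref{def: conically singular connections on fixed bundle}, since every unframed connection admits a framing. For injectivity, take two framings of the same $(S,P,A)$. By \autoref{prop: change of framing} there is $\tilde U_i\in\Stab_{\SU(3)}(A_i)$ covering $U_i=D_0(\Upsilon_2^{-1}\circ\Upsilon_1)$ with $\tilde\Upsilon_2^{-1}\circ\tilde\Upsilon_1-\pr^*\tilde U_i=\mathcal{O}(r^{\mu_i+1})$. This says exactly that $(\Upsilon_1,\tilde\Upsilon_1)$ is equivalent to $(\Upsilon_2\circ U_i,\tilde\Upsilon_2\circ\tilde U_i)$, modulo the parallel-transport identification already built into the statement of that proposition. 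Hence the two framed elements lie in the same orbit.

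\emph{Topology.} The forgetful map $\mathcal{A}^{\Fr}_\mu\to\mathcal{A}_\mu$ sends $V_{\mathbb{A}}(V_1,V_2,V_3)$ onto $V_{\mathbb{A}}(V_1,V_3)$; here I use that by \autoref{rmk: independence of C^infty_mu-topology from choices} the $C^\infty_\mu$-topology does not depend on the representative framing in the orbit. It is therefore continuous and open, which will follow once I check that preimages of $V_{\mathbb{A}}(V_1,V_2)$ are unions of basic sets. Concretely, the preimage of $V_{\mathbb{A}}(V_1,V_2)$ is $\bigcup_{\{\tilde U_i\}}V_{\mathbb{A}\cdot\tilde U}(V_1,\times_i\Stab,V_2)$. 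This is the step needing care: a neighbourhood $V_1$ of diffeomorphisms with $f^*(\omega,\Omega)_{f(s_i)}=(\omega,\Omega)_{s_i}$ must be matched with a choice of framing $f\circ\Upsilon_i$. By the change-of-framing proposition every framing of $f_*A$ is of the form $(f\circ\Upsilon_i\circ U_i,\tilde\Upsilon_i\circ\tilde U_i)$ up to equivalence, so the union covers the preimage. A continuous open bijection on orbit spaces is a homeomorphism, since the orbit map of a group action is open.

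\emph{Descent to $\mathcal{B}$.} The $\times_i\Stab$-action commutes with the equivalence relation of \autoref{def: moduli space of framed connections}: if $F$ realises an equivalence for one framing, it also does after composing with $(U_i,\tilde U_i)$, because $\tilde U_i^{-1}(\cdot)\tilde U_i$ preserves the decay. So the action descends to $\mathcal{B}^{\Fr}_\mu$, and the forgetful map descends to a continuous surjection $\mathcal{B}^{\Fr}_\mu/\Stab\to\mathcal{B}_\mu$. It is open because all quotient maps involved are open, by \autoref{cor: quotient map is open} and the analogous statement for $\mathcal{B}_\mu$. For injectivity, suppose $F\colon P'\to P$ with $F^*A=A'$. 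By the remark after \autoref{def: spaces of unframed connections}, $\tilde\Upsilon_i^{-1}\circ F\circ\tilde\Upsilon_i'-\tilde U_i=\mathcal{O}(r^{\mu_i+1})$ for some $\tilde U_i\in\Stab_{\SU(3)}(A_i)$. Hence the framed elements become $\sim$-equivalent after acting by $\{(U_i,\tilde U_i)\}$. The main obstacle is the openness bookkeeping in the third step; the rest is direct from \autoref{prop: change of framing}.
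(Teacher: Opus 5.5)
Your proposal is correct and follows essentially the same route as the paper. The bijection on orbit spaces comes from \autoref{prop: change of framing}, the homeomorphism from comparing the bases of the two topologies, and the descent to $\mathcal{B}_\mu$ from the stabiliser action commuting with the equivalence relation; you also check freeness, which the paper leaves implicit. The only small difference is at the end: you use openness of the unframed quotient map $\mathcal{A}_\mu(\{P_i,A_i\})\to\mathcal{B}_\mu(\{P_i,A_i\})$, which is available only by analogy with \autoref{cor: quotient map is open}, whereas the paper gets continuity of the inverse from the universal property of the quotient topology and so avoids that claim.
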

\begin{proof}
\autoref{prop: change of framing} implies that the forgetful map induces a (set-theoretic) bijection between $\mathcal{A}_{\mu}^{\textup{Fr}}(\{P_i,A_i\})/ (\times_i \Stab_{\SU(3)}(A_i))$ and $\mathcal{A}_{\mu}(\{P_i,A_i\})$. That this is an homeomorphism follows directly from the definition of the topologies on $\mathcal{A}_{\mu}^{\textup{Fr}}(\{P_i,A_i\})$ and $\mathcal{A}_{\mu}(\{P_i,A_i\})$.

If $\mathbb{A}, \mathbb{A}^\prime \in \mathcal{A}_{\mu}^{\textup{Fr}}(\{P_i,A_i\})$ are equivalent in the sense of \autoref{def: moduli space of framed connections} (i.e. $[\mathbb{A}]=[\mathbb{A}^\prime] \in \mathcal{B}_{\mu}^{\textup{Fr}}(\{P_i,A_i\})$), then $\mathbb{A}\cdot \{(U_i,\tilde{U}_i)\}$ is equivalent to $\mathbb{A}^\prime\cdot \{(U_i,\tilde{U}_i)\}$ (in the sense of \autoref{def: moduli space of framed connections}) for any $\{(U_i,\tilde{U}_i)\} \in (\times_i \Stab_{\SU(3)}(A_i))$. The action of $(\times_i \Stab_{\SU(3)}(A_i))$ descends therefore to $\mathcal{B}_{\mu}^{\textup{Fr}}(\{P_i,A_i\})$. 

Since the concatenation of the forgetful-map with the quotient map  \[\mathcal{A}_{\mu}^{\textup{Fr}}(\{P_i,A_i\}) \to \mathcal{A}_{\mu}(\{P_i,A_i\}) \to \mathcal{B}_{\mu}(\{P_i,A_i\})\] is constant both along the equivalence classes under $\sim$ defined in \autoref{def: moduli space of framed connections} and along the $(\times_i \Stab_{\SU(3)}(A_i))$-orbits, we obtain by the universal property of the quotient topology an induced map \[(\mathcal{B}_{\mu}^{\textup{Fr}}(\{P_i,A_i\}))/(\times_i \Stab_{\SU(3)}(A_i)) \to \mathcal{B}_{\mu}(\{P_i,A_i\}).\] That this is a homeomorphism again follows from the universal property of the quotient topology.
\end{proof}

The previous proposition allows us to use \autoref{thm: local structure of B Psi is homeo} to describe the local structure of $\mathcal{B}_\mu(\{P_i,A_i\})$. In the following we will first define a local parametrisation and then prove that this indeed defines a local homeomorphism.

\begin{definition}\label{def: local structure of unframed B definition of Phi}
Let $\mathbb{A}\coloneqq (S,\pi \col P \to Z \setminus S,A) \in \mathcal{A}_\mu(\{P_i,A_i\})$ be a fixed element and let $\{(\Upsilon_i,\tilde{\Upsilon}_i)\}$ be a set of framings such that $A \in \mathcal{A}_\mu^{\textup{Fr}}(P,\{\Upsilon_i,\tilde{\Upsilon}_i,A_i\})$. Moreover, pick for every $i=1,\dots,N$ a complementary subspace $\mathfrak{m}_i \subset \mathfrak{su}(3)$ of $\textup{image}(\mathfrak{stab}_{\SU(3)}(A_i) \to \mathfrak{su}(3))$ under the natural projection map $\mathfrak{stab}_{\SU(3)}(A_i) \to \mathfrak{su}(3)$. In the following, we denote for any $\varepsilon>0$ by $B^{\mathfrak{m}_i}_{\varepsilon}(0)$ the $\varepsilon$-ball around $0$ in $\mathfrak{m}_i$ and by $B_{\varepsilon}(0)$ the $\varepsilon$-ball in $\mathbb{C}^3$. For the (fixed) choices of $\mathfrak{m}_i$ and $\varepsilon>0$ we define the following map:
\begin{align*}
\Phi_{\mathbb{A}} \col (B_{\varepsilon}(0))^N \times (\times_iB_{\varepsilon}^{\mathfrak{m}_i}(0)) \times \mathcal{A}_\mu^{\textup{Fr}}(P,\{\Upsilon_i,\tilde{\Upsilon}_i,A_i\}) \to \mathcal{A}_\mu(\{P_i,A_i\})\\
\big(\Vec{v},\vec{u}, A^\prime\big) \mapsto \big(f_{\Vec{v},\vec{u}}(S), f_{\Vec{v},\vec{u}} \circ \pi \col P \to Z\setminus f_{\Vec{v},\vec{u}}(S), A^\prime\big)
\end{align*}
where $f_{\Vec{v},\vec{u}} \col Z \to Z$ denotes the diffeomorphism (realising the translation by $v_i$ and rotation by $\exp(u_i)$ at every $s_i \in S$) from \autoref{def: family of diffeomorphism parametrising open neighbourhood}.
\end{definition}

\begin{theorem}\label{thm: local structure of unframed B Phi is homeo}
Let $\mathbb{A}\coloneqq (S,\pi \col P \to Z \setminus S,A) \in \mathcal{A}_\mu(\{P_i,A_i\})$ be any element and $\Phi_{\mathbb{A}}$ be as in the previous definition. Furthermore, let $\mathfrak{q}\col \mathcal{A}_\mu(\{P_i,A_i\}) \to \mathcal{B}_\mu(\{P_i,A_i\})$ be the quotient map. Then $\mathfrak{q}\circ \Phi_\mathbb{A}$ descends to \[ \overline{\mathfrak{q} \circ \Phi_\mathbb{A}} \col (B_{\varepsilon}(0))^N \times (\times_iB_\varepsilon^{\mathfrak{m}_i}(0)) \times \big((\mathcal{A}_\mu^{\textup{Fr}}(P,\{\Upsilon_i,\tilde{\Upsilon}_i,A_i\}))/\mathcal{G}_{\mu+1})\big) \to \mathcal{B}_\mu(\{P_i,A_i\}),\] where 
\begin{align*}
\mathcal{G}_{\mu+1} \coloneqq \big\{ g \in \mathcal{G}(P) &\mid \vert \nabla^k (\tilde{\Upsilon}_i^{-1} \circ g \circ \tilde{\Upsilon}_i-\tilde{U}_i) \vert = \mathcal{O}(r^{\mu_i+1-k}) \textup{ for every $i=1,\dots,N$, $k \in \mathbb{N}_0$,} \\
& \quad \textup{and a $\tilde{U}_i \in \mathcal{G}(P_i)$ that preserves $A_i$} \big\}
\end{align*} 
is as in \autoref{thm: local structure of B Psi is homeo}. For sufficiently small $\varepsilon>0$, the map $\overline{\mathfrak{q} \circ \Phi_\mathbb{A}}$ is a homeomorphism onto an open subset of $\mathcal{B}_{\mu}(\{P_i,A_i\})$.
\end{theorem}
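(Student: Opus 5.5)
The plan is to deduce the statement from \autoref{thm: local structure of B Psi is homeo} by passing to the orbit space of the $\times_i\Stab_{\SU(3)}(A_i)$-action, using \autoref{prop: unframed moduli is orbit space of framed moduli}. Write $\mathfrak{q}^{\Fr}\col \mathcal{A}^{\Fr}_\mu(\{P_i,A_i\})\to\mathcal{B}^{\Fr}_\mu(\{P_i,A_i\})$ and $p\col \mathcal{B}^{\Fr}_\mu(\{P_i,A_i\})\to \mathcal{B}^{\Fr}_\mu(\{P_i,A_i\})/(\times_i\Stab_{\SU(3)}(A_i))\cong \mathcal{B}_\mu(\{P_i,A_i\})$. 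By construction, $\Phi_{\mathbb{A}}(\vec v,\vec u,A')$ is the image under the forgetful map of $\Psi_{\mathbb{A}}(\vec v,\vec u,\{(U_i,\tilde U_i)\},A')$ for any choice of $\{(U_i,\tilde U_i)\}$. Therefore
\[\mathfrak{q}\circ\Phi_{\mathbb{A}}(\vec v,\vec u,A') = p\circ\mathfrak{q}^{\Fr}\circ\Psi_{\mathbb{A}}(\vec v,\vec u,\{(U_i,\tilde U_i)\},A'),\]
and the right-hand side does not depend on the stabiliser entry. This identity lets us transport all properties from the framed setting to the unframed one.

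\textbf{Step 1 (descent).} If $g\in\mathcal{G}_{\mu+1}$, then $g$ covers the identity and satisfies $g^*A=A'$. Hence the two triples $(f_{\vec v,\vec u}(S),f_{\vec v,\vec u}\circ\pi,A)$ and $(f_{\vec v,\vec u}(S),f_{\vec v,\vec u}\circ\pi,g^*A)$ are equivalent in the sense of \autoref{def: spaces of unframed connections}. So $\mathfrak{q}\circ\Phi_{\mathbb{A}}$ is constant on $\mathcal{G}_{\mu+1}$-orbits. Equivalently, one can note that the $\mathcal{G}_{\mu+1}$-action of \autoref{thm: local structure of B Psi is homeo} only alters the stabiliser entry, which $p$ forgets. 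The map therefore descends, and the descended map is continuous by the universal property of the quotient topology.

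\textbf{Step 2 (openness).} The map $\mathfrak{q}^{\Fr}\circ\Psi_{\mathbb{A}}$ is open by the proof of \autoref{thm: local structure of B Psi is homeo}. The map $p$ is open because it is the quotient by a group action. Given an open set $V$ in the domain of $\Phi_{\mathbb{A}}$, its image equals $p\circ\mathfrak{q}^{\Fr}\circ\Psi_{\mathbb{A}}(V\times\times_i\Stab_{\SU(3)}(A_i))$, with the stabiliser factor inserted appropriately, so it is open.

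\textbf{Step 3 (injectivity), which I expect to be the main obstacle.} Suppose $\mathfrak{q}\circ\Phi_{\mathbb{A}}(\vec v_1,\vec u_1,A_1')=\mathfrak{q}\circ\Phi_{\mathbb{A}}(\vec v_2,\vec u_2,A_2')$. Then there is an isomorphism $F\col P\to P$ covering the identity between the pushed-forward bundles with $F^*A_1'=A_2'$. Equality of the singular sets gives $\vec v_1=\vec v_2$ as before.

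\autoref{prop: change of framing}, applied to the framings $(f_{\vec v,\vec u_k}\circ\Upsilon_i,\tilde\Upsilon_i)$, produces $\tilde U_i\in\Stab_{\SU(3)}(A_i)$. This $\tilde U_i$ covers $D_0(\Upsilon_i^{-1}\circ f_{\vec v,\vec u_1}^{-1}\circ f_{\vec v,\vec u_2}\circ\Upsilon_i)$, which is close to $\exp(-u_{1,i})\exp(u_{2,i})$. Moreover $\tilde\Upsilon_i^{-1}\circ F\circ\tilde\Upsilon_i-\tilde U_i=\mathcal{O}(r^{\mu_i+1})$.

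The point needing care is to conclude $\vec u_1=\vec u_2$. Since $\exp(-u_{1,i})\exp(u_{2,i})$ lies in $\textup{image}(\Stab_{\SU(3)}(A_i)\to\SU(3))$, and $\mathfrak{m}_i$ is transverse to the Lie algebra of this compact subgroup, the map $(u,w)\mapsto \exp(u)\exp(w)$ from $\mathfrak{m}_i\times\textup{image}(\mathfrak{stab})$ is a local diffeomorphism near $0$. Hence for $\varepsilon$ small this forces $u_{1,i}=u_{2,i}$, after checking that the only elements of the image near the identity come from the identity component; this uses compactness, so that the image is a closed Lie subgroup. Consequently $U_i=\id$, and $\tilde U_i$ is an $A_i$-preserving gauge transformation of $P_i$. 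This shows $F\in\mathcal{G}_{\mu+1}$ and hence $[A_1']=[A_2']$ in $\mathcal{A}^{\Fr}_\mu/\mathcal{G}_{\mu+1}$.

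Combining continuity, openness and injectivity gives that $\overline{\mathfrak{q}\circ\Phi_{\mathbb{A}}}$ is a homeomorphism onto an open subset.
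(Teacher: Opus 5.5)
Your proposal is correct. For injectivity it takes a different route from the paper.

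The paper never examines an identification in $\mathcal{B}_\mu(\{P_i,A_i\})$ directly. It observes three things:
\begin{itemize}
\item Under $\overline{\mathfrak{q}\circ\Psi_{\mathbb{A}}}$, the right action of $\times_i\Stab_{\SU(3)}(A_i)$ on $\mathcal{B}^{\Fr}_\mu(\{P_i,A_i\})$ becomes right multiplication on the stabiliser factor.
\item This commutes with the $\mathcal{G}_{\mu+1}$-action, which multiplies that factor on the left by asymptotic limits.
\item Hence the framed chart descends. Quotienting $\big(\times_i\Stab_{\SU(3)}(A_i)\times\mathcal{A}^{\Fr}_\mu\big)/\mathcal{G}_{\mu+1}$ further by $\times_i\Stab_{\SU(3)}(A_i)$ gives $\mathcal{A}^{\Fr}_\mu/\mathcal{G}_{\mu+1}$, and the target becomes $\mathcal{B}^{\Fr}_\mu/\times_i\Stab_{\SU(3)}(A_i)\cong\mathcal{B}_\mu$ by \autoref{prop: unframed moduli is orbit space of framed moduli}.
\end{itemize}
So in the paper, injectivity and openness are inherited formally from \autoref{thm: local structure of B Psi is homeo} via equivariance.

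You use the framed theorem only for continuity and openness. You then re-prove injectivity by hand, using \autoref{prop: change of framing} and the transversality of $\mathfrak{m}_i$.

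The paper's route is shorter and needs no new geometric input. Yours is more self-contained. It also spells out the general-$\mathfrak{m}_i$ argument that the paper's framed proof treats only for $\mathfrak{m}_s=\mathfrak{su}(3)$ and otherwise relegates to a footnote. That argument has two parts: $(u,w)\mapsto\exp(u)\exp(w)$ is a local diffeomorphism on $\mathfrak{m}_i\times\textup{image}(\mathfrak{stab}_{\SU(3)}(A_i)\to\mathfrak{su}(3))$, and the compact image subgroup is closed.

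Two points to tighten.
\begin{itemize}
\item The derivative $D_0(\Upsilon_i^{-1}\circ f_{\vec v,\vec u_1}^{-1}\circ f_{\vec v,\vec u_2}\circ\Upsilon_i)$ is not merely close to $\exp(-u_{1,i})\exp(u_{2,i})$; it is exactly equal. Once $\vec v_1=\vec v_2$, the translation flows cancel in $f_{\vec v,\vec u_1}^{-1}\circ f_{\vec v,\vec u_2}$, and $\textup{Flow}_1^{\mathfrak{vec}_0(\vec u)}$ has derivative exactly $\exp(u_i)$ at $s_i$. You need this exactness: closeness alone would not place the product in $\textup{image}(\Stab_{\SU(3)}(A_i)\to\SU(3))$.
\item Before you know $f_{\vec v,\vec u_1}=f_{\vec v,\vec u_2}$, the isomorphism $F$ covers $f_{\vec v,\vec u_1}^{-1}\circ f_{\vec v,\vec u_2}$ with respect to $\pi$, not the identity. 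It only becomes an element of $\mathcal{G}(P)$, and then of $\mathcal{G}_{\mu+1}$, after $\vec u_1=\vec u_2$ is established. Say this explicitly before concluding $F\in\mathcal{G}_{\mu+1}$.
\end{itemize}
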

\begin{proof}
We will again assume that $N=1$. Recall from \autoref{thm: local structure of B Psi is homeo} that the map $\overline{\mathfrak{q} \circ \Psi_\mathbb{A}}$ (as defined in mentioned theorem) is a local homeomorphism between $\mathcal{B}_{\mu}^{\textup{Fr}}(\{P_s,A_s\})$ and \[B_{\varepsilon}(0) \times B_\varepsilon^{\mathfrak{m}_s}(0) \times  \big(\big(\textup{Stab}_{\SU(3)}(A_s) \times \mathcal{A}_\mu^{\textup{Fr}}(P,\{\Upsilon_s,\tilde{\Upsilon}_s,A_s\})\big)\big/\mathcal{G}_{\mu+1}\big).\] In this description $g \in \mathcal{G}_{\mu+1}$ acts on any $((U,\tilde{U}),A)\in \textup{Stab}(A_s) \times \mathcal{A}_\mu^{\textup{Fr}}(P,\{\Upsilon_s,\tilde{\Upsilon}_s,A_s\})$ via $((U_s,\lim_{\tilde{\Upsilon}_s}(g)^{-1} \circ \tilde{U}_s),g^*A)$, where $\lim_{\tilde{\Upsilon}_s}(g)\in \mathcal{G}(P_s)$ is characterised by \[ \vert\nabla^k(\tilde{\Upsilon}_s^{-1}\circ g \circ \tilde{\Upsilon}_s - \pr_{S^5}^*(\textstyle{\lim_{\tilde{\Upsilon}_s}}(g))) \vert = \mathcal{O}(r^{\mu_s+1-k}) \quad \textup{for every $k\in \mathbb{N}_0$.}\] 

Moreover, the (right) $\textup{Stab}_{\SU(3)}(A_s)$-action on $\mathcal{B}_{\mu}^{\textup{Fr}}(\{P_s,A_s\})$ corresponds under $\overline{\mathfrak{q}\circ \Psi_{\mathbb{A}}}$ to \[ (v,u,[ (U,\tilde{U}),A]) \cdot (U^\prime,\tilde{U}^\prime) = (v,u,[(U\circ U^\prime,\tilde{U} \circ \tilde{U}^\prime ),A ]) \] where $(U^\prime,\tilde{U}^\prime ) \in \Stab_{SU(3)}(A_s)$ and $(v,u,[(U,\tilde{U}),A])$ lies in  \[B_{\varepsilon}(0) \times B_\varepsilon^{\mathfrak{m}_s}(0) \times  \big(\big(\textup{Stab}_{\SU(3)}(A_s) \times \mathcal{A}_\mu^{\textup{Fr}}(P,\{\Upsilon_s,\tilde{\Upsilon}_s,A_s\})\big)\big/\mathcal{G}_{\mu+1}\big).\] 

This shows that the actions of $\mathcal{G}_{\mu+1}$ and $\Stab_{\SU(3)}(A_s)$ commute, so that $\overline{q \circ \Psi_{\mathbb{A}}}$ induces a local homeomorphism between 
\begin{align*}
& B_{\varepsilon}(0) \times B_\varepsilon^{\mathfrak{m}_s}(0) \times \left(\left. \raisebox{.2em}{$\big(\textup{Stab}_{\SU(3)}(A_s) \times \mathcal{A}_\mu^{\textup{Fr}}(P,\{\Upsilon_s,\tilde{\Upsilon}_s,A_s\})\big)\big/\mathcal{G}_{\mu+1}$} \right/ \raisebox{-.2em}{$\Stab_{\SU(3)}(A_s)$} \right)\\
= & B_{\varepsilon}(0) \times B_\varepsilon^{\mathfrak{m}_s}(0) \times \big( \mathcal{A}_\mu^{\textup{Fr}}(P,\{\Upsilon_s,\tilde{\Upsilon}_s,A_s\})/\mathcal{G}_{\mu+1} \big)
\end{align*}
and \[\mathcal{B}_{\mu}^{\textup{Fr}}(\{P_s,A_s\})/\Stab_{\SU(3)}(A_s) = \mathcal{B}_{\mu}(\{P_s,A_s\}).\] A moment's thought reveals that this map is precisely $\overline{q \circ \Phi_{\mathbb{A}}}$.
\end{proof}

\begin{remark}\label{rem: alternative interpretation of rotations}
We now give an alternative interpretation of the deformations in the previous theorem obtained by 'rotating' the bundle via $U \in \SU(3)$ and give a heuristic on why one only considers rotations parametrised by $B_{\varepsilon}^{\mathfrak{m}_i}(0) \subset \mathfrak{m}_i$. For simplicity we assume hereby that $Z = \mathbb{C}^3$ and that we consider conically singular connections with one singular point modelled on $(\pi_0 \col P_0 \to S^5,A_0)$ (otherwise we have to perform the following discussion locally). 

If \[ \big(\{0\}, (\pr_{S^5}^*\pi_0) \col (\pr_{S^5}^*P_0) \to \mathbb{C}^3\setminus \{0\},A \big) \in \mathcal{A}_\mu(\{P_0,A_0\}) \] is a conically singular connection and $U \in \SU(3)$, then we obtain a new conically singular connection via 'rotation':  \[ \big(\{0\}, U \circ (\pr_{S^5}^*\pi_0) \col (\pr_{S^5}^*P_0) \to \mathbb{C}^3\setminus \{0\},A \big) \in \mathcal{A}_\mu(\{P_0,A_0\}). \] Since $\SU(3)$ is connected, there exists an isomorphism \[F \col (U\circ \pi_0 \col P_0 \to S^5) \to (\pi_0 \col P_0 \to S^5)\] covering $\id$ (which can be constructed via parallel transport as in \autoref{prop: isomorphism from isotopy} and \autoref{prop: isomorphism from stabiliser}). In a comprehensive moduli theory which allows for variable tangent connections (cf. \autoref{rem: totally ordered singular set framed case} and \autoref{rem: full moduli theory for unframed connections}), one should identify \[ \big[\{0\}, U \circ (\pr_{S^5}^*\pi_0) \col (\pr_{S^5}^*P_0) \to \mathbb{C}^3\setminus \{0\},A \big] \in \mathcal{B}_\mu(\{P_0,A_0\}) \] with \[ \big[\{0\},(\pr_{S^5}^*\pi_0) \col (\pr_{S^5}^*P_0) \to \mathbb{C}^3\setminus \{0\},F_*A \big] \in \mathcal{B}_\mu(\{P_0,F_*A_0\})\] (because both elements are related by an isomorphism compatible with the respective conical structure). One can therefore interpret rotations of the bundle as deformations of $A\in \mathcal{A}(\pr_{S^5}^*P_0)$ (on the fixed bundle) which also change the tangent connection within the class \[\big\{F_*A_0 \in \mathcal{A}(P_0) \ \big\vert \ \textup{$F \col P_0\to P_0$ is an isomorphism covering any $U \in \SU(3)$} \big\} \subset \mathcal{A}(P_0).\] Of course, if $F$ preserves $A_0$ (i.e. $(U,F) \in \Stab_{\SU(3)}(A_0)$), then one stays within $\mathcal{B}_\mu(\{P_0,A_0\})$. Thus, in order to only add deformations that truly change the tangent connection one needs to restrict to rotations $U \in \SU(3)$ not contained in $\textup{image}( \Stab_{\SU(3)}(A_0)\to \SU(3))$. These are locally parametrised by $\mathfrak{m}_0$. This discussion is of course analogous to its well-known counterpart for conically singular submanifolds (cf. \cite[Definition~5.1]{Joyce-Moduli_of_cs-slag}).
\end{remark}

Note that the asymptotic limit map $g \mapsto \lim_{\tilde{\Upsilon}_i}(g)$ as defined in \autoref{thm: local structure of B Psi is homeo} (see also the previous proof) gives an embedding of $\mathcal{G}_{\mu+1}/\mathcal{G}_{0,\mu+1}$ into $\times_i \Stab_{\mathcal{G}(P_i)}(A_i)$ (the product-group of $A_i$-preserving gauge transformations $g_i \col P_i\to P_i$). Thus, if the center $Z(G)$ of the structure group is trivial and all tangent connections $A_i$ are irreducible, then $\mathcal{G}_{\mu+1}/\mathcal{G}_{0,\mu+1}$ is trivial. This implies the following:

\begin{corollary}
Assume that $G$ has a trivial center and that all tangent connections $A_i$ are irreducible (that is, $\Stab_{\mathcal{G}(P_i)}(A_i)$ is trivial). Then $\mathcal{B}_{\mu}^{\textup{Fr}}(\{P_i,A_i\}) \to \mathcal{B}_{\mu}(\{P_i,A_i\})$ is a principal $(\times_i \Stab_{\SU(3)}(A_i))$-bundle.
\end{corollary}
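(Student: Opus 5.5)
Under these hypotheses, the plan is to show two things. First, the $(\times_i \Stab_{\SU(3)}(A_i))$-action on $\mathcal{B}_{\mu}^{\textup{Fr}}(\{P_i,A_i\})$ from \autoref{prop: unframed moduli is orbit space of framed moduli} is free. Second, the quotient map admits local trivialisations. Both will be read off from the local models of \autoref{thm: local structure of B Psi is homeo} and \autoref{thm: local structure of unframed B Phi is homeo}.

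\textbf{Step 1: the residual gauge group is trivial.} Since $Z(G)$ is trivial and each $A_i$ is irreducible, each $\Stab_{\mathcal{G}(P_i)}(A_i)$ is trivial. The asymptotic-limit map embeds $\mathcal{G}_{\mu+1}/\mathcal{G}_{0,\mu+1}$ into $\times_i \Stab_{\mathcal{G}(P_i)}(A_i)$, so $\mathcal{G}_{\mu+1}=\mathcal{G}_{0,\mu+1}$. Any $g\in\mathcal{G}_{\mu+1}$ therefore has $\lim_{\tilde{\Upsilon}_i}(g)=\textup{Id}$ and acts trivially on the $\times_i\Stab_{\SU(3)}(A_i)$-factor. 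Hence the local model of \autoref{thm: local structure of B Psi is homeo} becomes
\[ (B_\varepsilon(0))^N\times\big(\times_i B^{\mathfrak{m}_i}_\varepsilon(0)\big)\times\big(\times_i\Stab_{\SU(3)}(A_i)\big)\times\big(\mathcal{A}^{\textup{Fr}}_\mu(P,\{\Upsilon_i,\tilde{\Upsilon}_i,A_i\})/\mathcal{G}_{\mu+1}\big), \]
i.e. a product in which the stabiliser group appears as an honest factor.

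\textbf{Step 2: equivariance and freeness.} As recorded in the proof of \autoref{thm: local structure of unframed B Phi is homeo}, under $\overline{\mathfrak{q}\circ\Psi_{\mathbb{A}}}$ the right action of $(U',\tilde U')$ is right multiplication on the $\Stab_{\SU(3)}$-factor, with the other coordinates unchanged. This action is free, so freeness on all of $\mathcal{B}^{\textup{Fr}}_\mu$ follows once we know every point lies in such a chart together with its whole orbit. I would argue this by noting that the image of $\overline{\mathfrak{q}\circ\Psi_{\mathbb{A}}}$ is saturated: the whole group factor $\times_i\Stab_{\SU(3)}(A_i)$ is included, not just a neighbourhood of the identity.

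\textbf{Step 3: local triviality.} Projecting the image onto the remaining coordinates gives, by \autoref{thm: local structure of unframed B Phi is homeo}, exactly $\overline{\mathfrak{q}\circ\Phi_{\mathbb{A}}}$, a homeomorphism onto an open set $W\subset\mathcal{B}_\mu(\{P_i,A_i\})$. Consequently the preimage of $W$ in $\mathcal{B}^{\textup{Fr}}_\mu$ is equivariantly homeomorphic to $W\times(\times_i\Stab_{\SU(3)}(A_i))$, which is the principal bundle chart.

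The main obstacle is injectivity of $\overline{\mathfrak{q}\circ\Psi_{\mathbb{A}}}$ on the \emph{full} stabiliser factor rather than only near the identity. Two different $U_1,U_2$ in the image of $\Stab_{\SU(3)}(A_i)\to\SU(3)$ could be confused with a small rotation $\exp(u)$, $u\in\mathfrak{m}_i$, combined with a gauge transformation. I would handle this with the transversality argument from the injectivity part of the proof of \autoref{thm: local structure of B Psi is homeo}. By the equivalence relation, $U_1^{-1}D_0(\cdots)U_2=\textup{Id}$ with $D_0(\cdots)$ $2\varepsilon$-close to the identity. Since $\mathfrak{m}_i$ is transverse to the image of $\mathfrak{stab}_{\SU(3)}(A_i)$ and that image is a closed (compact) subgroup, this forces $u_1=u_2$ and $U_1=U_2$ for small $\varepsilon$. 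After that, the gauge transformation lies in $\mathcal{G}_{\mu+1}$, whose limit is trivial by Step 1, so $\tilde U_1=\tilde U_2$.
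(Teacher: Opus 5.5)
Your proposal is correct and follows the paper's argument. The paper notes that the asymptotic-limit map embeds $\mathcal{G}_{\mu+1}/\mathcal{G}_{0,\mu+1}$ into the trivial group $\times_i \Stab_{\mathcal{G}(P_i)}(A_i)$, so the local model of \autoref{thm: local structure of B Psi is homeo} becomes a product with the stabiliser factor acted on by right multiplication, while \autoref{thm: local structure of unframed B Phi is homeo} gives the base chart; your injectivity check on the full stabiliser factor is already contained in the statement of \autoref{thm: local structure of B Psi is homeo} (whose proof handles $\mathfrak{m}_i \neq \mathfrak{su}(3)$ only in a footnote), so it is redundant but harmless.
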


\section{The moduli space of conically singular SU(3)-instantons}\label{sec: moduli space of cs SU(3)-instantons}

In this section we now come to the moduli space of conically singular $\SU(3)$-instantons with prescribed tangent connections. We first give a definition of this space and use \autoref{thm: local structure of unframed B Phi is homeo} to (locally) reduce its complexity. We then prove the existence of local Kuranishi charts. That is, this moduli space is locally given by the zero-set of a smooth function between finite dimensional vector spaces. Moreover, we give a formula for its virtual dimension and show that all moduli spaces of instantons with rates lying in a certain cube are homeomorphic to each other.

\subsection{Definition of the moduli space and first properties}
\label{sec: moduli space and its topology}

Throughout this section, $Z^6$ is a compact 6-manifold with an $\SU(3)$-structure $(\omega,\Omega)$. Furthermore, $G$ is a compact Lie group whose Lie algebra $\mathfrak{g}$ has been equipped with an $\Ad$-invariant inner product. Recall also the definition of $\mathcal{B}_\mu(\{P_i,A_i\})$ (as a topological space) given in \autoref{sec: space of (unframed) cs connections}.

\begin{definition}[{\textbf{Moduli space of conically singular instantons with prescribed tangent connections}}] \label{def: moduli space of cs instantons}
Let $N\in \mathbb{N}$ be the number of singular points, $\mu\coloneqq \{\mu_i\}_{i\in \{1,\dots,N\}}$ for $\mu_i \in (-1,0)$ be a set of rates, and $\{(P_i,A_i)\}_{i\in \{1,\dots,N\}}$ be a fixed set consisting of a principal $G$-bundle $\pi_i \col P_i \to S^5$ and a connection $A_i \in \mathcal{A}(P_i)$ satisfying~\eqref{equ: cone reduction of instanton equation}. The moduli space of conically singular $\SU(3)$-instantons with $N$ singularities and prescribed tangent cones $\{(P_i,A_i)\}_{i\in \{1,\dots,N\}}$ of rate $\mu$ is the topological space defined as \[\mathcal{M}_\mu(\{P_i,A_i\}) \coloneqq \{ [(S,\pi \col P \to Z\setminus S,A)]\in \mathcal{B}_\mu(\{P_i,A_i\})\mid \textup{A solves~\eqref{equ: SU(3)-instanton}}\} \subset \mathcal{B}_\mu(\{P_i,A_i\}) \] equipped with the subspace topology.
\end{definition}

\begin{remark}
Note that the equivalence relation $\sim$ in the definition of $\mathcal{B}^{\textup{Fr}}_\mu(\{P_i,A_i\})$ preserves~\eqref{equ: SU(3)-instanton}. The subset $\mathcal{M}_\mu(\{P_i,A_i\})$ is therefore well-defined.
\end{remark}

\begin{remark}
Here and in the following we will focus on unframed conically singular instantons. Note, however, that there exists an analogous definition of a moduli space of framed conically singular instantons with prescribed tangent cones $\mathcal{M}^\textup{Fr}_\mu(\{P_i,A_i\}) \subset \mathcal{B}^\textup{Fr}_\mu(\{P_i,A_i\})$. Moreover, using \autoref{thm: local structure of B Psi is homeo}, all results discussed in the following for $\mathcal{M}_\mu(\{P_i,A_i\})$ have straight forwards analogues for $\mathcal{M}^\textup{Fr}_\mu(\{P_i,A_i\})$.
\end{remark}

Recall from \autoref{thm: local structure of B Psi is homeo} the definition of the following gauge groups
\begin{align*}
\mathcal{G}_{0,\mu+1} \coloneqq \big\{ g \in \mathcal{G}(P) &\mid \vert \nabla^k (\tilde{\Upsilon}_i^{-1} \circ g \circ \tilde{\Upsilon}_i-\textup{Id}) \vert = \mathcal{O}(r^{\mu_i+1-k}) \textup{ for every $i=1,\dots,N$} \\
& \qquad \qquad \qquad \qquad \qquad\qquad\qquad\qquad\qquad\qquad\quad \qquad \textup{and $k \in \mathbb{N}_0$}\big\} \\
\mathcal{G}_{\mu+1} \coloneqq \big\{ g \in \mathcal{G}(P) &\mid \vert \nabla^k (\tilde{\Upsilon}_i^{-1} \circ g \circ \tilde{\Upsilon}_i-\pr_{S^5}^*\tilde{U}_i) \vert = \mathcal{O}(r^{\mu_i+1-k}) \textup{ for every $i=1,\dots,N$,} \\
& \quad \textup{$k \in \mathbb{N}_0$, and a $\tilde{U}_i \in \mathcal{G}(P_i)$ that preserves $A_i$} \big\}
\end{align*} 

The following theorem is a direct consequence of \autoref{thm: local structure of unframed B Phi is homeo}:

\begin{theorem}\label{thm: local structure for instanton moduli space 1}
Let $[\mathbb{A}] \coloneqq [(S,\pi \col P \to Z \setminus S,A)] \in \mathcal{M}_\mu(\{P_i,A_i\})$ be a fixed conically singular instanton and let $\{(\Upsilon_i,\tilde{\Upsilon}_i)\}$ be any choice of framing such that $A \in \mathcal{A}_\mu^{\textup{Fr}}(P,\{\Upsilon_i,\tilde{\Upsilon}_i,A_i\})$. The map $\overline{\mathfrak{q} \circ \Phi_{\mathbb{A}}}$ defined in \autoref{thm: local structure of unframed B Phi is homeo} induces a homeomorphism between an open neighbourhood of $(0,0,[A])$ in 
\begin{align*}
\big\{ (\vec{v},\vec{u},[A]) \in B_{\varepsilon}(0))^N \times (\times_iB_\varepsilon^{\mathfrak{m}_i}(0)) \times \big(\mathcal{A}_\mu^{\textup{Fr}}&(P,\{\Upsilon_i,\tilde{\Upsilon}_i,A_i\})/\mathcal{G}_{\mu+1}\big) \ \big\vert \ \textup{$A$ satisfies \eqref{equ: SU(3)-instanton}} \\
& \textup{with respect to the $\SU(3)$-structure $f_{\vec{v},\vec{u}}^*(\omega,\Omega)$} \big\} \\
\subset (B_{\varepsilon}(0))^N \times  & (\times_iB_\varepsilon^{\mathfrak{m}_i}(0)) \times \big(\mathcal{A}_\mu^{\textup{Fr}}(P,\{\Upsilon_i,\tilde{\Upsilon}_i,A_i\})/\mathcal{G}_{\mu+1}\big)
\end{align*}
(where $f_{\vec{v},\vec{u}}$ is the diffeomorphism from \autoref{def: family of diffeomorphism parametrising open neighbourhood}) and an open neighbourhood of $[\mathbb{A}]$ in $\mathcal{M}_\mu(\{P_i,A_i\})$.
\end{theorem}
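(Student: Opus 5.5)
The plan is to restrict the homeomorphism $\overline{\mathfrak{q}\circ\Phi_{\mathbb{A}}}$ of \autoref{thm: local structure of unframed B Phi is homeo} to the instanton locus and check that the two sides match. By that theorem, for $\varepsilon>0$ small, $\overline{\mathfrak{q}\circ\Phi_{\mathbb{A}}}$ maps $(B_\varepsilon(0))^N\times(\times_i B_\varepsilon^{\mathfrak{m}_i}(0))\times(\mathcal{A}_\mu^{\textup{Fr}}(P,\{\Upsilon_i,\tilde{\Upsilon}_i,A_i\})/\mathcal{G}_{\mu+1})$ homeomorphically onto an open subset $\mathcal{U}\subset\mathcal{B}_\mu(\{P_i,A_i\})$ containing $[\mathbb{A}]$. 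Since $\mathcal{M}_\mu(\{P_i,A_i\})$ carries the subspace topology, $\mathcal{U}\cap\mathcal{M}_\mu(\{P_i,A_i\})$ is an open neighbourhood of $[\mathbb{A}]$ in $\mathcal{M}_\mu$. Its preimage is homeomorphic to it, because a homeomorphism restricts to a homeomorphism between a subset and its image. It therefore suffices to identify this preimage with the set in the statement.

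First I would show that the condition in the statement is well defined on $\mathcal{G}_{\mu+1}$-orbits. If $g\in\mathcal{G}_{\mu+1}$, then $F_{g^*A}=g^{-1}F_Ag$. Both equations in \eqref{equ: SU(3)-instanton} are linear in the curvature and involve only forms on the base. Hence $g^*A$ satisfies \eqref{equ: SU(3)-instanton} for $f_{\vec v,\vec u}^*(\omega,\Omega)$ exactly when $A$ does.

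Next comes the main identification. Under $\Phi_{\mathbb{A}}$, the triple $(\vec v,\vec u,A')$ goes to $(f_{\vec v,\vec u}(S),\,f_{\vec v,\vec u}\circ\pi\colon P\to Z\setminus f_{\vec v,\vec u}(S),\,A')$. As observed after \autoref{def: base of topology}, the bundle $f\circ\pi\colon P\to Z\setminus f(S)$ is isomorphic to the push-forward $f_*P$, and this isomorphism carries $A'$ to $f_*A'$. The curvature of $f_*A'$ is the push-forward of $F_{A'}$. Therefore $f_*A'$ satisfies \eqref{equ: SU(3)-instanton} with respect to $(\omega,\Omega)$ on $Z\setminus f(S)$ if and only if $A'$ satisfies it with respect to $f^*(\omega,\Omega)$ on $Z\setminus S$. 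Here I would note that $f^*\Lambda_\omega=\Lambda_{f^*\omega}$ and that $f^*(F\wedge\Im\Omega)=f^*F\wedge\Im f^*\Omega$ (naturality of the dual Lefschetz operator and of the wedge product). Because $\mathcal{M}_\mu$ is defined through the isomorphism-invariant condition of \eqref{equ: SU(3)-instanton} (see the remark after \autoref{def: moduli space of cs instantons}), the class $\overline{\mathfrak{q}\circ\Phi_{\mathbb{A}}}(\vec v,\vec u,[A'])$ lies in $\mathcal{M}_\mu$ exactly when $A'$ solves \eqref{equ: SU(3)-instanton} for $f_{\vec v,\vec u}^*(\omega,\Omega)$. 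So the preimage of $\mathcal{M}_\mu\cap\mathcal{U}$ is precisely the set in the statement, intersected with the domain. The point $(0,0,[A])$ lies in it because $f_{0,0}=\textup{Id}$ and $A$ is an instanton.

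I expect no serious obstacle; the only care needed is in the naturality bookkeeping. The representative $(S,P,A')$ in the domain is fixed while the structure $f^*(\omega,\Omega)$ varies, and one must check that $f_{\vec v,\vec u}^*(\omega,\Omega)$ is still an $\SU(3)$-structure for which $\Upsilon_i$ remains an $\SU(3)$-chart at $s_i$. This follows from the last property of $\mathfrak{vec}_2$ and from the $\SU(3)$-invariance of the rotations $\exp(u_i)$ in \autoref{prop: construction of families of vector fields}. This check ensures that the conical conditions on $A'$ relative to $\{(\Upsilon_i,\tilde{\Upsilon}_i)\}$ are the correct ones.
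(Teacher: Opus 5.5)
Your proposal is correct and is the argument the paper intends: the paper states this theorem as a direct consequence of \autoref{thm: local structure of unframed B Phi is homeo}. That is, one restricts the homeomorphism $\overline{\mathfrak{q}\circ\Phi_{\mathbb{A}}}$ to the preimage of $\mathcal{M}_\mu(\{P_i,A_i\})$, and identifies that preimage with the stated set using gauge invariance and diffeomorphism-naturality of \eqref{equ: SU(3)-instanton}. Your extra check that $f_{\vec v,\vec u}\circ\Upsilon_i$ remains an $\SU(3)$-chart is harmless but not new, since it is already part of the well-definedness of $\Phi_{\mathbb{A}}$ established earlier.
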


\subsection{The local structure of $\mathcal{M}_\mu(\{P_i,A_i\})$}

In this section we prove the existence of local Kuranishi charts on $\mathcal{M}_\mu(\{P_i,A_i\})$. That is, for each $[\mathbb{A}]\in \mathcal{M}_\mu(\{P_i,A_i\})$ there exist two finite dimensional vector spaces $W_1,W_2$ and a smooth map $\ob_{\mathbb{A}} \col W_1 \to W_2$ such that a neighbourhood of $[\mathbb{A}]$ in $\mathcal{M}_\mu(\{P_i,A_i\})$ is homeomorphic to a neighbourhood of zero in $\ob^{-1}_{\mathbb{A}}(0)$. For this we first prove in \autoref{sub: Coulomb gauge} a slice theorem for the action of the (based) gauge group. In \autoref{sub: Kuranishi charts} we then establish the existence of such Kuranishi charts.

Throughout this section we restrict to compact structure groups $G$ that have a finite center and to tangent connections $A_i$ that are infinitesimally irreducible. This assumption makes the presentation in \autoref{sub: Coulomb gauge} a bit simpler but can be removed as in \cite[Chapter~I.5]{SoleFarre-thesis} (see also \autoref{rem: Coulomb gauge and slice for groups with positive center} and \autoref{rem: Kuranishi chart for groups with positive center}).

\subsubsection{Coulomb gauge as a slice for the gauge action}\label{sub: Coulomb gauge}

We have seen in \autoref{thm: local structure for instanton moduli space 1} that $\mathcal{M}_\mu(\{P_i,A_i\})$ is locally homeomorphic to an open neighbourhood inside a product involving \[\mathcal{A}_\mu^{\textup{Fr}}(P,\{\Upsilon_i,\tilde{\Upsilon}_i,A_i\})/\mathcal{G}_{\mu+1} = \big(\mathcal{A}_\mu^{\textup{Fr}}(P,\{\Upsilon_i,\tilde{\Upsilon}_i,A_i\})/\mathcal{G}_{0,\mu+1}\big)\big/\big(\mathcal{G}_{\mu+1}/\mathcal{G}_{0,\mu+1}\big)\] for some fixed bundle $\pi \col P \to Z\setminus S$, a set of framings $\{(\tilde{\Upsilon}_i \col \pr_{S^5}^*P_i \to \Upsilon_i^*P)\}$, and tangent connections $\{(\pi_i \col P_i \to S^5,A_i)\}$. In this section, we establish a slice theorem for the action of the based gauge group on (a Banach space version of) $\mathcal{A}_\mu^\Fr(P,\{\Upsilon_i,\tilde{\Upsilon}_i,A_i\})$. This implies that the subspace of $\mathcal{A}_\mu^{\textup{Fr}}(P,\{\Upsilon_i,\tilde{\Upsilon}_i,A_i\})/\mathcal{G}_{0,\mu+1}$ consisting of (the equivalence classes of) $\SU(3)$-instantons is (locally) given by the zero-set of the (non-linear) elliptic equation~\eqref{equ: augmented instanton equations} and is another key step in establishing the existence of local Kuranishi charts on $\mathcal{M}_\mu(\{P_i,A_i\})$.

As in the previous sections, we assume that $G$ is a compact Lie group (and a subgroup of $\GL(W)$ for some vector space $W$). In contrast to the previous sections, however, we additionally assume that the center of $G$ is 0-dimensional (and therefore finite). This assumption is used in \autoref{prop: rates of invertibility of Laplacian} (and consequently in \autoref{thm: Coulomb Gauge} and \autoref{thm: Slice Theorem}) to conclude that the Laplacian associated to an irreducible connection is invertible at certain rates. However, we note that by working with the larger group $\mathcal{G}_{\mu+1}$, this assumption can be removed (cf. \autoref{rem: Coulomb gauge and slice for groups with positive center}) and has been worked out in more detail in \cite[Chapter~I.5]{SoleFarre-thesis}.

We begin by considering a principal $G$-bundle $\pi_0 \col P_0 \to S^5$ equipped with a connection $A_0 \in \mathcal{A}(P_0)$ whose pullback to $\mathbb{C}^3\setminus \{0\}$ defines an $\SU(3)$-instanton (i.e. $A_0$ satisfies \eqref{equ: cone reduction of instanton equation} by \autoref{prop: dilation invariant instantons}). In order to ease the notation we will in the following denote by $\pi^\prime_0 \col P_0^\prime \to \mathbb{C}^3\setminus \{0\}$ and $A_0^\prime \coloneqq \pr_{S^5}^*A_0$ their respective pullbacks.

We now identify the rates at which the Laplacian $\Delta_{A^\prime_0}\coloneqq \diff_{A_0^\prime}^*\diff_{A_0^\prime}$ associated to $A_0^\prime$ acting on sections of the adjoint bundle $ \mathfrak{g}_{P_0^\prime} \to \mathbb{C}^3\setminus \{0\}$ is an isomorphism. For this we need the following definitions.

\begin{definition}
A section $\mathfrak{\xi}\in \Omega^{0}(\mathbb{C}^3\setminus \{0\},\mathfrak{g}_{P_0^\prime})$ is called homogeneous of degree $\lambda \in \mathbb{R}$ if it satisfies $\delta_r^*\xi = r^{\lambda} \xi$ for every $r\in(0,\infty)$, where $\delta_r \col \mathbb{C}^3\setminus \{0\} \to \mathbb{C}^3\setminus \{0\}$ denotes the dilation by $r$ and where we use parallel transport (with respect to $A_0^\prime$) in radial direction to identify different fibers. Similarly, a 1-form $a \in \Omega^1(\mathbb{C}^3\setminus \{0\},\mathfrak{g}_{P_0^\prime})$ is called homogeneous of degree $\lambda \in \mathbb{R}$ if $\delta_r^*a = r^{\lambda+1} a$ for every $r \in (0,\infty)$.
\end{definition}

\begin{remark}
A 1-form $a \in \Omega^1(\mathbb{C}^3\setminus \{0\},\mathfrak{g}_{P_0^\prime})$ is homogeneous of degree $\lambda \in \mathbb{R}$ if and only if it is of the form \[ a = \textstyle{\sum} \xi_i \diff x^{i},\] where $\xi_i \in \Omega^0(\mathbb{C}^3\setminus \{0\},\mathfrak{g}_{P_0^\prime})$ are homogeneous sections of degree $\lambda$ and $\diff x^1, \dots, \diff x^6$ are the canonical (dual-) basis elements of $\mathbb{C}^3\equiv \mathbb{R}^6$.
\end{remark}

\begin{definition}\label{def: critical rates}
For the operators $\Delta_{A^\prime}$ (as above) and $L_{A_0^\prime}$ (associated to the connection $A_0^\prime$ as defined prior to \autoref{prop: adjoint of model operator}) we define the following sets:
\begin{align*}
\mathcal{D}(\Delta_{A_0^\prime}) \coloneqq \big\{ \lambda \in \mathbb{R}\ \big\vert   &\textup{ $\exists$ non-trivial homogeneous $\mathfrak{\xi}\in \Omega^0(\mathbb{C}^3\setminus\{0\}, \mathfrak{g}_{P_0^\prime})$ of degree $\lambda$} \\
& \textup{ with $\diff_{A_0^\prime}^*\diff_{A_0^\prime} \xi=0$} \big\} \\
\mathcal{D}(L_{A_0^\prime}) \coloneqq \big\{ \lambda \in \mathbb{R} \ \big\vert  &\textup{ $\exists$ non-trivial homogeneous $\underline{a}\in \Omega^1(\mathbb{C}^3\setminus\{0\}, \mathfrak{g}_{P_0^\prime}\oplus \mathfrak{g}_{P_0^\prime} \oplus T^*\mathbb{C}^3\otimes \mathfrak{g}_{P_0^\prime})$} \\
& \textup{ of degree $\lambda$ with $L_{A_0^\prime} \underline{a}=0$} \big\}.
\end{align*}
\end{definition}
\begin{proposition}\label{prop: critical rates of model Laplacian}
Let $A_0\in \mathcal{A}(P_0)$ be a connection whose pullback $A_0^\prime$ is an $\SU(3)$-instanton. Then $\mathcal{D}(\Delta_{A_0^\prime}) \cap (-4,0) = \emptyset$. Furthermore, if the center of $G$ is finite and $A_0$ is infinitesimally irreducible (that is, the only section $\xi \in \Omega^0(S^5,\mathfrak{g}_{P_0})$ with $\diff_{A_0} \xi =0$ is $\xi=0$), then \[\mathcal{D}(\Delta_{A_0^\prime}) \cap (-4,1) \subset \{ \lambda+1 \mid  \lambda \in \mathcal{D}(L_{A_0}) \cap (-1,0)\}.\]
\end{proposition}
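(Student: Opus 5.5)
Separation of variables on the cone is the natural approach. I write $\mathbb{C}^3\setminus\{0\}=(0,\infty)\times S^5$ with $g_0=\diff r^2+r^2g_{S^5}$, and I use radial parallel transport (for $A_0^\prime=\pr_{S^5}^*A_0$ this is just the trivial radial identification) to write a homogeneous section of degree $\lambda$ as $\xi=r^\lambda\eta$ with $\eta\in\Omega^0(S^5,\mathfrak{g}_{P_0})$. Since $\diff_{A_0^\prime}$ has no radial component, the standard cone formula gives
\[\Delta_{A_0^\prime}(r^\lambda\eta)=r^{\lambda-2}\big(\Delta_{A_0}\eta-\lambda(\lambda+4)\eta\big).\]
Hence $\lambda\in\mathcal{D}(\Delta_{A_0^\prime})$ if and only if $\lambda(\lambda+4)$ is an eigenvalue of the nonnegative operator $\Delta_{A_0}=\diff_{A_0}^*\diff_{A_0}$ on $S^5$. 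For $\lambda\in(-4,0)$ we have $\lambda(\lambda+4)<0$, so the first claim follows at once.

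For the second claim, take $\lambda\in(-4,1)\cap\mathcal{D}(\Delta_{A_0^\prime})$. By the first part $\lambda\in[0,1)$. If $\lambda=0$, then $\Delta_{A_0}\eta=0$, so $\diff_{A_0}\eta=0$ after integrating by parts on the closed manifold $S^5$. Infinitesimal irreducibility forces $\eta=0$, which excludes $\lambda=0$. The finite centre is what rules out constant central sections; it is in fact implied by the irreducibility hypothesis, and that is how it enters.

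So let $\lambda\in(0,1)$ and $\xi=r^\lambda\eta\neq0$ be harmonic. The plan is to set $a\coloneqq\diff_{A_0^\prime}\xi$, which is homogeneous of degree $\lambda-1\in(-1,0)$, and to show that $\underline a\coloneqq(0,0,a)$ lies in $\ker L_{A_0^\prime}$. The three components of $L_{A_0^\prime}\underline a$ are handled as follows.
\begin{itemize}
\item The first is $\diff_{A_0^\prime}^*a=\Delta_{A_0^\prime}\xi=0$.
\item The second is $\Lambda_{\omega_0}\diff_{A_0^\prime}a=\Lambda_{\omega_0}[F_{A_0^\prime},\xi]=[\Lambda_{\omega_0}F_{A_0^\prime},\xi]=0$, because $A_0^\prime$ is an $\SU(3)$-instanton.
\item The third is $*(\Im\Omega_0\wedge[F_{A_0^\prime},\xi])=0$, because $F_{A_0^\prime}\wedge\Im\Omega_0=0$ and $\Omega_0$ is parallel.
\end{itemize}
Finally $a\neq0$, since otherwise $\diff_{A_0}\eta=0$, which irreducibility excludes. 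Hence $\lambda-1\in\mathcal{D}(L_{A_0^\prime})\cap(-1,0)$. This is the claimed inclusion with $\mathcal{D}(L_{A_0})$ read as $\mathcal{D}(L_{A_0^\prime})$, as in Definition~\ref{def: critical rates}.

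The main point to get right is the degree bookkeeping. Under the paper's convention, a 1-form is homogeneous of degree $\mu$ if $\delta_r^*a=r^{\mu+1}a$, so I must check that $\diff_{A_0^\prime}(r^\lambda\eta)$ is homogeneous of degree $\lambda-1$ in this sense, i.e. that $\delta_r^*a=r^{\lambda}a$. This is consistent: the coefficients in the $\diff x^i$ basis scale like $r^{\lambda-1}$, and $\delta_r^*\diff x^i=r\,\diff x^i$ contributes the remaining factor $r$. With that convention confirmed, the computations above are routine, using that $\omega_0$ and $\Omega_0$ are parallel so that $\diff_{A_0^\prime}$ commutes past them.
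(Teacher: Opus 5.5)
Your proposal is correct and follows essentially the same route as the paper: separate variables to reduce $\Delta_{A_0^\prime}(r^\lambda\eta)=0$ to $\Delta_{A_0}\eta=\lambda(\lambda+4)\eta$, use positivity of $\Delta_{A_0}$ on $(-4,0)$ and irreducibility at $\lambda=0$, and then observe that $\diff_{A_0^\prime}\xi$ is a nonzero homogeneous element of $\ker L_{A_0^\prime}$ of degree $\lambda-1$. The only difference is cosmetic: you rule out $\diff_{A_0^\prime}\xi=0$ via irreducibility, whereas the paper argues that a parallel section tending to zero at the origin must vanish.
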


\begin{proof}
Let $\lambda \in \mathcal{D}(\Delta_{A_0^\prime}) \cap (-4,1)$ and let $\xi \in \Omega^0(\mathbb{C}^3\setminus \{0\},\mathfrak{g}_{P^\prime_0})$ be a homogeneous section of degree $\lambda$ which satisfies $\diff_{A_0^\prime}^*\diff_{A_0^\prime} \xi =0$. Using parallel transport in radial direction to identify different fibers, we can write $\xi$ as $\xi = r^\lambda \xi_0$ for $\xi_0 \in \Omega^0(S^5,P_0)$. The equation $\diff_{A_0^\prime}^*\diff_{A_0^\prime}\xi =0$ is then equivalent to \[ - \lambda (\lambda+4) \xi_0 + \diff_{A_0}^*\diff_{A_0} \xi_0 =0 \] where $\diff_{A_0}^*\diff_{A_0}$ denotes the Laplacian of $A_0$ over the sphere $S^5$. Since $\diff_{A_0}^*\diff_{A_0}$ is a positive operator, this equation does not have a non-trivial solution for $\lambda \in (-4,0)$. Furthermore, the solutions for $\lambda=0$ are precisely given by parallel sections $\xi_0$ of $\diff_{A_0}$. Hence, if $A_0$ is infinitesimally irreducible and the center of $G$ discrete, $\lambda$ needs to be positive. Since $A_0$ is an $\SU(3)$-instanton, $\diff_{A_0^\prime} \xi$ is a homogeneous element in the kernel of $L_{A_0}$ which is homogeneous of degree $\lambda-1$. So either $\diff_{A_0^\prime} \xi =0$ in which case $\xi$ vanishes (because it is parallel and vanishes at $0$) or $(\lambda-1) \in \mathcal{D}(L_{A_0})\cap (-1,0)$.
\end{proof}

In the following, let $Z$ be a 6-manifold equipped with an $\SU(3)$-structure $(\omega,\Omega)$. Furthermore, let $S \coloneqq \{s_1,\dots,s_N\} \subset Z$ and $\pi \col P \to Z \setminus S$ be a bundle together with a framed conically singular connection $A\in \mathcal{A}_\mu^{\textup{Fr}}(P,\{\Upsilon_i,\tilde{\Upsilon}_i,A_i\})$. We will now define weighted Hölder spaces of sections of $\mathfrak{g}_P$ and use the previous discussion on homogeneous kernel elements to show that $\diff_A^*\diff_A$ is an isomorphism for a certain range of rates. 

\begin{definition}\label{def: weighted Hölder norms}
Let $\pi \col P \to Z\setminus S$ and $A \in \mathcal{A}_\mu^{\textup{Fr}}(P,\{\Upsilon_i,\tilde{\Upsilon}_i,A_i\})$ be as above and let $P\times_{\nu}W_\nu$ be the associated bundle to any (fixed) representation $(W_\nu,\nu)$ of $G$. Moreover, let $\rho \col Z \setminus S \to (0,\infty)$ and $w_\lambda \col Z \to \mathbb{R}$ be the distance and rate functions of \autoref{def: C^infty_mu topology} and set $\rho(x,y)\coloneqq \min\{\rho(x),\rho(y)\}$ for any $x,y \in Z \setminus S$. For any $k\in \mathbb{N}_0$, $\alpha \in (0,1)$, and $\lambda = (\lambda_1,\dots,\lambda_N) \in \mathbb{R}^N$ we define the following weighted Hölder (semi-) norms acting on any $\eta\in C^{k,\alpha}_{\textup{loc}}(Z \setminus S,\Lambda^\ell T^* Z \otimes (P\times_{\nu}W_\nu))$:
\begin{align*}
[\eta\WsH{0}{\lambda}{} &\coloneqq \sup_{2\diff(x,y) < \rho(x,y)} \rho(x,y)^{w_{\lambda-\alpha}(x)} \frac{\vert\eta(x)- \eta(y)\vert}{\vert x-y\vert^\alpha} \\
\Vert \eta \VertWH{0}{\lambda}{} & \coloneqq \Vert \rho^{-w_{\lambda}} \eta \VertC{0}{} + [\eta\WsH{0}{\lambda}{}  \\
\Vert \eta \VertWH{k}{\lambda}{} &\coloneqq \sum_{i=0}^k  \Vert \nabla^i_{A} \eta \VertWH{0}{\lambda-i}{},
\end{align*} 
where $\lambda-i \coloneqq (\lambda_1-i,\dots,\lambda_N-i)$ and where all covariant derivatives are taken with respect to $A$ and the Levi--Civita connection on $T^*Z$. To compare $\eta(x)$ and $\eta(y)$ which lie over different fibers we use parallel transport over the shortest geodesic connecting $x$ and $y$. 
\end{definition}

\begin{definition}\label{def: weighted Hölder spaces}
With $\pi \col P \to Z\setminus S$, $A \in \mathcal{A}_\mu^{\textup{Fr}}(P,\{\Upsilon_i,\tilde{\Upsilon}_i,A_i\})$, and $(W_\nu,\nu)$ as in the previous definition, we define $C^{k,\alpha}_\lambda(Z\setminus S, \Lambda^\ell T^*Z \otimes (P\times_{\nu} W_\nu))$ as the Banach space consisting of all sections $\eta\in C^{k,\alpha}_{\textup{loc}}(Z \setminus S,\Lambda^\ell T^*Z \otimes (P\times_{\nu} W_\nu))$, for which $\Vert \eta \VertWH{k}{\lambda}{}$ is finite, equipped with the norm $\Vert \cdot \VertWH{k}{\lambda}{}$. Moreover, for any fixed (smooth) $A^\prime \in \mathcal{A}_\lambda^\Fr(P,\{\Upsilon_i,\tilde{\Upsilon}_i,A_i\})$, we define \[\mathcal{A}_\lambda^{k,\alpha}(P,\{\Upsilon_i,\tilde{\Upsilon}_i,A_i\}) \coloneqq A^\prime + C^{k,\alpha}_\lambda(Z\setminus S, T^*Z \otimes \mathfrak{g}_P) \] as an affine Banach space. Ultimately, we define \[ \mathcal{G}^{k,\alpha}_{0,\lambda} \coloneqq \{ \textup{Id} + g \mid g\in C^{k,\alpha}_\lambda(Z\setminus S,P\times_{G} \End(W)) \textup{ and } \textup{Id} + g \in P\times_{G} G\},\] where we again assumed that $G\subset \GL(W)$ for some vector space $W$.
\end{definition}

\begin{remark}
Since all $\mu_i>-1$, it is straight forward to see that a different choice of $A \in \mathcal{A}_\mu^\Fr(P,\{\Upsilon_i\tilde{\Upsilon}_i,A_i\})$ leads to an equivalent norm $\Vert \cdot \VertWH{k}{\lambda}{}$. Furthermore, a moment's thought shows that the definition of $\mathcal{A}_\lambda^{k,\alpha}(P,\{\Upsilon_i,\tilde{\Upsilon}_i,A_i\})$ (as an affine Banach space) is also independent of the choice of base-connection $A^\prime$. It is well-known that $\mathcal{G}^{k+1,\alpha}_{0,\lambda+1}$ is a Banach Lie group with Lie algebra $C^{k+1,\alpha}_{\lambda+1}(Z\setminus S, \mathfrak{g}_P)$ which acts smoothly on $\mathcal{A}_\lambda^{k,\alpha}(P,\{\Upsilon_i,\tilde{\Upsilon}_i,A_i\})$. 
\end{remark}

\begin{proposition}\label{prop: rates of invertibility of Laplacian}
Let $A \in \mathcal{A}_\mu^\Fr(P,\{\Upsilon_i,\tilde{\Upsilon}_i,A_i\})$ be as above and assume that the center of $G$ is finite and that all tangent cones $\{(P_i,A_i)\}$ are infinitesimally irreducible. Define for every $i=1,\dots,N$ \[\Bar{\mu}_i \coloneqq \min \{((-1,0)  \cap \mathcal{D}(L_{A_i})) \cup \{0\} \}\] and fix $k\geq 2$. Then \[\diff_{A}^*\diff_A \col C^{k,\alpha}_{\lambda}(Z\setminus S,\mathfrak{g}_P) \to C^{k-2,\alpha}_{\lambda-2}(Z\setminus S, \mathfrak{g}_P) \] is an isomorphism for all $\lambda \in \mathbb{R}^N$ with $\lambda_i \in (-4-\Bar{\mu}_i-1,\Bar{\mu}_i+1)$.
\end{proposition}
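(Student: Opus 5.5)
The plan is to combine the standard Lockhart--McOwen / Melrose Fredholm theory for conically singular elliptic operators, collected in the appendix, with the description of critical rates in \autoref{prop: critical rates of model Laplacian}.

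First, I will observe that $\diff_A^*\diff_A$ is asymptotically conical at each $s_i$: under the framing $\tilde{\Upsilon}_i$ it differs from the model Laplacian $\Delta_{A_i^\prime}$ of $A_i^\prime=\pr_{S^5}^*A_i$ on $\mathbb{C}^3\setminus\{0\}$ by terms with coefficients decaying like $r^{\mu_i+1}$, relative to the homogeneous scaling. This decay has two sources. The error $\tilde{\Upsilon}_i^*A-\pr_{S^5}^*A_i$ is $\mathcal{O}(r^{\mu_i})$ with $\mu_i>-1$. The metric $\Upsilon_i^*g$ agrees with the flat metric to order $\mathcal{O}(r)$. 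Hence the appendix results apply: the map $C^{k,\alpha}_\lambda\to C^{k-2,\alpha}_{\lambda-2}$ is Fredholm whenever no $\lambda_i$ lies in $\mathcal{D}(\Delta_{A_i^\prime})$. Moreover, the index is constant on each connected component of the complement of these critical rates.

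Second, I will locate the critical rates in the window. By \autoref{prop: critical rates of model Laplacian}, $\mathcal{D}(\Delta_{A_i^\prime})\cap(-4,1)$ is contained in $\{\nu+1\mid \nu\in\mathcal{D}(L_{A_i})\cap(-1,0)\}$, so it lies in $[\bar\mu_i+1,1)$. The Laplacian on the cone has the symmetry $\lambda\leftrightarrow -4-\lambda$, because the indicial equation is $-\lambda(\lambda+4)\xi_0+\Delta_{A_i}\xi_0=0$. It follows that the interval $(-5-\bar\mu_i,\bar\mu_i+1)$ contains no critical rates. Therefore, on this cube, the operator is Fredholm with constant index.

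Third, I will compute the index and injectivity. The operator is formally self-adjoint, and the $L^2$-dual of $C_\lambda$ corresponds to rate $-4-\lambda$, since the dimension is $6$ and $-6-(\lambda-2)=-4-\lambda$. So the cokernel at rate $\lambda$ is identified with the kernel at rate $-4-\lambda$, which lies in the same window. It therefore suffices to show the kernel vanishes for all $\lambda$ in the window. Since the index is constant there and the kernel is monotone in $\lambda$, it is enough to show the kernel vanishes at the most negative rate, taken near $-5-\bar\mu_i$. I will actually argue directly for any rate in the window.

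Let $\xi$ be in the kernel. By the asymptotic expansion of kernel elements given in the appendix, together with the absence of critical rates, $\xi$ improves to rate $\bar\mu_i+1-\epsilon>0$ near every singular point. Then $\langle\diff_A\xi,\xi\rangle=\mathcal{O}(r^{2\lambda-1})$ with $2\lambda-1>-5$, so the integration by parts $\int|\diff_A\xi|^2=0$ is justified. Hence $\diff_A\xi=0$. A parallel section that decays at $s_i$ must vanish identically, using that $Z\setminus S$ is connected. Alternatively, one can use infinitesimal irreducibility of $A_i$ together with finite centre, which gives that $\xi$ restricted to each small sphere is a parallel section of $\mathfrak g_{P_i}$ and hence zero. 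So the kernel vanishes on the whole window, and by the duality the cokernel does as well. This yields the isomorphism.

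I expect the main obstacle to be the asymptotic improvement step, moving a kernel element from an arbitrary rate $\lambda_i$ in the window up past $0$. It relies on the appendix result that kernel elements have expansions whose exponents lie in $\mathcal{D}$, and on the perturbation $\diff_A^*\diff_A-\Delta_{A_i^\prime}$ gaining $r^{\mu_i+1}$. I will iterate the improvement in steps of size $\mu_i+1$, which is possible because there are no critical rates before $\bar\mu_i+1$.
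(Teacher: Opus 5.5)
Your proposal is correct and follows the paper's proof. There are no critical rates of $\Delta_{A_i'}$ in the symmetric window, so $\diff_A^*\diff_A$ is Fredholm there with $\lambda$-independent kernel. Injectivity at a positive rate then follows from integration by parts plus the fact that a parallel section with $|\xi|\to 0$ at $s_i$ vanishes.

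The one difference is cosmetic. You obtain surjectivity from $\coker_\lambda\cong(\ker_{-4-\lambda})^*$, with $-4-\lambda$ again in the window, whereas the paper uses self-adjointness to get index zero at $\lambda=(-2,\dots,-2)$ and then constancy of the index. Your ``alternative'' irreducibility argument would need a rescaling limit, since $\xi$ is $A$-parallel rather than $A_i$-parallel, but it is not needed.
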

\begin{proof}
By \autoref{prop: critical rates of model Laplacian}, the critical rates $\mathcal{D}(\Delta_{A_i^\prime})$ of $\diff_{A_i}^*\diff_{A_i}$ for any $i=1,\dots,N$ are disjoint from the interval $(-4,\Bar{\mu}_i+1)$. The operator $\diff_{A}^*\diff_A \col C^{k,\alpha}_{\lambda}(Z\setminus S, \mathfrak{g}_P) \to C^{k-2,\alpha}_{\lambda-2}(Z\setminus S, \mathfrak{g}_P)$ is therefore Fredholm of constant Fredholm index for all $\lambda \in \times_i (-4-\Bar{\mu}_i-1,\Bar{\mu}_i+1)$ (cf. \autoref{app-prop: Fredholm away from critical rates} and \autoref{app-prop: ker and coker are locally constrant + index change formula}). Furthermore, for $\lambda = (-2,\dots,-2)$ (and therefore for all $\lambda \in \times_i(-4-\Bar{\mu}_i-1,\Bar{\mu}_i+1)$) the formal self-adjointness of $\diff_A^*\diff_A$ implies that this index is zero (cf. \autoref{app-prop: cokernel kernel pairing}). 
    
Next, we show that $\diff_{A}^*\diff_A \col C^{k,\alpha}_{\varepsilon}(Z\setminus S, \mathfrak{g}_P) \to C^{k-2,\alpha}_{\varepsilon-2}(Z\setminus S, \mathfrak{g}_P)$ for $\varepsilon \in  \times_i(0,\Bar{\mu}_i+1)$ is injective. For this assume that $\xi \in C^{k,\alpha}_{\varepsilon}(Z\setminus S, \mathfrak{g}_P)$ satisfies $\diff_A^*\diff_A \xi=0$. Integration by parts then gives $\diff_A \xi=0$ which implies that $\vert \xi \vert$ is constant and since $\vert \xi \vert = \mathcal{O}(r^{\varepsilon_i})$ around any $s_i\in S$, that $\vert \xi \vert$ vanishes everywhere. Since the kernel of $\diff_{A}^*\diff_A$ is independent of $\lambda \in \times_i (-4-\Bar{\mu}_i-1,\Bar{\mu}_i+1)$ (cf. \autoref{app-cor: kernel is locally constant}) the proposition follows.
\end{proof}

We come now to the main results of this section, in which we first show that connections which lie nearby a fixed connection $A\in\mathcal{A}^{k,\alpha}_\mu(P,\{\Upsilon_i,\tilde{\Upsilon}_i,A_i\})$ can be put into Coulomb gauge relative to $A$. We then use this gauge to construct a slice to the action of $\mathcal{G}^{k+1,\alpha}_{\mu+1,0}$ on $\mathcal{A}^{k,\alpha}_\mu(P,\{\Upsilon_i,\tilde{\Upsilon}_i,A_i\})$.

\begin{theorem}\label{thm: Coulomb Gauge}
Let $\pi \col P \to Z\setminus S$ be a framed principal $G$-bundle, where we assume that $G$ has a finite center. Moreover, assume that all tangent connections $\{(P_i,A_i)\}$ are infinitesimally irreducible and that $-1<\mu_i< \bar{\mu}_i$ for every $i=1,\dots,N$ (with $\bar{\mu}_i$ as in the previous proposition). For every fixed $k\geq 1$, $\alpha\in(0,1)$, and $A\in \mathcal{A}^{k,\alpha}_\mu(P,\{\Upsilon_i,\tilde{\Upsilon}_i,A_i\})$ there exists an open neighbourhood $V\subset \mathcal{A}^{k,\alpha}_\mu(P,\{\Upsilon_i,\tilde{\Upsilon}_i,A_i\})$ of $A$ and a smooth map $s \col V \to \mathcal{G}^{k+1,\alpha}_{0,\mu+1}$ with $s(A)=\textup{Id}$ such that \[ \diff_A^*(s(A^\prime)^*A^\prime-A) = 0 \] for all $A^\prime \in V$. Moreover, the map 
\begin{align*}
\Psi \col V &\to \ker\big(\diff_A^* \col C^{k,\alpha}_{\mu}(Z\setminus S,T^*Z\otimes\mathfrak{g}_P) \to C^{k-1,\alpha}_{\mu-1}(Z\setminus S,\mathfrak{g}_P)\big) \times \mathcal{G}^{k+1,\alpha}_{0,\mu+1} \\
A^\prime &\mapsto \big(s(A^\prime)^*A^\prime-A,s(A^\prime)\big)
\end{align*}
is a diffeomorphism onto a neighbourhood of $(0,\textup{Id})$.
\end{theorem}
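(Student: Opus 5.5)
We prove this with the Banach-space implicit function theorem applied to a gauge-fixing map. We define
\[
\mathcal{F} \col \mathcal{A}^{k,\alpha}_\mu(P,\{\Upsilon_i,\tilde{\Upsilon}_i,A_i\}) \times \mathcal{G}^{k+1,\alpha}_{0,\mu+1} \to C^{k-1,\alpha}_{\mu-1}(Z\setminus S,\mathfrak{g}_P), \qquad (A^\prime,g) \mapsto \diff_A^*(g^*A^\prime - A).
\]
The first step is to check that $\mathcal{F}$ is well defined and smooth. Write $g = \textup{Id}+h$ with $h\in C^{k+1,\alpha}_{\mu+1}$. Then
\[
g^*A^\prime - A = g^{-1}(A^\prime - A)g + g^{-1}\diff_A g,
\]
and the weighted multiplication estimates give $C^{k+1,\alpha}_{\mu+1}\cdot C^{k,\alpha}_{\mu}\subset C^{k,\alpha}_{\mu}$, because $\mu_i+1>0$ and so $h$ is bounded near each singular point. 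Hence $g^*A^\prime - A \in C^{k,\alpha}_\mu$, and $\diff_A^*$ maps this into $C^{k-1,\alpha}_{\mu-1}$. The action map is polynomial in $h$, $g^{-1}$ and $A^\prime-A$, so $\mathcal{F}$ is smooth. This is the same reasoning as in the remark that $\mathcal{G}^{k+1,\alpha}_{0,\mu+1}$ acts smoothly on the space of connections.

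The second step computes the partial derivative in the gauge direction at $(A,\textup{Id})$. It is $\xi \mapsto \diff_A^*\diff_A \xi$ from $C^{k+1,\alpha}_{\mu+1}(Z\setminus S,\mathfrak{g}_P)$ to $C^{k-1,\alpha}_{\mu-1}(Z\setminus S,\mathfrak{g}_P)$. Since $-1<\mu_i<\bar{\mu}_i$, we have $\mu_i+1\in(0,\bar{\mu}_i+1)\subset(-4-\bar{\mu}_i-1,\bar{\mu}_i+1)$. So \autoref{prop: rates of invertibility of Laplacian}, applied with $\lambda=\mu+1$ and regularity $k+1\geq 2$, shows that this derivative is an isomorphism. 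The proposition is stated for a smooth conically singular $A$; if $A$ is only $C^{k,\alpha}$, we must check that the Fredholm and index statements still hold. They do, because $\diff_A^*\diff_A$ differs from the smooth model by a perturbation that decays at rate $\mu_i+1>0$ relative to the cone. The implicit function theorem then gives a neighbourhood $V$ of $A$ and a smooth map $s\col V\to\mathcal{G}^{k+1,\alpha}_{0,\mu+1}$ with $s(A)=\textup{Id}$ and $\mathcal{F}(A^\prime,s(A^\prime))=0$. This is exactly the Coulomb gauge condition.

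For the diffeomorphism statement, consider the inverse-candidate map
\[
\Xi\col \ker\diff_A^*\times\mathcal{G}^{k+1,\alpha}_{0,\mu+1}\to\mathcal{A}^{k,\alpha}_\mu, \qquad (a,g)\mapsto (g^{-1})^*(A+a).
\]
This map is smooth. Its differential at $(0,\textup{Id})$ is $(a,\xi)\mapsto a - \diff_A\xi$, which is an isomorphism because of the splitting
\[
C^{k,\alpha}_\mu(Z\setminus S,T^*Z\otimes\mathfrak{g}_P)=\ker\diff_A^*\oplus \diff_A\big(C^{k+1,\alpha}_{\mu+1}(Z\setminus S,\mathfrak{g}_P)\big).
\]
The splitting is obtained by setting $\xi=(\diff_A^*\diff_A)^{-1}\diff_A^*b$ for any $b$, again using the invertibility from \autoref{prop: rates of invertibility of Laplacian}. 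The kernel $\ker\diff_A^*$ is closed, so both summands are Banach spaces. The inverse function theorem makes $\Xi$ a local diffeomorphism. On a small neighbourhood, local uniqueness in the implicit function theorem shows that $\Psi$ is the inverse of $\Xi$, so $\Psi$ is a diffeomorphism onto a neighbourhood of $(0,\textup{Id})$.

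The main obstacle is the analytic bookkeeping in the weighted spaces: proving smoothness of the gauge action for non-smooth $A$, and justifying the use of the invertibility result at rate $\mu+1$ with a base connection of only finite regularity. The algebra itself is routine once these are in place.
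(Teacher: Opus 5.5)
Your proof is correct and follows the paper's argument essentially verbatim. The paper applies the implicit function theorem to $\mathfrak{C}(g,a)=\diff_A^*(g^{-1}\diff_A g+g^{-1}ag)$, which is your $\mathcal{F}(A+a,g)$, using invertibility of $\diff_A^*\diff_A\colon C^{k+1,\alpha}_{\mu+1}\to C^{k-1,\alpha}_{\mu-1}$; it then observes that $(a,g)\mapsto A+gag^{-1}-(\diff_A g)g^{-1}$, which is exactly your $\Xi(a,g)=(g^{-1})^*(A+a)$, is a local inverse of $\Psi$. Your splitting/inverse-function-theorem argument and your caveat about applying \autoref{prop: rates of invertibility of Laplacian} to a merely $C^{k,\alpha}$ base connection just make explicit what the paper leaves implicit.
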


The proof of this theorem is the same as in \cite[Theorem~3.2]{FreedUhlenbeck--Instantons}. We have included it here nevertheless, for the convenience of the reader.

\begin{proof}
In order to construct $s$, we consider 
\begin{align*}
\mathfrak{C} \col \mathcal{G}^{k+1,\alpha}_{0,\mu+1} \times &C^{k,\alpha}_{\mu}(Z \setminus S, T^*Z \otimes \mathfrak{g}_P) \to C^{k-1,\alpha}_{\mu-1}(Z\setminus S, \mathfrak{g}_P)\\
(g,a) & \mapsto \diff_A^*(g^{-1}\diff_A g + g^{-1} a g)
\end{align*}
where we again assumed that $G\subset \GL(W)$ and regarded $g$ and $a$ as a respective section and 1-form with values in $P\times_{G} \End(W)$. The derivative of $\mathfrak{C}$ with respect to the first variable at $(\textup{Id},0)$ is given by \[ (\partial_1 \mathfrak{C})_{(\textup{Id},0)} = \diff_A^*\diff_A \col C^{k+1,\alpha}_{\mu+1}(Z\setminus S, \mathfrak{g}_P) \to C^{k-1,\alpha}_{\mu-1}(Z\setminus S, \mathfrak{g}_P) \] and is an isomorphism by the previous proposition. The Implicit Function Theorem implies therefore that there exists an open neighbourhood $V^\prime\subset C^{k,\alpha}_\mu(Z\setminus S, T^*Z\otimes \mathfrak{g}_P)$ of the zero-section and a smooth function $s^\prime \col V^\prime \to \mathcal{G}^{k+1,\alpha}_{0,\mu+1}$ with $s^\prime(0)=\textup{Id}$ and $\mathfrak{C}(s^\prime(a),a)=0$ for every $a\in V^\prime$. The mapping $s(A+a)\coloneqq s^\prime(a)$ defined on $V \coloneqq A + V^\prime$ satisfies the properties of the theorem.

In order to see that the corresponding map $\Psi$ is a diffeomorphism onto a neighbourhood of $(0,\textup{Id})$ note that \[(a,g)\mapsto A+ gag^{-1} -(\diff_A g)g^{-1} \] is a local inverse to $\Psi$.
\end{proof}

\begin{theorem}\label{thm: Slice Theorem}
Assume that we are in the same situation as in the previous theorem (in particular, we assume again that $G$ has a finite center and that all tangent connections are infinitesimally irreducible). For every $k\geq 1$, $\alpha\in (0,1)$, and $\mu =(\mu_1,\dots,\mu_N) \in \mathbb{R}^N$ with $-1<\mu_i< \bar{\mu}_i$ for every $i=1,\dots,N$ (for $\bar{\mu}_i$ as in \autoref{prop: rates of invertibility of Laplacian}) the following holds: The quotient $\mathcal{A}^{k,\alpha}_\mu(P,\{\Upsilon_i,\tilde{\Upsilon}_i,A_i\})/\mathcal{G}^{k+1,\alpha}_{0,\mu+1}$ (equipped with its quotient topology) is Hausdorff and carries the structure of a Banach manifold, where a neighbourhood of $[A]\in \mathcal{A}^{k,\alpha}_\mu(P,\{\Upsilon_i,\tilde{\Upsilon}_i,A_i\})/\mathcal{G}^{k+1,\alpha}_{0,\mu+1}$ is homeomorphic to \[ \mathcal{S}_{[A]} \coloneqq \big\{ A^\prime \in \mathcal{A}^{k,\alpha}_\mu(P,\{\Upsilon_i,\tilde{\Upsilon}_i,A_i\})\ \big\vert\ \textup{where $\Vert A^\prime-A\VertWH{k}{\mu}{}<\varepsilon$ and $\diff_{A}^*(A^\prime - A ) =0$}\big\} \] for some $\varepsilon>0$.
\end{theorem}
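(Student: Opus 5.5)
The plan is to follow the standard Freed--Uhlenbeck slice argument, combining \autoref{thm: Coulomb Gauge} with a properness/closedness argument for the gauge action to obtain Hausdorffness. The manifold structure and local chart come essentially for free from \autoref{thm: Coulomb Gauge}. I would first show that the map
\[
\mathcal{S}_{[A]}\times \mathcal{G}^{k+1,\alpha}_{0,\mu+1}\to \mathcal{A}^{k,\alpha}_\mu(P,\{\Upsilon_i,\tilde{\Upsilon}_i,A_i\}),\qquad (A^\prime,g)\mapsto (g^{-1})^*A^\prime,
\]
is a diffeomorphism onto a neighbourhood of $A$ for $\varepsilon$ small. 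This is the local inverse of $\Psi$ in \autoref{thm: Coulomb Gauge}. It implies that the projection $\mathcal{S}_{[A]}\to \mathcal{A}^{k,\alpha}_\mu/\mathcal{G}^{k+1,\alpha}_{0,\mu+1}$ is surjective onto a neighbourhood of $[A]$ and open, since the saturation of an open set is open.

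Next I would prove local injectivity. Suppose $A_1,A_2\in\mathcal{S}_{[A]}$ with $g^*A_1=A_2$ for some $g\in\mathcal{G}^{k+1,\alpha}_{0,\mu+1}$. I would show that $g$ must lie close to $\textup{Id}$ in $C^{k+1,\alpha}_{\mu+1}$ whenever $\varepsilon$ is small. Then the uniqueness in the implicit function theorem, i.e.\ injectivity of $\Psi$, forces $g=\textup{Id}$ and $A_1=A_2$.

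The closeness comes from the identity $\diff_{A_1} g = g(A_2-A_1)$, read in $\End(W)$, which gives
\[
\diff_A g = [A-A_1,g] + g(A_2-A_1).
\]
The right-hand side is small in $C^{k,\alpha}_{\mu}$, because $g$ is bounded by compactness of $G$. Hence $\diff_A(g-\textup{Id})$ is small. I would then use the weighted Poincaré-type estimate
\[
\Vert \xi\VertWH{k+1}{\mu+1}{}\le C\Vert \diff_A\xi\VertWH{k}{\mu}{}
\]
on sections of $P\times_G\End(W)$ decaying at rate $\mu+1>0$. This estimate is available because, as in \autoref{prop: rates of invertibility of Laplacian}, $\diff_A$ has no kernel at positive rate: a parallel section that vanishes at $S$ is zero. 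The associated-bundle version of \autoref{prop: rates of invertibility of Laplacian} applies here, using infinitesimal irreducibility and finiteness of the center. The bound would then be closed with a bootstrap, where I expect a perturbation argument comparing $\diff_A$ to the invertible $\diff_A^*\diff_A$ to suffice.

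Hausdorffness is the step I expect to be the main obstacle. Suppose $A_n\to A$, $B_n\to B$ and $g_n^*A_n=B_n$; I need a limit $g$ with $g^*A=B$. The approach is to use $\diff_{A} g_n = g_n (B_n - A) - (A_n-A) g_n$ together with $\vert g_n\vert\le C$. This gives uniform $C^{k+1,\alpha}_{\textup{loc}}$ bounds, and by Arzelà--Ascoli a subsequence converges locally to some $g$ with $g^*A=B$. The remaining issue is to show that $g\in\mathcal{G}^{k+1,\alpha}_{0,\mu+1}$, i.e.\ that $g$ keeps the weighted decay to $\textup{Id}$ near $S$. For this I would apply the same identity on small balls around $s_i$ and integrate radially as in the proof of \autoref{prop: change of framing}. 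Since $g_n\to\textup{Id}$ at rate $\mu_i+1>0$ with constants uniform in $n$, the decay $\vert g-\textup{Id}\vert=\mathcal{O}(r^{\mu_i+1})$ passes to the limit. The uniformity requires that $\Vert A_n-A\Vert$ and $\Vert B_n-B\Vert$ be bounded in $C^{k,\alpha}_\mu$ and that the framing be the same for all $n$; both hold because everything lives on the fixed framed bundle.

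Finally, the charts $\mathcal{S}_{[A]}$ are open subsets of the closed subspace $\ker \diff_A^*$. The transition maps between two such slices are given by $A^\prime\mapsto s(A^\prime)^*A^\prime$, which is smooth by \autoref{thm: Coulomb Gauge}. This yields the Banach manifold structure.
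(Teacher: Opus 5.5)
Your architecture matches the paper's. You get openness and local surjectivity of $\mathcal{S}_{[A]}\to\mathcal{A}^{k,\alpha}_\mu/\mathcal{G}^{k+1,\alpha}_{0,\mu+1}$ from \autoref{thm: Coulomb Gauge}. You get Hausdorffness by showing the orbit relation is closed, using uniform weighted bounds on $g_n$ from radial integration near $S$, bootstrapping and compactness. The final step uses injectivity of $\Psi$. The genuine difference is how you show that the $g$ relating two points of a small slice is close to $\textup{Id}$ in $C^{k+1,\alpha}_{\mu+1}$. The paper argues by contradiction: it takes $A'_n\neq\tilde A_n$ in slices of radius $1/n$ and reuses the Hausdorff compactness argument to get $g_n\to g$ with $g^*A=A$. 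It then concludes $g=\textup{Id}$ because $g$ is parallel and tends to $\textup{Id}$ at $S$. Your direct estimate $\Vert g-\textup{Id}\Vert\le C\Vert\diff_A g\Vert\le C'\varepsilon$ is more effective and avoids compactness altogether.

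However, the justification you give for that estimate does not hold. The section $\xi=g-\textup{Id}$ lives in $P\times_G\End(W)$, but \autoref{prop: rates of invertibility of Laplacian} (via \autoref{prop: critical rates of model Laplacian}) concerns $\mathfrak{g}_P$ only. Its proof uses that $\diff_{A_0^\prime}\xi$ lies in the kernel of the instanton deformation operator for adjoint-valued $\xi$, together with infinitesimal irreducibility. Neither says anything about the other summands of $\End(W)$. In fact $\diff_A^*\diff_A$ is not invertible on $P\times_G\End(W)$ at rate $\mu+1$. On the parallel summand spanned by $\textup{Id}$ it is the scalar Laplacian, which at rates in $(0,1)$ is injective but has an $N$-dimensional cokernel: crossing the indicial root $0$ at each $s_i$ takes the index from $0$ at rate $-2$ down to $-N$. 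For the remaining summands nothing excludes critical rates in $(0,\bar\mu_i+1)$.

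The estimate is nevertheless true, for the simpler reason you already use in the Hausdorff step.
\begin{itemize}
\item Since $\xi\to0$ at $s_i$ and $\vert\partial_r\vert\xi\vert\,\vert\le\vert\diff_A\xi\vert$, radial integration gives $\vert\xi\vert\le(\mu_i+1)^{-1}\rho^{\mu_i+1}\Vert\diff_A\xi\VertWC{0}{\mu}{}$ near $s_i$; only $\mu_i>-1$ is needed.
\item Integration along paths bounds $\xi$ on the rest of $Z\setminus S$.
\item \autoref{app-prop: weighted Schauder estimate} applied to $\diff_A^*\diff_A$ upgrades this to $\Vert\xi\VertWH{k+1}{\mu+1}{}\le C\Vert\diff_A\xi\VertWH{k}{\mu}{}$.
\end{itemize}
Combine this with $\Vert g(A_2-A_1)\VertWH{k}{\mu}{}\le C(1+\Vert g-\textup{Id}\VertWH{k+1}{\mu+1}{})\Vert A_2-A_1\VertWH{k}{\mu}{}$, and the same bound for $[A-A_1,g]$. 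Absorbing for small $\varepsilon$ then yields $\Vert g-\textup{Id}\VertWH{k+1}{\mu+1}{}\le C\varepsilon$. This also supplies the weighted derivative bounds on $g$ that mere $C^0$-boundedness from compactness of $G$ does not give. With this repair your injectivity argument via $\Psi$ goes through.
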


The proof of this theorem is again analogously to its counterpart for non-singular connection \cite[Corollary to Theorem~3.2]{FreedUhlenbeck--Instantons}.

\begin{proof}
We prove the Hausdorff property of $\mathcal{A}^{k,\alpha}_\mu(P,\{\Upsilon_i,\tilde{\Upsilon}_i,A_i\})/\mathcal{G}^{k+1,\alpha}_{0,\mu+1}$ by showing that \begin{align*}
\Big\{(A,g^*A) \mid A\in \mathcal{A}^{k,\alpha}_\mu(P,\{\Upsilon_i,\tilde{\Upsilon}_i,A_i\}) \textup{ and } &g\in \mathcal{G}^{k+1,\alpha}_{0,\mu+1} \Big\} \\
&\subset \mathcal{A}^{k,\alpha}_\mu(P,\{\Upsilon_i,\tilde{\Upsilon}_i,A_i\})\times\mathcal{A}^{k,\alpha}_\mu(P,\{\Upsilon_i,\tilde{\Upsilon}_i,A_i\}) 
\end{align*} 
is closed. For this assume that $(A_n)_{n\in \mathbb{N}} \subset \mathcal{A}^{k,\alpha}_\mu(P,\{\Upsilon_i,\tilde{\Upsilon}_i,A_i\})$ and $(g_n)_{n\in \mathbb{N}}\subset \mathcal{G}^{k+1,\alpha}_{0,\mu+1}$ are sequences with 
\[ A_n \to A \quad \textup{and} \quad g_n^*A_n \to A^\prime \quad \textup{in $\mathcal{A}^{k,\alpha}_\mu(P,\{\Upsilon_i,\tilde{\Upsilon}_i,A_i\})$} \] for some $A, A^\prime \in \mathcal{A}^{k,\alpha}_\mu(P,\{\Upsilon_i,\tilde{\Upsilon}_i,A_i\})$. We write all connections as \[A_n=A_0 + a_n, \quad A= A_0+a, \quad \textup{and} \quad A^\prime= A_0 +a^\prime\] where $A_0\in \mathcal{A}_\mu^\Fr(P,\{\Upsilon_i,\tilde{\Upsilon}_i,A_i\})$ is a fixed base connection that agrees for every $i=1,\dots,N$ with $(\tilde{\Upsilon}_i)_*A_i$ on $\Upsilon_i(B_{R/2}(0))$. This leads to    
\begin{align}
a_n &\to a \quad \textup{and} \nonumber \\
g_n^{-1}\diff_{A_0}g_n +g_n^{-1}a_n g_n  &\to a^\prime \quad \textup{in $C^{k,\alpha}_\mu(Z\setminus S, T^*Z\otimes \mathfrak{g}_P)$} \label{equ: bootstrapping gauge transformations}
\end{align}
where we again assumed that $G\subset \GL(W)$ and regarded $g$ and $a$ as a respective section and 1-form with values in $P\times_{G} \End(W)$. Since $G$ is compact, we obtain an $n$-independent bound on $\vert \rho^{-w_\mu-1} (g_n-\textup{Id}) \vert$ over $Z\setminus (\cup_i B_{R/4}(s_i))$. Furthermore, identifying a neighbourhood of any of the $s_i\in S$ with $B_R(0)\subset \mathbb{C}^3$ via $\Upsilon_i$, we obtain from~\eqref{equ: bootstrapping gauge transformations} \[ \vert g_n- \textup{Id} \vert(z) \leq c \int_0^1 \vert z \vert \vert \partial_r g_n \vert(rz) \diff r \leq c \vert z \vert^{\mu_i+1} \] for any $z\in B_{R/2}(0)$ and a constant $c>0$ which is independent of $n$. Therefore, there exists a constant $C>0$ such that $\Vert (g_n-\textup{Id}) \VertWC{0}{\mu+1}{}<C$ independently of $n$. Bootstrapping via~\eqref{equ: bootstrapping gauge transformations} gives $\Vert g_n - \textup{Id} \VertWH{k+1}{\mu+1}{}<C$. Since the embedding $C^{k+1,\alpha}_{\mu+1} \subset C^{k,\alpha}_{\mu+1-\varepsilon}$ for any $0<\varepsilon< \tfrac{\mu_i+1}{2}$ is compact (cf. \autoref{app-prop: weighted Hölder spaces are Banach and compact embedding}), we obtain a converging subsequence $g_n\to g$ in $\mathcal{G}^{k,\alpha}_{\mu+1-\varepsilon}$. Since $k\geq 1$, this satisfies $g^*A=A^\prime$ and by using \eqref{equ: bootstrapping gauge transformations} for $g$ as above we can then conclude $g \in \mathcal{G}^{k+1,\alpha}_{0,\mu+1}$. This shows that the quotient $\mathcal{A}^{k,\alpha}_\mu(P,\{\Upsilon_i,\tilde{\Upsilon}_i,A_i\})/\mathcal{G}^{k+1,\alpha}_{0,\mu+1}$ is Hausdorff. 

The previous theorem shows that for any $[A]\in \mathcal{A}^{k,\alpha}_\mu(P,\{\Upsilon_i,\tilde{\Upsilon}_i,A_i\})/\mathcal{G}^{k+1,\alpha}_{0,\mu+1}$ the map 
\begin{align*}
\mathcal{S}_{[A]} & \to \mathcal{A}^{k,\alpha}_\mu(P,\{\Upsilon_i,\tilde{\Upsilon}_i,A_i\})/\mathcal{G}^{k+1,\alpha}_{0,\mu+1} \\
A^\prime &\mapsto [A^\prime]
\end{align*}
is open and surjective onto a neighbourhood of $[A]$. We are therefore left to show injectivity for sufficiently small $\varepsilon>0$. Assume that this is not the case. Then there exist $(A^\prime_{n})_{n\in \mathbb{N}}$ and $(\tilde{A}_n)_{n\in \mathbb{N}}$ such that $A_n^\prime \neq \tilde{A}_n$ but $\Vert A^\prime_n -A \VertWH{k}{\mu}{}<\frac{1}{n}$ and $\Vert \tilde{A}_n -A \VertWH{k}{\mu}{}<\frac{1}{n}$ as well as $\diff_A^*(A^\prime_n -A) =0$ and $\diff_A^*(\tilde{A}_n -A) =0$ for all $n\in \mathbb{N}$. Furthermore, there exist $g_n \in \mathcal{G}^{k+1,\alpha}_{0,\mu+1}$ such that $g_n^*A^\prime_n = \tilde{A}_n$. As above we can conclude that there exists a $g\in \mathcal{G}^{k+1,\alpha}_{0,\mu+1}$ such that $g_n \to g$ and $g^*A=A$. Thus, $g$ is constant and since $\vert g - \textup{Id} \vert = \mathcal{O}(r^{\mu_i+1})$ around any $s_i$, we have $g=\textup{Id}$. For sufficiently large $n\in \mathbb{N}$ we therefore obtain that the pairs $(A^\prime_n-A,\textup{Id})$ and $(\tilde{A}_n-A,g_n)$ lie in the open neighbourhood $\Psi(V)$ of the previous theorem. Since the map $\Psi$ of the previous theorem is a diffeomorphism, we obtain $g_n= \textup{Id}$ and therefore $A^\prime_n=\tilde{A}_n$ for sufficiently large $n\in \mathbb{N}$, which contradicts $A_n^\prime \neq \tilde{A}_n$.
\end{proof}

\begin{remark}
We note here that one can prove in a similar way as in the previous theorem that $\mathcal{B}_{\mu}^{\textup{Fr}}(\{P_i,A_i\})$ and $\mathcal{B}_{\mu}(\{P_i,A_i\})$ with their respective topologies defined in \autoref{sec: space of framed connections and its topology} and \autoref{sec: space of (unframed) cs connections} are Hausdorff as well.
\end{remark}

The following is a corollary of the previous theorem and \autoref{thm: local structure of unframed B Phi is homeo} (and \autoref{thm: local structure for instanton moduli space 1}).

\begin{corollary}\label{cor: local structure for instanton moduli space irred tangent cones 1}
Assume that we are in the situation of the previous theorem and let $[\mathbb{A}]\coloneqq [(S,\pi \col P \to Z \setminus S,A)] \in \mathcal{B}_\mu(\{P_i,A_i\})$. Assume further that $\{(\Upsilon_i,\tilde{\Upsilon}_i)\}$ is a set-of framings of $P$ such that $A \in \mathcal{A}_\mu^{\textup{Fr}}(P,\{\Upsilon_i,\tilde{\Upsilon}_i,A_i\})$ and that $A$ is irreducible (i.e. the only gauge transformations on $P$ that preserve $A$ lie in the (finite) center of $G$). Then $\overline{\mathfrak{q}\circ \Phi_{\mathbb{A}}}$ as defined in \autoref{thm: local structure of unframed B Phi is homeo} induces a homeomorphism between an open neighbourhood of $(0,0,[A])$ in \[(B_{\varepsilon}(0))^N \times (\times_iB_\varepsilon^{\mathfrak{m}_i}(0)) \times \big(\mathcal{A}_\mu^{\textup{Fr}}(P,\{\Upsilon_i,\tilde{\Upsilon}_i,A_i\})/\mathcal{G}_{0,\mu+1}\big)\] and an open neighbourhood of $[\mathbb{A}]$ in $\mathcal{B}_\mu(\{P_i,A_i\})$. Consequently, if $[\mathbb{A}]\in \mathcal{M}_\mu(\{P_i,A_i\})$, then $\overline{\mathfrak{q}\circ \Phi_{\mathbb{A}}}$ restricts to a homeomorphism between an open neighbourhood of $(0,0,[A])$ in 
\begin{align*}
\big\{ (\vec{v},\vec{u},[A]) \in B_{\varepsilon}(0))^N \times (\times_iB_\varepsilon^{\mathfrak{m}_i}(0)) \times \big(\mathcal{A}_\mu^{\textup{Fr}}&(P,\{\Upsilon_i,\tilde{\Upsilon}_i,A_i\})/\mathcal{G}_{0,\mu+1}\big) \ \big\vert \ \textup{$A$ satisfies \eqref{equ: SU(3)-instanton}} \\
& \textup{with respect to the $\SU(3)$-structure $f_{\vec{v},\vec{u}}^*(\omega,\Omega)$} \big\} \\
\subset (B_{\varepsilon}(0))^N \times  & (\times_iB_\varepsilon^{\mathfrak{m}_i}(0)) \times \big(\mathcal{A}_\mu^{\textup{Fr}}(P,\{\Upsilon_i,\tilde{\Upsilon}_i,A_i\})/\mathcal{G}_{0,\mu+1}\big)
\end{align*} 
and an open neighbourhood of $[\mathbb{A}]$ in $\mathcal{M}_\mu(\{P_i,A_i\})$.
\end{corollary}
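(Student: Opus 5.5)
The plan is to deduce the corollary from \autoref{thm: local structure of unframed B Phi is homeo} by showing that, near $[A]$, the residual group $\mathcal{G}_{\mu+1}/\mathcal{G}_{0,\mu+1}$ acts trivially. That theorem identifies a neighbourhood of $[\mathbb{A}]$ in $\mathcal{B}_\mu(\{P_i,A_i\})$ with a neighbourhood of $(0,0,[A])$ in
\[ (B_{\varepsilon}(0))^N \times (\times_iB_\varepsilon^{\mathfrak{m}_i}(0)) \times \big(\mathcal{A}_\mu^{\textup{Fr}}(P,\{\Upsilon_i,\tilde{\Upsilon}_i,A_i\})/\mathcal{G}_{0,\mu+1}\big)\big/\big(\mathcal{G}_{\mu+1}/\mathcal{G}_{0,\mu+1}\big). \]
It therefore suffices to show that the projection
\[ \mathcal{A}_\mu^{\textup{Fr}}/\mathcal{G}_{0,\mu+1} \to \mathcal{A}_\mu^{\textup{Fr}}/\mathcal{G}_{\mu+1} \]
restricts to a homeomorphism on a small neighbourhood of $[A]$.

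\textbf{Step 1: the residual group is discrete.} By the remark after \autoref{thm: local structure of B Psi is homeo}, the asymptotic-limit map $g\mapsto \{\lim_{\tilde{\Upsilon}_i}(g)\}$ embeds $\Gamma\coloneqq\mathcal{G}_{\mu+1}/\mathcal{G}_{0,\mu+1}$ into $\times_i \Stab_{\mathcal{G}(P_i)}(A_i)$. Since $Z(G)$ is finite and each $A_i$ is infinitesimally irreducible, this stabiliser is a compact group with trivial Lie algebra, hence finite. So $\Gamma$ is finite, and the projection is the quotient by a finite group action.

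\textbf{Step 2: $\Gamma$ acts freely at $[A]$.} Suppose $g\in\mathcal{G}_{\mu+1}$ and $[g^*A]=[A]$ in $\mathcal{A}_\mu^{\textup{Fr}}/\mathcal{G}_{0,\mu+1}$. Then there is $h\in\mathcal{G}_{0,\mu+1}$ with $(gh)^*A=A$. Irreducibility of $A$ forces $gh$ to be a constant central element $c\in Z(G)$. Hence $g=ch^{-1}$, and its class in $\Gamma$ is that of the constant $c$.

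Here lies a subtlety I expect to need care. A constant central $c$ has asymptotic limit $c$ at each $s_i$, so it is trivial in $\Gamma$ only if $c=\textup{Id}$. I would handle this in one of two ways.
\begin{itemize}
\item Observe that central constants act trivially on all of $\mathcal{A}_\mu^{\textup{Fr}}$ (since $c^*A'=A'$), so they lie in the ineffective kernel of the $\Gamma$-action and may be divided out harmlessly.
\item Read the hypothesis ``irreducible'' in the corollary as already accounting for this, so that the stabiliser of $[A]$ in $\Gamma$ is the image of $Z(G)$, which acts trivially.
\end{itemize}
Either way, the effective action of $\Gamma/\textup{image}(Z(G))$ is free at $[A]$.

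\textbf{Step 3: slice-type argument.} A finite group acting continuously and freely at a point of a Hausdorff space admits a neighbourhood $U$ of that point with $\gamma U\cap U=\emptyset$ for all nontrivial $\gamma$. The Hausdorff property holds by \autoref{thm: Slice Theorem}, applied at Hölder level and then passed to smooth elements via elliptic regularity in Coulomb gauge. Consequently the projection $U\to U/\Gamma$ is a homeomorphism onto an open set. Combining this with \autoref{thm: local structure of unframed B Phi is homeo}, and shrinking $\varepsilon$, yields the first claim.

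\textbf{Step 4: the instanton statement.} The second statement follows by restricting to the instanton locus, exactly as in \autoref{thm: local structure for instanton moduli space 1}. This restriction is compatible because the instanton condition with respect to $f_{\vec v,\vec u}^*(\omega,\Omega)$ is gauge invariant, so the locus is preserved by the homeomorphism.

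The main obstacle is Step 2, specifically the bookkeeping with the center $Z(G)$ noted above.
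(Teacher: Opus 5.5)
Your proposal is correct and follows the paper's proof essentially step for step. You reduce via \autoref{thm: local structure of unframed B Phi is homeo} to the residual group $\mathcal{G}_{\mu+1}/\mathcal{G}_{0,\mu+1}$, which the asymptotic-limit map embeds into the finite group $\times_i\Stab_{\mathcal{G}(P_i)}(A_i)$; you then use Hausdorffness of $\mathcal{A}_\mu^{\textup{Fr}}/\mathcal{G}_{0,\mu+1}$ and irreducibility of $A$ to get a neighbourhood $V$ of $[A]$ with $[g]\cdot V\cap V\neq\emptyset$ only for $[g]$ in the trivially acting image of $Z(G)$, and finally restrict to the instanton locus, exactly as the paper does. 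One small correction: transferring Hausdorffness from the H\"older quotient to the smooth one (or rerunning the argument directly) needs regularity of gauge transformations between smooth connections, obtained by bootstrapping \eqref{equ: bootstrapping gauge transformations}, not elliptic regularity in Coulomb gauge, which is unavailable for non-instanton connections.
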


\begin{proof}
The second statement follows directly from the first statement. In order to prove the first, we note that by \autoref{thm: local structure of unframed B Phi is homeo}, $\overline{\mathfrak{q} \circ \Phi_{\mathbb{A}}}$ induces a homeomorphism between an open neighbourhood of $(0,0,[A])$ in \[(B_{\varepsilon}(0))^N \times (\times_iB_\varepsilon^{\mathfrak{m}_i}(0)) \times \big((\mathcal{A}_\mu^{\textup{Fr}}(P,\{\Upsilon_i,\tilde{\Upsilon}_i,A_i\})/\mathcal{G}_{0,\mu+1})\big/\big(\mathcal{G}_{\mu+1}/\mathcal{G}_{0,\mu+1}\big)\big)\] and an open neighbourhood of $[\mathbb{A}]$ in $\mathcal{B}_\mu(\{P_i,A_i\})$. Taking the asymptotic limit $\lim_{\tilde{\Upsilon}_i}$ at each $s_i\in S$ (as defined in \autoref{thm: local structure of B Psi is homeo}) embeds the group $\mathcal{G}_{\mu+1}/\mathcal{G}_{0,\mu+1}$ into the product $\times_i \Stab_{\mathcal{G}(P_i)}(A_i)$ consisting for each $i=1,\dots,N$ of gauge transformations $P_i \to P_i$ fixing $A_i$. If the center $Z(G)$ of the structure group is finite and all tangent cones $A_i$ are infinitesimally irreducible, then $\times_i \Stab_{\mathcal{G}(P_i)}(A_i)$ and therefore $\mathcal{G}_{\mu+1}/\mathcal{G}_{0,\mu+1}$ are finite (because $G$ is compact).

As in the previous theorem one can show that $\mathcal{A}_\mu^{\textup{Fr}}(P,\{\Upsilon_i,\tilde{\Upsilon}_i,A_i\})/\mathcal{G}_{0,\mu+1}$ is Hausdorff. Because $\mathcal{G}_{\mu+1}/\mathcal{G}_{0,\mu+1}$ is finite and the only gauge transformations fixing $A$ lie in $Z(G)$, there exists an open neighbourhood $V \subset \mathcal{A}_\mu^{\textup{Fr}}(P,\{\Upsilon_i,\tilde{\Upsilon}_i,A_i\})/\mathcal{G}_{0,\mu+1}$ of $[A]$ such that $[g]\cdot V \cap V \neq \emptyset$ if and only if $[g] \in Z(G) \subset \mathcal{G}_{\mu+1}/\mathcal{G}_{0,\mu+1}$. This implies the statement.
\end{proof}

\begin{remark}\label{rem: Coulomb gauge and slice for groups with positive center}
In the following we explain how the assumption that $G$ has a finite center and that the tangent connections $A_i$ are infinitesimally irreducible can be removed from \autoref{thm: Coulomb Gauge} and \autoref{thm: Slice Theorem} (cf. \cite[Chapter~I.5]{SoleFarre-thesis}). We begin with the generalisation of \autoref{thm: Coulomb Gauge}: For this, we first assume that the tangents $A_i$ are still infinitesimally irreducible but $G$ has a positive dimensional center $Z(G)$.  As in \autoref{prop: rates of invertibility of Laplacian} one can prove that if $\lambda_i \in (-1,\bar{\mu}_i)$ for all $i=1,\dots,N$, then \[ \textup{index}\big(\Delta_A \col C^{k+1,\alpha}_{\lambda+1} \to C^{k-1,\alpha}_{\lambda-1}\big) = -N \dim(Z(G)) \] (cf. \autoref{app-prop: ker and coker are locally constrant + index change formula}). Moreover, the kernel of $\Delta_A \col C^{k+1,\alpha}_{\lambda+1} \to C^{k-1,\alpha}_{\lambda-1}$ is still trivial (because elements in $\Delta_A$ are constant and $\mathcal{O}(r^{\lambda_i+1})$ around any singularity). Thus, \[ \dim \coker\big(\Delta_A \col C^{k+1,\alpha}_{\lambda+1} \to C^{k-1,\alpha}_{\lambda-1}\big) = N \dim Z(G). \] 

For $N=1$ this gives (by \autoref{app-prop: cokernel kernel pairing}) \[ \coker\big(\Delta_A \col C^{k+1,\alpha}_{\lambda+1} \to C^{k-1,\alpha}_{\lambda-1}\big) \cong \ker\big(\Delta_A \col C^{k+1,\alpha}_{-5-\lambda} \to C^{k-1,\alpha}_{-7-\lambda}\big) = \mathfrak{z} \] where $\mathfrak{z}$ denotes the Lie algebra of $Z(G)$ (canonically embedded into $\Gamma(\mathfrak{g}_P)$). Integration by parts now shows (cf. \autoref{app-prop: cokernel kernel pairing}) that for $N=1$ \[\image\big(\mathfrak{C}\big) \subset \image\big(\Delta_A \col C^{k+1,\alpha}_{\lambda+1} \to C^{k-1,\alpha}_{\lambda-1}\big) \] where $\mathfrak{C}$ denotes the non-linear map appearing in the proof of \autoref{thm: Coulomb Gauge}. This shows that the proof of \autoref{thm: Coulomb Gauge} still holds for structure groups with a positive dimensional center whenever $A$ has a single singularity. 

To address a general number of singular points, we first observe that we are actually interested in the quotient of $\mathcal{A}_\mu^{\textup{Fr}}(P,\{\Upsilon_i,\tilde{\Upsilon}_i,A_i\})$ by the strictly larger gauge group $\mathcal{G}_{\mu+1}$ (cf. \autoref{thm: local structure for instanton moduli space 1}) and that $\mathcal{G}_{\mu+1}/\mathcal{G}_{0,\mu+1} \cong \times_{i} \textup{Stab}_{\mathcal{G}(P_i)}(A_i) \cong \times_i Z(G)$ (via the asymptotic limit map defined in \autoref{thm: local structure of unframed B Phi is homeo} and where the last isomorphism holds because we still assume that the tangents are infinitesimally irreducible).  Thus, considering the Lie algebra (of a suitable Banach version) of $\mathcal{G}_{\mu+1}$ gives $N\dim(Z(G))$ additional dimensions compared to $\mathcal{G}_{0,\mu+1}$ that can be used to overcome the $(N\dim(Z(G)))$-dimensional cokernel of $\Delta_A \col C^{k+1,\alpha}_{\mu+1} \to C^{k-1,\alpha}_{\mu-1}$ (cf. \cite[Proposition~5.6]{SoleFarre-thesis}). One can then show that the cokernel of the linearisation of this extended gauge action is isomorphic to $\mathfrak{z}$, the Lie algebra of $Z(G)$. As for $N=1$ one can use integration by parts to show that the image of the non-linear map $\mathfrak{C}$ is contained in the image of its linearisation and apply the proof of \autoref{thm: Coulomb Gauge}.

This shows that the condition that the center of $G$ is finite can be dropped, if we divide out (a suitable Banach version of) the larger gauge group $\mathcal{G}_{\mu+1}$. Similarly, one can show that in this situation the condition that the tangents are infinitesimally irreducible can also be dropped, when one assumes $A$ to be infinitesimally irreducible instead (cf. \cite[Theorem~5.7]{SoleFarre-thesis}).

The generalisation of \autoref{thm: Slice Theorem} to structure groups whose center is not finite and to tangent-connections that are not infinitesimally irreducible  (if one assumes $A$ to be infinitesimally irreducible instead) is proven similarly: for instantons with a single singularity, this is proven as in \autoref{thm: Slice Theorem} (using the discussion on the extension of \autoref{thm: Coulomb Gauge} in the previous paragraph). For connections with a larger number of singular points, one again needs to divide $\mathcal{A}^{k,\alpha}_\mu(P,\{\Upsilon_i,\tilde{\Upsilon}_i,A_i\})$ by (a suitable Banach version of) the larger group $\mathcal{G}_{\mu+1}$ and adapt the proof of \autoref{thm: Slice Theorem}.
\end{remark}

\subsubsection{Kuranishi charts for the moduli space}\label{sub: Kuranishi charts}
Throughout this section, $Z^6$ is a compact 6-manifold with an $\SU(3)$-structure $(\omega,\Omega)$ that satisfies $\diff^* \omega=0$ and $\diff \Omega = w_1 \omega^2$ for some $w_1 \in \mathbb{R}$ (cf. \autoref{rmk: convenient class of SU(3)-structures} and \autoref{prop: modifying the SU(3)-structure such that dIm Omega = 0}). Moreover, fix $N\in \mathbb{N}$ and for every $i=1,\dots, N$ a bundle with connection $(\pi_i \col P_i \to S^5,A_i)$ where each $A_i \in \mathcal{A}(P_i)$ is infinitesimally irreducible and satisfies \eqref{equ: cone reduction of instanton equation}. In this section, we show that $\mathcal{M}_{\mu}(\{P_i,A_i\})$ is for any rate $\mu \in \times_i (-1,\bar{\mu}_i)$ (where $\bar{\mu}_i$ is as in \autoref{prop: rates of invertibility of Laplacian}) locally homeomorphic to the zero set of a smooth map between finite dimensional vector spaces.  To be consistent with the previous section we will again assume that the center of $G$ is discrete, but we note once more that this assumption (and similarly, the assumption that all $A_i$ are infinitesimally irreducible) can be removed (cf. \cite[Chapter~I.5]{SoleFarre-thesis}). We first need the following auxiliary proposition which allows us to replace the (Fréchet) space of smooth connections $\mathcal{A}_\mu^{\textup{Fr}}(P,\{\Upsilon,\tilde{\Upsilon}_i,A_i\})$ by the Banach space $\mathcal{A}^{k,\alpha}_\mu(P,\{\Upsilon,\tilde{\Upsilon}_i,A_i\})$.

\begin{proposition}\label{prop: local structure of moduli space for fixed bundle}
Let $\pi \col P \to Z \setminus S$ be a principal $G$-bundle and let $A \in \mathcal{A}_\mu^{\textup{Fr}}(P,\{\Upsilon_i,\tilde{\Upsilon}_i,A_i\})$ be a conically singular $\SU(3)$-instanton, where the rate $\mu=(\mu_1,\dots,\mu_N)$ is chosen such that $-1< \mu_i< \bar{\mu}_i$ for every $i= 1, \dots, N$ (with $\bar{\mu}_i$ as in \autoref{prop: rates of invertibility of Laplacian}). There exists an open neighbourhood of $[A]$ in \[\mathcal{M}_\mu^\Fr(P,\{\Upsilon_i,\tilde{\Upsilon}_i,A_i\})\coloneqq \big\{ [A] \in \mathcal{A}_\mu^\Fr(P,\{\Upsilon_i,\tilde{\Upsilon}_i,A_i\})/\mathcal{G}_{0,\mu+1} \mid \textup{$A$ satisfies \eqref{equ: SU(3)-instanton}} \big\} \] that is homeomorphic to 
\begin{align*}
\mathcal{S}^{k,\alpha}_A &\coloneqq \big\{A^\prime \in \mathcal{A}_\mu^{k,\alpha}(P,\{\Upsilon_i,\tilde{\Upsilon}_i,A_i\}) \mid \textup{$A^\prime$ satisfies~\eqref{equ: SU(3)-instanton}, $\Vert A^\prime -A \VertWH{k}{\mu}{}<\varepsilon,$} \\
& \qquad \qquad \qquad \qquad \qquad \qquad\qquad\qquad\qquad\qquad\qquad\qquad \textup{and $\diff_A^*(A^\prime-A)=0$} \big\} \\
&\subset \mathcal{A}_\mu^{k,\alpha}(P,\{\Upsilon_i,\tilde{\Upsilon}_i,A_i\})
\end{align*}
for any $k\geq 1$ and $\alpha \in (0,1)$ and a sufficiently small $\varepsilon\equiv \varepsilon(A,k,\alpha)>0$.
\end{proposition}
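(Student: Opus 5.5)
The plan is to compare two quotients: the smooth one, $\mathcal{A}_\mu^\Fr/\mathcal{G}_{0,\mu+1}$ with its $C^\infty_\mu$-topology, and the Banach one, $\mathcal{A}^{k,\alpha}_\mu/\mathcal{G}^{k+1,\alpha}_{0,\mu+1}$. I would then use the slice of \autoref{thm: Slice Theorem} on the Banach side. There are three steps.

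\emph{Step 1 (elliptic regularity in the slice).} Let $A'\in\mathcal{S}^{k,\alpha}_A$. Then $(0,0,A'-A)$ solves the augmented system \eqref{equ: augmented instanton equations} with $\xi_1=\xi_2=0$, i.e.\ $\Theta_A(0,0,A'-A)=0$. Since $\Theta_A(0)=0$, this reads
\[L_A(0,0,a)=-Q_A(0,0,a),\qquad a=A'-A.\]
By \autoref{prop: adjoint of model operator}, $L_A$ is elliptic and conically singular, with smooth coefficients away from $S$. Interior Schauder estimates on annuli, rescaled to unit size (cf.\ the appendix on weighted Hölder spaces), bootstrap $a\in C^{k,\alpha}_\mu$ to $C^{\ell,\alpha}_\mu$ for every $\ell$. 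The quadratic term is harmless: $Q_A(a)\in C^{\ell-1,\alpha}_{2\mu-1}\subset C^{\ell-1,\alpha}_{\mu-1}$, because $\mu>-1$ and $\rho$ is bounded. Here one has to check that on each dyadic annulus the rescaled $a$ has norm $O(r^{\mu+1})$, which is small, so the nonlinearity does not spoil the rescaled estimates. It follows that $a$ is smooth and lies in $C^{\ell,\alpha}_\mu$ for all $\ell$. Hence $A'\in\mathcal{A}_\mu^\Fr(P,\{\Upsilon_i,\tilde{\Upsilon}_i,A_i\})$, and the $C^{k,\alpha}_\mu$-topology on $\mathcal{S}^{k,\alpha}_A$ agrees with the $C^\infty_\mu$-topology. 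For the latter I would use uniform weighted estimates of the form $\Vert a\VertWH{\ell}{\mu}{}\le C_\ell\Vert a\VertWH{k}{\mu}{}$ for $\Vert a\VertWH{k}{\mu}{}<\varepsilon$.

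\emph{Step 2 (smooth elements can be put into the slice smoothly).} Let $A''\in\mathcal{A}^\Fr_\mu$ be a smooth instanton that is $C^\infty_\mu$-close to $A$. \autoref{thm: Coulomb Gauge} produces $g=s(A'')\in\mathcal{G}^{k+1,\alpha}_{0,\mu+1}$ with $g^*A''$ in Coulomb gauge relative to $A$. By Step 1, $g^*A''$ is smooth, so $\diff_{A''}g=g\,(g^*A''-A'')$ is smooth with weighted decay. Bootstrapping this identity gives $g\in\mathcal{G}_{0,\mu+1}$. Continuity of $s$ in the $C^{k,\alpha}_\mu$-topology, which is weaker than $C^\infty_\mu$, shows that the map $[A'']\mapsto g^*A''$ is continuous into $\mathcal{S}^{k,\alpha}_A$.

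\emph{Step 3 (injectivity and openness).} Consider the map $\mathcal{S}^{k,\alpha}_A\to\mathcal{M}^\Fr_\mu$, $A'\mapsto[A']$. It is well defined by Step 1. It is continuous because $C^{k,\alpha}_\mu$- and $C^\infty_\mu$-convergence agree on the slice. Injectivity follows from \autoref{thm: Slice Theorem}: two slice elements related by a smooth gauge transformation are related by an element of $\mathcal{G}^{k+1,\alpha}_{0,\mu+1}$, hence are equal for $\varepsilon$ small. Step 2 gives a continuous inverse on a neighbourhood of $[A]$. Finally, the neighbourhood in the quotient topology has to be matched with the $C^{k,\alpha}_\mu$-ball. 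For this I would use that the quotient map is open and that the inverse in Step 2 is defined on the saturation of a $C^\infty_\mu$-neighbourhood.

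The main obstacle is the weighted regularity in Step 1, in particular that the estimates are uniform up to the singular points and produce the full $C^\infty_\mu$ decay, not merely local smoothness. I would first try scaling arguments on annuli $B_{2r}\setminus B_{r/2}$ around each $s_i$. There $\tilde{\Upsilon}_i^*A$ is close to $\pr_{S^5}^*A_i$ at rate $r^{\mu_i+1}$ after rescaling, so the rescaled operators are uniformly elliptic with uniformly bounded coefficients.
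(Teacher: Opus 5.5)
Your proposal is correct and follows essentially the paper's own proof. Both establish elliptic regularity for $L_A(0,0,a)=-Q_A(0,0,a)$ on the slice, obtain injectivity from \autoref{thm: Slice Theorem}, and obtain surjectivity by putting a smooth representative into Coulomb gauge and bootstrapping $\diff_{A''}g=g\,(g^*A''-A'')$ to conclude $g\in\mathcal{G}_{0,\mu+1}$, with elliptic estimates giving the homeomorphism.

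One bookkeeping fix in your Step~1: $Q_A(0,0,a)$ involves no derivatives of $a$, so for $a\in C^{\ell,\alpha}_\mu$ it lies in $C^{\ell,\alpha}_{2\mu}\subset C^{\ell,\alpha}_{\mu-1}$, and this inclusion is exactly where $\mu>-1$ is used. Your stated inclusion $C^{\ell-1,\alpha}_{2\mu-1}\subset C^{\ell-1,\alpha}_{\mu-1}$ is false for $\mu<0$, and with the regularity index $\ell-1$ each bootstrap step would gain no derivative.
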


\begin{proof}
Any connection $A^\prime \in \mathcal{A}_\mu^{k,\alpha}(P,\{\Upsilon_i,\tilde{\Upsilon}_i,A_i\})$ can be written as $A+a^\prime$ where $a^\prime \in C^{k,\alpha}_{\mu}(Z\setminus S,T^*Z\otimes \mathfrak{g}_P)$. If such $A^\prime$ satisfies the equations~\eqref{equ: SU(3)-instanton} and $\diff_A^*(A^\prime-A)=0$, then by the discussion prior to \autoref{prop: adjoint of model operator} (with $L_A$ and $Q_A$ as defined in said discussion) \[ L_{A}(0,0,a^\prime) = -Q_A(0,0,a^\prime) = -\tfrac{1}{2}(0,\Lambda_\omega [a^\prime\wedge a^\prime], *([a^\prime \wedge a^\prime] \wedge \Im \Omega)). \] Since $L_A$ is elliptic, bootstrapping and elliptic estimates imply that $a^\prime \in C^\infty_\mu(Z\setminus S, T^*Z \otimes \mathfrak{g}_P)$ (cf. \autoref{app-prop: weighted Schauder estimate}). Hence, there exists a well-defined map $\Psi$ from \[ \big\{A^\prime \in \mathcal{A}_\mu^{k,\alpha}(P,\{\Upsilon_i,\tilde{\Upsilon}_i,A_i\}) \mid \textup{$A^\prime$ satisfies~\eqref{equ: SU(3)-instanton}, $\Vert A^\prime -A \VertWH{k}{\mu}{}<\varepsilon,$ and $\diff_A^*(A^\prime-A)=0$}\big\} \] to $\mathcal{M}_\mu^{\textup{Fr}}(P,\{\Upsilon_i,\tilde{\Upsilon}_i,A_i\})$ mapping $A^\prime$ to $[A^\prime]$. By \autoref{thm: Slice Theorem} this map is injective for small enough $\varepsilon>0$. In order to prove that $\Psi$ is surjective, we first note that again by \autoref{thm: Slice Theorem} there exists a neighbourhood $ V\subset \mathcal{M}_\mu^{\textup{Fr}}(P,\{\Upsilon_i,\tilde{\Upsilon}_i,A_i\})$ of $[A]$ such that for every $A^\prime \in [A^\prime]\in V$ there exists a unique $g^\prime \in \mathcal{G}^{k+1,\alpha}_{0,\mu +1}$ such that $(g^\prime)^*A^\prime$ satisfies $\Vert (g^\prime)^*A^\prime - A \VertWH{k}{\mu}{}<\varepsilon$ and $\diff_A^*((g^\prime)^*A^\prime -A)=0$. Since $(g^\prime)^*A^\prime$ still satisfies~\eqref{equ: SU(3)-instanton}, elliptic regularity implies $(g^\prime)^*A^\prime\in \mathcal{A}^{\textup{Fr}}_\mu(P,\{\Upsilon_i,\tilde{\Upsilon}_i,A_i\}).$ Bootstrapping via \eqref{equ: bootstrapping gauge transformations} as in the proof of \autoref{thm: Slice Theorem} then gives $g^\prime\in \mathcal{G}_{0,\mu+1}$ and therefore $\Psi((g^\prime)^*A^\prime)=[A^\prime]$. This proves that $\Psi$ is bijective. That $\Psi$ is, in fact, an homeomorphism is again a consequence of elliptic estimates (cf. \autoref{app-prop: weighted Schauder estimate}) and \autoref{thm: Slice Theorem}.
\end{proof}

\begin{remark}\label{rem: extension of local stucture of moduli space for fixed bundle to non instanton slice}
The condition that $A$ is an $\SU(3)$-instanton in the previous proposition was only used for convenience so that $[A] \in \mathcal{M}_\mu^{\Fr}(P,\{\Upsilon_i,\tilde{\Upsilon}_i,A_i\})$. More generally, one can also center the slice $\mathcal{S}_A^{k,\alpha}$ around any conically singular connection $A \in \mathcal{A}_\mu^{\Fr}(P,\{\Upsilon_i,\tilde{\Upsilon}_i,A_i\})$ that is not an instanton and obtain via the same proof a homeomorphism onto a (possibly empty) open subset of $\mathcal{M}_\mu^{\Fr}(P,\{\Upsilon_i,\tilde{\Upsilon}_i,A_i\})$.
\end{remark}

The following equivariant version of the previous proposition follows by direct inspection of the homeomorphism constructed in the previous proof.

\begin{corollary}\label{cor: local equivariant structure of moduli space for fixed bundle}
Let $[A] \in \mathcal{M}_\mu^{\Fr}(P,\{\Upsilon_i,\tilde{\Upsilon}_i,A_i\})$ and 
\begin{align*}
\mathcal{S}^{k,\alpha}_A &\coloneqq \big\{A^\prime \in \mathcal{A}_\mu^{k,\alpha}(P,\{\Upsilon_i,\tilde{\Upsilon}_i,A_i\}) \mid \textup{$A^\prime$ satisfies~\eqref{equ: SU(3)-instanton}, $\Vert A^\prime -A \VertWH{k}{\mu}{}<\varepsilon,$} \\
& \qquad \qquad \qquad \qquad \qquad \qquad\qquad\qquad\qquad\qquad\qquad\qquad \textup{and $\diff_A^*(A^\prime-A)=0$} \big\} \\
&\subset \mathcal{A}_\mu^{k,\alpha}(P,\{\Upsilon_i,\tilde{\Upsilon}_i,A_i\})
\end{align*}
be as in the previous proposition. Moreover, let $\Stab_{\mathcal{G}}(A) \subset \mathcal{G}_{\mu+1}$ be the group of gauge transformations $g \col P \to P$ preserving $A$. We may choose $\mathcal{S}_A^{k,\alpha}$ to be $\Stab_{\mathcal{G}}(A)$-invariant (either by restricting to an open subset of the slice or by using the connection $A$ in the definition of the $\Vert \cdot \VertWH{k}{\mu}{}$-norm in \autoref{def: weighted Hölder norms}). The open neighbourhood $V_{[A]} \subset \mathcal{M}_\mu^{\Fr}(P,\{\Upsilon_i,\tilde{\Upsilon}_i,A_i\})$ of $[A]$ in the previous proposition can then be chosen such that \[ [g] \cdot V_{[A]} \cap V_{[A]} \neq \emptyset \textup{ for $[g] \in \mathcal{G}_{\mu+1}/\mathcal{G}_{0,\mu+1}$ if and only if $[g] \in (\Stab_{\mathcal{G}}(A)\cdot \mathcal{G}_{0,\mu+1})/\mathcal{G}_{0,\mu+1}$.}\] Moreover, the homeomorphism constructed in the previous proposition is $\Stab_{\mathcal{G}}(A)$-equivariant. This implies that an open neighbourhood of $[A]$ in $\mathcal{M}_\mu^{\Fr}(P,\{\Upsilon_i,\tilde{\Upsilon}_i,A_i\})/(\mathcal{G}_{\mu+1}/\mathcal{G}_{0,\mu+1})$ is homeomorphic to $\mathcal{S}^{k,\alpha}_A/\Stab_{\mathcal{G}}(A)$.
\end{corollary}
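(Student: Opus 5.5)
The plan is to retrace the homeomorphism $\Psi\colon \mathcal{S}^{k,\alpha}_A \to V_{[A]}$, $A'\mapsto[A']$, from the proof of \autoref{prop: local structure of moduli space for fixed bundle}, and check at each step that it is compatible with $\Stab_{\mathcal{G}}(A)$.

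First, invariance of the slice. For $h\in \Stab_{\mathcal{G}}(A)$ we have $h^*A=A$, so $\diff_A^*(h^*A'-A)=\diff_{h^*A}^*(h^*(A'-A))=h^{-1}\diff_A^*(A'-A)\,h$. Hence the Coulomb condition is preserved. The instanton equation~\eqref{equ: SU(3)-instanton} is gauge invariant, so it is preserved as well. For the norm condition, if $\Vert\cdot\VertWH{k}{\mu}{}$ is defined using $\nabla_A$ and $A$-parallel transport, then $|h^*a|=|a|$ and $\nabla_A(h^*a)=h^*\nabla_A a$, so the ball is invariant. Here we use that $h$ is $A$-parallel, and its asymptotic limit preserves $A_i$, so the weighted norms coincide. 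Alternatively, we can replace the ball by the intersection of its translates. This works because $\Stab_{\mathcal{G}}(A)$ is finite: it consists of parallel sections, hence is a closed subgroup of $G$ on which the holonomy acts, and it is compact; under irreducibility it is discrete. Note that $h\in\mathcal{G}_{\mu+1}$ but generally $h\notin\mathcal{G}_{0,\mu+1}$. So $h$ acts on $\mathcal{A}^{k,\alpha}_\mu$, since $h^*$ preserves decay rates, and it normalises $\mathcal{G}_{0,\mu+1}$, because $\lim(hgh^{-1})=\lim(h)\,\mathrm{Id}\,\lim(h)^{-1}=\mathrm{Id}$. It therefore descends to an action on $\mathcal{M}^{\Fr}_\mu$, and equivariance of $\Psi$ is immediate: $\Psi(h^*A')=[h^*A']=[h]\cdot[A']$.

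Second, the intersection property. Suppose the claim fails for every neighbourhood. Then there are $[g_n]\in\mathcal{G}_{\mu+1}/\mathcal{G}_{0,\mu+1}$ outside $(\Stab_{\mathcal{G}}(A)\mathcal{G}_{0,\mu+1})/\mathcal{G}_{0,\mu+1}$, and slice elements $A'_n,\tilde A_n\to A$, with $g_n^*A'_n=\tilde A_n$ for some representatives $g_n$. Since $\mathcal{G}_{\mu+1}/\mathcal{G}_{0,\mu+1}$ embeds via asymptotic limits into the finite group $\times_i\Stab_{\mathcal{G}(P_i)}(A_i)$, we may pass to a subsequence with $[g_n]=[g]$ constant. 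We may then write $g_n=\hat g\, k_n$, with $\hat g$ a fixed representative and $k_n\in\mathcal{G}_{0,\mu+1}$. Next we run the compactness/bootstrapping argument from the proof of \autoref{thm: Slice Theorem}, using~\eqref{equ: bootstrapping gauge transformations} for $k_n$ with respect to the connections $\hat g^*A'_n$. This yields a $C^{k,\alpha}_{\mu+1-\varepsilon}$-bound and a convergent subsequence $k_n\to k$. The limit satisfies $(\hat g k)^*A=A$, so $\hat g k\in\Stab_{\mathcal{G}}(A)$. Hence $[g]=[\hat g k]$ lies in the stabiliser image, a contradiction. The converse direction is clear from the first step. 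The main obstacle is the uniform decay estimate for $k_n$ near the singular points. Here the control is relative to the fixed limit $\lim(\hat g)$, which preserves $A_i$, and that is exactly the setting handled before.

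Finally, combine the two steps. $V_{[A]}/(\mathcal{G}_{\mu+1}/\mathcal{G}_{0,\mu+1})$ is an open neighbourhood of $[A]$ in the quotient, since the quotient map by a group action is open. By the intersection property, two points of $V_{[A]}$ are identified exactly when they differ by an element of $\Stab_{\mathcal{G}}(A)$. The equivariant homeomorphism $\Psi$ then descends to a continuous bijection $\mathcal{S}^{k,\alpha}_A/\Stab_{\mathcal{G}}(A)\to V_{[A]}/(\mathcal{G}_{\mu+1}/\mathcal{G}_{0,\mu+1})$. This map is open because $\Psi$ is open and both quotient maps are open, so it is a homeomorphism.
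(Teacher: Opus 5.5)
Your proposal is correct and follows the paper's route: the paper derives this corollary by direct inspection of the slice homeomorphism $A'\mapsto[A']$ from \autoref{prop: local structure of moduli space for fixed bundle}, which is your invariance/equivariance check followed by passing to quotients. For the intersection property, the paper's implicit argument (spelled out in \autoref{cor: local structure for instanton moduli space irred tangent cones 1}) is shorter: finiteness of $\mathcal{G}_{\mu+1}/\mathcal{G}_{0,\mu+1}$ together with Hausdorffness of $\mathcal{A}^{\Fr}_\mu/\mathcal{G}_{0,\mu+1}$ lets one separate $[A]$ from its finitely many translates $[g]\cdot[A]\neq[A]$; your sequential compactness argument is just the unpacked proof of that Hausdorffness.
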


For any conically singular $\SU(3)$-instanton $\mathbb{A}\in \mathcal{A}_\mu(\{P_i,A_i\})$ we now define a smooth Fredholm map between Banach spaces whose zero locus parametrises a neighbourhood of $[\mathbb{A}]$ in $\mathcal{M}_\mu(\{P_i,A_i\})$. The existence of a Kuranishi chart follows then from the theory of non-linear Fredholm maps (cf. \cite[Section~4.2.4]{DonaldsonKronheimer-4-manifolds}).

\begin{definition}\label{def: non-linear Fredholm map and its linearisation}
Let $\mathbb{A} \coloneqq (S,\pi \col P \to Z\setminus S,A)\in \mathcal{A}_{\mu}(\{P_i,A_i\})$ be a conically singular $\SU(3)$-instanton and let $\{(\Upsilon_i,\tilde{\Upsilon}_i)\}$ be a set of framings for $A$. Moreover, let $k\geq 1$ and $\alpha \in (0,1)$ be fixed. As in \autoref{def: family of diffeomorphism parametrising open neighbourhood} we choose $\mathfrak{m}_i\subset \mathfrak{su}(3)$ to be a complement of $\textup{image}(\mathfrak{stab}_{\SU(3)}(A_i) \to \mathfrak{su}(3))$ and denote by $B_\varepsilon^{\mathfrak{m}_i}(0) \subset \mathfrak{m}_i$ and $B_{\varepsilon}(0)\subset \mathbb{C}^3$ for $\varepsilon>0$ the respective open $\varepsilon$-balls. Recall also from \autoref{def: family of diffeomorphism parametrising open neighbourhood} the families of vector fields $\mathfrak{vec}_{0/1/2}$ and for $(\vec{v},\vec{u}) \in (B_{\varepsilon}(0))^N \times (\times_i B_{\varepsilon}^{\mathfrak{m}_i}(0))$ the corresponding diffeomorphism $f_{\vec{v},\vec{u}} \col Z \to Z$.
We now define the following maps:
\begin{align*}
\mathfrak{F}_{\mathbb{A}}^{k,\alpha} \col (B_{\varepsilon}(0))^N  \times (\times_i B_{\varepsilon}^{\mathfrak{m}_i}(0)) &\times C^{k,\alpha}_\mu(\mathfrak{g}_P \oplus \mathfrak{g}_P \oplus T^*Z \otimes\mathfrak{g}_P) \to C_{\mu-1}^{k-1,\alpha}(\mathfrak{g}_P \oplus \mathfrak{g}_P \oplus T^*Z \otimes\mathfrak{g}_P) \\
(\vec{v},\vec{u}, \xi_1,\xi_2, a) & \mapsto \big(\diff_A^* a, \Lambda_{f_{\vec{v},\vec{u}}^*\omega} F_{A+a}, \\
& \qquad *_{f_{\vec{v},\vec{u}}^*g}(F_{A+a} \wedge f_{\vec{v},\vec{u}}^*\Im \Omega) + \diff_{A+a} \xi_1 - (f_{\vec{v},\vec{u}}^*J)^*(\diff_{A+a} \xi_2)\big)
\end{align*}
and
\begin{align*}
\mathfrak{L}_{\mathbb{A}}^{k,\alpha} \col (\mathbb{C}^3)^N  \oplus ( \oplus_i \mathfrak{m}_i) &\oplus  C^{k,\alpha}_\mu(\mathfrak{g}_P \oplus \mathfrak{g}_P \oplus T^*Z \otimes\mathfrak{g}_P) \to C_{\mu-1}^{k-1,\alpha}(\mathfrak{g}_P \oplus \mathfrak{g}_P \oplus T^*Z \otimes\mathfrak{g}_P) \\
(\vec{v},\vec{u}, \xi_1,\xi_2, a) & \mapsto \big(\diff_A^* a, \Lambda_{\omega} \diff_{A}a + *(\diff (i_{X(\vec{v},\vec{u})} \omega \wedge \omega) \wedge F_A), \\
& \qquad *(\diff_A a \wedge \Im \Omega + F_A \wedge \diff i_{X(\vec{v},\vec{u})}\Im \Omega) + \diff_{A} \xi_1 - J^*(\diff_{A} \xi_2)\big)
\end{align*}
where the vector field $X(\vec{v},\vec{u}) \in \Gamma(TZ)$ is defined by 
\begin{align*}
X(\vec{v},\vec{u}) &\coloneqq (\partial_{\vec{v}}\mathfrak{vec}_2)(0) + (\partial_{\vec{v}}\mathfrak{vec}_1)(0) + (\partial_{\vec{u}}\mathfrak{vec}_0)(0) \\
&= (\partial_{\vec{v}}\mathfrak{vec}_2)(0) + \mathfrak{vec}_1(\vec{v}) + \mathfrak{vec}_0(\vec{u})
\end{align*}
(and where we regard the respective derivative of $\mathfrak{vec}_{0,1,2}$ at $0$ again as a vector field on $Z$).
\end{definition}

\begin{proposition}\label{prop: non-linear Fredholm map linearisation and index}
Let $\mathbb{A}\in \mathcal{A}_\mu(\{P_i,A_i\})$ be a conically singular $\SU(3)$-instanton. Additionally, let $\mathfrak{F}^{k,\alpha}_{\mathbb{A}}$ and $\mathfrak{L}^{k,\alpha}_{\mathbb{A}}$ be as in the previous definition. Then $\mathfrak{F}^{k,\alpha}_{\mathbb{A}}$ is a well-defined and smooth map with linearisation at zero given by $D_0 \mathfrak{F}^{k,\alpha}_\mathbb{A} = \mathfrak{L}^{k,\alpha}_\mathbb{A}$. Moreover, $\mathfrak{L}^{k,\alpha}_{\mathbb{A}}$ is Fredholm with Fredholm index given by 
\begin{align*}
\textup{index}\big(\mathfrak{L}^{k,\alpha}_\mathbb{A}\big) &=   6N + \sum_{i=1}^N \dim(\mathfrak{m}_i) - \hspace*{-20pt} \sum_{\scriptscriptstyle \nu_i \in \mathcal{D}(L_{A_i}) \cap (-5/2,\mu_i)} \hspace*{-20pt} \dim \mathcal{K}(L_{A_i})_{\nu_i},
\end{align*}
where for $\nu_i \in \mathbb{R}$ 
\begin{align}\label{equ: definition homogeneous kernel}
\mathcal{K}(L_{A_i})_{\nu_i} \coloneqq \big\{\underline{a} \in \ker(L_{\pr_{S^5}^*A_i}) \big\vert \textup{ $\underline{a}$ is homogeneous of rate $\nu_i$} \big\}.
\end{align}
When we equip the spaces $C^{k(-1),\alpha}_{\mu(-1)}(\mathfrak{g}_P \oplus \mathfrak{g}_P \oplus T^*Z \otimes\mathfrak{g}_P)$ with the obvious $\Stab_{\mathcal{G}}(A)$-action (where $\Stab_{\mathcal{G}}(A)$ is as in the previous corollary), then $\mathfrak{F}_\mathbb{A}^{k,\alpha}$ and $\mathfrak{L}_{\mathbb{A}}^{k,\alpha}$ become equivariant maps.
\end{proposition}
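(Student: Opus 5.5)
The proof splits into three parts: well-definedness and smoothness of $\mathfrak{F}^{k,\alpha}_\mathbb{A}$, computation of the linearisation, and the Fredholm index.

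\textbf{Well-definedness and smoothness.} First, $f_{\vec{v},\vec{u}}$ is the time-1 flow of vector fields supported in $\cup_i B_{4\varepsilon}(s_i)$. Near $s_i$, on $B_{2\varepsilon}(s_i)$, these vector fields are affine in the coordinates $\Upsilon_i$. Hence $f_{\vec{v},\vec{u}}^*(\omega,\Omega)$ is a smooth $\SU(3)$-structure on all of $Z$ (including $S$), depending smoothly on $(\vec{v},\vec{u})$ in every $C^k$-norm. Its pullback under $\Upsilon_i$ agrees with $(\omega_0,\Omega_0)$ at $0$ up to $\mathcal{O}(r)$, since property \ref{bul: properties of vec2} of \autoref{prop: construction of families of vector fields} guarantees exactly this at the singular point.

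Consequently, the coefficients of $\Lambda_{f^*\omega}$, $*_{f^*g}$ and $(f^*J)^*$ are smooth bounded tensors. Multiplying by such tensors preserves $C^{k-1,\alpha}_{\mu-1}$. The terms $F_A$, $\diff_A a$, $[a\wedge a]$ and $[a,\xi_j]$ map $C^{k,\alpha}_\mu$ into $C^{k-1,\alpha}_{\mu-1}$, using $F_A\in C^\infty_{\mu-1}$ (as $A$ is conically singular with $\mu_i>-1$) and the weighted multiplication $C^{k,\alpha}_\mu\times C^{k,\alpha}_\mu\to C^{k,\alpha}_{2\mu}\subset C^{k-1,\alpha}_{\mu-1}$, valid because $\mu_i>-1$. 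The map $\mathfrak{F}^{k,\alpha}_\mathbb{A}$ is polynomial in $(\xi_1,\xi_2,a)$ with coefficients depending smoothly on $(\vec{v},\vec{u})$, hence smooth.

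\textbf{Linearisation.} Differentiating at $0$ gives $\tfrac{\diff}{\diff t}f_{t}^*\omega=\mathcal{L}_X\omega$ for $X=X(\vec{v},\vec{u})$. Using that $A$ is an instanton ($\Lambda_\omega F_A=0$ and $F_A\wedge\Im\Omega=0$), the variation of $\Lambda$ and of $*$ only enters through the Lie derivative of $\omega$ and $\Im\Omega$. Writing $\mathcal{L}_X=\diff i_X+i_X\diff$, the $i_X\diff$ parts combine, via $\diff\omega^2=0$ and $\diff\Omega=w_1\omega^2$, into multiples of the instanton equation, so they vanish on $F_A$. This yields the stated $\mathfrak{L}^{k,\alpha}_\mathbb{A}$. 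I will not verify the signs or normalisations beyond this structure.

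\textbf{Fredholmness and index.} The restriction of $\mathfrak{L}^{k,\alpha}_\mathbb{A}$ to $C^{k,\alpha}_\mu$ is $L_A$ (the $\xi_2$ sign is immaterial). $L_A$ is elliptic and asymptotically conical with model $L_{\pr_{S^5}^*A_i}$ at each $s_i$, so by \autoref{app-prop: Fredholm away from critical rates} it is Fredholm for $\mu\notin\mathcal{D}(L_{A_i})$. The finite-dimensional summand $(\mathbb{C}^3)^N\oplus(\oplus_i\mathfrak{m}_i)$ adds $6N+\sum_i\dim\mathfrak{m}_i$ to the index. This requires that $X(\vec{v},\vec{u})$-terms land in $C^{k-1,\alpha}_{\mu-1}$. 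They do: near $s_i$, $X$ is $\mathcal{O}(1)$ with bounded derivative, so $F_A\wedge\diff i_X\Im\Omega=\mathcal{O}(r^{-2})$. This is not in $C_{\mu-1}$ for $\mu_i>-1$, and is the main obstacle. I would resolve it by showing that, in the flat model, $\diff i_X\Im\Omega_0=\mathcal{L}_X\Im\Omega_0=0$ for translations and for $\SU(3)$-rotations, so only the $\mathcal{O}(r)$ curvature corrections of $(\omega,\Omega)$ survive. Together with $\partial_{\vec v}\mathfrak{vec}_2(0)$ vanishing at $s_i$, this gives $\mathcal{O}(r^{-1})\subset C_{\mu-1}$ since $\mu_i<0$. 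Similarly, $F_A-\pr^*F_{A_i}=\mathcal{O}(r^{\mu-1})$ and the cone curvature is annihilated by these model Killing-type fields.

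It then remains to compute $\mathrm{index}(L_A\colon C^{k,\alpha}_\mu\to C^{k-1,\alpha}_{\mu-1})$. Since $L_A$ is formally self-adjoint (\autoref{prop: adjoint of model operator} with $w_1$ real), the cokernel at rate $\mu$ is identified with the kernel at the dual rate $-5-\mu$ (\autoref{app-prop: cokernel kernel pairing}). Hence the index at the symmetric rate $-5/2$ vanishes. Moving the rate from $-5/2$ up to $\mu$, the wall-crossing formula (\autoref{app-prop: ker and coker are locally constrant + index change formula}) subtracts $\dim\mathcal{K}(L_{A_i})_{\nu_i}$ for each critical rate $\nu_i\in(-5/2,\mu_i)$. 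This yields the claimed formula.

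\textbf{Equivariance.} Each $g\in\Stab_{\mathcal{G}}(A)$ fixes $A$ and commutes with all the operations above, because these are natural in the connection. Hence $\mathfrak{F}^{k,\alpha}_\mathbb{A}$ and $\mathfrak{L}^{k,\alpha}_\mathbb{A}$ are equivariant.
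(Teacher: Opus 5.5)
Your architecture matches the paper's: the restriction of $\mathfrak{L}^{k,\alpha}_{\mathbb{A}}$ to the last factor is $L_A$, the finite-dimensional factor adds $6N+\sum_i\dim\mathfrak{m}_i$, and the index of $L_A$ follows from its vanishing at the self-dual rate $-5/2$ plus wall-crossing. The gap is in the one genuinely analytic claim, that $\mathfrak{F}^{k,\alpha}_{\mathbb{A}}$ and $\mathfrak{L}^{k,\alpha}_{\mathbb{A}}$ take values in $C^{k-1,\alpha}_{\mu-1}$.

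You use $F_A\in C^\infty_{\mu-1}$, which is false. Near $s_i$ one has $\tilde{\Upsilon}_i^*F_A=\pr_{S^5}^*F_{A_i}+\mathcal{O}(r^{\mu_i-1})$, and $\vert\pr_{S^5}^*F_{A_i}\vert$ is of exact order $r^{-2}$ because the tangent cones are non-flat. Since $\mu_i>-1$, $r^{-2}$ is not $\mathcal{O}(r^{\mu_i-1})$. So ``bounded coefficient tensors times $F_{A+a}$'' does not even place $\mathfrak{F}^{k,\alpha}_{\mathbb{A}}(\vec{v},\vec{u},0)$ in the target. The missing step combines the instanton equation for $A$ with the property you quote but never use. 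Write $\Lambda_{f^*_{\vec{v},\vec{u}}\omega}F_A=(\Lambda_{f^*_{\vec{v},\vec{u}}\omega}-\Lambda_\omega)F_A$ and $F_A\wedge f^*_{\vec{v},\vec{u}}\Im\Omega=F_A\wedge(f^*_{\vec{v},\vec{u}}\Im\Omega-\Im\Omega)$. These coefficient differences vanish at $s_i$, because $f^*_{\vec{v},\vec{u}}(\omega,\Omega)$ and $(\omega,\Omega)$ agree there. Hence they are $\mathcal{O}(r)$, and the products are $\mathcal{O}(r^{-1})\subset\mathcal{O}(r^{\mu_i-1})$ since $\mu_i<0$, with the analogous statement for derivatives. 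This is exactly what the paper's ``by construction $f_{\vec{v},\vec{u}}^*(\omega,\Omega)_{f_{\vec{v},\vec{u}}(s_i)}=(\omega,\Omega)_{s_i}$'' is for.

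Your resolution of the $X(\vec{v},\vec{u})$-terms in $\mathfrak{L}^{k,\alpha}_{\mathbb{A}}$ has the same defect. Write $\Upsilon_i^*(\omega,\Omega)=(\omega_0,\Omega_0)+\Phi_1(z)+\mathcal{O}(\vert z\vert^2)$ with $\Phi_1$ linear in $z$.
\begin{itemize}
\item The Lie derivative of the $\mathcal{O}(r)$ correction along the constant field $\hat{v}_i$ is the constant $\Phi_1(v_i)$, not $\mathcal{O}(r)$. It is generally nonzero; if $w_1\neq0$ it cannot vanish in any chart, since then $\diff\omega\neq0$ at $s_i$.
\item Near $s_i$, $(\partial_{\vec{v}}\mathfrak{vec}_2)(0)$ is the linear field $z\mapsto Bz$, where $B$ is the derivative at $0$ of the matrix-valued map $v\mapsto A(v)\in\mathfrak{gl}_{\mathbb{R}}(\mathbb{C}^3)$ from the construction of $\mathfrak{vec}_2$ (not the connection). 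In general $B\notin\mathfrak{su}(3)$.
\item That this field vanishes at $s_i$ does not help, because $\mathcal{L}_{Bz}(\omega_0,\Omega_0)$ is a constant form, in general nonzero.
\end{itemize}
So each piece separately pairs with $F_A$ to give a term of size $r^{-2}$; only their sum is small. Differentiating $\exp(-A(v))^*(\omega_0,\Omega_0)=\Upsilon_i^*(\omega,\Omega)_{\Upsilon_i(v)}$ at $v=0$ gives $\Phi_1(v)+\mathcal{L}_{Bz}(\omega_0,\Omega_0)=0$. Since $\hat{u}_i$ preserves $(\omega_0,\Omega_0)$, this says $(\mathcal{L}_{X(\vec{v},\vec{u})}(\omega,\Omega))_{s_i}=0$. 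Hence $\diff(i_X\omega\wedge\omega)=\tfrac12\mathcal{L}_X\omega^2$ and $\diff i_X\Im\Omega=\mathcal{L}_X\Im\Omega$ are $\mathcal{O}(r)$ near $s_i$, and the $F_A$-terms are $\mathcal{O}(r^{-1})$. Equivalently, this follows once $\mathfrak{F}^{k,\alpha}_{\mathbb{A}}$ is known to be a smooth map into $C^{k-1,\alpha}_{\mu-1}$, since $\mathfrak{L}^{k,\alpha}_{\mathbb{A}}=D_0\mathfrak{F}^{k,\alpha}_{\mathbb{A}}$. This cancellation is the whole reason $\mathfrak{vec}_2$ is in the construction.

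A smaller point about your linearisation: the $i_X\diff$ parts vanish identically, because $\diff\omega^2=0$ and $\diff\Im\Omega=0$ for real $w_1$. What the instanton equation removes is the variation of the metric inside $\Lambda$ and $*$.
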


\begin{proof}
By construction we have $f_{\vec{v},\vec{u}}^*(\omega, \Omega)_{f_{\vec{v},\vec{u}}(s_i)} = (\omega, \Omega)_{s_i}$ for every $(\vec{v},\vec{u}) \in (B_\varepsilon(0))^N \times (\times_i B_{\varepsilon}^{\mathfrak{m}_i}(0))$ and $s_i \in S$. Since $\mu_i>-1$ it is therefore not difficult to see that $\mathfrak{F}^{k,\alpha}_\mathbb{A}$ indeed maps into $C^{k-1,\alpha}_{\mu-1}$. Similarly, a moment's thought shows that $\mathfrak{F}^{k,\alpha}_\mathbb{A}$ and therefore also $\mathfrak{L}^{k,\alpha}_\mathbb{A}$ are $\Stab_{\mathcal{G}}(A)$-equivariant. The smoothness of $\mathfrak{F}^{k,\alpha}_\mathbb{A}$ follows from the smoothness of $(\vec{v},\vec{u}) \mapsto f_{\vec{v},\vec{u}} \in \textup{Diff}(Z)$ and the fact that exterior and interior products of differential forms are smooth operations. 

In order to linearise $\mathfrak{F}^{k,\alpha}_\mathbb{A}$ we first note the identity $\Lambda_{f^*_{\vec{v},\vec{u}}\omega} F_{A+a} = \tfrac{1}{2}*_{f^*_{\vec{v},\vec{u}}\omega}(F_{A+a} \wedge f^*_{\vec{v},\vec{u}}(\omega \wedge \omega))$ (cf. \cite[Proposition~1.2.30]{Huybrechts-complex-Geometry}). The derivation of $D_0 \mathfrak{F}^{k,\alpha}_\mathbb{A} = \mathfrak{L}^{k,\alpha}_\mathbb{A}$ is then straight forward and makes use of the closedness of $\omega \wedge \omega$ and $\Im \Omega$ and the assumption that $A$ is an $\SU(3)$-instanton.

In order to prove that $\mathfrak{L}^{k,\alpha}_\mathbb{A}$ is Fredholm we first note that its restriction $\mathfrak{L}^{k,\alpha}_\mathbb{A}(0,0,\cdot)$ equals 
\begin{equation}\label{equ: Fredholmnes auxilary equation}
L_A \col C^{k,\alpha}_\mu(Z\setminus S,\mathfrak{g}_P \oplus \mathfrak{g}_P \oplus T^*Z \otimes \mathfrak{g}_P) \mapsto C^{k-1,\alpha}_{\mu-1}(Z \setminus S,\mathfrak{g}_P \oplus \mathfrak{g}_P \oplus T^*Z \otimes \mathfrak{g}_P) 
\end{equation} 
(where $L_A$ was defined prior to \autoref{prop: adjoint of model operator}). Since the rate $\mu \in \mathbb{R}^N$ does not lie in \[\mathcal{D}(L) \coloneqq\big\{(\lambda_1,\dots,\lambda_N) \in \mathbb{R}^N \ \big\vert \ \lambda_i \in \mathcal{D}(L_{s_i}) \textup{ for some $i \in \{1,\dots,N\}$} \big\},\] the operator \eqref{equ: Fredholmnes auxilary equation} is Fredholm (cf. \autoref{app-prop: Fredholm away from critical rates}). Because $(\mathbb{C}^3)^N \oplus (\oplus_i \mathfrak{m}_i)$ is finite dimensional, this implies the Fredholm property of $\mathfrak{L}^{k,\alpha}_\mathbb{A}$. Moreover, a moment's thought reveals \[\textup{index}\big(\mathfrak{L}^{k,\alpha}_\mathbb{A}\big) = \textup{index}\big(L_A\col C^{k,\alpha}_\mu \to C^{k-1,\alpha}_{\mu-1}\big) + 6N+ \textstyle{\sum} \dim(\mathfrak{m}_i) . \] 

It therefore remains to prove the formula 
\begin{align}\label{equ: auxiliary formula for Fredholm index}
\textup{index}\big(L_A\col C^{k,\alpha}_\mu \to C^{k-1,\alpha}_{\mu-1}\big) = -\sum_{i=1}^N \sum_{\scriptscriptstyle \nu_i \in \mathcal{D}(L_{A_i}) \cap (-5/2,\mu_i)} \hspace*{-20pt} \dim \mathcal{K}(L_{A_i})_{\nu_i}.
\end{align} 
The formal self-adjointness of $L_A$ (cf. \autoref{prop: adjoint of model operator}) implies that \[\textup{index}\big(L_A\col C^{k,\alpha}_{-5/2} \to C^{k-1,\alpha}_{-5/2-1}\big) = 0\] (cf. \autoref{app-prop: cokernel kernel pairing}) and \eqref{equ: auxiliary formula for Fredholm index} follows from the theory of elliptic operators on weighted spaces (cf. \autoref{app-prop: ker and coker are locally constrant + index change formula}).
\end{proof}

\begin{theorem}\label{thm: Kuranishi structure}
Let $[\mathbb{A}]\coloneqq [(S,\pi \col P \to Z\setminus S,A)]$ be an element in $\mathcal{M}_\mu(\{P_i,A_i\})$, where the rate $\mu =(\mu_1,\dots,\mu_N)$ satisfies $-1< \mu_i< \bar{\mu}_i$ (with $\bar{\mu}_i$ as in \autoref{prop: rates of invertibility of Laplacian}). Furthermore, let $\mathfrak{L}^{k,\alpha}_{\mathbb{A}}$ be as above. Then there exists a smooth $\Stab_{\mathcal{G}}(A)$-equivariant map 
\begin{align*}
\ob_{\mathbb{A}} \col \ker \mathfrak{L}^{k,\alpha}_{\mathbb{A}} \to \coker \mathfrak{L}^{k,\alpha}_{\mathbb{A}}
\end{align*}
(where $\Stab_{\mathcal{G}}(A)$ is as in \autoref{cor: local equivariant structure of moduli space for fixed bundle}) with $\ob_{\mathbb{A}}(0)=0$ and an $\Stab_{\mathcal{G}}(A)$-invariant open neighbourhood $V_{\mathbb{A}}\subset \ob^{-1}(0)$ of $0$  such that a neighbourhood of $[\mathbb{A}]$ in $\mathcal{M}_\mu(\{P_i,A_i\})$ is homeomorphic to $V_{\mathbb{A}}/\Stab_{\mathcal{G}}(A)$. Moreover, 
\begin{align*}
\textup{virt-dim}(V_{\mathbb{A}}) &\coloneqq \textup{index} \big(\mathfrak{L}^{k,\alpha}_{\mathbb{A}}\big) \\
&= 6N + \sum_{i=1}^N \dim(\mathfrak{m}_i) - \hspace*{-20pt} \sum_{\scriptscriptstyle \nu_i \in \mathcal{D}(L_{A_i}) \cap (-5/2,\mu_i)} \hspace*{-20pt} \dim \mathcal{K}(L_{A_i})_{\nu_i}.
\end{align*}
\end{theorem}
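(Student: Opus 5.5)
The plan is to combine the local reduction of \autoref{cor: local structure for instanton moduli space irred tangent cones 1} (in its equivariant form via \autoref{cor: local equivariant structure of moduli space for fixed bundle}) with the standard finite-dimensional reduction for the smooth map $\mathfrak{F}^{k,\alpha}_{\mathbb{A}}$ from \autoref{def: non-linear Fredholm map and its linearisation}, whose linearisation $\mathfrak{L}^{k,\alpha}_{\mathbb{A}}$ is Fredholm with the index already computed in \autoref{prop: non-linear Fredholm map linearisation and index}.

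\textbf{Step 1: identify the moduli space locally with a zero set.} By \autoref{thm: local structure for instanton moduli space 1}, a neighbourhood of $[\mathbb{A}]$ in $\mathcal{M}_\mu(\{P_i,A_i\})$ consists of triples $(\vec v,\vec u,[A'])$ with $A'$ an instanton for $f_{\vec v,\vec u}^*(\omega,\Omega)$, modulo $\mathcal{G}_{\mu+1}$. I would pass to the Banach setting and to Coulomb slices exactly as in \autoref{prop: local structure of moduli space for fixed bundle}. Each pulled-back structure $f_{\vec v,\vec u}^*(\omega,\Omega)$ again satisfies $\diff^*\omega=0$ and $\diff\Omega=w_1\omega^2$ with $w_1$ real, and the coordinate charts remain $\SU(3)$-charts. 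Hence \autoref{prop: augmented instanton equation} applies for each $(\vec v,\vec u)$: instantons in Coulomb gauge near $A$ correspond bijectively to zeros of $\mathfrak{F}^{k,\alpha}_{\mathbb{A}}$, with the auxiliary $\xi_1,\xi_2$ forced to vanish. The vanishing uses $\diff_{A+a}\xi_j=0$ together with decay at rate $\mu_i>-1$, and irreducibility of $A$ (which follows from finiteness of the center and infinitesimal irreducibility), which rules out nonzero parallel $\xi_j$. The Coulomb gauge condition in $\mathfrak{F}$ is taken relative to the fixed $A$, which is harmless because the slice theorem only uses $A$ as base point.

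Elliptic regularity (\autoref{app-prop: weighted Schauder estimate}) upgrades zeros to smooth connections in $\mathcal{A}^{\Fr}_\mu$. Residual gauge freedom is handled as follows. $\mathcal{G}_{0,\mu+1}$ is killed by the slice. The finite group $\mathcal{G}_{\mu+1}/\mathcal{G}_{0,\mu+1}$ reduces to $\Stab_{\mathcal{G}}(A)$ by \autoref{cor: local equivariant structure of moduli space for fixed bundle}, giving a neighbourhood homeomorphic to $(\mathfrak{F}^{k,\alpha}_{\mathbb{A}})^{-1}(0)/\Stab_{\mathcal{G}}(A)$ near $0$. I expect this step to be the main obstacle. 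The difficulty is that the gauge slice must be chosen uniformly in $(\vec v,\vec u)$, and the quotient must be shown to commute with varying the $\SU(3)$-structure. I would handle this by noting that the gauge action and the Coulomb condition do not depend on $(\vec v,\vec u)$ at all, so \autoref{thm: Slice Theorem} applies verbatim on the product.

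\textbf{Step 2: Kuranishi reduction.} Choose a $\Stab_{\mathcal{G}}(A)$-invariant complement $K^\perp$ of $\ker\mathfrak{L}$ and an invariant finite-dimensional complement $C$ of $\image\mathfrak{L}$, identified with $\coker\mathfrak{L}$. Invariant complements exist by averaging over the finite group $\Stab_{\mathcal{G}}(A)$. Let $\Pi$ denote the projection onto $\image\mathfrak{L}$ along $C$. By the implicit function theorem, $\Pi\circ\mathfrak{F}(x+\varphi(x))=0$ has a unique smooth solution $\varphi\colon\ker\mathfrak{L}\to K^\perp$ near $0$, and uniqueness makes $\varphi$ equivariant. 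Set $\ob_{\mathbb{A}}(x)\coloneqq(1-\Pi)\mathfrak{F}(x+\varphi(x))$. Then $\ob_{\mathbb{A}}$ is smooth and equivariant with $\ob_{\mathbb{A}}(0)=0$, and $x\mapsto x+\varphi(x)$ maps $\ob_{\mathbb{A}}^{-1}(0)$ homeomorphically onto $\mathfrak{F}^{-1}(0)$ near $0$.

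Taking $V_{\mathbb{A}}$ to be an invariant neighbourhood of $0$ in $\ob_{\mathbb{A}}^{-1}(0)$ and combining with Step 1 gives the homeomorphism with $V_{\mathbb{A}}/\Stab_{\mathcal{G}}(A)$. The virtual dimension is $\dim\ker-\dim\coker=\textup{index}\,\mathfrak{L}^{k,\alpha}_{\mathbb{A}}$, and the formula then follows from \autoref{prop: non-linear Fredholm map linearisation and index}.
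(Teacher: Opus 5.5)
Your proposal is correct and follows the paper's proof essentially step for step. A neighbourhood of $[\mathbb{A}]$ is identified with $(\mathfrak{F}^{k,\alpha}_{\mathbb{A}})^{-1}(0)/\Stab_{\mathcal{G}}(A)$ near $0$ via \autoref{thm: local structure for instanton moduli space 1}, the slice results extended to the $(\vec v,\vec u)$-dependent $\SU(3)$-structures, and \autoref{prop: augmented instanton equation}; the equivariant implicit-function-theorem reduction together with \autoref{prop: non-linear Fredholm map linearisation and index} then yields $\ob_{\mathbb{A}}$ and the index formula.

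One small correction: the auxiliary $\xi_j$ are parallel for $A+a$, not for $A$. What you actually need is infinitesimal irreducibility of every $A+a$ with $a\in C^{k,\alpha}_\mu$. The paper obtains this from \autoref{prop: rates of invertibility of Laplacian}, since $A+a$ has the same infinitesimally irreducible tangent cones.
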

\begin{remark}
Note that the asymptotic limit map $\lim_{\tilde{\Upsilon}_{i_0}}$ (as defined in \autoref{thm: local structure of B Psi is homeo}) embeds $\Stab_{\mathcal{G}}(A)$ for any $i_0 = 1,\dots,N$ into $\Stab_{\mathcal{G}(P_{i_0})}(A_{i_0})$. Since all tangent connections $A_i \in \mathcal{A}(P_i)$ are assumed to be infinitesimally irreducible and the center of $G$ is finite, $\Stab_{\mathcal{G}(P_{i_0})}(A_{i_0})$ and therefore also $\Stab_{\mathcal{G}}(A)$ are discrete and therefore finite.
\end{remark}

\begin{remark}\label{rem: Kuranishi chart for groups with positive center}
Using \autoref{rem: Coulomb gauge and slice for groups with positive center} one can show that the previous theorem can be extended to any compact structure group $G$ (whose center might be positive dimensional). Moreover, the assumption that the tangent connections $A_i$ are infinitesimally irreducible can also be dropped, if one instead assumes $A$ to be infinitesimally irreducible. This appeared in more detail in \cite[Chapter~I.5]{SoleFarre-thesis}.
\end{remark}

\begin{proof}[Proof of \autoref{thm: Kuranishi structure}]
Since all tangent cone connections $A_i \in \mathcal{A}(P_i)$ are infinitesimally irreducible, \autoref{prop: rates of invertibility of Laplacian} implies that the connection $A+a$ is also infinitesimally irreducible for every $a \in C^{k,\alpha}_\mu(Z\setminus S,T^*Z\otimes \mathfrak{g}_P)$. This implies $\ker(\diff_{A+a}) =0$. \autoref{thm: local structure for instanton moduli space 1}, an extension of \autoref{prop: local structure of moduli space for fixed bundle} and \autoref{cor: local equivariant structure of moduli space for fixed bundle} to variable $\SU(3)$-structures, and \autoref{prop: augmented instanton equation} imply therefore that a neighbourhood of $[\mathbb{A}]$ in $\mathcal{M}_\mu(\{P_i,A_i\})$ is homeomorphic to $V^\prime_{\mathbb{A}}/\Stab_{\mathcal{G}}(P)$ where $V^\prime_{\mathbb{A}} \subset (\mathfrak{F}^{k,\alpha}_{\mathbb{A}})^{-1}(0)$ is a $\Stab_{\mathcal{G}}(P)$-invariant open neighbourhood of zero (with $\mathfrak{F}^{k,\alpha}_{\mathbb{A}}$ as in the previous definition). 

Since $\mathfrak{F}^{k,\alpha}_{\mathbb{A}}$ is a smooth non-linear and equivariant Fredholm map with linearisation $\mathfrak{L}^{k,\alpha}_{\mathbb{A}}$, the rest of the proof now follows from standard arguments (cf. \cite[Section~4.2.4]{DonaldsonKronheimer-4-manifolds}): Choose closed $\Stab_{\mathcal{G}}(A)$-invariant complements $\coimage(\mathfrak{L}^{k,\alpha}_\mathbb{A})$ and $\coker(\mathfrak{L}^{k,\alpha}_\mathbb{A})$ of $\ker(\mathfrak{L}^{k,\alpha}_\mathbb{A})$ and $\image(\mathfrak{L}^{k,\alpha}_\mathbb{A})$ in the domain and codomain of $\mathfrak{L}^{k,\alpha}_\mathbb{A}$, respectively. With respect to the corresponding decomposition, we write 
\begin{align*}
\mathfrak{F}^{k,\alpha}_{\mathbb{A}} \col V^\prime_{\mathbb{A}} \subset \ker\big(\mathfrak{L}^{k,\alpha}_{\mathbb{A}}\big) \oplus \coimage\big(\mathfrak{L}^{k,\alpha}_{\mathbb{A}}\big) &\to \coker\big(\mathfrak{L}^{k,\alpha}_{\mathbb{A}}\big) \oplus \image \big(\mathfrak{L}^{k,\alpha}_{\mathbb{A}}\big) \\
(w_1,w_2) &\mapsto \big((\mathfrak{F}^{k,\alpha}_{\mathbb{A}})_1(w_1,w_2),(\mathfrak{F}^{k,\alpha}_{\mathbb{A}})_2(w_1,w_2)\big).
\end{align*}
The Implicit Function Theorem gives rise to $\Stab_{\mathcal{G}}(A)$-invariant open subsets \[V_\mathbb{A}\subset \ker\big(\mathfrak{L}^{k,\alpha}_{\mathbb{A}}\big) \quad \tilde{V}_\mathbb{A} \subset \coimage\big(\mathfrak{L}^{k,\alpha}_{\mathbb{A}}\big) \quad \textup{with} \quad (0,0) \in V_\mathbb{A} \times \tilde{V}_\mathbb{A} \subset V_\mathbb{A}^\prime \] and a smooth $\Stab_{\mathcal{G}}(A)$-equivariant map $\mathfrak{h}_\mathbb{A} \col V_\mathbb{A} \to \tilde{V}_\mathbb{A}$ with $\mathfrak{h}_\mathbb{A}(0)=0$ such that \[ (\mathfrak{F}^{k,\alpha}_{\mathbb{A}})_2^{-1}(0) \cap (V_{\mathbb{A}}\times \tilde{V}_\mathbb{A}) = \{(w_1,\mathfrak{h}_\mathbb{A}(w_1)) \mid w_1 \in V_\mathbb{A}\}. \] It is now not difficult to see that (after replacing $V_\mathbb{A}^\prime$ by $V_{\mathbb{A}}\times \tilde{V}_{\mathbb{A}}$)
\begin{align*}
\ob_\mathbb{A} \col V_\mathbb{A}\subset \ker\big(\mathfrak{L}^{k,\alpha}_{\mathbb{A}}\big) &\to \coker\big(\mathfrak{L}^{k,\alpha}_{\mathbb{A}}\big)\\
w_1 &\mapsto (\mathfrak{F}^{k,\alpha}_{\mathbb{A}})_1(w_1,\mathfrak{h}_\mathbb{A}(w_1))
\end{align*}
satisfies the wanted property. The formula for the (local) virtual dimension of $\mathcal{M}_\mu(\{P_i,A_i\})$ around $[\mathbb{A}]$ follows from the previous proposition.
\end{proof}

\subsection{Relating Moduli spaces for different rates}

As in the previous sections, we fix $N\in \mathbb{N}$ and for every $i=1,\dots,N$ a bundle with connection $(\pi_i\col P_i \to S^5,A_i)$ where each $A_i \in \mathcal{A}(P_i)$ is infinitesimally irreducible and satisfies \eqref{equ: cone reduction of instanton equation}. Moreover, we again assume that $G$ is compact and has a discrete (hence finite) center (but see also \autoref{rem: Coulomb gauge and slice for groups with positive center} and \autoref{rem: Kuranishi chart for groups with positive center}).

If $\mu,\nu \in \mathbb{R}^N$ are rates with $\nu_i\leq\mu_i$ for every $i=1,\dots,N$, then there is an obvious inclusion of framed conically singular connections $\mathcal{A}_\mu^\Fr(P,\{\Upsilon_i,\tilde{\Upsilon}_i,A_i\}) \subset \mathcal{A}_\nu^\Fr(P,\{\Upsilon_i,\tilde{\Upsilon}_i,A_i\})$ on a fixed framed bundle $\pi \col P \to Z \setminus S$. This induces a continuous inclusion $\mathcal{M}_\mu(\{P_i,A_i\}) \subset \mathcal{M}_\nu(\{P_i,A_i\})$. In this section we shall sketch a proof that this inclusion is, in fact, a homeomorphism for rates which lie in the open cube $ (-1,\Bar{\mu}_1)\times \dots \times(-1,\Bar{\mu}_N)$ with $\bar{\mu}_i$ as in \autoref{prop: rates of invertibility of Laplacian}.

For this we start with the following result:

\begin{proposition}
Let $\pi \col P \to Z\setminus S$ be a fixed framed principal $G$-bundle and let \[\mathcal{M}_\mu^\Fr(P,\{\Upsilon_i,\tilde{\Upsilon}_i,A_i\})\coloneqq \big\{ [A] \in \mathcal{A}_\mu^\Fr(P,\{\Upsilon_i,\tilde{\Upsilon}_i,A_i\})/\mathcal{G}_{0,\mu+1} \mid \textup{$A$ satisfies \eqref{equ: SU(3)-instanton}} \big\} \] be as in \autoref{prop: local structure of moduli space for fixed bundle}. For any $\mu,\nu \in (-1,\Bar{\mu}_1)\times \dots \times(-1,\Bar{\mu}_N) $ with $\Bar{\mu}_i$ as in \autoref{prop: rates of invertibility of Laplacian} there exists a homeomorphism $\Psi \col \mathcal{M}_\mu^\Fr(P,\{\Upsilon_i,\tilde{\Upsilon}_i,A_i\}) \to \mathcal{M}_\nu^\Fr(P,\{\Upsilon_i,\tilde{\Upsilon}_i,A_i\})$.
\end{proposition}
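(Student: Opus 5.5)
Without loss of generality assume $\nu_i\le\mu_i$ for all $i$; the general case follows by comparing both with $\min(\mu,\nu)$ componentwise. In this case $\mathcal{A}^\Fr_\mu\subset\mathcal{A}^\Fr_\nu$ and $\mathcal{G}_{0,\mu+1}\subset\mathcal{G}_{0,\nu+1}$. I would take $\Psi$ to be the induced map $[A]_\mu\mapsto[A]_\nu$. This map is well defined and continuous, since the $C^\infty_\mu$-seminorms dominate the $C^\infty_\nu$-seminorms. The work is to show that $\Psi$ is bijective and that its inverse is continuous.

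For surjectivity, let $A'\in\mathcal{A}^\Fr_\nu$ be an instanton and fix any smooth base connection $A_0\in\mathcal{A}^\Fr_\mu$ that agrees with $(\tilde\Upsilon_i)_*A_i$ near each $s_i$. Write $A'=A_0+a$ with $a\in C^\infty_\nu$. First I will put $A'$ into Coulomb gauge relative to $A_0$. The Laplacian $\diff_{A_0}^*\diff_{A_0}\colon C^{k+1,\alpha}_{\lambda+1}\to C^{k-1,\alpha}_{\lambda-1}$ is an isomorphism for every $\lambda$ in the cube $\times_i(-1,\bar\mu_i)$ by \autoref{prop: rates of invertibility of Laplacian}. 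Solving $\diff_{A_0}^*(g^*A'-A_0)=0$ for $g\in\mathcal{G}_{0,\nu+1}$ is nonlinear and not just near $A_0$, so instead I would work locally, near the instanton $A$ we are trying to hit. Concretely, the Kuranishi/slice description of \autoref{prop: local structure of moduli space for fixed bundle} at rate $\nu$ around any instanton $A\in\mathcal{A}^\Fr_\nu$ reduces the problem to showing that every solution $a\in C^{k,\alpha}_\nu$ of the elliptic system $L_Aa=-Q_A(a)$ with $\diff_A^*a=0$ automatically lies in $C^{k,\alpha}_\mu$, provided the base instanton $A$ itself lies in $\mathcal{A}^\Fr_\mu$. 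The same improvement applied to $A_0$ shows that every $\nu$-instanton is actually a $\mu$-instanton up to $\mathcal{G}_{0,\nu+1}$.

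The key regularity step is as follows. Near $s_i$ write the system as $L_{\pr^*A_i}a=(L_{\pr^*A_i}-L_A)a-Q_A(a)$. Since $A-\pr^*A_i=O(r^{\mu_i})$ and $a=O(r^{\nu_i})$, the right-hand side lies in $C^{k-1,\alpha}_{\nu_i+\min(\mu_i,\nu_i)-1}$, which is a strict improvement because $\nu_i>-1$. The asymptotic expansion theorem for conically singular elliptic operators (\autoref{app-prop: ker and coker are locally constrant + index change formula} and its companion results in the appendix) then shows that $a$ improves to rate $\min(\mu_i,\nu_i+\delta)$. The only possible obstruction is a homogeneous kernel element of $L_{A_i}$ with degree in $[\nu_i,\mu_i)$. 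Such degrees avoid the interval $(-1,\bar\mu_i)$ by the definition of $\bar\mu_i$, so no obstruction occurs. Iterating finitely many times gives $a\in C^\infty_\mu$.

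Next, any gauge transformation $g\in\mathcal{G}_{0,\nu+1}$ relating two $\mu$-instantons satisfies $\diff_{A}g=g(A'-A)$, so $\nabla g=O(r^{\mu_i})$. Integrating radially as in the proof of \autoref{thm: Slice Theorem} gives $g\in\mathcal{G}_{0,\mu+1}$, which proves injectivity. For continuity of $\Psi^{-1}$, the weighted Schauder estimates (\autoref{app-prop: weighted Schauder estimate}) applied to the bootstrap above control the $C^{k,\alpha}_\mu$-norm of the slice representative by its $C^{k,\alpha}_\nu$-norm on a small slice. Combined with the homeomorphism of \autoref{prop: local structure of moduli space for fixed bundle}, this shows the identity map between the $\nu$-slice and the $\mu$-slice is a homeomorphism, so $\Psi^{-1}$ is continuous.

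The main obstacle is the surjectivity step: rigorously justifying the rate improvement for nonlinear solutions, and in particular confirming that no critical rate of $L_{A_i}$ lies between $\nu_i$ and $\mu_i$. The latter uses that both rates lie below $\bar\mu_i$. A further point to check carefully is that the error terms shift the rate by a uniform positive amount, which requires $\mu_i>-1$.
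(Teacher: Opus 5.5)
Your overall architecture matches the paper, where the inclusion is exhibited as the inverse of the constructed map. That architecture is: $\Psi$ induced by the inclusion, injectivity by radially integrating $\diff_A g=g(A'-A)$, and continuity of $\Psi^{-1}$ via weighted estimates on slices.

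The genuine gap is surjectivity. You never show that an arbitrary instanton $A'\in\mathcal{A}^{\Fr}_\nu$ is $\mathcal{G}_{0,\nu+1}$-equivalent to one in $\mathcal{A}^{\Fr}_\mu$. To make the equation elliptic you must first put $A'$ into Coulomb gauge relative to some reference connection. \autoref{thm: Coulomb Gauge} only does this for connections in a small $C^{k,\alpha}_\nu$-ball around the reference.
\begin{itemize}
\item Working ``near the instanton $A$ we are trying to hit'' is circular. The slice argument around an already known $A\in\mathcal{A}^{\Fr}_\mu$ only shows that the image of $\mathcal{M}^{\Fr}_\mu$ in $\mathcal{M}^{\Fr}_\nu$ is open.
\item The fallback of using $A_0$ as reference fails because $A'$ is not $C^{k,\alpha}_\nu$-close to any cone-exact connection. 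Even if the reference agrees with $A'$ outside $\Upsilon_i(B_\delta(0))$, the quantity $\sup_{B_{\delta/2}(0)} r^{-\nu_i}\vert\tilde{\Upsilon}_i^*A'-\pr_{S^5}^*A_i\vert$ does not tend to zero as $\delta\to0$, unless $A'$ already decays faster than $r^{\nu_i}$.
\end{itemize}
So the step you flag as the main obstacle (decay improvement, absence of critical rates) is actually the routine part.

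The missing idea, which is what the paper does, is to sacrifice a little decay first.
\begin{itemize}
\item Fix $\nu'$ with $-1<\nu'_i<\nu_i$. Let $B$ agree with $A'$ outside $\cup_i\Upsilon_i(B_\delta(0))$ and with $(\tilde{\Upsilon}_i)_*\pr_{S^5}^*A_i$ on $\Upsilon_i(B_{\delta/2}(0))$. Since $\tilde{\Upsilon}_i^*A'-\pr_{S^5}^*A_i=\mathcal{O}(r^{\nu_i})$ with all derivatives, $\Vert A'-B\Vert_{C^{k,\alpha}_{\nu'}}\lesssim\delta^{\min_i(\nu_i-\nu'_i)}\to0$.
\item Apply \autoref{thm: Coulomb Gauge} at the admissible rate $\nu'$ around the fixed connection $A'$. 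This gives $g\in\mathcal{G}^{k+1,\alpha}_{0,\nu'+1}$ with $\diff_{A'}^*((g^{-1})^*B-A')=0$, which is equivalent to $\diff_B^*(g^*A'-B)=0$.
\item For $a\coloneqq g^*A'-B$ the instanton equation becomes $L_B(0,0,a)=-\Theta_B(0)-Q_B(0,0,a)$. Here $\Theta_B(0)=\mathcal{O}(r^{-1})$ because $B$ is exactly conical near $S$, and $Q_B(0,0,a)=\mathcal{O}(r^{2\nu'})$. Both are better than the generic $r^{\nu'-1}$, so your iteration via \autoref{app-cor: decay improvement CS operators not crossing rate} lifts $a$ to rate $\mu$.
\item Finally, $g^{-1}\diff_{A'}g=g^*A'-A'=a+(B-A')$ has rate $\nu$ and $g\to\textup{Id}$ at $S$. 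Your radial integration therefore upgrades $g$ to $\mathcal{G}_{0,\nu+1}$. This step is needed, since equivalence under $\mathcal{G}_{0,\nu'+1}$ alone says nothing about $[A']_\nu$.
\end{itemize}

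Two smaller points:
\begin{itemize}
\item In your regularity step the right-hand side is $\mathcal{O}(r^{\nu_i+\min(\mu_i,\nu_i)})$. That is, it lies in $C^{k-1,\alpha}_{\lambda_i-1}$ with $\lambda_i=\nu_i+\min(\mu_i,\nu_i)+1>\nu_i$. Your exponent carries a spurious $-1$, which would make it worse than generic.
\item For continuity of $\Psi^{-1}$ you need the scale-broken estimate \autoref{app-prop: scalebroken Schauder estimate} at non-critical rates, iterated through intermediate weights. \autoref{app-prop: weighted Schauder estimate} has $\Vert u\Vert_{C^0_\mu}$ on the right-hand side and cannot trade the $\nu$-norm for the $\mu$-norm.
\end{itemize}
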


\begin{proof}
For simplicity, we will assume that $N=1$ and $\mu<\nu$. The general case is similar. 

Let $A\in \mathcal{A}_\mu^\Fr(P,\{\Upsilon_i,\tilde{\Upsilon}_i,A_i\})$ be an $\SU(3)$-instanton and let $-1<\mu^\prime<\mu$, $k\geq 1$, and $\alpha\in (0,1)$ be fixed. It is not difficult to see that for any $\varepsilon>0$, there exists a connection $B\in \mathcal{A}_{\mu^\prime}^\Fr(P\{\Upsilon_i,\tilde{\Upsilon}_i,A_i\})$ that agrees with $(\tilde{\Upsilon}_s)_*(\pr_{S^5}^*A_s)$ in a small neighbourhood of $s \in S$ and that satisfies \[ \Vert A - B \VertWH{k}{\mu^\prime}{} \leq \varepsilon. \] When $\varepsilon$ is sufficiently small, \autoref{thm: Coulomb Gauge} implies the existence of a $g \in \mathcal{G}^{k+1,\alpha}_{0,\mu^\prime+1}$ such that $\diff_{A}^*((g^{-1})^*B-A)=0$ or, equivalently, $\diff_{B}^*(g^*A-B)=0$. 

Next, we write $g^*A-B = a \in C^{k,\alpha}_{\mu^\prime}(Z\setminus S, T^*Z\otimes \mathfrak{g}_P)$. The $\SU(3)$-instanton equation on $g^*A$ can now be written as 
\begin{equation}\label{equ: auxiliary equation for improving decay}
L_B(0,0,a) = - \Theta_B(0) - Q_B(0,0,a), 
\end{equation} 
where $\Theta_B$, $L_B$, and $Q_B$ are as defined prior to \autoref{prop: adjoint of model operator}. Since $B$ coincides with $(\tilde{\Upsilon}_s)_*(\pr_{S^5}^*A_s)$ on an open neighbourhood of $s$, we have \[ \vert \nabla^{i}(\Theta_B(0)+Q_B(0,0,a)) \vert = \mathcal{O}(r^{-1-i} + r^{2\mu^\prime-i})\quad \textup{ for $i=0,\dots,k$.} \] Since $-1<\mu^\prime<0$, the right-hand side of \eqref{equ: auxiliary equation for improving decay} blows up slower than expected. The rate of $a$ can therefore iteratively be improved until $a \in C^{k,\alpha}_{\Bar{\mu}}(Z\setminus S, T^*Z \otimes \mathfrak{g}_P)$ (cf. \autoref{app-cor: decay improvement CS operators not crossing rate} and \autoref{app-prop: decay improvement CS operators when crossing rate}). Moreover, $a$ is smooth by elliptic regularity. We now define \[\Psi(A) \coloneqq g^*A = B+a \in \mathcal{A}_{\Bar{\mu}}(P,\{\Upsilon_i,\tilde{\Upsilon}_i,A_i\})\subset \mathcal{A}_{\nu}(P,\{\Upsilon_i,\tilde{\Upsilon}_i,A_i\}).\] 

Next, we show that up to the action of $\mathcal{G}_{0,\bar{\mu}+1}\subset \mathcal{G}_{0,\nu+1}$ the element $\Psi(A)$ is independent of the particular choices of $-1<\mu^\prime<\mu$, $k\geq 1$, $\alpha\in(0,1)$, $B$, and $g$. We start with the independence of the choice of $g$. For this, assume that \[A_1= \Psi_1(A) \equiv g_1^*A \in \mathcal{A}_{\bar{\mu}}^{\Fr}(P,\{\Upsilon_i,\tilde{\Upsilon}_i,A_i\}) \quad \textup{and} \quad A_2=\Psi_2(A)\equiv g_2^*A \in \mathcal{A}_{\bar{\mu}}^{\Fr}(P,\{\Upsilon_i,\tilde{\Upsilon}_i,A_i\}) \] are both constructed as above associated to different $g_1,g_2 \in \mathcal{G}^{k+1,\alpha}_{0,\mu^\prime+1}$. Then there exists a further $g \in \mathcal{G}^{k+1,\alpha}_{0,\mu^\prime+1}$ such that $g^*A_1 = A_2$. Once more assuming that $G\subset \GL(W)$ for some vector space $W$, we may regard $g$ and $A_1-A_2$ as a section of (and a 1-form with values in) the vector bundle associated to $\End(W)$. The equation $g^*A_1=A_2$ is then equivalent to \[ \diff_{A_1}g = g(A_2-A_1).\] Since $A_2-A_1 \in \Omega^1_{\bar{\mu}}(Z\setminus S,\mathfrak{g}_P)$, we obtain $g \in \mathcal{G}_{0,\bar{\mu}+1}$. A similar argument also shows the independence (up to the action of $\mathcal{G}_{0,\bar{\mu}+1}\subset \mathcal{G}_{0,\nu+1}$) of $\mu^\prime$, $k$, $\alpha$, and $B$.

Now assume that two $\SU(3)$-instantons $A_1,A_2 \in \mathcal{A}_{\mu}^\Fr(P,\{\Upsilon_i\tilde{\Upsilon}_i,A_i\})$ differ by $g \in \mathcal{G}_{0,\mu+1}$. Let $\Psi(A_i)\coloneqq g_i^*A_i \in \mathcal{A}_{\bar{\mu}}$ be as constructed above. Then \[ (g_2^{-1}gg_1)^*(g_2^*A_2)=g_1^*A_1 \] and the same argument as in the previous paragraph proves $g_2^{-1}gg_1 \in \mathcal{G}_{0,\bar{\mu}+1}$. This implies that $\Psi$ descends to a well-defined map (which we again denote by) \[ \Psi \col \mathcal{M}_\mu^\Fr(P,\{\Upsilon_i,\tilde{\Upsilon}_i,A_i\}) \to \mathcal{M}_\nu^\Fr(P,\{\Upsilon_i,\tilde{\Upsilon}_i,A_i\}). \] A scale-broken elliptic estimate (cf. \autoref{app-prop: scalebroken Schauder estimate}) and \autoref{prop: local structure of moduli space for fixed bundle} (more precisely, \autoref{rem: extension of local stucture of moduli space for fixed bundle to non instanton slice}) imply that this map is continuous\footnote{Note that this argument finally uses that $\nu\in \mathbb{R}^N\setminus \mathcal{D}(L_A)$ is a non-critical rate.} and a moment's thought shows that the inclusion map $\mathcal{M}_\nu^\Fr(P,\{\Upsilon_i,\tilde{\Upsilon}_i,A_i\}) \subset \mathcal{M}_\mu^\Fr(P,\{\Upsilon_i,\tilde{\Upsilon}_i,A_i\})$ is its (continuous) inverse.
\end{proof}

The following theorem can be proven in a similar fashion.

\begin{theorem}\label{thm: relating different rates}
Assume the situation described in the beginning of this section. For any $ \mu,\nu \in (-1,\Bar{\mu}_1)\times \dots \times(-1,\Bar{\mu}_N) $ where $\Bar{\mu}_i$ are as in \autoref{prop: rates of invertibility of Laplacian} there exists a homeomorphism $\Psi \col \mathcal{M}_\mu(\{P_i,A_i\}) \to \mathcal{M}_\nu(\{P_i,A_i\})$.
\end{theorem}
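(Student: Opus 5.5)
We reduce \autoref{thm: relating different rates} to the fixed-bundle statement just proved by working locally in the charts of \autoref{thm: local structure for instanton moduli space 1} and \autoref{cor: local structure for instanton moduli space irred tangent cones 1}. As before, we assume $N=1$ and $\mu<\nu$ for notational simplicity. The general case is handled componentwise, reducing one rate at a time; the intermediate rates stay inside the cube $(-1,\bar{\mu}_1)\times\dots\times(-1,\bar{\mu}_N)$.

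\textbf{Step 1: the inverse map.} Since $\mathcal{A}_\nu \subset \mathcal{A}_\mu$ and the equivalence relations agree, we have a continuous injective inclusion $\iota\col \mathcal{M}_\nu(\{P_i,A_i\}) \to \mathcal{M}_\mu(\{P_i,A_i\})$. Continuity holds because the $C^\infty_\nu$-topology is finer than the $C^\infty_\mu$-topology, and the diffeomorphism and stabiliser parameters are identical in \autoref{def: base of topology for unframed connections}. It therefore suffices to show that $\iota$ is surjective and that its inverse is continuous.

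\textbf{Step 2: surjectivity.} Let $[\mathbb{A}] = [(S,\pi\col P\to Z\setminus S,A)] \in \mathcal{M}_\mu$ and pick framings $\{(\Upsilon_i,\tilde{\Upsilon}_i)\}$ with $A\in \mathcal{A}^\Fr_\mu(P,\{\Upsilon_i,\tilde{\Upsilon}_i,A_i\})$. The previous proposition provides $g$ with $g^*A \in \mathcal{A}^\Fr_{\bar{\mu}}\subset \mathcal{A}^\Fr_{\nu}$. We must check that $g$ is an admissible bundle isomorphism for the unframed equivalence relation. It is, because $g$ is a gauge transformation covering the identity and preserving $S$; no compatibility with framings is required in \autoref{def: spaces of unframed connections}. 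Hence $[\mathbb{A}] = [(S,P,g^*A)]$ lies in the image of $\iota$. The decay improvement relies on \autoref{app-cor: decay improvement CS operators not crossing rate} and \autoref{app-prop: decay improvement CS operators when crossing rate}. It uses that no critical rate of $L_{A_i}$ lies in $(-1,\bar{\mu}_i)$, which holds by the definition of $\bar{\mu}_i$. It also uses that the inhomogeneous term $\Theta_B(0)+Q_B$ is $\mathcal{O}(r^{-1}+r^{2\mu'})$; this holds because the $\SU(3)$-structure near $s_i$ agrees with the flat one to first order at $s_i$ in an $\SU(3)$-coordinate chart.

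\textbf{Step 3: continuity of $\iota^{-1}$ (the main obstacle).} Near $[\mathbb{A}]$, \autoref{cor: local structure for instanton moduli space irred tangent cones 1} identifies both $\mathcal{M}_\mu$ and $\mathcal{M}_\nu$ with neighbourhoods of $(0,0,[A])$ in sets of triples $(\vec v,\vec u,[A'])$, where $A'$ is an instanton for the perturbed structure $f_{\vec v,\vec u}^*(\omega,\Omega)$. Under these identifications, $\iota$ is the identity on $(\vec v,\vec u)$ and the fixed-bundle inclusion on the last factor. We therefore need a version of the previous proposition that is uniform in $(\vec v,\vec u)$. To obtain it, we rerun that proof with the family of $\SU(3)$-structures $f_{\vec v,\vec u}^*(\omega,\Omega)$. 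These structures still satisfy the torsion hypotheses, since they are diffeomorphism pullbacks, and they agree with $(\omega_0,\Omega_0)$ at $s_i$ in the chart by construction of $\mathfrak{vec}_2$. We then use the scale-broken estimate \autoref{app-prop: scalebroken Schauder estimate} with constants uniform for $(\vec v,\vec u)$ near $0$. This shows that the Coulomb-gauge representative in the $\nu$-slice depends continuously on $(\vec v,\vec u,A')$ in the $\mu$-topology, in the sense of \autoref{rem: extension of local stucture of moduli space for fixed bundle to non instanton slice}. The delicate point is ensuring that the decay-improvement constants do not degenerate as the structure varies; we would handle this by applying the weighted estimates to the operator $L$ for the reference structure and absorbing the difference, which is $\mathcal{O}(r)$ near $s_i$ and small in $C^k$, as a small perturbation. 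With this, $\iota^{-1}$ is locally continuous, hence continuous, and $\Psi\coloneqq\iota^{-1}$ is the desired homeomorphism.
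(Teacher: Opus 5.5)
Your proposal follows the paper's route. The paper only says the theorem ``can be proven in a similar fashion'' to the fixed-bundle proposition, and your reduction through the charts of \autoref{thm: local structure for instanton moduli space 1} is what that phrase leaves implicit: the continuous injective inclusion, surjectivity from the gauge-improvement step, and continuity of the inverse via a scale-broken estimate made uniform over the family $f_{\vec v,\vec u}^*(\omega,\Omega)$, which still agrees with $(\omega,\Omega)$ at each $s_i$.

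One small repair is needed. \autoref{cor: local structure for instanton moduli space irred tangent cones 1} assumes $A$ is irreducible, which the theorem does not guarantee. At points with a non-trivial finite stabiliser, use the $\mathcal{G}_{\mu+1}$-quotient chart of \autoref{thm: local structure for instanton moduli space 1} instead. Then note, as in the fixed-bundle proof, that a gauge transformation relating two rate-$\nu$ connections automatically converges to its asymptotic limit at rate $\nu+1$, so the $\mu$- and $\nu$-charts are compatible.
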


\section{The obstruction space}\label{sec: obstruction space}

Let $(Z,\omega,\Omega)$ be a 6-manifold equipped with an $\SU(3)$-structure satisfying $\diff^*\omega = 0$ and $\diff \Omega = w_1 \omega^2$ for $w_1 \in \mathbb{R}$. Moreover, let $G$ be a compact Lie group with center and let $(\pi_i \col P_i \to S^5,A_i)$ for $i=1,\dots,N\in \mathbb{N}$ be a collection of principal $G$-bundles with infinitesimally irreducible connections $A_i\in \mathcal{A}(P_i)$ satisfying~\eqref{equ: cone reduction of instanton equation} (but recall that by \autoref{rem: Coulomb gauge and slice for groups with positive center} and \autoref{rem: Kuranishi chart for groups with positive center} these assumptions can be removed). In \autoref{thm: Kuranishi structure} we have seen that $\mathcal{M}_\mu(\{P_i,A_i\})$ is for certain rates locally modelled on a quotient of the zero set of a smooth map 
\begin{align*}
\ob_{\mathbb{A}} \col \ker \mathfrak{L}^{k,\alpha}_{\mathbb{A}} \to \coker \mathfrak{L}^{k,\alpha}_{\mathbb{A}}.
\end{align*}
Thus, whenever $\coker \mathfrak{L}^{k,\alpha}_{\mathbb{A}} = 0$ (or, more generally, whenever $0$ is a regular value of $\ob_{\mathbb{A}}$) $\mathcal{M}_\mu(\{P_i,A_i\})$ is locally an orbifold of dimension $\textup{index}(\mathfrak{L}^{k,\alpha}_{\mathbb{A}})$. In the following we will give in \autoref{cor: estimate dimension cokernel} under certain assumptions on $(\pi_i\col P_i\to S^5,A_i)$ and $\mathbb{A}\in\mathcal{A}_\mu(\{P_i,A_i\})$ a formula for $\dim\coker \mathfrak{L}^{k,\alpha}_{\mathbb{A}}$.

\subsection{A pairing for the cokernel}
\label{subsec: pairing for the cokernel}
Let $(Z,\omega,\Omega)$ be a 6-manifold equipped with an $\SU(3)$-structure satisfying $\diff^*\omega = 0$ and $\diff \Omega = w_1 \omega^2$ for $w_1 \in \mathbb{R}$. Assume further that $\pi \col P \to Z\setminus S$ for $S=\{s_1,\dots,s_N\} \subset Z$ is a principal $G$-bundle, where $G$ is compact with discrete center. Moreover, let $A \in \mathcal{A}_\mu(P,\{\Upsilon_i,\tilde{\Upsilon}_i,A_i\})$ be a conically singular $\SU(3)$-instanton with infinitesimally irreducible tangent cones $(\pi_i \col P_i \to S^5,A_i)$ at $s_i \in S$, framings $\tilde{\Upsilon}_i \col \pr^*_{S^5}P_i \to P$ and rates $\mu_i \in (-1,\bar{\mu}_i)$ with $\bar{\mu}_i$ as in \autoref{prop: rates of invertibility of Laplacian}. Throughout this section we will restrict to tangent connections $A_i \in \mathcal{A}(P_i)$ that additionally satisfy the following:
\begin{assumption}\label{ass: critical rates contained in Z}
Assume that for every $i= 1,\dots,N$ we have \[\mathcal{D}(L_{A_i})\cap [-4,-1] \subset \{-4,-3,-2,-1\},\]
where $\mathcal{D}(L_{A_i})$ is as in \autoref{def: critical rates}.
\end{assumption}
\begin{remark}
By \cite[Theorem~1.8]{Wang-spectrum_of_operator_for_instantons} the previous assumptions holds for $A_i \in \mathcal{A}(P_i)$ if both the bundle and the connection $(\pi_i \col P_i \to S^5,A_i)$ are pulled back from $\mathbb{P}^2$. By \autoref{prop: tangent cones pulled back from P2} this holds automatically if $A_i$ is irreducible and $G$ has trivial center.
\end{remark}

Recall from \autoref{def: non-linear Fredholm map and its linearisation} the linear operator
\begin{align*}
\mathfrak{L}_{\mathbb{A}}^{k,\alpha} \col (\mathbb{C}^3)^N  \oplus ( \oplus_i \mathfrak{m}_i) &\oplus  C^{k,\alpha}_\mu(\mathfrak{g}_P \oplus \mathfrak{g}_P \oplus T^*Z \otimes\mathfrak{g}_P) \to C_{\mu-1}^{k-1,\alpha}(\mathfrak{g}_P \oplus \mathfrak{g}_P \oplus T^*Z \otimes\mathfrak{g}_P) \\
(\vec{v},\vec{u}, \xi_1,\xi_2, a) & \mapsto \big(\diff_A^* a, \Lambda_{\omega} \diff_{A}a + *(\diff (i_{X(\vec{v},\vec{u})} \omega \wedge \omega) \wedge F_A), \\
& \qquad *(\diff_A a \wedge \Im \Omega + F_A \wedge \diff i_{X(\vec{v},\vec{u})}\Im \Omega) + \diff_{A} \xi_1 - J^*(\diff_{A} \xi_2)\big)
\end{align*}
where the vector field $X(\vec{v},\vec{u}) \in \Gamma(TZ)$ is defined by 
\begin{align*}
X(\vec{v},\vec{u}) &\coloneqq (\partial_{\vec{v}}\mathfrak{vec}_2)(0) + (\partial_{\vec{v}}\mathfrak{vec}_1)(0) + (\partial_{\vec{u}}\mathfrak{vec}_0)(0) \\
&= (\partial_{\vec{v}}\mathfrak{vec}_2)(0) + \mathfrak{vec}_1(\vec{v}) + \mathfrak{vec}_0(\vec{u})
\end{align*}
for $\mathfrak{vec}_{0/1/2}$ as in \autoref{prop: construction of families of vector fields} (and where we regard the respective derivatives of $\mathfrak{vec}_{0,1,2}$ at $0$ again as a vector field on $Z$). 

Furthermore, recall from the discussion prior to \autoref{prop: adjoint of model operator} the operator $L_A$ associated to the connection $A$. Since \[\mu \notin \mathcal{D}(L) \coloneqq\big\{(\lambda_1,\dots,\lambda_N) \in \mathbb{R}^N \ \big\vert \ \lambda_i \in \mathcal{D}(L_{s_i}) \textup{ for some $i \in \{1,\dots,N\}$} \big\},\] we have that \[L_A \col  C^{k,\alpha}_\mu( Z \setminus S,\mathfrak{g}_P \oplus \mathfrak{g}_P \oplus T^*Z \otimes\mathfrak{g}_P) \to C_{\mu-1}^{k-1,\alpha}(Z \setminus S,\mathfrak{g}_P \oplus \mathfrak{g}_P \oplus T^*Z \otimes\mathfrak{g}_P)\] is Fredholm (cf. \autoref{app-prop: Fredholm away from critical rates}).

\begin{definition}
We define the following map:
\begin{align*}
\Phi_\mathbb{A} \col (\mathbb{C}^3)^N \oplus (\oplus_i \mathfrak{m}_i) &\to \coker(L_A \col C^{k,\alpha}_\mu \to C^{k-1,\alpha}_{\mu-1}) \\
(\vec{v},\vec{u}) & \mapsto (\pr_{\coker(L_A)} \circ \mathfrak{L}^{k,\alpha}_{\mathbb{A}})(\vec{v},\vec{u},0,0,0)
\end{align*}
\end{definition}

\begin{proposition}\label{prop: observations on cokernel}
The following hold:
\begin{enumerate}
\item $\coker(\Phi_\mathbb{A}) \cong \coker(\mathfrak{L}^{k,\alpha}_{\mathbb{A}})$.
\item The $L^2$-inner product gives rise to a perfect pairing between $\coker(L_A \col C^{k,\alpha}_\mu \to C^{k-1,\alpha}_{\mu-1})$ and $\ker(L_A)_{-5-\mu}$, where \[\ker(L_A)_{-5-\mu} \coloneqq \ker(L_A \col C^{k,\alpha}_{-5-\mu} \to C^{k-1,\alpha}_{-6-\mu}).\]
\item If $\underline{a}\coloneqq (\xi_1,\xi_2,a) \in \ker(L_A)_{-5-\mu}$, then $\xi_1,\xi_2=0$.
\item Let $\underline{a}=(0,0,a)\in \ker(L_A)_{-5-\mu}$. For every $i = 1,\dots, N$ there are $\tilde{a}_i,\tilde{b}_i \in \Omega^1(S^5,\mathfrak{g}_{P_i})$ and $\tilde{\xi}_i,\tilde{\zeta}_i \in \Omega^0(S^5,\mathfrak{g}_{P_i})$  with \[ r^{-3}(\tilde{a}_i+\tilde{\xi}_i \tfrac{\diff r}{r}) \in \mathcal{K}(L_{A_i})_{-4} \quad \textup{and} \quad r^{-2}(\tilde{b}_i+\tilde{\zeta}_i \tfrac{\diff r}{r}) \in \mathcal{K}(L_{A_i})_{-3}\] and a linear function \[ \eta_i \col \mathcal{K}(L_{A_i})_{-4} \to \Omega^1_{-3+\mu_i}(\mathbb{C}^3\setminus \{0\},\pr^*_{S^5}\mathfrak{g}_{P_i})\]
such that \[ \big\vert \tilde{\Upsilon}^*_i a - r^{-3}(\tilde{a}_i+\tilde{\xi}_i \tfrac{\diff r}{r}) - \eta_i\big(r^{-3}(\tilde{a}_i+\tilde{\xi}_i \tfrac{\diff r}{r})\big) - r^{-2} (\tilde{b}_i+\tilde{\zeta}_i \tfrac{\diff r}{r}) \big\vert = \mathcal{O}(r^{-2+ \varepsilon})\] for some small $\varepsilon>0$. Note that in the formulation above we implicitly identified both $\mathbb{C}^3\setminus \{0\} \cong S^5 \times \mathbb{R}_{>0}$ and $\pr_{S^5} \mathfrak{g}_{P_i} \cong \mathfrak{g}_{P_i} \times \mathbb{R}_{>0}$.
\end{enumerate}
\end{proposition}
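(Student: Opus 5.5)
I will treat the four items in order.

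\textbf{Item 1.} This is linear algebra. Write $D \coloneqq (\mathbb{C}^3)^N\oplus(\oplus_i\mathfrak{m}_i)$. Then $\mathfrak{L}^{k,\alpha}_{\mathbb{A}}(\vec v,\vec u,\underline a)=\mathfrak{L}^{k,\alpha}_{\mathbb{A}}(\vec v,\vec u,0)+L_A\underline a$. Hence $\image \mathfrak{L}^{k,\alpha}_{\mathbb{A}}=\image L_A + \mathfrak{L}^{k,\alpha}_{\mathbb{A}}(D,0)$. Since $\image L_A$ is closed of finite codimension, quotienting the codomain first by $\image L_A$ gives
\[\coker \mathfrak{L}^{k,\alpha}_{\mathbb{A}} \cong \coker(L_A)/\image(\Phi_\mathbb{A}) = \coker(\Phi_\mathbb{A}).\]

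\textbf{Item 2.} I will invoke \autoref{app-prop: cokernel kernel pairing} together with \autoref{prop: adjoint of model operator}. Because $w_1$ is real, $L_A$ is formally self-adjoint, and the dual weight of $\mu$ in dimension $6$ is $-5-\mu$, since $(\mu-1)+(-5-\mu)=-6$. This gives $\coker(L_A)_\mu\cong\ker(L_A)_{-5-\mu}$ via the $L^2$-pairing.

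\textbf{Item 3.} Mimic the proof of \autoref{prop: augmented instanton equation}. The kernel equations read $\diff_A^*a=0$, $\Lambda_\omega\diff_A a=0$ and $*(\diff_Aa\wedge\Im\Omega)+\diff_A\xi_1+J^*\diff_A\xi_2=0$. Apply $\diff_A^*$ to the third equation and use three facts: $\diff\Im\Omega=0$, $\diff(\omega^2)=0$, and the Bianchi-type identity $\diff_A\diff_A a=[F_A\wedge a]$ together with the fact that $A$ is an instanton. The $a$-terms then cancel, exactly as in the nonlinear computation, and one obtains $\Delta_A\xi_1=0$; similarly $\Delta_A\xi_2=0$ after applying $J^*$.

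The main obstacle is that now $\xi_j\in C^{k,\alpha}_{-5-\mu}$ with $-5-\mu\in(-5,-4)$, so the integration by parts used earlier fails. Instead I will use \autoref{prop: rates of invertibility of Laplacian}: $\Delta_A$ is injective on $C_\lambda$ for $\lambda_i\in(-4-\bar\mu_i-1,\bar\mu_i+1)$, and $-5-\mu_i$ lies in this range precisely because $\mu_i<\bar\mu_i$. Hence $\xi_1=\xi_2=0$.

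\textbf{Item 4.} This is an asymptotic expansion via \autoref{app-prop: decay improvement CS operators when crossing rate}. Near $s_i$, write $\tilde\Upsilon_i^*A=\pr^*A_i+b$ with $b=O(r^{\mu_i})$, so that $L_A=L_{\pr^*A_i}+(\text{error of order } r^{\mu_i})$. Starting from $a\in C_{-5-\mu}$, the perturbation $(L_A-L_{A_i})a$ has rate $-6$ improved by $\mu_i+1$. Decay improvement then pushes the rate of $a$ upward, and at each crossed critical rate in $(-5-\mu_i,-2)$ it picks up a homogeneous kernel element. By \autoref{ass: critical rates contained in Z} the only critical rates crossed are $-4$ and $-3$, which yield the terms $r^{-3}(\tilde a_i+\tilde\xi_i\frac{\diff r}{r})\in\mathcal{K}_{-4}$ and $r^{-2}(\tilde b_i+\tilde\zeta_i\frac{\diff r}{r})\in\mathcal{K}_{-3}$.

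The interaction term is handled as follows. The error applied to the leading $\mathcal{K}_{-4}$ term has rate $-5+\mu_i$, which is non-critical after a generic perturbation. Solving $L_{\pr^*A_i}\eta=-(L_A-L_{A_i})(\text{leading term})$ by homogeneous inversion at that rate defines the linear map $\eta_i$ with values in $\Omega^1_{-3+\mu_i}$ on the model cone. If $-4+\mu_i+1$ happens to hit $-3$, I would shrink $\mu_i$ slightly, using \autoref{thm: relating different rates}. The remainder then lies in $C_{-2+\varepsilon}$ because $(-3,-2)$ has no critical rates.
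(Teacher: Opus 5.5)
Items 1--3 are fine and match the paper. In item 3, citing \autoref{prop: rates of invertibility of Laplacian} at the weight $-5-\mu_i\in(-5-\bar\mu_i,-4)$ is just the packaged form of the paper's ``improve the decay of $\xi_j$, then use infinitesimal irreducibility''.

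The gap is in item 4, where you rebuild the expansion by hand. The paper's entire proof is to apply \autoref{app-prop: cs operators asymptotic expansions of kernel elements} with $\nu=-5-\mu$, $\tilde\nu_{i,1}=-4$, $\tilde\nu_{i,2}=-3$. This directly gives $\eta_i$ with values in $C^\infty_{-4+(1+\mu_i)}=C^\infty_{-3+\mu_i}$ and a pointwise remainder $\mathcal{O}(r^{-3+\varepsilon})$.

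Your single correction does not suffice. For $u_1\in\mathcal{K}(L_{A_i})_{-4}$ the source $(L_A-L_{A_i})u_1$ is $\mathcal{O}(r^{-4+\mu_i})$, not $r^{-5+\mu_i}$, so $\eta_1=\mathcal{O}(r^{-3+\mu_i})$. But then $R\coloneqq\tilde\Upsilon_i^*a-u_1-\eta_1$ solves $L_{A_i}R=-(L_A-L_{A_i})(\eta_1+R)$ with a source of size $r^{-3+2\mu_i}$, so at best $R=\mathcal{O}(r^{-2+2\mu_i})$. Since $(-1,\bar\mu_i)$ always contains rates $\mu_i\le-\tfrac12$, this is in general not $o(r^{-3})$. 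The $\mathcal{K}(L_{A_i})_{-3}$-coefficient is then not isolated, and no remainder bound $\mathcal{O}(r^{-3+\varepsilon})$ follows. You must iterate, $\eta_i=\eta_1+\dots+\eta_J$ with $\eta_j=\mathcal{O}(r^{-4+j(1+\mu_i)})$ until $J(1+\mu_i)>1$; this is exactly the bookkeeping the appendix proposition hides.

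Moreover, \autoref{ass: critical rates contained in Z} only covers $[-4,-1]$. That no critical rate lies in $(-5-\mu_i,-4)$ comes from two facts: $(-1,\mu_i]\cap\mathcal{D}(L_{A_i})=\emptyset$ by the definition of $\bar\mu_i$, and $\mathcal{D}(L_{A_i})$ is symmetric under $\lambda\mapsto-5-\lambda$ (\autoref{prop: duality between homogeneous kernels}).

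Several smaller points in item 4 should also be fixed:
\begin{itemize}
\item The weight $-3+\mu_i\in(-4,-3)$ is automatically non-critical by \autoref{ass: critical rates contained in Z}, and $-3+\mu_i=-3$ never happens since $\mu_i<0$. So no ``generic perturbation'' is needed, and no appeal to \autoref{thm: relating different rates}, which is a statement about moduli spaces.
\item Since $(L_A-L_{A_i})u_1$ is not homogeneous, you invert $L_{A_i}$ on the whole cone at a non-critical weight rather than ``homogeneously''.
\item ``The remainder lies in $C_{-2+\varepsilon}$'' is false in the paper's convention, where $C_\lambda$ means pointwise $\mathcal{O}(r^\lambda)$. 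First, $-2$ is a critical rate: $\mathcal{K}(L_{A_i})_{-2}$ contains $i_v(\pr_{S^5}^*F_{A_i})\neq0$ for $v\neq0$ by \autoref{prop: infinitesimal translations and Atiyah classes}. Second, for $u_2\in\mathcal{K}(L_{A_i})_{-3}$ the error $(L_A-L_{A_i})u_2=\mathcal{O}(r^{-3+\mu_i})$ caps any improvement at pointwise rate $-2+\mu_i$.
\end{itemize}
What actually holds is a pointwise remainder $\mathcal{O}(r^{-3+\varepsilon})$. This is what the statement's $\mathcal{O}(r^{-2+\varepsilon})$ expresses, since its terms are written as $r^{-3}\tilde a_i$ and $r^{-2}\tilde b_i$ with $\tilde a_i,\tilde b_i$ forms on $S^5$.
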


\begin{proof}
The first point simply follows from 
\begin{align*}
\coker (\mathfrak{L}^{k,\alpha}_{\mathbb{A}}) &\cong C^{k-1,\alpha}_{\mu-1}(T^*Z\otimes \mathfrak{g}_{P} \oplus \mathfrak{g}_{P} \oplus \mathfrak{g}_{P})/\textup{image}(\mathfrak{L}^{k,\alpha}_\mu) \\
&\cong \frac{C^{k-1,\alpha}_{\mu-1}(T^*Z\otimes \mathfrak{g}_{P} \oplus \mathfrak{g}_{P} \oplus \mathfrak{g}_{P})/\textup{image}(L_A)}{\textup{image}(\mathfrak{L}^{k,\alpha}_\mathbb{A})/\textup{image}(L_A)} \\
&\cong \coker(L_A) / \textup{image}(\Phi_{\mathbb{A}}) \\
&\cong \coker(\Phi_{\mathbb{A}}),
\end{align*}
where we have abbreviated $\textup{image}(L_A)$ and $\coker(L_A)$ for the respective image and cokernel of the operator $L_A \col C^{k,\alpha}_\mu \to C^{k-1,\alpha}_{\mu-1}$. The second point is \autoref{app-prop: cokernel kernel pairing}. For the third point we may argue as in \autoref{prop: augmented instanton equation} that $\Delta_A \xi_1=0$. Since $\mu_i < \bar{\mu}_i$ for every $i=1,\dots,N$, \autoref{prop: critical rates of model Laplacian} (and the fact that the set of critical rates is symmetric under reflection at -2) implies that there are no critical rates of $\Delta_{A_i}$ between $-5-\mu_i$ and $0$. Thus, the decay of $\xi_1$ can be improved to conclude $\diff_A \xi_1=0$ (cf. \autoref{app-cor: kernel is locally constant}). However, since $A_i$ is infinitesimally irreducible we have $\xi_1 =0$ and similarly $\xi_2=0$. The fourth point is \autoref{app-prop: cs operators asymptotic expansions of kernel elements} together with \autoref{ass: critical rates contained in Z} that the only critical rates of $L_{A_i}$ in $(-5-\bar{\mu}_i,-2]$ are $-4$, $-3$, and $-2$.
\end{proof}

\begin{remark}
The sections $(\tilde{a}_i,\tilde{\xi}_i)$ and $(\tilde{b}_i,\tilde{\zeta}_i)$ over $S^5$ in the last point of the previous proposition are eigensections to a suitable self-adjoint elliptic operator $P$ on $S^5$ (arising from $L_{A_i}$) written down explicitly in~\cite[Lemma~2.14]{Wang-spectrum_of_operator_for_instantons} (see also the proof of \autoref{prop: duality between homogeneous kernels} for a formula of $P$).
\end{remark}

\begin{proposition}\label{prop: obstruction pairing}
Let $\underline{a} = (0,0,a) \in \ker(L_A)_{-5-\mu}$, where $\ker(L_A)_{-5-\mu}$ is as in the previous proposition. Moreover, let $\tilde{a}_i,\tilde{b}_i \in \Omega^1(S^5,\mathfrak{g}_{P_i})$ and $\tilde{\xi}_i,\tilde{\zeta}_i \in \Omega^0(S^5,\mathfrak{g}_{P_i})$ for every $i=1,\dots,N$ be associated to $\underline{a}$ as in the last point of the previous proposition. For any $u \in \mathfrak{su}(3)$ we denote by $\hat{u}_{S^5} \in \Gamma(TS^5)$ the vector field defined at $z\in S^5 \subset \mathbb{C}^3$ by $u\cdot z \in z^\perp = T_zS^5$. Then for every $(\vec{v},\vec{u}) \in (\mathbb{C}^3)^N \oplus (\oplus_i \mathfrak{m}_i)$ we have
\begin{align*}
\begin{split}
\int_Z \big\langle \Phi_{\mathbb{A}}(\vec{v},\vec{u}),\underline{a} \big\rangle \vol_Z &= \sum_{i=1}^N \int_{S^5} \big\langle i_{(\hat{u}_i)_{S^5}} F_{A_i}, J_2^*(\tilde{a}_i)_H \big\rangle \vol_{S^5} + \int_{S^5}  \big\langle i_{v_i} F_{A_i}, J_2^*(\tilde{b}_i)_H \big\rangle \vol_{S^5} \\
& \qquad + \mathfrak{e}_i\big(v_i,(\tilde{a}_i,\tilde{\xi}_i)\big).
\end{split}
\end{align*}
Here, $\alpha_H$ for $\alpha \in \Omega^1(S^5)$ denotes the projection to the (dual of the) contact distribution $H = \ker \theta$ and $J_2$ is the complex structure on $H$ corresponding to $\omega_2$ for the canonical Sasaki--Einstein structure $(\theta,\omega_1,\omega_2,\omega_3)$ described in \autoref{subsec: Sasaki--Einstein structure on S5}. Moreover, the functions $\mathfrak{e}_i$ in the expression above are linear in $v_i$ and $(\tilde{a}_i,\tilde{\xi}_i)$.
\end{proposition}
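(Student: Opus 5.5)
The plan is to compute the pairing by integration by parts, reducing the global $L^2$-integral to boundary terms on small spheres around the singular points, where only the leading asymptotics of $a$ from the last point of \autoref{prop: observations on cokernel} contribute.

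\textbf{Step 1: rewrite the image of $(\vec v,\vec u)$ as $L_A$ applied to a Lie-derivative-type term.} By definition $\Phi_{\mathbb{A}}(\vec v,\vec u)$ is the class of $\mathfrak{L}^{k,\alpha}_{\mathbb{A}}(\vec v,\vec u,0,0,0)$, i.e. of $\bigl(0,\ *(\diff(i_X\omega\wedge\omega)\wedge F_A),\ *(F_A\wedge \diff i_X\Im\Omega)\bigr)$ with $X=X(\vec v,\vec u)$. Since $A$ is an $\SU(3)$-instanton and $\omega^2$, $\Im\Omega$ are closed, Cartan's formula gives $\diff i_X\omega^2=\mathcal{L}_X\omega^2$ and $\diff i_X\Im\Omega=\mathcal{L}_X\Im\Omega$. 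Differentiating the instanton equation for $f_t^*A$ (with $f_t$ the flow of $X$) shows this vector equals $-L_A(0,0,i_XF_A)$ up to a term $\diff_A^*(i_XF_A)$ in the first slot, which I expect to vanish or be absorbable using $\diff_A^*F_A$-type identities for instantons. So, away from $S$, $\mathfrak{L}(\vec v,\vec u,0)=-L_A(0,0,\beta)$ with $\beta:=i_XF_A$. The point is that $\beta$ is \emph{not} in $C^{k,\alpha}_\mu$: near $s_i$, $X$ is $v_i+\hat u_i z+O(|z|^2)$ and $F_A\sim r^{-2}$, so $\beta\sim r^{-2}$ (translations) and $\beta\sim r^{-1}$ (rotations), which is why the class can be nonzero.

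\textbf{Step 2: integrate by parts.} Using formal self-adjointness of $L_A$ (\autoref{prop: adjoint of model operator}, $w_1$ real) and $L_A\underline a=0$, for small $\delta$
\[
\int_{Z\setminus\cup_i B_\delta(s_i)}\langle L_A(0,0,\beta),\underline a\rangle=-\sum_i\int_{\partial B_\delta(s_i)}\sigma_{L_A}(\partial_r)(0,0,\beta)\cdot\underline a ,
\]
and I take $\delta\to0$. The principal symbol of $L_A$ on the $T^*Z\otimes\mathfrak g_P$ block is $*(\Im\Omega\wedge \diff r\wedge\cdot)$ plus the $\diff_A^*,\Lambda_\omega\diff_A$ rows; restricted to the sphere and using $\Omega_0=r^2(\diff r+ir\theta)\wedge(\omega_2+i\omega_3)$ this produces (after pullback via $\Upsilon_i,\tilde\Upsilon_i$) the pairing $\langle\beta_H, J_2^*(\cdot)_H\rangle$ on $S^5$ with volume factor $\delta^5$.

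\textbf{Step 3: power counting with the expansion.} Insert $\tilde\Upsilon_i^*a=r^{-3}(\tilde a_i+\tilde\xi_i\tfrac{\diff r}{r})+\eta_i(\cdot)+r^{-2}(\tilde b_i+\tilde\zeta_i\tfrac{\diff r}{r})+O(r^{-2+\varepsilon})$ and $\beta=r^{-2}i_{v_i}F_{A_i}+r^{-1}i_{(\hat u_i)_{S^5}}F_{A_i}+(\text{corrections from }A-A_i,\ O(|z|^2)\text{ in }X)$. Boundary terms scale as $\delta^{5}\cdot\delta^{p}\delta^{q}$: the products $r^{-1}\cdot r^{-3}$ (rotation vs $\tilde a_i$) and $r^{-2}\cdot r^{-2}$ (translation vs $\tilde b_i$) are $O(1)$ and give the two stated integrals; the cross term $r^{-2}\cdot r^{-3}$ (translation vs $\tilde a_i$) is divergent at order $\delta^{-1}$ and must vanish by the homogeneous kernel equations (or is killed using that the whole left side is finite), while its finite part, together with contributions of $\eta_i$, of $\mu_i$-rate corrections $A-A_i$, and of the quadratic part of $X$ (including $\partial_{\vec v}\mathfrak{vec}_2$) paired against $r^{-3}\tilde a_i$, is collected into $\mathfrak e_i(v_i,(\tilde a_i,\tilde\xi_i))$, visibly bilinear. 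All remaining terms are $O(\delta^{\varepsilon})$ or $O(\delta^{\mu_i+1})$ and vanish.

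\textbf{Main obstacle.} The delicate step is Step 3's handling of the divergent translation–$\tilde a_i$ term and verifying that no $\tilde\xi_i,\tilde\zeta_i$ or $\tilde b_i$ contribution survives from rotations; I would first try to show the $\delta^{-1}$ coefficient vanishes by expressing it as $\int_{S^5}\langle i_{v}F_{A_i},\cdot\rangle$ against a $(-4)$-homogeneous kernel element and using the equation $L_{A_i}$ on the cone (i.e. $i_vF_{A_i}$ is itself, up to gauge, a $(-2)$-homogeneous kernel element, and distinct homogeneity kernels are $L^2(S^5)$-orthogonal in the relevant pairing).
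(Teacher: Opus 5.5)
Your proposal is correct and is essentially the paper's proof. You rewrite the relevant slot of $\Phi_{\mathbb{A}}(\vec v,\vec u)$ as $-*(\diff_A(i_XF_A)\wedge\Im\Omega)$ and apply Green's formula for $L_A$. This yields exactly the paper's boundary integrand, which the paper obtains by applying Stokes to $\langle a\wedge F_A\wedge i_X\Im\Omega\rangle$, killing the interior term by a $(p,q)$-type argument, and then using $F_A\wedge i_X\Im\Omega=-(i_XF_A)\wedge\Im\Omega$. The paper also forces the $\delta^{-1}$ translation--$\tilde a_i$ coefficient to vanish by finiteness of the left-hand side, with your eigensection-orthogonality argument given as a footnoted alternative, and then absorbs the remaining terms into $\mathfrak{e}_i$.

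Two small precisions, neither of which affects the conclusion. First, $(\partial_{\vec v}\mathfrak{vec}_2)(0)$ has a nonzero \emph{linear} part at $s_i$, so it contributes an order-one term bilinear in $(v_i,\tilde a_i)$ rather than a vanishing quadratic one. Second, some of the terms you absorb into $\mathfrak{e}_i$ (those from $\eta_i$ and from $A-A_i$) can individually grow like $\delta^{\mu_i}$. So $\mathfrak{e}_i$ has to be defined as the limit of their sum, which exists once the $\delta^{-1}$ term is gone.
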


\begin{remark}\label{rem: interpretation of homogeneous kernel elements as eigensection}
It follows from \autoref{prop: critical rates of model Laplacian} and \autoref{prop: duality between homogeneous kernels} (which will be proven in the next section) that $\tilde{\xi}_i,\tilde{\zeta}_i \in \Omega^0(S^5,\mathfrak{g}_{P_i})$ in the last point of \autoref{prop: observations on cokernel} vanish identically. Thus, the functions $\mathfrak{e}_i$ in the previous proposition only depend on $v_i$ and $\tilde{a}_i$ (and are linear with respect to both arguments).
\end{remark}

\begin{proof}
For notational convenience we only prove this theorem for $N=1$ and drop the subscripts. To any vector field $X \in \Gamma(TZ)$ we will in the following associate a vector $X_0 \in \mathbb{C}^3$ and a vector field $X_{S^5} \in \Gamma(TS^5)$ as follows: The vector is defined as $X_0 \coloneqq (D_0 \Upsilon)^{-1} X_{s} \in \mathbb{C}^3$. In order to construct $X_{S^5} \in \Gamma(TS^5)$ we first consider the pullback vector field $\Upsilon^*X$ over $B_R(0)\subset \mathbb{C}^3$ as a map $B_R(0) \to \mathbb{C}^3$. Let $D_0 (\Upsilon^*X) \col \mathbb{C}^3 \to \mathbb{C}^3$ be its derivative at zero. At $z \in S^5 \subset \mathbb{C}^3$ we then set $ X_{S^5}(z) \coloneqq (D_0 (\Upsilon^*X) \cdot z)^{\perp z} \in z^\perp = T_zS^5$, where $(\cdot)^{\perp z}$ denotes the projection onto the orthogonal complement of $z$. 

For any $X\in \Gamma(TZ)$ we now calculate
\begin{align*}
\int_Z \big\langle *(F_A \wedge \diff i_X \Im \Omega), a \big\rangle \vol_Z &= - \lim_{r \to 0} \int_{Z \setminus \Upsilon(B_r)} \big\langle a \wedge \diff_A (F_A \wedge i_X \Im \Omega) \big\rangle_{\mathfrak{g}_P} \\
&= \lim_{r \to 0} \int_{Z \setminus \Upsilon(B_r)} \diff \big\langle a \wedge F_A \wedge i_X \Im \Omega \big\rangle_{\mathfrak{g}_P} \\
& \qquad \qquad \qquad \qquad - \big\langle (\diff_A a) \wedge F_A \wedge i_X\Im \Omega\big\rangle_{\mathfrak{g}_P} \\
&=  \lim_{r\to 0} \int_{S^5_r} \big\langle (\tilde{\Upsilon}^* a)_{\vert S^5_r}\wedge (\tilde{\Upsilon}^* (F_A \wedge i_{X} \Im \Omega )_{\vert S^5_r}) \big\rangle_{\mathfrak{g}_{P_0}} \\
&=  \lim_{r\to 0} \int_{S^5} \big\langle \tilde{\delta}_r^*(\tilde{\Upsilon}^* a)_{\vert S^5_r}\wedge (\tilde{\delta}_r^*\tilde{\Upsilon}^* (F_A \wedge i_{X} \Im \Omega )_{\vert S^5_r}) \big\rangle_{\mathfrak{g}_{P_0}} \\
&=  \lim_{r\to 0} \int_{S^5} \big\langle \tilde{\delta}_r^*(\tilde{\Upsilon}^* a)_{\vert S^5_r}, *_{S^5} (\tilde{\delta}_r^*\tilde{\Upsilon}^* (F_A \wedge i_{X} \Im \Omega )_{\vert S^5_r}) \big\rangle_{S^5} \vol_{S^5} 
\end{align*}
where the last term in the second line vanishes because $F_A$ and $\diff_A a$ are both of type $(1,1)$ (following from the $\SU(3)$-instanton condition) and $i_X\Im \Omega$ is of type $(2,0)+(0,2)$ and where $\delta_r \col S^5 \to S^5_r$ is the dilation-diffeomorphism and $\tilde{\delta}_r \col \pr_{S^5}^*P_0 \to \pr_{S^5}^*P_0$ its canonical lift. Since $A$ is an $\SU(3)$-instanton we have \[F_A \wedge i_X \Im \Omega = i_X (F_A \wedge \Im \Omega) - (i_X F_A) \wedge \Im \Omega = - (i_X F_A) \wedge \Im \Omega.\] Moreover, from \[ \vert \nabla^k (\tilde{\Upsilon}^* A -A_0) \vert = \mathcal{O}(r^{\mu-k}), \quad \vert \Upsilon^*\Im \Omega - \Im \Omega_0 \vert = \mathcal{O}(r), \quad \Upsilon^*X = X_0 + r \cdot (X_{S^5} + f_{S^5} \cdot \partial_r) + \mathcal{O}(r^2) \] (for a suitable function $f_{S^5}$ on $S^5$) we obtain 
\begin{align*}
\tilde{\delta}_r^* \tilde{\Upsilon}^*(F_A \wedge i_X \Im \Omega)_{\vert S^5_r} &= - r^2 (i_{X_0}F_{A_0}) \wedge (\Im \Omega_0)_{\vert S^5} - r^3 (i_{X_{S^5}}F_{A_0}) \wedge (\Im \Omega_0)_{\vert S^5} \\
& \quad - r^2 (i_{X_0}F_{A_0})\wedge \big(\delta^*_r(\tfrac{\Im \Omega}{r^3})-\Im \Omega_0 \big) - r^2 i_{X_0} (\tilde{\delta}_r^*F_A -F_{A_0}) \wedge (\Im \Omega_0) \\
& \quad + \mathcal{O}(r^{4+\mu}).
\end{align*}

Next, we express $\tilde{\Upsilon}_s^*a$ as in the previous proposition as \[ \tilde{\Upsilon}^*_s a = r^{-3}(\tilde{a}_s+\tilde{\xi}_s \tfrac{\diff r}{r}) + \eta_s(\tilde{a}_s,\tilde{\xi}_s) + r^{-2} (\tilde{b}_s+\tilde{\zeta}_s \tfrac{\diff r}{r}) + \mathcal{O}(r^{-2+ \varepsilon}).\]  Noting that the terms in the second line in the expression of $\tilde{\delta}_r^* \tilde{\Upsilon}^*(F_A \wedge i_X \Im \Omega)_{\vert S^5_r}$ are $\mathcal{O}(r^3 + r^{3+\mu})$ and that $(\tilde{\delta}_r^*\eta_s(\tilde{a}_s,\tilde{\xi}_s))_{\vert S^5} = \mathcal{O}(r^{-2+\mu_s})$ gives
\begin{align}\label{equ: auxilliary calculation for obstruction pairing}
\begin{split}
\int_Z \big\langle *(F_A \wedge \diff i_X \Im \Omega), a \big\rangle_{S^5} \vol_Z = - \lim_{r\to 0} &\int_{S^5} \big\langle \tilde{b}_s,*(i_{X_0}F_{A_0}) \wedge \Im \Omega_0  \big\rangle_{S^5} \\
& \quad + \big\langle \tilde{a}_s,*(i_{X_{S^5}}F_{A_0}) \wedge \Im \Omega_0 \big\rangle_{S^5} \\
& \quad + r^{-1}\big\langle \tilde{a}_s,*(i_{X_0}F_{A_0}) \wedge \Im \Omega_0  \big\rangle_{S^5} \\
& \quad + r^{-1} \big\langle \tilde{a}_s, *(i_{X_0}F_{A_0})\wedge \big(\delta^*_r(\tfrac{\Im \Omega}{r^3})-\Im \Omega_0 \big)\big\rangle_{S^5} \\
& \quad + r^{-1} \big\langle \tilde{a}_s, *(i_{X_0} (\tilde{\delta}_r^*F_A -F_{A_0})) \wedge \Im \Omega_0 \big\rangle_{S^5} \\
& \quad  + r^2 \big\langle \tilde{\delta}_r^*\eta_s,*_{S^5}(i_{X_0}F_{A_s} \wedge\Im \Omega_0) \big\rangle \\
& \quad  + r^2 \big\langle \tilde{\delta}_r^*\eta_s,*_{S^5}(i_{X_0}(\tilde{\delta}_r^*F_A-F_{A_s}) \wedge\Im \Omega_0) \big\rangle \vol_{S^5}.
\end{split}
\end{align}
Since the integral on the left-hand side is finite and the last four terms are $\mathcal{O}(r^0+r^{\mu}+r^{\mu}+r^{1+2\mu})$, we must have $\int_{S^5} \langle \tilde{a}_s,(i_{X_0}F_{A_0}) \wedge \Im \Omega_0 \rangle \vol_{S^5} = 0.$\footnote{Alternatively, this also follows from \autoref{prop: infinitesimal translations and Atiyah classes}, \autoref{prop: duality between homogeneous kernels}, and the interpretation of $(\tilde{a}_s,\tilde{\xi}_s)$ as eigensections of an operator $P$ on $S^5$ (cf. \autoref{rem: interpretation of homogeneous kernel elements as eigensection}). Indeed, since $P$ is formally self-adjoint (cf. \cite[Lemma~2.14]{Wang-spectrum_of_operator_for_instantons} or the proof of \autoref{prop: duality between homogeneous kernels}), the eigensections to different eigenvalues are $L^2$-orthogonal.} 

Recall the canonical $\SU(2)$-structure $(\theta,\omega_1, \omega_2, \omega_3)$ on $S^5$ as reviewed in \autoref{subsec: Sasaki--Einstein structure on S5}. We have $(\Im \Omega_0)_{\vert S^5} = \theta \wedge \omega_2$ and therefore $ *_{S^5} (\alpha \wedge \Im \Omega_0)_{\vert S^5} = J_2^* \alpha_H$ for any $\alpha \in \Omega^1(S^5)$ where $\alpha_H$ denotes the projection of $\alpha$ to $H^*$ (the dual of the contact distribution $H = \ker \theta$) and where $J_2$ denotes the almost complex structure on $H$ corresponding to $\omega_2$ (cf. \cite[Proposition~1.2.31]{Huybrechts-complex-Geometry}).

Ultimately, we note that for $(v,u) \in \mathbb{C}^3 \oplus \mathfrak{m}_s$ the vector field $X(v,u)\in \Gamma(TZ)$ (as defined at the beginning of this section) satisfies \[ (X(v,u))_0 = v \qquad \textup{and} \qquad (X(v,u))_{S^5} = \hat{u}_{S^5} + \mathfrak{r}(v) \] where $\hat{u}_{S^5} \in \Gamma(TS^5)$ is defined at $z \in S^5$ by $u \cdot z \in z^\perp = T_zS^5$ and $\mathfrak{r}(v) \in \Gamma(TS^5)$ depends linearly on $v$. (This follows from the second items in \autoref{bul: properties of vec0}, \autoref{bul: properties of vec1}, and \autoref{bul: properties of vec2} of \autoref{prop: construction of families of vector fields}, respectively.) Inserting this into \eqref{equ: auxilliary calculation for obstruction pairing} and combining $\int \langle \tilde{a}_s,*(i_{\mathfrak{r}(v)}F_{A_0}) \wedge \Im \Omega_0 \rangle_{S^5} \vol_{S^5}$ and the last four terms into $\mathfrak{e}_s(v,(\tilde{a}_s,\tilde{\xi}_s))$ finishes the proof.
\end{proof}

\subsection{On the non-degenerateness of the pairing}

In the following we will show that under a suitable condition on $A$ the pairing between $(\mathbb{C}^3)^N \oplus (\oplus_i\mathfrak{m}_i)$ and $\ker(L_A)_{-5-\mu}$ introduced in \autoref{prop: obstruction pairing} is left-non-degenerate. For this we will first show that the projections of $i_{v_i}F_{A_i}$ and $i_{(\hat{u}_i)_{S^5}} F_{A_i}$ onto $\mathcal{K}(L_{A_i})_{-2}$ and $\mathcal{K}(L_{A_i})_{-1}$ are non-trivial, respectively. In \autoref{sec: proof of non-degenerateness} we will then use this together with a duality-map between homogeneous kernels to prove the non-degenerateness of the pairing.

\subsubsection{Infinitesimal rotations and deformations of the tangent connection}

Let $\pi_0 \col P_0 \to S^5$ be a principal $G$-bundle. Recall from \autoref{prop: dilation invariant instantons} that $\pr^*_{S^5}A_0$ for $A_0 \in \mathcal{A}(P_0)$ is an $\SU(3)$-instanton over $\mathbb{C}^3\setminus \{0\}$ if and only if $F_{A_0} \wedge \omega_i = 0$ for every $i=1,2,3$, where $(\theta,\omega_1,\omega_2,\omega_3)$ denotes the canonical $\SU(2)$-structure on $S^5$ (cf. \autoref{subsec: Sasaki--Einstein structure on S5}). The following is the infinitesimal version of \autoref{prop: dilation invariant instantons} and its proof follows from the same arguments as in \autoref{prop: augmented instanton equation} and \autoref{prop: dilation invariant instantons}.

\begin{proposition}\label{prop: homogeneous kernel of degree -1}
Let $\pr_{S^5}^*A_0$ for $A_0 \in \mathcal{A}(P_0)$ be a dilation-invariant and infinitesimally irreducible $\SU(3)$-instanton. Then $L_{\pr_{S^5}^*A_0} (\pr_{S^5}^*(\xi_1,\xi_2,a)) = 0$ for some $(\xi_1,\xi_2,a) \in \Omega^0(S^5,\mathfrak{g}_{P_0}\oplus \mathfrak{g}_{P_0} \oplus TS^5 \otimes \mathfrak{g}_{P_0})$ if and only if 
\begin{align}
\xi_1,\xi_2 &= 0\\
(\diff_{A_0} a) \wedge \omega_i &= 0 \quad \text{for every $i=1,2,3$} \label{equ: SU(2) instantons deformations} \\
\diff_{A_0}^*a &= 0. \label{equ: SU(2) instantons Coulomb gauge}
\end{align}
Consequently, $ \mathcal{K}(L_{A_0})_{-1} = \big\{ \pr_{S^5}^*a \mid a \in \Omega^1(S^5,\mathfrak{g}_{P_0}) \text{ satisfies \eqref{equ: SU(2) instantons deformations} and \eqref{equ: SU(2) instantons Coulomb gauge}} \big\}$.
\end{proposition}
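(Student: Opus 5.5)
The plan is to compute $L_{\pr_{S^5}^*A_0}$ on pulled-back triples directly and then reuse the two arguments the statement points to. Write the pullback as $\underline{a}' = \pr_{S^5}^*(\xi_1,\xi_2,a)$. Here $a\in\Omega^1(S^5,\mathfrak{g}_{P_0})$, viewed on $\mathbb{C}^3\setminus\{0\}$ as $\pr^*a$ with no $\diff r$ component, so it is homogeneous of degree $-1$; likewise $\xi_j$ has degree $0$.

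\textbf{Vanishing of $\xi_1,\xi_2$.} The kernel equations are $\diff^*_{A_0'}a' = 0$, $\Lambda_{\omega_0}\diff_{A_0'}a' = 0$ and $*(\diff_{A_0'}a'\wedge\Im\Omega_0) + \diff_{A_0'}\xi_1 - J^*\diff_{A_0'}\xi_2 = 0$. I follow the proof of \autoref{prop: augmented instanton equation}, but on the cone and with the linearised operator. Apply $\diff^*_{A_0'}$ to the third equation. The term $\diff^*_{A_0'}*(\diff_{A_0'}a'\wedge \Im\Omega_0)$ reduces, via $\diff\Im\Omega_0=0$, to $\pm *[F_{A_0'}\wedge a'\wedge\Im\Omega_0]$, and this vanishes because $F_{A_0'}$ is of type $(1,1)$ and primitive. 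The $J^*\diff\xi_2$ term gives $[\Lambda_{\omega_0}F_{A_0'},\xi_2]=0$, and the $\xi_1$ term gives $\Delta_{A_0'}\xi_1 = 0$.

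Since $\xi_1$ is pulled back, it is homogeneous of degree $0$. The computation in \autoref{prop: critical rates of model Laplacian} (with $\lambda=0$) then gives $\diff^*_{A_0}\diff_{A_0}\xi_1 = 0$ on $S^5$. Integrating over the compact $S^5$ yields $\diff_{A_0}\xi_1=0$, so $\xi_1=0$ by infinitesimal irreducibility; no decay argument is needed. Applying $J^*$ to the third equation and repeating the argument gives $\xi_2=0$.

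\textbf{Reduction of the remaining equations to $S^5$.} With $\xi_1=\xi_2=0$, the remaining conditions are $\diff^*_{A_0'}\pr^*a=0$ together with the linearised instanton equations $\Lambda_{\omega_0}\diff_{A_0'}\pr^*a=0$ and $\diff_{A_0'}\pr^*a\wedge\Im\Omega_0=0$. Since $\diff_{A_0'}\pr^*a=\pr^*(\diff_{A_0}a)$, the proof of \autoref{prop: dilation invariant instantons} applies verbatim with $F_A$ replaced by $\diff_{A_0}a$; it only used that the 2-form is pulled back from $S^5$. Thus the two instanton equations are equivalent to $i_{X_\theta}\diff_{A_0}a=0$ together with $\diff_{A_0}a\wedge\omega_i=0$ for $i=1,2,3$, and by the final note of that proposition the latter already implies the former.

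For the gauge condition, use $g_0 = \diff r^2 + r^2 g_{S^5}$. A 1-form with no radial component, pulled back from $S^5$, satisfies $\diff^*_{A_0'}\pr^*a = r^{-2}\pr^*(\diff^*_{A_0}a)$. There is no extra term, because the divergence correction involves only the radial component. Hence this condition is equivalent to \eqref{equ: SU(2) instantons Coulomb gauge}.

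\textbf{The description of $\mathcal{K}(L_{A_0})_{-1}$.} The inclusion "$\supseteq$" is immediate from the above. For "$\subseteq$", a homogeneous kernel element of degree $-1$ has the form $\pr^*(\xi_1,\xi_2) + \pr^*a + f\,\tfrac{\diff r}{r}$ with $f\in\Omega^0(S^5,\mathfrak{g}_{P_0})$. So the statement as written requires handling the radial part, and I expect this to be the main obstacle.

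I plan to show $f=0$ as follows. The $\diff r$ term adds $-f\,\diff r\wedge$ (degree-0 data) to the curvature variation. Contracting the $\Lambda_{\omega_0}$ equation and the $\Im\Omega_0$ equation with $\partial_r$ and $X_\theta$ gives $\diff_{A_0}f$-type terms paired against $J$-rotated terms. One combination should give $\mathcal{L}_{X_\theta}$-type relations, and the Coulomb condition gives $\diff^*_{A_0}a$ proportional to $f$. Combining these, I expect an identity of the form $\Delta_{A_0}f + cf=0$ with $c\ge0$, and then $f=0$ by infinitesimal irreducibility. If this turns out to fail, the fallback is to state the characterisation for the pulled-back ($\diff r$-free) elements only, which is exactly the first assertion of the proposition.
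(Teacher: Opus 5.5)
Your argument for the equivalence itself is correct and is essentially the paper's. The paper only says it follows from the proofs of \autoref{prop: augmented instanton equation} and \autoref{prop: dilation invariant instantons}, and you carry out exactly those steps:
the divergence of the third component gives $\Delta_{A_0'}\xi_j=0$, which you close off by integrating over the compact $S^5$; the cone reduction is applied to the pulled-back $2$-form $\pr_{S^5}^*(\diff_{A_0}a)$; and you use $\diff^*_{A_0'}\pr_{S^5}^*a=r^{-2}\pr_{S^5}^*\diff^*_{A_0}a$.

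The gap is in the final assertion about $\mathcal{K}(L_{A_0})_{-1}$, namely the inclusion ``$\subseteq$''.

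First, a minor slip. In the paper's convention a homogeneous triple of degree $-1$ has $\mathfrak{g}$-components $r^{-1}\xi_{j,0}$, not pulled-back ones. This is harmless: $\Delta_{A_0'}(r^{-1}\xi_{j,0})=r^{-3}(\Delta_{A_0}\xi_{j,0}+3\xi_{j,0})$, and positivity of $\Delta_{A_0}$ forces $\xi_{j,0}=0$.

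Second, and this is the real gap, you never show that the radial coefficient $f$ of $f\tfrac{\diff r}{r}$ vanishes. You only hope that some contractions produce $\Delta_{A_0}f+cf=0$, and your fallback would prove a weaker statement than the one claimed. The missing idea is the duality of \autoref{prop: duality between homogeneous kernels}:
\begin{itemize}
\item Write the element as $r^{-1}(\xi_1,\xi_2,a_r\diff r+a_\theta(r\theta)+ra_H)$, so that $f=a_r$.
\item The map $\underline{J}$ sends it to $r^{-4}(-a_r,a_\theta,\xi_1\diff r-\xi_2(r\theta)+rJ_2^*a_H)\in\mathcal{K}(L_{A_0})_{-4}$. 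The radial and Reeb coefficients have now become the two $\mathfrak{g}$-components.
\item Your own divergence argument, applied to this dual element, gives $\Delta_{A_0'}(r^{-4}a_r)=0$.
\item The indicial term $\lambda(\lambda+4)$ vanishes at $\lambda=-4$, so this says $\Delta_{A_0}a_r=0$ on $S^5$. Hence $a_r$ is $A_0$-parallel and therefore zero by infinitesimal irreducibility. The same argument also gives $a_\theta=0$.
\end{itemize}

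This is the reasoning in the remark following \autoref{prop: duality between homogeneous kernels}, and the paper's ``Consequently'' implicitly rests on it. It confirms your guess with $c=0$. However, obtaining that identity by direct contraction would amount to redoing the computation $\underline{J}P+P\underline{J}=-5\underline{J}$ behind that proposition, and your sketch does not carry this out.
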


\begin{proposition}
For $u \in \mathfrak{su}(3)$ let $\hat{u}_{S^5} \in \Gamma(TS^5)$ be the infinitesimal vector field induced by $u$ (i.e. $\hat{u}_{S^5}(z) = u \cdot z \in T_zS^5$ for every $z\in S^5$). Moreover, let $P_0$ and $A_0$ be as in the previous proposition. Then $i_{\hat{u}_{S^5}}F_{A_0} \in \Omega^1(S^5,\mathfrak{g}_{P_0})$ satisfies~\eqref{equ: SU(2) instantons deformations}.
\end{proposition}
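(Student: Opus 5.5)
The plan is to realise $i_{\hat{u}_{S^5}}F_{A_0}$ as the infinitesimal variation of $A_0$ under the $\SU(3)$-rotation generated by $u$, and to exploit that $\SU(3)$ preserves the whole Sasaki--Einstein structure $(\theta,\omega_1,\omega_2,\omega_3)$. Let $U_t \coloneqq \exp(tu)\in \SU(3)$, acting on $S^5$. Since $\SU(3)$ preserves $\omega_0$, $\Omega_0$ and $r$, it preserves $\theta$, $\omega_1$, $\omega_2$ and $\omega_3$ on $S^5$ (they are read off from \eqref{equ: relation Kähler form on C^3 and omega1 on S5} and the formula for $\Omega_0$). Hence $\mathcal{L}_{\hat{u}_{S^5}}\omega_i = 0$ for $i=1,2,3$.

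As in the proofs of \autoref{prop: isomorphism from isotopy} and \autoref{prop: isomorphism from stabiliser}, lift $U_t$ to bundle automorphisms $\tilde{U}_t$ of $P_0$ by $A_0$-horizontal transport along the flow of $\hat{u}_{S^5}$, and set $A_t \coloneqq \tilde{U}_t^*A_0$. Then
\[ F_{A_t}\wedge \omega_i = \tilde{U}_t^*(F_{A_0}\wedge \omega_i) = 0 \]
because $U_t^*\omega_i=\omega_i$. So $A_t$ is a path of solutions of \eqref{equ: cone reduction of instanton equation}. Its derivative at $t=0$ is the infinitesimal action of the horizontal lift $X^H$ of $\hat{u}_{S^5}$. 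By the Cartan formula this equals $\mathcal{L}_{X^H}A_0 = i_{X^H}F_{A_0} + \diff_{A_0}(i_{X^H}A_0)$, and since $X^H$ is horizontal we get $\tfrac{\diff}{\diff t}A_t\vert_{t=0} = i_{\hat{u}_{S^5}}F_{A_0}$. Differentiating $F_{A_t}\wedge\omega_i=0$ then gives $\diff_{A_0}(i_{\hat{u}_{S^5}}F_{A_0})\wedge\omega_i=0$, which is \eqref{equ: SU(2) instantons deformations}.

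A direct computation would avoid lifting the flow. Write $\diff_{A_0}(i_XF_{A_0}) = \mathcal{L}^{A_0}_XF_{A_0} - i_X\diff_{A_0}F_{A_0}$ with $X=\hat{u}_{S^5}$, and use the Bianchi identity $\diff_{A_0}F_{A_0}=0$. This gives $\diff_{A_0}(i_XF_{A_0})\wedge\omega_i = (\mathcal{L}^{A_0}_XF_{A_0})\wedge\omega_i$. Here $\mathcal{L}^{A_0}_X$ is the covariant Lie derivative, and it satisfies a Leibniz rule against the scalar forms $\omega_i$. Applying $\mathcal{L}^{A_0}_X$ to $F_{A_0}\wedge\omega_i=0$ and using $\mathcal{L}_X\omega_i=0$ yields the claim.

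The only step needing care is verifying $\mathcal{L}_{\hat{u}_{S^5}}\omega_i=0$ for all $i$, in particular for $\omega_2$ and $\omega_3$. This uses that $u\in\mathfrak{su}(3)$ rather than $\mathfrak{u}(3)$, so that $\Omega_0$ is fixed and not rotated by a phase. It also uses that the flow fixes $r$ and the one-form $\diff r + ir\theta$, so that the decomposition $\Omega_0 = r^2(\diff r+ir\theta)\wedge(\omega_2+i\omega_3)$ is preserved termwise.
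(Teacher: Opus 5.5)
Your proposal is correct, and your main argument is exactly the paper's proof: lift $\exp(tu)$ to $P_0$ by $A_0$-parallel transport, use that $\SU(3)$ fixes each $\omega_i$ so that $F_{(\textup{tra}_t^{A_0})^*A_0}\wedge\omega_i=(\textup{tra}_t^{A_0})^*(F_{A_0}\wedge\omega_i)=0$, and differentiate at $t=0$ using $\tfrac{\diff}{\diff t}\big((\textup{tra}_t^{A_0})^*A_0\big)_{\vert t=0}=i_{\hat{u}_{S^5}}F_{A_0}$. Your alternative via the covariant Cartan formula and the Bianchi identity is the infinitesimal version of the same argument, and your checks fill in details the paper leaves implicit: that $\mathcal{L}_{\hat{u}_{S^5}}\omega_i=0$ requires $u\in\mathfrak{su}(3)$, and that horizontality identifies the derivative.
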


\begin{remark}
Let $\textup{Flow}_t^{\hat{u}_{S^5}} \col S^5 \to S^5$ be the time $t$-flow of the vector field $\hat{u}_{S^5}$ and let $\textup{tra}_t^{A_0} \col P_0 \to P_0$ be its lift via parallel transport. Then $i_{\hat{u}_{S^5}} F_{A_0} = \tfrac{\diff}{\diff t} ((\textup{tra}_t^{A_0})^*A_0)_{\vert t=0}$. Thus, the previous proposition states that (to first order) $(\textup{tra}_t^{A_0})^*A_0$ is a family of instantons on $P_0$. 
\end{remark}

\begin{proof}
Denote by $\textup{Flow}_t^{\hat{u}_{S^5}} \col S^5 \to S^5$ the time $t$-flow of the vector field $\hat{u}_{S^5}$ and by $\textup{tra}_t^{A_0} \col P_0 \to P_0$ its lift via parallel transport. Since $\text{Flow}_t^{\hat{u}_{S^5}} = \exp(tu) \in \SU(3)$ we have 
\begin{align*}
\diff_{A_0} (i_{\hat{u}_{S^5}}F_{A_0}) \wedge \omega_i &= \tfrac{\diff}{\diff t} \big(F_{(\text{tra}_t^{A_0})^*A_0} \wedge \omega_i\big)_{\vert t=0} \\
&=\tfrac{\diff}{\diff t} \big((\text{tra}_t^{A_0})^*(F_{A_0} \wedge \omega_i)\big)_{\vert t=0} = 0
\end{align*}
since $\pr_{S^5}^*A_0$ is a dilation-invariant instanton over $\mathbb{C}^3\setminus \{0\}$.
\end{proof}

Recall the group $\Stab_{\SU(3)}(A_0)$ introduced in~\eqref{equ: definition Stab_SU(3)(A_s)} and let $\mathfrak{m}\subset \mathfrak{su}(3)$ be a linear subspace which is a complement to $\textup{image}(\mathfrak{stab}_{\SU(3)}(A_0) \to \mathfrak{su}(3))$, the image of $\mathfrak{stab}_{\SU(3)}(A_0)$ under the canonical projection to $\mathfrak{su}(3)$.

\begin{proposition}\label{prop: infinitesimal rotations and deformations of tangent cone}
If $u \in \mathfrak{m}$ is non-zero, then there exists an $a \in \Omega^1(S^5,\mathfrak{g}_{P_0})$, which satisfies \eqref{equ: SU(2) instantons deformations} and \eqref{equ: SU(2) instantons Coulomb gauge}, such that \[ \int_{S^5} \big\langle i_{\hat{u}_{S^5}}F_{A_0}, a \big\rangle \vol_{S^5} \neq 0. \]
\end{proposition}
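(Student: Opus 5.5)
The plan is to take for $a$ the $L^2$-orthogonal projection of $i_{\hat{u}_{S^5}}F_{A_0}$ onto the Coulomb slice $\ker \diff_{A_0}^*$, and then to show that this projection cannot vanish unless $u$ lies in $\textup{image}(\mathfrak{stab}_{\SU(3)}(A_0)\to\mathfrak{su}(3))$. Since $u\in\mathfrak{m}$ is non-zero, that would contradict the choice of $\mathfrak{m}$ as a complement.

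\textbf{Step 1: construction of $a$.} Since $A_0$ is infinitesimally irreducible, the Laplacian $\diff_{A_0}^*\diff_{A_0}$ on $\Omega^0(S^5,\mathfrak{g}_{P_0})$ is injective, hence invertible, as $S^5$ is compact. Let $\xi$ be the unique solution of
\[\diff_{A_0}^*\diff_{A_0}\xi=\diff_{A_0}^*\big(i_{\hat{u}_{S^5}}F_{A_0}\big)\]
and set $a\coloneqq i_{\hat{u}_{S^5}}F_{A_0}-\diff_{A_0}\xi$. Then \eqref{equ: SU(2) instantons Coulomb gauge} holds by construction. For \eqref{equ: SU(2) instantons deformations}, the previous proposition shows that $i_{\hat{u}_{S^5}}F_{A_0}$ satisfies it. Moreover
\[\diff_{A_0}\diff_{A_0}\xi\wedge\omega_i=[F_{A_0}\wedge\omega_i,\xi]=0\]
by \eqref{equ: cone reduction of instanton equation}, so $a$ satisfies \eqref{equ: SU(2) instantons deformations} as well.

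\textbf{Step 2: reduction to $a\neq 0$.} Integrating by parts on the closed manifold $S^5$ and using $\diff_{A_0}^*a=0$ gives
\[\int_{S^5}\langle i_{\hat{u}_{S^5}}F_{A_0},a\rangle\vol_{S^5}=\Vert a\Vert_{L^2}^2+\int_{S^5}\langle \xi,\diff_{A_0}^*a\rangle\vol_{S^5}=\Vert a\Vert_{L^2}^2 .\]
It therefore suffices to show $a\neq0$.

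\textbf{Step 3: $a=0$ forces $u$ into the stabiliser image.} Suppose $a=0$, so that $i_{\hat{u}_{S^5}}F_{A_0}=\diff_{A_0}\xi$. Consider on $P_0$ the vector field
\[Y\coloneqq \hat{u}^H-\xi^\#,\]
where $\hat{u}^H$ is the $A_0$-horizontal lift of $\hat{u}_{S^5}$ and $\xi^\#$ is the vertical field induced by $\xi$. This field is $G$-invariant and projects to $\hat{u}_{S^5}$. By the Cartan formula, $\mathcal{L}_{\hat u^H}A_0=i_{\hat u^H}F_{A_0}$ (as in the remark preceding the statement), and $\mathcal{L}_{\xi^\#}A_0=\diff_{A_0}\xi$ up to sign. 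Hence, after fixing the sign convention for $\xi$ (and replacing $\xi$ by $-\xi$ if necessary), we get $\mathcal{L}_YA_0=i_{\hat{u}_{S^5}}F_{A_0}-\diff_{A_0}\xi=0$.

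Since $P_0$ is compact, $Y$ integrates to a one-parameter group $\tilde{U}_t$ of $G$-equivariant bundle automorphisms. These cover $\textup{Flow}_t^{\hat{u}_{S^5}}=\exp(tu)\in\SU(3)$ and satisfy $\tilde{U}_t^*A_0=A_0$. Thus $\tilde{U}_t\in\Stab_{\SU(3)}(A_0)$, and differentiating at $t=0$ produces an element of $\mathfrak{stab}_{\SU(3)}(A_0)$ mapping to $u$. Hence $u\in\mathfrak{m}\cap\textup{image}(\mathfrak{stab}_{\SU(3)}(A_0)\to\mathfrak{su}(3))=\{0\}$, a contradiction.

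The main obstacle is Step 3: one must check carefully that the sign conventions for the Lie derivative of a connection along horizontal and vertical lifts match, so that $\mathcal{L}_YA_0=0$ exactly. One must also confirm that the integrated flow consists of automorphisms of $P_0$ in the precise sense of \eqref{equ: definition Stab_SU(3)(A_s)}. Steps 1 and 2 are routine.
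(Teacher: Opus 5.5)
Your proof is correct and follows essentially the same route as the paper. Both arguments reduce, via the $L^2$-orthogonal splitting of $\cap_i\ker(\diff_{A_0}\cdot\wedge\omega_i)$ into Coulomb-gauge deformations and $\image(\diff_{A_0})$, to the case $i_{\hat{u}_{S^5}}F_{A_0}=\diff_{A_0}\xi$. Both then produce a one-parameter family of $A_0$-preserving automorphisms covering $\exp(tu)$, which contradicts the transversality of $\mathfrak{m}$.

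The only difference is presentational. You make the witness explicit (the pairing equals $\Vert a\Vert_{L^2}^2$) and integrate the invariant field $\hat{u}^H-\xi^\#$ directly. This neatly supplies the step the paper states as ``$A_t=(\textup{tra}_t^{A_0})^*A_0$ stays tangent to the gauge orbit, hence is gauge equivalent to $A_0$''.
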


\begin{proof}
Elliptic theory for the operator $\diff_{A_0} \col \Omega^0(S^5,\mathfrak{g}_{P_0}) \to \Omega^1(S^5,\mathfrak{g}_{P_0})$ (together with the observation that $\image(\diff_{A_0}) \subset \ker(\diff_{A_0}\cdot \wedge \omega_i)$ because $A_0$ is an instanton; cf. \autoref{prop: dilation invariant instantons}) implies that \[ \cap_i \ker(\diff_{A_0}\cdot \wedge \omega_i) = \big\{ a \in \Omega^1(S^5,\mathfrak{g}_{P_0}) \mid \text{$a$ satisfies \eqref{equ: SU(2) instantons deformations} and \eqref{equ: SU(2) instantons Coulomb gauge}}\big\} \oplus \textup{image}(\diff_{A_0}),\] where the splitting is $L^2$-orthogonal. Thus, by the previous proposition, if $i_{\hat{u}_{S^5}}F_{A_0}$ lies $L^2$-orthogonal to every $a \in \Omega^1(S^5,\mathfrak{g}_{P_0})$ satisfying \eqref{equ: SU(2) instantons deformations} and \eqref{equ: SU(2) instantons Coulomb gauge}, then $i_{\hat{u}_{S^5}}F_{A_0} = \diff_{A_0} \xi$ for some $\xi \in \Omega^0(S^5,\mathfrak{g}_{P_0})$.

We now define a 1-parameter family of connections \[ A_t \coloneqq (\textup{tra}_t^{A_0})^*A_0 \in \mathcal{A}(P_0) \] where $\text{tra}_t^{A_0} \col P_0 \to P_0$ is the lift of $\textup{Flow}_t^{\hat{u}_{S^5}}$ via parallel transport. We then have \[ \tfrac{\diff}{\diff t} (A_t)_{\vert t=t_0} = (\text{tra}_{t_0}^{A_0})^* (i_{\hat{u}_{S^5}} F_{A_0}) = (\text{tra}_{t_0}^{A_0})^* (\diff_{A_0} \xi) = \diff_{A_{t_0}}\big( (\text{tra}_{t_0}^{A_0})^* \xi \big).  \] In particular, $A_t$ is at every $t$ tangent to the gauge orbit. This implies that for every $t$, there exists a gauge transformation mapping $A_t$ to $A_0$. This however is in contradiction to the fact that $\mathfrak{m}$ is (by definition) transverse to $\textup{image}(\mathfrak{stab}_{\SU(3)}(A_0) \to \mathfrak{su}(3))$.
\end{proof}

\subsubsection{Infinitesimal translations}

As in the previous section, let $\pi \col P_0 \to S^5$ be a principal $G$-bundle together with a connection $A_0$ that pulls back to a dilation invariant $\SU(3)$-instanton over $\mathbb{C}^3\setminus \{0\}$.

\begin{proposition}[{\cite[Proposition~4.1]{Wang-AtiyahClasses}}]\label{prop: infinitesimal translations and Atiyah classes}
Let $v \in \mathbb{C}^3$ be a constant vector. Then $i_{v} (\pr_{S^5}^*F_{A_0}) \in \mathcal{K}(L_{A_0})_{-2}$. Moreover, if $F_{A_0} \neq 0$, then the map 
\begin{align*}
\mathbb{C}^3 \to \mathcal{K}(L_{A_0})_{-2}, v  \mapsto i_{v} (\pr_{S^5}^*F_{A_0})
\end{align*}
is injective.
\end{proposition}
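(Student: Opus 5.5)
The plan is to exploit translation invariance of the flat model: translates of a dilation-invariant instanton are again instantons, so the derivative of the translation family lies in the kernel of $L$. Set $A_0' \coloneqq \pr_{S^5}^*A_0$ on $\mathbb{C}^3\setminus\{0\}$. For $v\in\mathbb{C}^3$ and small $t$, let $\tau_{tv}(z)=z+tv$, and lift $\tau_{tv}$ to $\pr_{S^5}^*P_0$ over any compact region avoiding $0$ and $-tv$ by parallel transport along straight lines (as in \autoref{prop: isomorphism from isotopy}). Since $(\omega_0,\Omega_0)$ is translation invariant, $A_t\coloneqq\tilde\tau_{tv}^*A_0'$ is an $\SU(3)$-instanton on its domain. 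Exactly as in the Remark following \autoref{prop: homogeneous kernel of degree -1}, one gets $\tfrac{\diff}{\diff t}A_t|_{t=0}=i_vF_{A_0'}$. Differentiating the instanton equations then gives $\Lambda_{\omega_0}\diff_{A_0'}(i_vF_{A_0'})=0$ and $\diff_{A_0'}(i_vF_{A_0'})\wedge\Im\Omega_0=0$. Alternatively, these follow directly from the Bianchi identity and $\mathcal{L}_v\omega_0=\mathcal{L}_v\Omega_0=0$, since $\diff_{A}(i_vF_A)=\mathcal{L}^{A}_vF_A$.

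Next I will check the Coulomb condition $\diff_{A_0'}^*(i_vF_{A_0'})=0$. Write $\diff^*_{A}i_vF_A=-i_v\diff^*_AF_A$ up to sign, which holds because $v$ is parallel. An $\SU(3)$-instanton on the Calabi--Yau $\mathbb{C}^3$ is Yang--Mills, since $*F = -F\wedge\omega_0$ for primitive $(1,1)$-forms and $\diff\omega_0=0$. Hence $\diff_A^*F_A=0$. Consequently $(0,0,i_vF_{A_0'})\in\ker L_{A_0'}$.

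For homogeneity, $F_{A_0'}=\pr_{S^5}^*F_{A_0}$ satisfies $\delta_r^*F=F$ as a 2-form. With constant $v$, $\delta_r^*(i_vF)=r^{-1}i_vF$, i.e.\ $r^{\lambda+1}$ with $\lambda=-2$. So $i_vF_{A_0'}\in\mathcal{K}(L_{A_0})_{-2}$.

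Injectivity is the main point. Suppose $i_vF_{A_0'}=0$ with $v\neq 0$. Then $F$ is invariant under the linear flow in direction $v$, and since $F=\pr_{S^5}^*F_{A_0}$ also has $i_{\partial_r}F=0$ and is dilation invariant, I would argue as follows. For any $z\neq0$, the component of $F$ at $z$ is pulled back from $S^5$ and annihilated by $v$ and by $z$ (the radial direction). Also $i_{X_\theta}F_{A_0}=0$ by \autoref{prop: dilation invariant instantons}, so $F$ is annihilated by $Jz$ as well. Moreover $F$ is of type $(1,1)$, so $i_vF=0$ implies $i_{Jv}F=0$. Hence, at points $z$ with $z,v$ complex-linearly independent, $F_z$ is annihilated by the complex span of $z$ and $v$, a real 4-dimensional subspace. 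It is therefore a multiple of the area form on the remaining complex line. Primitivity $\Lambda_{\omega_0}F=0$ kills such a $(1,1)$-form supported on a single complex line, since a $(1,1)$-form on one complex line is proportional to $\omega$ restricted there, whose trace is nonzero. So $F_z=0$ on a dense set, hence $F_{A_0}\equiv0$, a contradiction. The hardest step is making the pointwise linear algebra precise, in particular the claim that $i_vF=0$ forces $i_{Jv}F=0$; this I would verify using that $F$ is $J$-invariant, i.e.\ of type $(1,1)$.
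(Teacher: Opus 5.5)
Your proposal is correct. For kernel membership it is the paper's argument: the Yang--Mills property together with constancy of $v$ gives the Coulomb condition, and the Bianchi identity ($\diff_{A_0'}(i_vF)=\nabla_vF$) together with constancy of $\omega_0,\Omega_0$ gives the other two components; your count $\delta_r^*(i_vF)=r^{-1}i_vF$ makes explicit the homogeneity of degree $-2$ that the paper leaves implicit. For injectivity the paper argues directly at one point $z_0\in S^5$ with $(F_{A_0})_{z_0}\neq 0$ and $\pr_H(v)\neq 0$, reading off $i_vF\neq 0$ from the ASD normal form $\xi_1(\varepsilon^{12}-\varepsilon^{34})+\xi_2(\varepsilon^{13}-\varepsilon^{42})+\xi_3(\varepsilon^{14}-\varepsilon^{23})$ on $H$; this is the same pointwise fact as your contradiction argument (a primitive $(1,1)$-form annihilated by a complex $2$-plane vanishes), which you run on the dense set $\mathbb{C}^3\setminus\mathbb{C}v$.
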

A proof of the previous proposition appeared in \cite[Section~4.3]{Wang-AtiyahClasses}. We have included a short sketch for the convenience of the reader.
\begin{proof}
Since $\pr_{S^5}^*A_0$ is an instanton over $\mathbb{C}^3\setminus \{0\}$, it is in particular Yang--Mills. Thus,
\begin{align*}
\diff_{\pr_{S^5}^*A_0}^* \big(i_{v} (\pr_{S^5}^*F_{A_0})\big) &= -\textstyle{\sum} i_{e_i} \nabla_{e_i}\big(i_{v} (\pr_{S^5}^*F_{A_0})\big) = -\textstyle{\sum} i_{e_i}i_{v} \nabla_{e_i}(\pr_{S^5}^*F_{A_0}) \\
&=  - i_v \diff_{\pr_{S^5}^*A_0}^*(\pr_{S^5}^*F_{A_0}) = 0
\end{align*}
where $e_1,\dots,e_6$ is the standard (constant) frame of $\mathbb{C}^3$ and where we have used that $v$ is constant. Similarly, we obtain 
\begin{align*}
\diff_{\pr_{S^5}^*A_0} \big(i_{v} (\pr_{S^5}^*F_{A_0})\big) &= \nabla_v(\pr_{S^5}^*F_{A_0}) - i_v\big(\diff_{\pr_{S^5}^*A_0} (\pr_{S^5}^*F_{A_0})\big) = \nabla_v(\pr_{S^5}^*F_{A_0})
\end{align*}
by the Bianchi-identity, and therefore
\begin{align*}
\diff_{\pr_{S^5}^*A_0} \big(i_{v} (\pr_{S^5}^*F_{A_0})\big) \wedge \Im \Omega_0 &= \nabla_v(\pr_{S^5}^*F_{A_0}) \wedge \Im \Omega_0 = \nabla_v \big( \pr_{S^5}^*F_{A_0} \wedge \Im \Omega_0 \big) = 0
\end{align*}
because $\Im \Omega_0$ is constant and $A_0$ is an instanton. The equation $\Lambda_{\omega_0}\diff_{\pr_{S^5}^*A_0} \big(i_{v} (\pr_{S^5}^*F_{A_0})\big) = 0$ is proven analogously, which shows the first assertion.

In order to prove that $i_{v}(\pr_{S^5}^*F_{A_0}) \neq 0$ for $v \neq 0$, we first choose a point $z_0 \in S^5$ at which $\pr_{H}(v) \neq 0$ and $(F_{A_0})_{z_0} \neq 0$ (where $\pr_{H}$ denotes the projection to the contact distribution $H \subset TS^5$; cf. \autoref{subsec: Sasaki--Einstein structure on S5}). Moreover, let $\varepsilon^1,\dots,\varepsilon^4$ be a positively oriented orthonormal coframe of $H^*$ which satisfies $\pr_{S^5}^*\varepsilon^1(v) \neq 0$ and $\pr_{S^5}^*\varepsilon^i(v) = 0$ $i=2,3,4$. By \autoref{prop: dilation invariant instantons} we locally have \[ F_{A_0} = \xi_1 (\varepsilon^{12}-\varepsilon^{34}) +\xi_2 (\varepsilon^{13}-\varepsilon^{42}) + \xi_3 (\varepsilon^{14}-\varepsilon^{23})  \] for some local sections $\xi_1,\xi_2,\xi_3$ of $\mathfrak{g}_{P_0}$. This explicit form shows $i_{v}(\pr_{S^5}^*F_{A_0}) \neq 0$ and concludes the proof.
\end{proof}

\subsubsection{Proof of the non-degenerateness}\label{sec: proof of non-degenerateness}

Assume that we are in the situation of \autoref{subsec: pairing for the cokernel} with a framed conically singular $\SU(3)$-instanton $A$ on $\pi \col P \to Z\setminus S$ whose (infinitesimally irreducible) tangent cones $(\pi_i \col P_i \to S^5,A_i)$ all satisfy \autoref{ass: critical rates contained in Z}.
\begin{theorem}\label{thm: non-degenerateness of pairing}
Assume additionally that $\ker(L_A)_{-5/2} = 0$. Then the pairing introduced in \autoref{prop: obstruction pairing} is non-degenerate on the left. That is, $\int \langle \Phi_{\mathbb{A}}(\vec{v},\vec{u}),\underline{a}\rangle \vol_Z = 0$ for all $\underline{a} \in \ker(L_A)_{-5-\mu}$ implies $(\vec{v},\vec{u})=0 \in (\mathbb{C}^3)^N \oplus (\oplus_i \mathfrak{m}_i)$. Equivalently, the map $\Phi_\mathbb{A}$ is injective.
\end{theorem}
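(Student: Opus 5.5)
The plan is to prove injectivity of $\Phi_\mathbb{A}$ by constructing, for each nonzero $(\vec{v},\vec{u})$, a kernel element $\underline{a}\in\ker(L_A)_{-5-\mu}$ whose pairing with $\Phi_\mathbb{A}(\vec v,\vec u)$, computed via \autoref{prop: obstruction pairing}, is nonzero. The key input is a duality map between homogeneous kernels: for each tangent cone, the map $\mathcal{K}(L_{A_i})_{\nu}\to\mathcal{K}(L_{A_i})_{-5-\nu}$ sending $r^{\nu+1}(\tilde a + \tilde\xi\tfrac{\diff r}{r})$ to $r^{-4-\nu}(J_2^*\tilde a_H+\dots)$. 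Under this map the degree $-1$ and $-2$ homogeneous kernels correspond to the degree $-4$ and $-3$ ones, respectively. I expect this map (\autoref{prop: duality between homogeneous kernels}) to come from the self-adjoint operator $P$ on $S^5$ of \cite[Lemma~2.14]{Wang-spectrum_of_operator_for_instantons}: eigenvalues symmetric about $-5/2$ plus the twist by $J_2$. I would establish this first. With it, the pairing terms $\int_{S^5}\langle i_{(\hat u_i)_{S^5}}F_{A_i},J_2^*(\tilde a_i)_H\rangle$ and $\int_{S^5}\langle i_{v_i}F_{A_i},J_2^*(\tilde b_i)_H\rangle$ become $L^2$-pairings of $i_{\hat u}F_{A_i}$ and $i_vF_{A_i}$ against arbitrary elements of $\mathcal{K}(L_{A_i})_{-1}$ and $\mathcal{K}(L_{A_i})_{-2}$.

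The second step is a surjectivity statement on leading asymptotics. I claim that, since $\ker(L_A)_{-5/2}=0$, the map
\[
\ker(L_A)_{-5-\mu}\to\textstyle\bigoplus_i \mathcal{K}(L_{A_i})_{-4}\oplus\mathcal{K}(L_{A_i})_{-3},
\]
sending $\underline a$ to its leading coefficients $(\tilde a_i,\tilde b_i)$ from \autoref{prop: observations on cokernel}, is surjective. By \autoref{app-prop: ker and coker are locally constrant + index change formula} and the index jump formula, the dimension of $\ker(L_A)_{-5-\mu}$ exceeds that of $\ker(L_A)_{-5/2}=0$ by exactly the sum of $\dim\mathcal{K}(L_{A_i})_{\nu}$ over the critical rates in $(-5-\mu_i,-5/2)$. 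Using the formal self-adjointness of $L_A$ (so that the cokernel at rate $-5/2$ is also zero) and \autoref{ass: critical rates contained in Z}, these critical rates are only $-4$ and $-3$. The leading-coefficient map is injective, because an element with vanishing coefficients would decay at rate $>-5/2$, placing it in $\ker(L_A)_{-5/2}=0$. Comparing dimensions then gives surjectivity.

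The third step handles the cases. Suppose the pairing vanishes for all $\underline a$. First take $\underline a$ with $\tilde b_i$ prescribed arbitrarily and all $\tilde a_j=0$. Then the $\mathfrak{e}_i$ terms vanish, since by \autoref{rem: interpretation of homogeneous kernel elements as eigensection} they depend linearly on $\tilde a_i$. The pairing therefore reduces to $\sum_i\langle i_{v_i}F_{A_i},\text{dual of }\tilde b_i\rangle_{L^2}$. The restriction to $H$ is harmless, because $i_{X_\theta}F_{A_i}=0$ by \autoref{prop: dilation invariant instantons} makes $i_vF$ horizontal, and $J_2^*$ is an isometry of $H^*$. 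Choosing $\tilde b_i$ to be the dual of $i_{v_i}F_{A_i}$, which lies in $\mathcal{K}(L_{A_i})_{-2}$ by \autoref{prop: infinitesimal translations and Atiyah classes}, gives $\sum_i\|i_{v_i}F_{A_i}\|^2=0$. Injectivity in that proposition (with $F_{A_i}\neq0$; if $F_{A_i}=0$ the cone is flat and irreducibility forces the trivial case, which I would treat separately) yields $\vec v=0$. With $\vec v=0$ the $\mathfrak e_i$ vanish, since they are linear in $v_i$. Then \autoref{prop: infinitesimal rotations and deformations of tangent cone} provides, for $u_i\neq0$, an element of $\mathcal{K}(L_{A_i})_{-1}$ pairing nontrivially with $i_{\hat u_i}F_{A_i}$. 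Dualising and lifting it via step two gives a contradiction, so $\vec u=0$.

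I expect the main obstacle to be the duality proposition. It must be shown precisely that $J_2^*$ composed with the horizontal projection sends $\mathcal{K}_{-1}$ onto the $\tilde a$-components of $\mathcal{K}_{-4}$, and similarly $\mathcal{K}_{-2}$ onto $\mathcal{K}_{-3}$, and that the induced pairing is exactly the $L^2$ one. To do this I would write $L_{A_0}$ on $\mathbb{R}_{>0}\times S^5$ as $\partial_r+r^{-1}P$ and check that $J_2$-conjugation anticommutes appropriately with $P$ after the shift by $5/2$.
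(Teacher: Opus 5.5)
Your proposal is correct and follows essentially the same route as the paper. You lift prescribed leading coefficients in $\mathcal{K}(L_{A_i})_{-4}\oplus\mathcal{K}(L_{A_i})_{-3}$ to elements of $\ker(L_A)_{-5-\mu}$, which is \autoref{app-prop: for every expansion there is a kernel element}, proved there by the same index count and injectivity argument; you then use \autoref{prop: duality between homogeneous kernels} to take $\tilde a_i=0$ and $\tilde b_i=J_2^*(i_{v_i}F_{A_i})$, forcing $\vec v=0$, and finish with \autoref{prop: infinitesimal rotations and deformations of tangent cone} and bilinearity of $\mathfrak e_i$ to get $\vec u=0$, while your planned proof of the duality (anticommutation after the shift by $\tfrac52$) is the paper's identity $\underline{J}P+P\underline{J}=-5\underline{J}$.
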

\begin{remark}
Because $L_A$ is formally self-adjoint, we have $\text{index}(L_A\col C^{k,\alpha}_{-5/2} \to C^{k,\alpha}_{-5/2-1} ) = 0$. We therefore hope that the assumption in the previous theorem holds after a suitable generic perturbation.
\end{remark}
\begin{corollary}\label{cor: estimate dimension cokernel}
The previous theorem and \autoref{prop: observations on cokernel} imply that if $\ker(L_A)_{-5/2}=0$, then \[ \dim \textup{coker}(\mathfrak{L}^{k,\alpha}_{\mathbb{A}}) = - 6N + \big(\sum_i \dim \mathcal{K}(L_{A_i})_{-3} + \dim \mathcal{K}(L_{A_i})_{-4}- \dim \mathfrak{m}_i\big)   \]
and $\dim \ker(\mathfrak{L}^{k,\alpha}_{\mathbb{A}}) = 0$.
\end{corollary}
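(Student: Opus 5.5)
The plan is to compute $\dim\coker(\mathfrak{L}^{k,\alpha}_{\mathbb{A}})$ from the first point of \autoref{prop: observations on cokernel}, namely $\coker(\mathfrak{L}^{k,\alpha}_{\mathbb{A}})\cong\coker(\Phi_{\mathbb{A}})$. By \autoref{thm: non-degenerateness of pairing} the map $\Phi_{\mathbb{A}}$ is injective, so
\[\dim\coker(\Phi_{\mathbb{A}}) = \dim\coker(L_A\col C^{k,\alpha}_\mu\to C^{k-1,\alpha}_{\mu-1}) - 6N - \textstyle\sum_i\dim\mathfrak{m}_i.\]
By the second point of \autoref{prop: observations on cokernel}, the cokernel of $L_A$ at rate $\mu$ is dual to $\ker(L_A)_{-5-\mu}$. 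It therefore remains to compute $\dim\ker(L_A)_{-5-\mu}$.

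To compute this kernel, start from the hypothesis $\ker(L_A)_{-5/2}=0$. Then use the standard wall-crossing picture (\autoref{app-prop: ker and coker are locally constrant + index change formula}): as the rate decreases from $-5/2$ down to $-5-\mu_i$ at each singular point, the index of $L_A$ increases by $\dim\mathcal{K}(L_{A_i})_{\nu_i}$ at each critical rate $\nu_i$ crossed. \autoref{ass: critical rates contained in Z}, together with $\mu_i<\bar\mu_i$ and the symmetry of the critical rates about $-5/2$, shows that the only critical rates in $(-5-\mu_i,-5/2)$ are $-3$ and $-4$. Since $-5-\mu_i\in(-5,-4)$ and $(-5-\bar\mu_i,-5-\mu_i)$ contains no critical rate, we get
\[\textup{index}(L_A)_{-5-\mu} = \textup{index}(L_A)_{-5/2}+\textstyle\sum_i\big(\dim\mathcal{K}(L_{A_i})_{-3}+\dim\mathcal{K}(L_{A_i})_{-4}\big) = \textstyle\sum_i\big(\dim\mathcal{K}(L_{A_i})_{-3}+\dim\mathcal{K}(L_{A_i})_{-4}\big),\]
using formal self-adjointness for $\textup{index}_{-5/2}=0$.

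Next I determine the cokernel at rate $-5-\mu$. By duality it is $\ker(L_A)_{\mu}$. This kernel sits inside $\ker(L_A)_{-5/2}$ because $\mu_i>-1>-5/2$, and the latter vanishes. Hence the cokernel at $-5-\mu$ is zero, and $\dim\ker(L_A)_{-5-\mu}$ equals the index computed above. This also gives $\ker(L_A)_\mu=0$.

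Substituting yields the stated formula for $\dim\coker(\mathfrak{L}^{k,\alpha}_{\mathbb{A}})$. For the kernel statement, $\textup{index}(\mathfrak{L}^{k,\alpha}_{\mathbb{A}})$ is given by \autoref{prop: non-linear Fredholm map linearisation and index}. Here the sum over $(-5/2,\mu_i)$ runs over critical rates $-2$ and $-1$, and their contributions equal those at $-3$ and $-4$ by symmetry of the homogeneous kernels; this is the duality of \autoref{prop: duality between homogeneous kernels}. Then $\dim\ker = \textup{index}+\dim\coker = 0$.

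Alternatively, and more directly: an element $(\vec v,\vec u,\underline a)$ of $\ker\mathfrak{L}$ has $L_A\underline a=-\mathfrak{L}(\vec v,\vec u,0)$. Hence $\Phi_{\mathbb{A}}(\vec v,\vec u)=0$, so $(\vec v,\vec u)=0$ by injectivity, and then $\underline a\in\ker(L_A)_\mu=0$. I would use this direct argument, as it avoids needing the symmetry of the homogeneous kernels. The main care point is the bookkeeping of which critical rates are crossed and in which direction the index jumps.
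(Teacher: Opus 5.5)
Your proposal is correct and is exactly the unpacking the paper intends: $\coker(\mathfrak{L}^{k,\alpha}_{\mathbb{A}})\cong\coker(\Phi_{\mathbb{A}})$ with $\Phi_{\mathbb{A}}$ injective, the duality $\coker(L_A)_\mu\cong(\ker(L_A)_{-5-\mu})^*$, and wall-crossing from the non-critical rate $-5/2$ (where the index vanishes) past exactly $-3$ and $-4$, with $\ker(L_A)_\mu\subset\ker(L_A)_{-5/2}=0$ killing the cokernel at rate $-5-\mu$. Your direct argument for $\ker\mathfrak{L}^{k,\alpha}_{\mathbb{A}}=0$ (a kernel element forces $\Phi_{\mathbb{A}}(\vec v,\vec u)=0$, hence $(\vec v,\vec u)=0$ and then $\underline a\in\ker(L_A)_\mu=0$) is the cleanest route to the second claim, and it is consistent with the index count via $\dim\mathcal{K}(L_{A_i})_{\lambda}=\dim\mathcal{K}(L_{A_i})_{-5-\lambda}$.
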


In order to proof \autoref{thm: non-degenerateness of pairing} we need the following result:
\begin{proposition}[{\cite[Equation~(41)]{Wang-spectrum_of_operator_for_instantons}}]\label{prop: duality between homogeneous kernels}
Fix $i\in \{1,\dots,N\}$ and assume that \[r^{\lambda}(\xi_1,\xi_2, a_r \diff r + a_\theta (r\theta) + r a_H) \in \mathcal{K}(L_{A_i})_{\lambda}\] where $\xi_1,\xi_2,a_r,a_\theta \in \Omega^0(S^5,\mathfrak{g}_{P_i})$ and $a_H \in \Omega^0(S^5,H^*\otimes \mathfrak{g}_{P_i})$ is horizontal (with respect to the canonical Sasaki--Einstein structure on $S^5$ as revised in \autoref{subsec: Sasaki--Einstein structure on S5}) and $\theta \in \Omega^1(S^5)$ is the canonical contact 1-form (also reviewed in \autoref{subsec: Sasaki--Einstein structure on S5}). Then \[r^{-5-\lambda}(-a_r,a_\theta,\xi_1 \diff r - \xi_2 (r\theta) + r J_2^*a_H) \in \mathcal{K}(L_{A_i})_{-5-\lambda},\] where $J_2$ is the horizontal complex structure $J_2$ on $H \subset TS^5$ induced by $\omega_2$ (cf. \autoref{subsec: Sasaki--Einstein structure on S5}).
\end{proposition}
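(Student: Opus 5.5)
The plan is to verify the duality directly, by writing the operator $L_{\pr_{S^5}^*A_i}$ on homogeneous sections in polar form and observing that the proposed map intertwines the resulting equations. Write $A_0'=\pr_{S^5}^*A_i$ and use the flat $\SU(3)$-structure in polar form, $\omega_0 = r\diff r\wedge\theta + r^2\omega_1$ and $\Omega_0 = r^2(\diff r + ir\theta)\wedge(\omega_2+i\omega_3)$, so that $\Im\Omega_0 = r^2\diff r\wedge\omega_3 + r^3\theta\wedge\omega_2$. A homogeneous element of degree $\lambda$ has the form $r^\lambda(\xi_1,\xi_2,a_r\diff r + a_\theta r\theta + r a_H)$. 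Applying $L_{A_0'}$ (i.e.\ $\diff_A^*$, $\Lambda_{\omega_0}\diff_A$, and $*(\Im\Omega_0\wedge\diff_A)+\diff_A\xi_1+J^*\diff_A\xi_2$) gives $r^{\lambda-1}$ times an operator on $S^5$. This operator acts on the tuple $(\xi_1,\xi_2,a_r,a_\theta,a_H)$ and has the form $\lambda\,\Gamma + P$. Here $\Gamma$ is the constant-coefficient endomorphism coming from Clifford multiplication by $\diff r$ (its symbol in the radial direction), and $P$ is a first-order operator on $S^5$ built from $\diff_{A_i}$ restricted to $\theta$- and $H$-directions, $\nabla_{X_\theta}$, $J_1,J_2,J_3$ acting on $H^*$, and zeroth-order terms coming from $\diff\theta = 2\omega_1$ and similar identities. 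This is the computation in \cite[Lemma~2.14]{Wang-spectrum_of_operator_for_instantons}, and I would redo it quickly in the present conventions.

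The key steps are as follows. First, compute the block matrix of $P$ and $\Gamma$ with respect to the splitting $\mathfrak{g}\oplus\mathfrak{g}\oplus\mathfrak{g}\oplus\mathfrak{g}\oplus H^*\otimes\mathfrak{g}$. The homogeneous kernel equation then reads $(\lambda\Gamma + P + c)\,\underline{a}_0 = 0$, where $c$ is a constant shift produced by $\diff^*$ of $r^\lambda\diff r$ in dimension 6 (for instance $\diff^*(r^\lambda a_r\diff r)$ contributes $-(\lambda+5)r^{\lambda-1}a_r$). Second, define the linear bundle map
\[
\sigma(\xi_1,\xi_2,a_r,a_\theta,a_H) = (-a_r,a_\theta,\xi_1,-\xi_2,J_2^*a_H).
\]
Check that $\sigma$ anticommutes with $\Gamma$ and commutes with the first-order part of $P$ up to sign. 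Since $\sigma$ is essentially Clifford multiplication by a fixed parallel element, this should be consistent: $J_2$ preserves the anti-self-dual structure on $H$ used in \autoref{prop: dilation invariant instantons}, and it anticommutes with $J_1$ and $J_3$ appropriately. Third, track the zeroth-order and constant terms and show that
\[
\sigma\circ(\lambda\Gamma + P + c) = \pm\,(( -5-\lambda)\Gamma + P + c)\circ\sigma.
\]
This sends solutions at degree $\lambda$ to solutions at degree $-5-\lambda$, which is exactly the reflection about $-5/2$.

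The main obstacle is the third step: bookkeeping of the zeroth-order terms. These include the factors of $r$ attached to $\theta$ and $H$ (which produce the shifts $\lambda+1$ versus $\lambda$), the contributions of $\diff\theta=2\omega_1$ in $\diff_A(a_\theta r\theta)$, and the commutation of $J_2$ with $\diff_{A_i}$ on $H$. The last of these works because $\nabla J_2$ is controlled by the Sasaki–Einstein structure and $F_{A_i}\wedge\omega_j=0$. I would organise this check componentwise, equation by equation: first the $\mathfrak{g}$-equations for $\diff^*a$ and $\Lambda\diff a$, which become the $\diff r$ and $\theta$ components of the third equation for the dual element and vice versa, and then the $H$-component. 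At each step I would verify that the constants match precisely when the degree is replaced by $-5-\lambda$. A useful consistency check is the formal self-adjointness of $L_{A_0'}$ (\autoref{prop: adjoint of model operator}). It implies that $P$ is self-adjoint with respect to the appropriate pairing, so the spectrum of the degree problem is symmetric about $-5/2$. The explicit $\sigma$ then realises this symmetry, and I would use it to fix the signs if the direct computation becomes messy.
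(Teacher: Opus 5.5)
Your approach is essentially the paper's: there $L_{A_i}$ is written as $\underline{J}(\partial_r - P/r)$ in exactly your coordinates $(\xi_1,\xi_2,a_r,a_\theta,a_H)$, and your map $\sigma$ is precisely the radial symbol $\underline{J}$. So your $\Gamma$ is $\underline{J}$, your $P+c$ is $-\underline{J}P$ with the paper's $P$, and your intertwining identity is equivalent to the relation $\underline{J}P+P\underline{J}=-5\underline{J}$, which the paper verifies by direct calculation. The one slip is your heuristic that $\sigma$ anticommutes with $\Gamma$: since $\sigma=\Gamma$ and $\Gamma^2=-1$, $\sigma$ commutes with $\Gamma$, so the identity holds with the minus sign, i.e.\ $\sigma(P+c)+(P+c)\sigma=5\Gamma\sigma$ in your notation.
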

The proof of the previous proposition is given in~\cite[Section~2.2]{Wang-spectrum_of_operator_for_instantons}. For the convenience of the reader we indicate a proof.
\begin{proof}
Under the isomorphism 
\begin{align*}
\mathfrak{g}_{P_i} \oplus \mathfrak{g}_{P_i} \oplus T^*(\mathbb{C}^3\setminus\{0\}) \otimes \mathfrak{g}_{P_i} &\cong \mathfrak{g}_{P_i} \oplus \mathfrak{g}_{P_i} \oplus \mathfrak{g}_{P_i} \oplus \mathfrak{g}_{P_i} \oplus H^* \otimes \mathfrak{g}_{P_i} \\
\big( \xi_1,\xi_2, a_r \diff r + a_\theta \theta + a_H \big) &\mapsto \big( \xi_1,\xi_2,a_r, \tfrac{1}{r} a_\theta , \tfrac{1}{r}a_H \big)
\end{align*}
(where we have suppressed the pullback from $S^5$ from our notation) the model operator\footnote{Note that our model operator differs from the one used in \cite{Wang-spectrum_of_operator_for_instantons} in the following way: The isomorphism between $\mathfrak{g}_{P_i} \oplus \mathfrak{g}_{P_i} \oplus T^*(\mathbb{C}^3\setminus \{0\})\otimes \mathfrak{g}_{P_i}$ and $\mathfrak{g}_{P_i}\oplus \mathfrak{g}_{P_i}\oplus \mathfrak{g}_{P_i}\oplus \mathfrak{g}_{P_i} \oplus H^* \otimes \mathfrak{g}_{P_i}$ considered here differs to the one in \cite{Wang-spectrum_of_operator_for_instantons} by an overall factor of $r^{-1}$. This implies that $P_{\textup{here}} = P_{\textup{Wang}}- \textup{Id}$ (modulo the fact that~\cite{Wang-spectrum_of_operator_for_instantons} uses a different (but equivalent) $\SU(3)$-instanton equation).} becomes
\begin{align*}
L_{A_i} &\equiv \underline{J} \big( \partial_r - \tfrac{P}{r} \big)
\end{align*}
for 
\begin{align*}
\underline{J} = \begin{pmatrix}
0 & 0 & -1 & 0 & 0 \\
0 & 0 & 0 & 1 & 0 \\
1 & 0 & 0 & 0 & 0 \\
0 & -1 & 0 & 0 & 0 \\
0 & 0 & 0 & 0 & J_2^* 
\end{pmatrix}
\end{align*}
and 
\begin{align*}
P = \begin{pmatrix}
0 & -\nabla_{X_\theta} & 0 & 0 &  \Lambda_{\omega_2}^H \diff_H \\ \nabla_{X_\theta} & 0 & 0 & 0 &  \Lambda_{\omega_3}^H \diff_H \\
0 & 0 & -5 & - \nabla_{X_\theta} &  \diff_H^* \\
0 & 0 &  \nabla_{X_\theta} & -5 & -\Lambda_{\omega_1}^H \diff_H \\
J_2^* \diff_H & J_3^* \diff_H &  \diff_H & -J_1^* \diff_H & -1 + J_1^* \diff_{X_\theta} 
\end{pmatrix}.
\end{align*}
In the previous formula for $P$, $\diff_H$ denotes the projection of $\diff_{A_i} \col \Omega^k_{\text{hor}}(S^5,\mathfrak{g}_{P_i}) \to \Omega^{k+1}(S^5,\mathfrak{g}_{P_i})$ to its horizontal part lying in $\Lambda^{k+1}H^* \otimes \mathfrak{g}_{P_i}$ and $\diff_{X_\theta}$ denotes the linear differential operator acting on any $\alpha_H \otimes \xi \in \Omega^0(S^5,H^*\otimes \mathfrak{g}_{P_i})$ via \[ \diff_{X_\theta} (\alpha_H \otimes \xi) \coloneqq (L_{X_\theta} \alpha_H) \otimes \xi + \alpha_H \otimes (\nabla_{X_\theta} \xi) \] (where ${X_\theta}$ is the Reeb vector field to $\theta$, i.e. the infinitesimal generator of the $\U(1)$-action on $S^5$). 

The formula for $L_{A_i}$ implies that the homogeneous kernel elements of $L_{A_i}$ of degree $\tilde{\lambda}\in \mathbb{R}$ correspond to eigensections of $P$ with eigenvalue $\tilde{\lambda}$. A calculation now shows $\underline{J}P + P \underline{J} = -5 \underline{J}$. Thus, $\underline{J}$ maps eigensections of $P$ with eigenvalue $\lambda$ to eigensections with eigenvalue $- \lambda - 5$. This implies the statement.
\end{proof}
\begin{remark}
Note that if $r^{\lambda}(\xi_1,\xi_2, a_r \diff r + a_\theta (r\theta) + r a_H) \in \mathcal{K}(L_{A_i})_{\lambda}$ is a homogeneous kernel element, then by the same proof as in \autoref{prop: augmented instanton equation} we may conclude that $\Delta_{A_i}(r^{\lambda} \xi_1) = 0$. Thus, if $\lambda \in [-4,0]$, \autoref{prop: critical rates of model Laplacian} implies $\xi_1 = 0$. Similarly, one can then conclude $\xi_2 = 0$. By the previous proposition, all homogeneos kernel elements $\underline{a}\in \mathcal{K}(L_{A_i})_\lambda$ for $\lambda \in [-4,-1]$ are therefore of the form $\underline{a}=(0,0,r^{\lambda+1}a_H) \in \Omega^0(\mathbb{C}^3\setminus \{0\},\mathfrak{g}_{P_i} \oplus\mathfrak{g}_{P_i} \oplus T^*\mathbb{C}^3 \otimes \mathfrak{g}_{P_i})$, where $a_H\in \Omega^1(S^5,\mathfrak{g}_{P_i})$ is $H$-horizontal.
\end{remark}
\begin{proof}[Proof of \autoref{thm: non-degenerateness of pairing}]
We again assume that $N =1$ and drop the subscripts to ease notation. Let now $(v,u) \in \mathbb{C}^3 \oplus \mathfrak{m}_s$ be such that $\int \langle \Phi_{\mathbb{A}}(v,u),\underline{a}\rangle \vol_Z = 0$ for all $\underline{a} \in \ker(L_A)_{-5-\mu}$. Since $\ker(L_A)_{-5/2} = 0$, we have for every kernel element of the form \[(0,0,r^{-3}\tilde{a}_s + r^{-2} \tilde{b}_s) \in \ker (L_{A_s}), \] with $\tilde{a}_s,\tilde{b}_s\in \Omega
^1(S^5,\mathfrak{g}_{P_i})$ being horizontal, an element $\underline{a} = (0,0,a) \in \ker(L_{A})_{-5-\mu}$ with \[ \big\vert \tilde{\Upsilon}^*_s a - r^{-3}\tilde{a}_s- \eta_s(\tilde{a}_s) - r^{-2} \tilde{b}_s \big\vert = \mathcal{O}(r^{-2+ \varepsilon})\] (cf. \autoref{app-prop: for every expansion there is a kernel element}). Then by \autoref{prop: obstruction pairing} 
\begin{align*}
0=\int_Z \langle \Phi_\mathbb{A}(v,u),\underline{a}\rangle \vol_Z = \int_{S^5} \langle i_{\hat{u}_{S^5}} F_{A_s},J_2^*\tilde{a}_s \rangle + \langle i_v F_{A_s},J_2^* \tilde{b}_s \rangle \vol_{S^5} + \mathfrak{e}_s(v,\tilde{a}_s),
\end{align*}
where $\hat{u}_{S^5}\in \Gamma(TS^5)$ is the vector field defined at $z\in S^5$ by $u \cdot S^5 \in \{z\}^\perp = T_zS^5$, and where $\mathfrak{e}_s$ is bilinear. By \autoref{prop: infinitesimal translations and Atiyah classes} and \autoref{prop: duality between homogeneous kernels} we may choose $\tilde{a}_s=0$ and $\tilde{b}_s = J_2^*(i_{v_{\vert S^5}} F_{A_s}) \neq 0$. \autoref{prop: infinitesimal translations and Atiyah classes} implies then $v=0$ (this is because we assume $A_s$ to be infinitesimally irreducible and therefore $F_{A_s} \neq 0$). A similar argument using \autoref{prop: infinitesimal rotations and deformations of tangent cone} and the bilinearity of $\mathfrak{e}_s$ implies $u=0$.
\end{proof}

\begin{remark}\label{rem: overcoming obstructions via deformations of tangent cone}
The previous proof shows that the infinitesimal translations of a singular point $s \in S$ overcome (some) of the obstructions arising from $\mathcal{K}(L_{A_s})_{-3}$ whereas the infinitesimal rotations of the bundle around $s$ overcome (some) of the obstructions coming from $\mathcal{K}(L_{A_s})_{-4}$. Recall from \autoref{prop: duality between homogeneous kernels} that $J_2^*$ (which also appears on the right-hand side of the pairing in \autoref{prop: obstruction pairing}) induces an isomorphism between $\mathcal{K}(L_{A_s})_{-4}$ and $\mathcal{K}(L_{A_s})_{-1}$ and that $\mathcal{K}(L_{A_s})_{-1}$ consists precisely of the deformations of the tangent connection $A_s \in \mathcal{A}(P_s)$ (cf. \autoref{prop: homogeneous kernel of degree -1}). The phenomenon that the infinitesimal rotations of $\pi \col P \to Z\setminus S$ around $s$ overcome the obstructions arising from $\mathcal{K}(L_{A_s})_{-4}\cong \mathcal{K}(L_{A_s})_{-1}$ therefore again supports the interpretation of these rotations as deformations of $A \in \mathcal{A}(P)$ that also deform the tangent connection stated in \autoref{rem: alternative interpretation of rotations}. 

More generally, we believe that in a 'full' moduli theory, which allows for variable tangent cones (cf. \autoref{rem: totally ordered singular set framed case} and \autoref{rem: full moduli theory for unframed connections}), the deformations of $A$ that also deform the tangent connection $A_s \in \mathcal{A}(P_s)$ will overcome all obstructions in $\mathcal{K}(L_{A_s})_{-4} = J_2^*\mathcal{K}(L_{A_s})_{-1}$ that arise from integrable infinitesimal deformations.
\end{remark}

\section{Instantons with structure group $G= \mathbb{P}\U(n)$}\label{sec: structure group PU(n)}

In this section we apply the previous results to $\SU(3)$-instantons with structure group $G= \mathbb{P}\U(n)$. For this we first begin with the following (well-known) properties of $\mathbb{P}\U(n)$-connections over $S^5$ and $\mathbb{P}^2$:

\begin{observation}
Let $\hat{\pi}_0^{\circ} \col \hat{P}_0^{\circ} \to S^5$ be a $\mathbb{P}\U(n)$-bundle together with an irreducible connection $\hat{A}^{\circ}_0 \in \mathcal{A}(\hat{P}_0^{\circ})$ satisfying~\eqref{equ: cone reduction of instanton equation}. Since $\mathbb{P}\U(n)$ has a trivial center, \autoref{prop: tangent cones pulled back from P2} implies that there is a $\mathbb{P}\U(n)$-bundle $\pi_0^{\circ} \col P_0^{\circ} \to \mathbb{P}^2$ and an ASD-instanton $A^{\circ}_0 \in \mathcal{A}(P_0^{\circ})$ such that $\hat{P}_0^{\circ} = \pr_{\mathbb{P}^2}^*P_0^{\circ}$ and $\hat{A}_0^{\circ} = \pr_{\mathbb{P}^2}^*A_0^{\circ}$.
\end{observation}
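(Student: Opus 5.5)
The plan is to simply unwind \autoref{prop: tangent cones pulled back from P2}, because the Observation is a direct application of it. First, $\mathbb{P}\U(n)=\U(n)/\U(1)$ has trivial center. The center of $\U(n)$ consists of the scalars, which are exactly the kernel of $\U(n)\to\mathbb{P}\U(n)$. If the class of $g\in\U(n)$ is central in $\mathbb{P}\U(n)$, then $g h g^{-1}h^{-1}$ is a scalar $\chi(h)$ for every $h\in\U(n)$. The map $\chi\col\U(n)\to\U(1)$ is continuous and multiplicative, and taking determinants gives $\chi(h)^n=1$. Since $\U(n)$ is connected and $\chi(\mathrm{Id})=1$, it follows that $\chi\equiv 1$. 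Hence $g$ is central in $\U(n)$, so it is a scalar, and its class in $\mathbb{P}\U(n)$ is trivial.

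Second, the hypotheses on $(\hat{P}^\circ_0,\hat{A}^\circ_0)$ are exactly those of \autoref{prop: tangent cones pulled back from P2}. Namely, $\hat{A}^\circ_0$ is irreducible, meaning its only stabilising gauge transformation is the identity, and it satisfies \eqref{equ: cone reduction of instanton equation}. That proposition therefore produces a principal $\mathbb{P}\U(n)$-bundle $\pi_0^\circ\col P_0^\circ\to\mathbb{P}^2$ and an ASD instanton $A_0^\circ$ with $\pr_{\mathbb{P}^2}^*P_0^\circ=\hat{P}_0^\circ$ and $\pr_{\mathbb{P}^2}^*A_0^\circ=\hat{A}_0^\circ$. \autoref{cor: ASD-instantons = pulled back SU(3)-instantons} confirms that the ASD condition here is taken with respect to the Fubini--Study metric and the complex orientation.

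For completeness, it helps to say why the cited result holds. By \autoref{prop: dilation invariant instantons}, $i_{X_\theta}F_{\hat{A}^\circ_0}=0$. Consequently, parallel transport of $\hat{A}^\circ_0$ along the Hopf fibres commutes with the $\U(1)$-action up to a holonomy. Around each fibre this holonomy is a gauge transformation preserving $\hat{A}^\circ_0$. By irreducibility and the trivial center it is the identity, so the horizontal lift of the Reeb flow defines a free $\U(1)$-action on $\hat{P}^\circ_0$ that preserves $\hat{A}^\circ_0$. Quotienting by this action gives $P^\circ_0$ together with the descended connection. This holonomy step is the only nontrivial point, and it is supplied by the citation, so I do not expect a genuine obstacle; the only thing actually to check is the trivial-center claim above.
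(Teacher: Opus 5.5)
Your proposal is correct and matches the paper: the Observation is just an application of \autoref{prop: tangent cones pulled back from P2} once one knows that $\mathbb{P}\U(n)$ has trivial center, and your determinant-and-connectedness argument proves that correctly. Your sketch of why the cited result holds is also sound, though not needed: since $i_{X_\theta}F=0$, the horizontal lift of the Reeb field preserves the connection, and its time-$2\pi$ flow is a stabilising gauge transformation, hence the identity.
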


Let $(\pi_0^\circ \col P_0^\circ \to \mathbb{P}^2,A_0^\circ)$ be a $\mathbb{P}\U(n)$-bundle together with an ASD-instanton $A_0^\circ \in \mathcal{A}(P_0^\circ)$ and denote by $(\hat{\pi}_0^\circ \col \hat{P}_0^\circ \to S^5,\hat{A}_0^\circ) \coloneqq \pr_{\mathbb{P}^2}^*(\pi_0^\circ \col P_0^\circ \to \mathbb{P}^2,A_0^\circ)$ their respective pullbacks to $S^5$. Recall the group $\Stab_{\SU(3)}(\hat{A}_0^{\circ})$ from \eqref{equ: definition Stab_SU(3)(A_s)}. Similarly, we define \begin{equation}\label{equ: definition Stab_SU(3)(A_s) over P2}
\Stab_{\SU(3)}(A_0^\circ) \coloneqq \{ \tilde{U} \col P_0^\circ \xrightarrow{\sim} P_0^\circ \mid \textup{$\tilde{U}$ covers an element in $\SU(3)$ and $\tilde{U}^*A_0^\circ = A_0^\circ$}\} 
\end{equation}
where $\SU(3)$ acts on $\mathbb{P}^2=S^5/\U(1)$ in the obvious way.

\begin{proposition}\label{prop: isomorphism of stabiliser groups}
Pulling back a gauge transformation from $P_0^\circ$ to $\hat{P}_0^\circ$ induces an isomorphism between $\Stab_{\SU(3)}(A_0^\circ)$ and $\Stab_{\SU(3)}(\hat{A}_0^{\circ})$.
\end{proposition}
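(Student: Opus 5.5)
The plan is to define the pullback map explicitly and then check that it is a well-defined, injective and surjective group homomorphism. Given $\tilde{U} \in \Stab_{\SU(3)}(A_0^\circ)$ covering $U\in \SU(3)$ acting on $\mathbb{P}^2$, we set $\hat{U}\coloneqq \pr_{\mathbb{P}^2}^*\tilde{U}$. This is the bundle map $\hat{P}_0^\circ = S^5\times_{\mathbb{P}^2} P_0^\circ \to \hat{P}_0^\circ$, $(z,p)\mapsto (Uz,\tilde{U}(p))$. It is well defined because $\pr_{\mathbb{P}^2}(Uz)=U\pr_{\mathbb{P}^2}(z)$, which holds since the $\SU(3)$-action on $S^5$ commutes with the $\U(1)$-action. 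The map $\hat{U}$ covers $U$ acting on $S^5$, and $\hat{U}^*\hat{A}_0^\circ = \pr_{\mathbb{P}^2}^*(\tilde{U}^*A_0^\circ)=\hat{A}_0^\circ$. Moreover $\tilde{U}\mapsto\hat{U}$ is clearly a homomorphism, and it is injective: $\hat{U}$ determines $U$ (as the map it covers on $S^5$) and determines $\tilde{U}$ fibrewise, since $\pr_{\mathbb{P}^2}$ is surjective.

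The main step is surjectivity. Let $\hat{V}\in\Stab_{\SU(3)}(\hat{A}_0^\circ)$ cover $U\in\SU(3)$. We must show that $\hat{V}$ descends, i.e.\ that it commutes with the natural lift of the $\U(1)$-action to $\hat{P}_0^\circ=\pr_{\mathbb{P}^2}^*P_0^\circ$. Denote this lifted action by $R_\varphi(z,p)=(e^{i\varphi}z,p)$. We consider $g_\varphi\coloneqq R_\varphi^{-1}\circ\hat{V}^{-1}\circ R_\varphi\circ\hat{V}$. Since $U$ commutes with scalar multiplication, $g_\varphi$ covers the identity, so it is a gauge transformation of $\hat{P}_0^\circ$. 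Both $R_\varphi$ and $\hat{V}$ preserve $\hat{A}_0^\circ$ ($R_\varphi$ does because $\hat{A}_0^\circ$ is pulled back), hence $g_\varphi\in\Stab_{\mathcal{G}(\hat{P}_0^\circ)}(\hat{A}_0^\circ)$.

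We next use irreducibility. The connection $\hat{A}_0^\circ$ is irreducible (this is the standing assumption on the tangent cone; if only infinitesimal irreducibility is assumed, the stabiliser is still discrete because $\mathbb{P}\U(n)$ has trivial centre). In either case $\varphi\mapsto g_\varphi$ is a continuous path in a discrete group starting at $g_0=\textup{Id}$, so $g_\varphi=\textup{Id}$ for all $\varphi$. Thus $\hat{V}$ is $\U(1)$-equivariant, and it induces a bundle map $\tilde{V}\colon P_0^\circ=\hat{P}_0^\circ/\U(1)\to P_0^\circ$ covering the induced action of $U$ on $\mathbb{P}^2$, with $\pr_{\mathbb{P}^2}^*\tilde{V}=\hat{V}$.

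Finally we check that $\tilde{V}^*A_0^\circ=A_0^\circ$. The connection $\tilde{V}^*A_0^\circ$ pulls back to $\hat{V}^*\hat{A}_0^\circ=\hat{A}_0^\circ$, and pulling back connections along the surjective submersion $S^5\to\mathbb{P}^2$ (with the lifted $\U(1)$-action) is injective, since horizontal spaces are determined pointwise. This gives $\tilde{V}\in\Stab_{\SU(3)}(A_0^\circ)$ and completes surjectivity. The only delicate point is the discreteness argument for $g_\varphi$; everything else is formal.
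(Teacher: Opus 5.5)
Your well-definedness, injectivity and descent steps are fine, but the surjectivity argument has a gap. The step $g_\varphi=\textup{Id}$ rests on $\Stab_{\mathcal{G}(\hat{P}_0^\circ)}(\hat{A}_0^\circ)$ being discrete, i.e.\ on (infinitesimal) irreducibility of $\hat{A}_0^\circ$. That is not a hypothesis of the proposition. The proposition is set up for the pullback of an \emph{arbitrary} ASD instanton $A_0^\circ$ on a $\mathbb{P}\U(n)$-bundle over $\mathbb{P}^2$; the irreducibility in the preceding Observation is not carried over. For a reducible $A_0^\circ$ the discreteness argument is unavailable. For instance, for the trivial connection on the trivial bundle the stabiliser is all of $\mathbb{P}\U(n)$, and a continuous path in it starting at $\textup{Id}$ need not be constant. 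In the paper's later applications the tangent cones are irreducible, so your argument suffices there, but it does not prove the statement as given.

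The missing idea, which is what the paper uses, removes the need for irreducibility. Because $\hat{A}_0^\circ$ is pulled back from $\mathbb{P}^2$, the generator $(X_\theta,0)$ of your lifted action $R_\varphi$ is $\hat{A}_0^\circ$-horizontal. So $R_\varphi$ is exactly $\hat{A}_0^\circ$-parallel transport along the Reeb orbits $\varphi\mapsto e^{i\varphi}z$. Since $\hat{V}^*\hat{A}_0^\circ=\hat{A}_0^\circ$, the map $\hat{V}$ sends horizontal lifts of a curve $\gamma$ to horizontal lifts of $U\circ\gamma$, and $U(e^{i\varphi}z)=e^{i\varphi}Uz$. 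Hence $\hat{V}\circ R_\varphi=R_\varphi\circ\hat{V}$ directly, i.e.\ $g_\varphi=\textup{Id}$ for every $\varphi$, with no assumption on the stabiliser. In the paper's formulation, $\hat{V}$ is $\nabla^{\hat{A}_0^\circ}$-parallel as a section of the relevant $\Hom$-bundle. For a pulled-back connection $\nabla_{X_\theta}$ coincides with the Lie derivative along the $\U(1)$-action, so $L_{X_\theta}\hat{V}=0$. With this replacement the rest of your proof goes through unchanged.
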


\begin{proof}
Let $\tilde{U} \col \hat{P}_0^\circ \to \hat{P}_0^\circ$ be a bundle isomorphism that preserves $\hat{A}_0^\circ$. As in the previous sections, we may regard $\tilde{U}$ as a section of a vector bundle associated to $\hat{P}_0^{\circ}$ (namely, $\tilde{U} \in \Gamma(\Hom(\hat{P}_0^\circ\times_{\mathbb{P}\U(n)}W, U^*\hat{P}_0^\circ\times_{\mathbb{P}\U(n)}W))$ where $W$ is a vector space such that $\mathbb{P}\U(n) \subset \GL(W)$). Since $\hat{P}_{0}^\circ$ is pulled back from a bundle over $\mathbb{P}^2$, there exists a canonical $\U(1)$-action on $\hat{P}_{0}^\circ$ covering the action on $S^5$. Moreover, we have $\nabla^{\hat{A}_0^{\circ}} \tilde{U} = 0$ because $\tilde{U}$ preserves $\hat{A}_0^{\circ}$. Since $\hat{A}_0^{\circ}$ is also pulled back from a connection over $\mathbb{P}^2$, this implies $L_{X_{\theta}}\tilde{U} = 0$, where $X_{\theta}\in \Gamma(TS^5)$ denotes the infinitesimal generator of the $\U(1)$-action and $L_{X_\theta}$ its Lie derivative. That is, $\tilde{U}$ is $\U(1)$-invariant and therefore pulled back from an isomorphism $P_0^{\circ} \to P_0^{\circ}$ over $\mathbb{P}^2$. This implies the result.
\end{proof}

Finally, we have the following result about lifting $\mathbb{P}\U(n)$-connections to $\U(n)$-connections:

\begin{proposition}\label{prop: PU(n)-bundles and instantons are liftible to U(n)-bundles and HYM}
Let $\pi_0^\circ \col P_0^\circ \to \mathbb{P}^2$ be a $\mathbb{P}\U(n)$-bundle. Then there exists a $\U(n)$-bundle $\pi_0 \col P_0 \to \mathbb{P}^2$ such that $P_0^\circ = P_0/\U(1)$. Moreover, $\pi_0 \col P_0 \to \mathbb{P}^2$ is unique up to isomorphism and twisting by a $\U(1)$-bundle. If $A^\circ_0 \in \mathcal{A}(P_0^\circ)$ is an ASD-instanton, then there exists a Hermitian Yang--Mills connection $A_0 \in \mathcal{A}(P_0)$ (i.e. a connection that satisfies \[F_{A_0}^{0,2} = 0 \quad \textup{and} \quad i\Lambda_{\omega_{\textup{FS}}} F_{A_0} = \lambda \cdot \textup{id}\] for some constant $\lambda \in \mathbb{R}$) such that $A_0^\circ$ is induced by $A_0$. The connection $A_0 \in \mathcal{A}(P_0)$ is unique up to $\U(1)$-gauge transformations $C^\infty(\mathbb{P}^2,\U(1)) \subset \mathcal{G}(P_0)$.
\end{proposition}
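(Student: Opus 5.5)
The plan is to treat the bundle lift and the connection lift separately, using characteristic-class obstruction theory for the first and the Donaldson--Uhlenbeck--Yau theorem (or a direct curvature argument) for the second.

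\emph{Lifting the bundle.} The exact sequence $1 \to \U(1) \to \U(n) \to \mathbb{P}\U(n) \to 1$ gives an obstruction class in $H^2(\mathbb{P}^2,\underline{\U(1)}) \cong H^3(\mathbb{P}^2,\mathbb{Z})$. This group vanishes, so a $\U(n)$-bundle $P_0$ with $P_0/\U(1) = P_0^\circ$ exists. For uniqueness, suppose $P_0$ and $P_0'$ both lift $P_0^\circ$. Then the two transition cocycles differ by a $\U(1)$-valued cocycle, because $\U(1)$ is central. So $P_0' \cong P_0 \otimes L$ for a $\U(1)$-bundle $L$, i.e.\ $P_0'$ is a twist of $P_0$. (Alternatively, lifts form a torsor over $H^1(\mathbb{P}^2,\underline{\U(1)})$, the group of line bundles.)

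\emph{Lifting the connection.} Fix any unitary connection $B$ on $\det P_0$, a $\U(1)$-bundle. Connections on $P_0$ correspond to pairs consisting of a connection on $P_0^\circ$ and a connection on $\det P_0$. This is because $\mathfrak{u}(n) = \mathfrak{su}(n) \oplus \mathfrak{u}(1)$ and $\mathfrak{pu}(n) \cong \mathfrak{su}(n)$; the map $\U(n) \to \mathbb{P}\U(n) \times \U(1)$, $g \mapsto ([g],\det g)$, is a finite cover with kernel $\mu_n$. Connections pull back along finite covers, so $A_0^\circ$ and $B$ together determine a connection $A_0$ on $P_0$ inducing $A_0^\circ$. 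Its curvature is
\[ F_{A_0} = F_{A_0^\circ} + \tfrac{1}{n} F_B \cdot \textup{id}, \]
where $F_{A_0^\circ}$ is viewed in $\mathfrak{su}(n)$. By \autoref{cor: ASD-instantons = pulled back SU(3)-instantons} and the self-dual decomposition $(\Lambda^2_+ T^*\mathbb{P}^2)_{\mathbb{C}} \cong \Lambda^{2,0}T^*_{\mathbb{C}}\mathbb{P}^2 \oplus \mathbb{C}\,\omega_{\textup{FS}} \oplus \Lambda^{0,2}T^*_{\mathbb{C}}\mathbb{P}^2$, the ASD condition gives $F_{A_0^\circ}^{0,2} = 0$ and $\Lambda_{\omega_{\textup{FS}}} F_{A_0^\circ} = 0$. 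The two HYM conditions therefore reduce to $F_B^{0,2} = 0$ and $i\Lambda_{\omega_{\textup{FS}}} F_B = n\lambda$ constant, i.e.\ $B$ must be a Hermitian Yang--Mills connection on a line bundle.

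Such a $B$ exists. Since $H^1(\mathbb{P}^2) = 0$ and $H^2(\mathbb{P}^2,\mathbb{R})$ is spanned by $[\omega_{\textup{FS}}]$, the Chern form $\tfrac{i}{2\pi}F_B$ can be chosen as the harmonic representative $c\,\omega_{\textup{FS}}$. Concretely, $B = B' + b$ with $\diff b$ correcting $F_{B'}$ by an exact form via Hodge theory. The curvature $c\,\omega_{\textup{FS}}$ is of type $(1,1)$ with constant contraction, as required.

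\emph{Uniqueness of $A_0$.} Two lifts inducing the same $A_0^\circ$ differ by $\beta\cdot\textup{id}$ with $\beta \in i\Omega^1(\mathbb{P}^2)$. HYM for both forces $\diff\beta$ to be of type $(1,1)$ with constant trace. Since $\diff\beta$ is exact, its trace $\Lambda_{\omega_{\textup{FS}}}\diff\beta$ integrates to zero, so it is zero. Hence $\diff\beta$ is primitive $(1,1)$, which makes it anti-self-dual, and so $\diff^*\diff\beta = *\diff\diff\beta = 0$. This gives $\diff\beta = 0$. As $H^1 = 0$, we get $\beta = \diff f$, and then $\beta$ is realised by the central gauge transformation $e^{f}$ with $e^f \in C^\infty(\mathbb{P}^2,\U(1))$, after fixing conventions for $i$.

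\emph{Main obstacle.} I expect the only delicate point to be the bookkeeping of the finite cover $\U(n) \to \mathbb{P}\U(n) \times \U(1)$, i.e.\ checking that $\det P_0$ together with $P_0^\circ$ really reconstructs $P_0$ with its connection. If this is awkward, I would instead pick any connection $A_0'$ on $P_0$ inducing $A_0^\circ$ and correct its trace part by $\beta\cdot\textup{id}$ using Hodge theory on the $\U(1)$ factor, exactly as in the uniqueness argument above.
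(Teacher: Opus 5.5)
Your proposal is correct and follows the paper's proof. The obstruction lies in $\check{\mathrm{H}}^2(\mathbb{P}^2,\U(1))\cong \mathrm{H}^3(\mathbb{P}^2,\mathbb{Z})=0$, uniqueness up to twisting comes from the centrality of $\U(1)$, and a lift of $A_0^\circ$ amounts to a choice of connection on $\det P_0$, so the HYM condition reduces to HYM for that line-bundle connection. The only difference is that you prove existence and gauge-uniqueness of the HYM connection on $\det P_0$ directly by Hodge theory, using $\mathrm{H}^2(\mathbb{P}^2,\mathbb{R})=\mathbb{R}[\omega_{\textup{FS}}]$, $\mathrm{H}^1(\mathbb{P}^2)=0$ and the anti-self-duality of primitive $(1,1)$-forms, whereas the paper just cites $\mathrm{H}^{0,1}(\mathbb{P}^2,\mathbb{C})=0$ together with a lemma from Huybrechts.
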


This proposition is well-known and we therefore only sketch a proof for the convenience of the reader.
\begin{proof}
The short exact sequence
\begin{equation*}
\begin{tikzcd}
1 \arrow[r] & \U(1) \arrow[r] & \U(n) \arrow[r] & \mathbb{P}\U(n) \arrow[r] & 1
\end{tikzcd}
\end{equation*}
induces the following exact sequence in \v{C}ech cohomology:
\begin{equation*}
\begin{tikzcd}
\check{\mathrm{H}}^1(\mathbb{P}^2,\U(1)) \arrow[r] & \check{\mathrm{H}}^1(\mathbb{P}^2,\U(n)) \arrow[r] & \check{\mathrm{H}}^1(\mathbb{P}^2,\mathbb{P}\U(n))  \arrow[r,"\beta"] & \check{\mathrm{H}}^2(\mathbb{P}^2,\U(1)).
\end{tikzcd}
\end{equation*}
Thus, the (isomorphism class of the) $\mathbb{P}\U(n)$-bundle $[\pi_0^\circ \col P_0^\circ \to \mathbb{P}^2] \in \check{\mathrm{H}}^1(\mathbb{P}^2,\mathbb{P}\U(n))$ lifts to a $\U(n)$-bundle if and only if $\beta([\pi_0^\circ \col P_0^\circ \to \mathbb{P}^2]) = 0 \in \check{\mathrm{H}}^2(\mathbb{P}^2,\U(1))$. Moreover, since $\U(1) \subset \U(n)$ lies central, any two lifts differ by a twist with a $\U(1)$-bundle and an isomorphism. The first part of the proposition now follows from the observation $\check{\mathrm{H}}^2(\mathbb{P}^2,\U(1)) \cong \mathrm{H}^3(\mathbb{P}^2,\mathbb{Z}) = 0$, which follows from the same long exact sequence in \v{C}ech cohomolgy associated to 
\begin{equation*}
\begin{tikzcd}
0 \arrow[r] & \mathbb{Z} \arrow[r] & \mathbb{R} \arrow[r] & \U(1) \arrow[r] & 0
\end{tikzcd}
\end{equation*}
and the observation that $\check{\mathrm{H}}^i(\mathbb{P}^2,C^\infty(\ \cdot \ ,\mathbb{R})) = 0$ for $i>0$ (cf. \cite[Proposition~8.5]{BottTu-differentialForms}).

For the second part, note that a lift $A_0 \in \mathcal{A}(P_0)$ of a connection $A_0^\circ \in \mathcal{A}(P_0^\circ)$ is equivalent to a choice of a connection $A_{\det}$ on the determinant $\U(1)$-bundle $P_0 \times_{\det} \U(1)$. Moreover, $A_0$ is Hermitian Yang--Mills if and only if $A_0^\circ$ is an ASD-instanton and $A_{\det}$ is Hermitian Yang--Mills. Since $\mathrm{H}^{0,1}(\mathbb{P}^2, \mathbb{C}) = 0$, there exists an up to gauge unique connection $A_{\det} \in \mathcal{A}(P_0 \times_{\det} \U(1))$ with this property (cf. \cite[Lemma~4.B.4]{Huybrechts-complex-Geometry}).
\end{proof}

Let now $Z^6$ again be a compact 6-manifold with an $\SU(3)$-structure $(\omega,\Omega)$ that satisfies $\diff^*\omega = 0$ and $\diff \Omega = w_1 \omega^2$ for some $w_1 \in \mathbb{R}$. Moreover, let $N\in \mathbb{N}$ and for every $i=1,\dots,N$ let $(\hat{\pi}_i^{\circ} \col \hat{P}_i^{\circ} \to S^5,\hat{A}_i^\circ)$ be a $\mathbb{P}\U(n)$-bundle (for $n\geq 2$) together with an irreducible connection $\hat{A}_i^\circ \in \mathcal{A}(\hat{P}_i^{\circ})$ satisfying~\eqref{equ: cone reduction of instanton equation}. By the previous results, there exists for every $i=1,\dots,N$ a $\U(n)$-bundle $\pi_i \col P_i \to \mathbb{P}^2$ together with a Hermitian Yang--Mills connection $A_i$, such that $\hat{P}_i^\circ = (\pr_{\mathbb{P}^2}^*P_i)/\U(1)$ and $\hat{A}_i^\circ$ is induced by $\pr_{\mathbb{P}^2}^*A_i$. Denote by $E_i\to \mathbb{P}^2$ the complex vector bundle associated to $P_i$. Note that the Hermitian Yang--Mills connection $A_i$ equips $E_i \to \mathbb{P}^2$ with a holomorphic structure. The following result due to the second named author determines the (relevant) critical rates of the operators $L_{\hat{A}_i^\circ}$ and relates their corresponding homogeneous kernels to the cohomology of the holomorphic vector bundle $(\End E_i)(\ell) \coloneqq (\End E_i) \otimes \mathcal{O}(\ell)$.

\begin{proposition}[{\cite[Theorem~1.8]{Wang-spectrum_of_operator_for_instantons}}]
The critical rates of $L_{\hat{A}_i^\circ}$ satisfy \[ \{-3,-2\} \subset \mathcal{D}(L_{\hat{A}_i^\circ}) \cap [-4,-1] \subset \{-4,-3,-2,-1\} \] and for $\nu_i \in \{-4,-3,-2,-1\}$ we have \[ \dim \mathcal{K}(L_{\hat{A}_i^\circ})_{\nu_i} = 2 \mathrm{h}^1(\mathbb{P}^2,(\End E_i)(\nu_i+1)), \] where $\mathrm{h}^1(\mathbb{P}^2,(\End E_i)(\nu_i+1)) = \dim_\mathbb{C}(\mathrm{H}^1(\mathbb{P}^2,(\End E_i)(\nu_i+1))$.
\end{proposition}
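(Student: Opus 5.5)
The approach is to reduce the spectral problem for $L_{\hat{A}_i^\circ}$ on $\mathbb{C}^3\setminus\{0\}$ to Dolbeault cohomology on $\mathbb{P}^2$, using the $\U(1)$-equivariance of the pulled back data.

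\textbf{Step 1 (reduction to horizontal eigensections).} By the proof of \autoref{prop: duality between homogeneous kernels}, homogeneous kernel elements of degree $\nu$ correspond to eigensections of $P$ on $S^5$ with eigenvalue $\nu$. By the remark following that proposition, for $\nu\in[-4,-1]$ these are of the form $(0,0,r^{\nu+1}a_H)$ with $a_H$ horizontal. The equations for $a_H$ are then
\begin{itemize}
\item $\diff_H^* a_H=0$,
\item $\Lambda^H_{\omega_j}\diff_H a_H=0$ for $j=1,2,3$,
\item $(-1+J_1^*\diff_{X_\theta})a_H=\nu\, a_H$.
\end{itemize}
Since $\hat A_i^\circ$ and the $\U(1)$-action on $\hat P_i^\circ$ are pulled back from $\mathbb{P}^2$, the operator $\diff_{X_\theta}$ commutes with the rest. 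We therefore decompose $a_H$ into Fourier modes under the Reeb flow. The eigenvalue equation forces $J_1^*\diff_{X_\theta}a_H=(\nu+1)a_H$, so $a_H$ has a fixed weight.

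\textbf{Step 2 (identification with twisted bundle-valued forms).} Split $a_H=a^{1,0}+a^{0,1}$ using $J_1$. Sections of $H^*\otimes\mathfrak g$ of $\U(1)$-weight $k$ correspond to sections of $\Lambda^{0,1}T^*\mathbb{P}^2\otimes(\End E_i)(\ell)$ and of its conjugate, with $\ell$ determined by $\nu$. Under this correspondence the $\mathbb{P}\U(n)$ adjoint bundle is the trace-free part of $\End E_i$; the trace part contributes nothing because $\mathrm{H}^1(\mathbb{P}^2,\mathcal O(\ell))=0$. I expect $\ell=\nu+1$, matching the standard identification of $S^5\to\mathbb{P}^2$ with the circle bundle of $\mathcal O(-1)$.

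The equations from Step 1 then become, for the $(0,1)$-component $\beta$:
\begin{itemize}
\item $\bar\partial\beta=0$, coming from the $\omega_2,\omega_3$ equations;
\item $\bar\partial^*\beta=0$, coming from the combination of $\diff_H^*$ and $\Lambda_{\omega_1}$.
\end{itemize}
So $\beta$ is harmonic and defines a class in $\mathrm{H}^{0,1}(\mathbb{P}^2,(\End E_i)(\nu+1))$ via Hodge theory for the twisted Dolbeault complex. The $(1,0)$-part is conjugate, or is determined by $\beta$ through the real structure. The real dimension is therefore $2\mathrm{h}^1$.

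\textbf{Step 3 (critical rates).} For $\nu\notin\mathbb Z$ no integer weight exists, which gives the inclusion $\mathcal D\cap[-4,-1]\subset\{-4,\dots,-1\}$. For $\nu=-2$, \autoref{prop: infinitesimal translations and Atiyah classes} already provides nonzero kernel elements when $F\neq0$; irreducibility of a $\mathbb{P}\U(n)$ connection with $n\ge2$ forces $F\neq 0$, so $-2\in\mathcal D$. Then $-3\in\mathcal D$ follows from the duality $\nu\mapsto-5-\nu$ of \autoref{prop: duality between homogeneous kernels}.

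The main obstacle is Step 2: the precise bookkeeping of weights, the curvature term of the connection $\theta$ entering $\diff_H^2$, and the constant shift $-1$ in $P$. These must combine exactly so that weight $\nu+1$ corresponds to the twist $\mathcal O(\nu+1)$ and the equations become precisely $\bar\partial$-harmonicity, without a leftover zeroth order term. I would first verify this on the case $\nu=-2$, with $\ell=-1$, against the Atiyah class description.
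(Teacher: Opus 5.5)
The paper gives no argument of its own for this statement. It imports it verbatim from \cite[Theorem~1.8]{Wang-spectrum_of_operator_for_instantons}. The only related material in the text is the block form of $P$ and the horizontality remark after \autoref{prop: duality between homogeneous kernels}, and you reuse both.

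Your proposal is a correct, self-contained alternative route. Compared with the bare citation, it buys transparency:
\begin{itemize}
\item it explains why $\mathcal{D}(L_{\hat{A}_i^\circ})\cap[-4,-1]$ consists of integers (Reeb weights are integers);
\item it shows where the twist $\nu+1$ comes from;
\item it shows why the duality $\nu\mapsto -5-\nu$ matches Serre duality $\mathrm{H}^1(\End E_i(\ell))^*\cong \mathrm{H}^1(\End E_i(-3-\ell))$;
\item it avoids reconciling this paper's normalisation with Wang's (cf.\ the footnote $P_{\textup{here}}=P_{\textup{Wang}}-\textup{Id}$).
\end{itemize}
The cost is that it rests on the displayed form of $P$ and on the horizontality remark. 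Both apply here, since an irreducible $\mathbb{P}\U(n)$-connection is infinitesimally irreducible and non-flat.

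The step you flag as the main obstacle closes with no leftover zeroth-order term.
\begin{itemize}
\item \textbf{The weight.} You need not trust the $(5,5)$-entry of $P$. On the cone, set $\bar\xi=\tfrac12(r\partial_r+iX_\theta)$. The mixed component of $(\diff_A a)^{0,2}$ is $i_{\bar\xi}\bar\partial_A a^{0,1}=\mathcal{L}^A_{\bar\xi}a^{0,1}=\tfrac12(\nu+1-k)\,a^{0,1}$ on the weight-$k$ part. So the $(0,1)$-part $\beta$ is forced to have weight $k=\nu+1$. With the convention that degree-$d$ holomorphic polynomials have weight $d$, this is $\mathcal{O}(\nu+1)$, so your guess $\ell=\nu+1$ is right.
\item \textbf{The connection.} The form $\theta$ induces the Chern connection of the Fubini--Study metric on $\mathcal{O}(k)$. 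Hence on weight-$k$ sections $\diff_H$ is literally $\diff_\nabla$ on $\End E_i\otimes\mathcal{O}(k)$. The curvature of $\theta$ is just the $(1,1)$-curvature of $\mathcal{O}(k)$, so $\bar\partial_\nabla^2=0$.
\item \textbf{The $\omega_2,\omega_3$ equations.} The form $\omega_2+i\omega_3$ has weight $3$ and corresponds to a nowhere-vanishing section of $K_{\mathbb{P}^2}\otimes\mathcal{O}(3)\cong\mathcal{O}$. So these equations are exactly $\bar\partial_\nabla\beta=0$.
\item \textbf{The $\omega_1$ and $\diff_H^*$ equations.} We have $\Lambda\beta=0$ and $\Lambda\bar\partial_\nabla\beta=0$. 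The Kähler identity $[\Lambda,\partial_\nabla]=\pm i\bar\partial_\nabla^*$ holds for any Hermitian connection. It shows that $\diff_H^*a_H$ and $\Lambda_{\omega_1}\diff_Ha_H$ are, up to constants, the real and imaginary parts of $\bar\partial_\nabla^*\beta$.
\item \textbf{Consistency check.} Your proposed check works: $i_vF$ has $(0,1)$-part $i_{v^{1,0}}F$ of weight $-1$, matching $\nu=-2$.
\end{itemize}

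One inaccuracy: $\diff_{X_\theta}$ does \emph{not} commute with $\Lambda_{\omega_2}\diff_H$ and $\Lambda_{\omega_3}\diff_H$ individually, because $\omega_2+i\omega_3$ carries weight $3$. You never actually use this, though, since row five of $P$ alone fixes the weight.
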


Inserting this into the virtual-dimension formula of \autoref{thm: Kuranishi structure} and using \autoref{prop: isomorphism of stabiliser groups} immediately proves the first part of \autoref{introthm: moduli space for PU(n)-connections}:

\begin{corollary}\label{cor: virtual dimension for structure group PU(n)}
For $\mu \in (-1,\Bar{\mu}_1)\times \dots \times (-1,\Bar{\mu}_N)$, where $\bar{\mu}_i \coloneqq \min \{((-1,0)\cap \mathcal{D}(L_{\hat{A}_i^\circ})) \cup \{0\}\}$ we have \begin{align*}
\textup{virt-dim}\big(\mathcal{M}_\mu(\{\hat{P}_i^\circ,\hat{A}_i^\circ\})\big) = \textstyle{\sum}_{i=1}^{N} 6 &+ (8-\dim \Stab_{\SU(3)}(A_i^\circ)) \\
&- 2\mathrm{h}^{1}(\mathbb{P}^{2}, \End E_i) - 2\mathrm{h}^{1}(\mathbb{P}^{2}, (\End E_i)(-1)). 
\end{align*}
\end{corollary}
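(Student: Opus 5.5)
The corollary follows by substituting the spectral data of the preceding proposition into the index formula of \autoref{thm: Kuranishi structure}. That formula gives
\[ \textup{virt-dim} = 6N + \textstyle\sum_i \dim(\mathfrak{m}_i) - \textstyle\sum_i \sum_{\nu_i \in \mathcal{D}(L_{\hat{A}_i^\circ}) \cap (-5/2,\mu_i)} \dim \mathcal{K}(L_{\hat{A}_i^\circ})_{\nu_i}. \]
Before applying it, I check its hypotheses. $\mathbb{P}\U(n)$ has trivial center, and each $\hat{A}_i^\circ$ is irreducible, hence infinitesimally irreducible. The rates satisfy $\mu_i \in (-1,\bar{\mu}_i)$ by assumption.

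The plan has three steps.

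First, I identify $\dim \mathfrak{m}_i$. By definition $\mathfrak{m}_i$ is a complement in $\mathfrak{su}(3)$ of the image of $\mathfrak{stab}_{\SU(3)}(\hat{A}_i^\circ)$. The projection $\Stab_{\SU(3)}(\hat{A}_i^\circ)\to \SU(3)$ has kernel the $\hat{A}_i^\circ$-preserving gauge transformations. By irreducibility and the trivial center, this kernel is trivial, so the projection is injective on Lie algebras. Hence $\dim \mathfrak{m}_i = 8 - \dim \Stab_{\SU(3)}(\hat{A}_i^\circ)$. By \autoref{prop: isomorphism of stabiliser groups} this equals $8 - \dim \Stab_{\SU(3)}(A_i^\circ)$. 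Here $A_i^\circ$ denotes the ASD instanton on $\mathbb{P}^2$. The stabiliser of its $\U(n)$-lift $A_i$ has the same image in $\SU(3)$, so the notation in the statement is consistent.

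Second, I determine which critical rates enter the sum. Since $\mu_i \in (-1,\bar\mu_i) \subset (-1,0)$, the range is $(-5/2,\mu_i)$. By \cite[Theorem~1.8]{Wang-spectrum_of_operator_for_instantons}, the critical rates in $[-4,-1]$ lie in $\{-4,-3,-2,-1\}$. Between $-1$ and $\mu_i$ there are no critical rates, by the definition of $\bar\mu_i$ as the smallest critical rate in $(-1,0)$ (or $0$ if there is none). Also $-1 \notin (-5/2,\mu_i)$, because $\mu_i > -1$. So the only rate contributing is $\nu_i = -2$, which is critical since $\{-3,-2\}\subset\mathcal{D}$.

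This step is the main point needing care: the open interval must exclude $-1$ and include $-2$, and nothing in $(-2,-1]$ may contribute. The quoted theorem controls only $[-4,-1]$, and the part $(-1,\mu_i)$ is controlled by $\bar\mu_i$. I expect no obstacle beyond verifying these endpoints.

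Third, I evaluate the single term. The proposition gives $\dim \mathcal{K}(L_{\hat{A}_i^\circ})_{-2} = 2\mathrm{h}^1(\mathbb{P}^2,(\End E_i)(-1))$. This accounts for only one of the two cohomology terms in the statement. The other term, $2\mathrm{h}^1(\mathbb{P}^2,\End E_i)$, must come from rate $-1$, i.e. from $\mathcal{K}_{-1}$, the deformations of the tangent cone. My tentative reading is that the intended range is really $(-5/2,\mu_i]$ together with rate $-1$. In that case I would apply the same dimension formula at $\nu_i=-1$ and combine the two terms to get the stated formula. I flag this as the likely subtle point: reconciling the precise range in \autoref{thm: Kuranishi structure} with the appearance of $\mathrm{h}^1(\End E_i)$. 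If it cannot be reconciled, I would state the formula with only the $(-1)$-twisted term and treat the untwisted term as a correction.
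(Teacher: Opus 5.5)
Your overall route is the paper's: substitute the spectral data of \cite[Theorem~1.8]{Wang-spectrum_of_operator_for_instantons} and \autoref{prop: isomorphism of stabiliser groups} into the index formula of \autoref{thm: Kuranishi structure}. Your first step is fine: $\dim\mathfrak{m}_i = 8-\dim\Stab_{\SU(3)}(A_i^\circ)$ follows because $\mathfrak{stab}_{\SU(3)}(\hat{A}_i^\circ)\to\mathfrak{su}(3)$ is injective.

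Your second step, however, contains a concrete error that derails the rest. You claim $-1\notin(-5/2,\mu_i)$ \emph{because} $\mu_i>-1$. The opposite is true: $\mu_i>-1$ means $-5/2<-1<\mu_i$, so $-1$ lies in the open interval. The critical rates in $(-5/2,\mu_i)$ are therefore exactly those in $\{-2,-1\}$. There are none in $(-1,\mu_i)$ by the definition of $\bar{\mu}_i$, and none elsewhere in $(-5/2,-1)$ by Wang's theorem. The subtracted sum is then $\dim\mathcal{K}(L_{\hat{A}_i^\circ})_{-2}+\dim\mathcal{K}(L_{\hat{A}_i^\circ})_{-1} = 2\mathrm{h}^1(\mathbb{P}^2,(\End E_i)(-1))+2\mathrm{h}^1(\mathbb{P}^2,\End E_i)$. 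If $-1$ happens not to be a critical rate, then $\mathcal{K}(L_{\hat{A}_i^\circ})_{-1}=0$, and the same dimension formula at $\nu_i=-1$ gives $\mathrm{h}^1(\mathbb{P}^2,\End E_i)=0$. So the stated formula holds either way.

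There is thus no discrepancy to reconcile, and no reason to reinterpret the range as $(-5/2,\mu_i]$; that change would add nothing anyway, since $\mu_i$ is not critical. Your proposed fallback, keeping only the $(-1)$-twisted term, would produce a formula that is wrong by $2\mathrm{h}^1(\mathbb{P}^2,\End E_i)$. That error is nonzero in general. By \autoref{prop: homogeneous kernel of degree -1}, $\mathcal{K}(L_{\hat{A}_i^\circ})_{-1}$ is the space of infinitesimal deformations of the tangent connection. By \autoref{prop: infinitesimal rotations and deformations of tangent cone}, this space is nonzero as soon as $\mathfrak{m}_i\neq 0$. Once the interval membership is corrected, your argument is exactly the paper's one-line substitution.
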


The following example based on \cite[Corollary~1.11]{Wang-spectrum_of_operator_for_instantons} calculates the virtual dimension in the case of conically singular instantons whose tangent cones are modelled on the Fubini--Study connection on $T\mathbb{P}^2$.

\begin{example}\label{example: moduli space for tangent cone = Fubini-Study}
Let $(T\mathbb{P}^2,h_{\textup{FS}})$ be the holomorphic tangent bundle over $\mathbb{P}^2$ equipped with the Fubini--Study metric. The corresponding Chern--connection $A_{\textup{FS}}$ is Hermitian Yang--Mills (cf. \cite[Example~4.B.16]{Huybrechts-complex-Geometry}) and the induced connection $A_{\textup{FS}}^\circ$ on the associated $\mathbb{P}\U(2)$-bundle $\mathbb{P}\U(T\mathbb{P}^2,h_{\textup{FS}})$ over $\mathbb{P}^2$ is therefore an ASD-instanton. Thus, by \autoref{cor: ASD-instantons = pulled back SU(3)-instantons} the pullback of $(\mathbb{P}\U(T\mathbb{P}^2,h_{\textup{FS}}),A_{\textup{FS}}^\circ)$ to $S^5$ satisfies~\eqref{equ: cone reduction of instanton equation}. Moreover, \cite[Corollary~1.11]{Wang-spectrum_of_operator_for_instantons} shows:
\begin{align*}
\mathcal{D}(L_{\hat{A}_{\textup{FS}}^\circ}) \cap [-4,-1] &= \{-3,-2\} \\
2 \textrm{h}^1(\mathbb{P}^2,(\End T\mathbb{P}^2)(-1)) &= 6 \\
\min \{((-1,0)\cap \mathcal{D}(L_{\hat{A}_{\textup{FS}}^\circ})) \cup \{0\}\} &= 2 \sqrt{2}-3
\end{align*}
and \autoref{prop: infinitesimal rotations and deformations of tangent cone} or the observation that $(\mathbb{P}^2,g_{\textup{FS}})$ is the symmetric space $\SU(3)/\textup{S}(\U(1)\times \U(2))$ (see also \autoref{bul: relationship SE structure S5 and K structure on P2} of \autoref{subsec: Sasaki--Einstein structure on S5}) implies \[\dim \Stab_{\SU(3)}(A_{\textup{FS}}^\circ)) = \dim \SU(3) = 8.\]

Thus, if $N \in \mathbb{N}$ and all prescribed tangent connections $(\hat{\pi}_i^{\circ} \col \hat{P}_i^{\circ} \to S^5,\hat{A}_i^\circ)$ for $i=1,\dots,N$ are isomorphic to the pullback of $(\mathbb{P}\U(T\mathbb{P}^2,h_{\textup{FS}}),A_{\textup{FS}}^\circ)$ to $S^5$, then for $\mu \in (-1,2\sqrt{2}-3)^N$, we have \[\textup{virt-dim}\big(\mathcal{M}_\mu(\{\hat{P}_i^{\circ},\hat{A}_i^{\circ}\})\big) = 0. \] Moreover, the homeomorphism type of $\mathcal{M}_\mu(\{P_i,A_i\})$ is independent of the choice of $\mu \in (-1,2\sqrt{2}-3)^N$ (cf. \autoref{thm: relating different rates}).
\end{example}

If, on the other hand, $(\pi_0^{\circ} \col P_0^{\circ} \to \mathbb{P}^2,A_0^{\circ})$ is a non-flat ASD-instanton with structure group $\mathbb{P}\U(n)$-that is \textit{not} isomorphic to the Fubini--Study connection on $\mathbb{P}\U(T\mathbb{P}^2,h_{\textup{FS}})$, then \cite[Proposition~4.1]{Wang-AtiyahClasses} shows that \[2h^1(\mathbb{P}^2,(\End E_0)(-1)) > 6, \] where $E_0 \to \mathbb{P}^2$ is the holomorphic vector bundle associated to $(P_0^{\circ},A_0^{\circ})$ via \autoref{prop: PU(n)-bundles and instantons are liftible to U(n)-bundles and HYM}. Moreover, in \autoref{prop: infinitesimal rotations and deformations of tangent cone} we have seen that \[ 8-\dim \Stab_{\SU(3)}(A_0^{\circ})  \leq 2 \mathrm{h}^1(\mathbb{P}^2,\End(E_0)). \] Together with \autoref{cor: virtual dimension for structure group PU(n)} and the previous example this implies:

\begin{theorem}\label{thm: non-positive dimension for PU(n) connections}
Let $N \in \mathbb{N}$ and for each $i=1,\dots,N$ let $(\hat{\pi}_i^{\circ} \col \hat{P}_i^{\circ} \to S^5,\hat{A}_i^\circ)$ be a $\mathbb{P}\U(n)$-bundle (for $n>1$) together with an irreducible (hence non-flat) connection $\hat{A}_i^\circ \in \mathcal{A}(\hat{P}_i^{\circ})$ satisfying~\eqref{equ: cone reduction of instanton equation}. Then \[\textup{virt-dim}\big(\mathcal{M}_\mu(\{\hat{P}_i^\circ,\hat{A}_i^\circ\})\big) \leq 0\] with equality if and only if all $(\hat{\pi}_i^{\circ} \col \hat{P}_i^{\circ} \to S^5,\hat{A}_i^\circ)$ are isomorphic to the pullback of the Fubini--Study connection $(\pi\col \mathbb{P}\U(T\mathbb{P}^2,h_{\textup{FS}}) \to \mathbb{P}^2,A_{\textup{FS}}^\circ)$.
\end{theorem}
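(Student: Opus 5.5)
The plan is to prove the inequality one singular point at a time, by combining the virtual dimension formula of \autoref{cor: virtual dimension for structure group PU(n)} with two estimates.

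By \autoref{cor: virtual dimension for structure group PU(n)}, the virtual dimension is the sum over $i$ of
\[ d_i \coloneqq 6 + \big(8-\dim \Stab_{\SU(3)}(A_i^\circ)\big) - 2\mathrm{h}^1(\mathbb{P}^2,\End E_i) - 2\mathrm{h}^1(\mathbb{P}^2,(\End E_i)(-1)). \]
It therefore suffices to show $d_i\leq 0$ for each $i$, with equality exactly in the Fubini--Study case.

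The first step pairs the rotation contribution with the $\End E_i$ term. By \autoref{prop: infinitesimal rotations and deformations of tangent cone}, the map $u\mapsto i_{\hat{u}_{S^5}}F_{\hat{A}_i^\circ}$, followed by $L^2$-projection onto the space of solutions of \eqref{equ: SU(2) instantons deformations} and \eqref{equ: SU(2) instantons Coulomb gauge}, is injective on $\mathfrak{m}_i$. By \autoref{prop: homogeneous kernel of degree -1}, that solution space is $\mathcal{K}(L_{\hat{A}_i^\circ})_{-1}$. By Wang's theorem quoted above, this space has dimension $2\mathrm{h}^1(\mathbb{P}^2,\End E_i)$. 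Since $\dim\mathfrak{m}_i = 8-\dim\Stab_{\SU(3)}(\hat{A}_i^\circ)$, and this equals $8-\dim\Stab_{\SU(3)}(A_i^\circ)$ by \autoref{prop: isomorphism of stabiliser groups}, we get
\[ 8-\dim\Stab_{\SU(3)}(A_i^\circ)\leq 2\mathrm{h}^1(\mathbb{P}^2,\End E_i). \]

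The second step pairs the translation contribution with the $(\End E_i)(-1)$ term. Irreducibility forces $F_{\hat A_i^\circ}\neq 0$. Then \autoref{prop: infinitesimal translations and Atiyah classes} gives an injection $\mathbb{C}^3\to\mathcal{K}(L_{\hat{A}_i^\circ})_{-2}$, and this space has dimension $2\mathrm{h}^1(\mathbb{P}^2,(\End E_i)(-1))$. Hence
\[ 6\leq 2\mathrm{h}^1(\mathbb{P}^2,(\End E_i)(-1)), \]
and consequently $d_i\leq 0$.

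For the equality case:
\begin{itemize}
\item \emph{If direction.} Apply \autoref{example: moduli space for tangent cone = Fubini-Study}, which gives $d_i = 6+0-2\mathrm{h}^1(\End T\mathbb{P}^2)-6$. Here one needs $\mathrm{h}^1(\mathbb{P}^2,\End T\mathbb{P}^2)=0$; I would check this via the Euler sequence, or equivalently via the rigidity of $T\mathbb{P}^2$ used in the example. This yields $d_i=0$.
\item \emph{Only-if direction.} If some tangent connection is not Fubini--Study, then \cite[Proposition~4.1]{Wang-AtiyahClasses} gives the strict inequality $2\mathrm{h}^1(\mathbb{P}^2,(\End E_i)(-1))>6$, so $d_i<0$ and the total is negative.
\end{itemize}

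The main obstacle is this strict inequality, which must come from the cited Atiyah-class result. I also need to make sure that the nonflat hypothesis there is satisfied (this follows from irreducibility) and that the Fubini--Study case is pinned down up to isomorphism of the $\mathbb{P}\U(n)$-bundle, which in particular forces $n=2$.
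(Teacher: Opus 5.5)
Your proposal is correct and follows essentially the same route as the paper. It works per singular point: it bounds $8-\dim\Stab_{\SU(3)}(A_i^\circ)$ by $2\mathrm{h}^1(\mathbb{P}^2,\End E_i)$ via the infinitesimal-rotation proposition, and it bounds $6$ by $2\mathrm{h}^1(\mathbb{P}^2,(\End E_i)(-1))$, with strict inequality outside the Fubini--Study case coming from \cite[Proposition~4.1]{Wang-AtiyahClasses}. It then uses the Fubini--Study example for equality. The only cosmetic difference is that you obtain the weak inequality from the injectivity of $v\mapsto i_vF$, which is itself the content of that same cited result.
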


\begin{remark}\label{rem: only Fubini--Study connections appear generically}
Recall from \autoref{thm: Kuranishi structure} and (the construction prior to) \autoref{thm: local structure of unframed B Phi is homeo} that the contribution of $8-\dim \Stab_{\SU(3)}(A_i^\circ)$ in the virtual dimension formula of \autoref{cor: virtual dimension for structure group PU(n)} for every $i=1,\dots,N$ came from rotating the bundle around the singular point $s_i \in S$. Moreover, recall from \autoref{rem: alternative interpretation of rotations} that these rotations may also be interpreted as deformations of the conically singular $\SU(3)$-instanton that change the tangent connection at $s_i$ (inside a certain class of connections on $\hat{P}_i^{\circ}$). In a universal moduli space $\mathcal{M}$ of conically singular instantons with structure group $\mathbb{P}\U(n)$ (which allows for variable tangent connections; cf. \autoref{rem: full moduli theory for unframed connections}) the rotations parametrised by $\mathfrak{m}_i$ in \autoref{thm: Kuranishi structure} would therefore be replaced by the deformations of the tangent connections (cf. \cite[Section~5.1]{Bera-cs_associatives}). Since the deformations of the (irreducible) tangent connection $(\hat{\pi}_i^\circ \col \hat{P}_i \to S^5,\hat{A}_i^{\circ})$ form a moduli space of (real) virtual dimension $2h^1(\mathbb{P}^2,\End E_i)$, we expect the virtual dimension of the universal moduli space at an element whose tangents are precisely $(\hat{\pi}_i^\circ \col \hat{P}_i \to S^5,\hat{A}_i^{\circ})$ (for $i=1,\dots,N$) to be 
\begin{align*}
\textup{virt-dim}(\mathcal{M}) &= \textstyle{\sum}\ 6 + 2\mathrm{h}^1(\mathbb{P}^2,\End E_i)-2\mathrm{h}^1(\mathbb{P}^2,\End E_i)-2\mathrm{h}^1(\mathbb{P}^2,\End E_i(-1)) \\
&= \textstyle{\sum}\ 6 -2\mathrm{h}^1(\mathbb{P}^2,\End E_i(-1)).
\end{align*}
Therefore, \cite[Proposition~4.1]{Wang-AtiyahClasses} still implies that $\textup{virt-dim}(\mathcal{M}) \leq 0$. Moreover, equality holds precisely on the connected components of $\mathcal{M}$ consisting of those conically singular instantons whose tangent connections are all isomorphic to the pullback of the Fubini--Study connection. This suggests that after a generic perturbation of the instanton equations (cf. \cite[Chapter~5.5]{Donaldson-FloerHomology} and \cite[Section~4]{Ma-counting_flat_connections}), one only encounters singular instantons with Fubini--Study tangent connections.
\end{remark}

\appendix

\section{Analytic preliminaries}\label{app-sec: analytic preliminaries}

This section contains a summary of well-known analytic results for elliptic operators on bundles with isolated singularities that are used frequently throughout the text. The original references which develop the Fredholm theory of such operators mapping between weighted Sobolev spaces are \cite{LockhardMcOwen-ellipticOperators_on_noncompact_mfds} and \cite{MelroseMendoza--bCalculus} (see also \cite{Melrose-AtyiahPatodiSinger}). Good expositions and summaries can for example be found in \cite[Section~1]{Bartnik-mass_of_ALF}, \cite[Chapter~3]{Donaldson-FloerHomology}, \cite[Chapter~4]{Marshal-deformations_special_Lagrangians}, \cite[Section~4]{KarigiannisLotay-conifolds}, and \cite[Sections~3 and~4]{Langlais-analysis_of-neck-stretching_problems}. A treatment of the mapping properties of elliptic operators between weighted Hölder spaces can be found in \cite[Section~12]{Pacard-lecture_notes_connected_sums} and \cite[Section~2.1]{HaskinsHeinNordstroem--ACylCalabiYaus}.

\subsection{Conical operators}

We begin with a special class of bundles and operators over $\mathbb{R}^6\setminus \{0\}$ that interact with the dilation action $\delta_r \col \mathbb{R}^6\setminus \{0\} \to \mathbb{R}^6\setminus \{0\}$. We restrict ourselves to $\mathbb{R}^6 \setminus \{0\}$ for concreteness and because it is the relevant case for the rest of this article. Note, however, that the results discussed in this section also hold for general cones.

\begin{definition}\label{def: conical bundle}
A triple $(\pi \col E \to \mathbb{R}^6\setminus \{0\},h,\nabla)$ consisting of a vector bundle $\pi \col E \to \mathbb{R}^6\setminus \{0\}$, a bundle metric $h$ on $E$, and a metric connection $\nabla \col \Gamma(E) \to \Omega^1(\mathbb{R}^6\setminus \{0\},E)$ is called conically admissible, if one of the following two equivalent conditions is met:
\begin{enumerate}
\item There exists a bundle $\pi_0 \col E_0 \to S^5$ together with a bundle metric $h_0$ and a metric connection $\nabla_0 \col \Gamma(E_0) \to \Omega^1(S^5,E_0)$ such that $(E,h,\nabla) = \pr_{S^5}^*(E_0,h_0,\nabla_0)$.
\item The curvature of $\nabla$ satisfies $i_{\partial_r}F_{\nabla} = 0$. 
\end{enumerate}
\end{definition}
\begin{remark}
Note that parallel transport via $\nabla$ lifts the canonical dilation action $\delta \col \mathbb{R}_{>0} \times \mathbb{R}^6\setminus \{0\} \to \mathbb{R}^6 \setminus \{0\}$ to $\tilde{\delta} \col \mathbb{R}_{>0} \times E \to E$. By construction, this lift satisfies the following properties:
\begin{itemize}
\item for each $r\in \mathbb{R}_{>0}$, the corresponding map $\tilde{\delta}_r \col E \to E$  is a linear isometry
\item for each $e \in E$, $r \mapsto \tilde{\delta}_r(e)$ is parallel with respect to $\nabla$.
\end{itemize}
Using this lift, the equivalence between the two conditions in the previous definition follows by noting that $\tilde{\delta}_r^*\nabla = \nabla$ for every $r\in \mathbb{R}_{>0}$ if and only if $i_{\partial_r}F_\nabla=0$ (cf. \cite[Section~2.5.1]{Donaldson-FloerHomology}). In the following we will call a vector bundle $\pi \col E \to \mathbb{R}^6\setminus \{0\}$ conical, if we have fixed a lift $\tilde{\delta} \col \mathbb{R}_{>0} \times E \to E$ of the dilation action. 
\end{remark}

Let $\pi \col E \to \mathbb{R}^6\setminus \{0\}$ be a conical vector bundle together with its corresponding lift $\tilde{\delta}$ of the dilation action. Recall that for any $r\in \mathbb{R}_{>0}$ the isomorphism $\tilde{\delta}_r$ acts on sections $\Gamma(E)$ via pullback $\tilde{\delta}_r^*u \coloneqq \tilde{\delta}_{1/r} \circ u \circ \delta_{r} \in \Gamma(E)$ for $u\in \Gamma(E)$.

\begin{definition}
For $i=1,2$ let $\pi_i \col E_i \to \mathbb{R}^6\setminus \{0\}$ be conical vector bundles with corresponding lifts $\tilde{\delta}_i$ of the dilation action. A differential operator $L \col \Gamma(E_1) \to \Gamma(E_2)$ of order $\ell$ is called conical if \[ L \circ (\tilde{\delta}_1)_r^* = r^\ell \cdot  (\tilde{\delta}_2)_{r}^* \circ L \quad \textup{for every $r\in \mathbb{R}_{>0}$.} \]
\end{definition}
\begin{remark}
If $L \col \Gamma(E_1) \to \Gamma(E_2)$ is a conical differential operator of order $\ell$, then one can check that the product $r^\ell \cdot L$ (where $r$ now denotes the radius function) satisfies \[ (\tilde{\delta}_2)_{r_0}^* \circ (r^\ell \cdot L) = (r^\ell \cdot L) \circ (\tilde{\delta}_1)_{r_0}^* \quad \textup{for every $r_0\in \mathbb{R}_{>0}$.} \] Thus, when identifying $\mathbb{R}^6\setminus \{0\}$ with the cylinder $\mathbb{R} \times S^5$ by mapping $(t,\sigma) \in \mathbb{R} \times S^5 \mapsto \e^t \cdot \sigma$ the operator $\e^{\ell t} \cdot L$ is translation invariant (in the sense of~\cite{LockhardMcOwen-ellipticOperators_on_noncompact_mfds}).
\end{remark}
\begin{remark}\label{rem: concrete form of conical operators}
If one identifies the bundles $E_i$ with $\mathbb{R}_{>0} \times (E_i)_{\vert S^5}$ via $\tilde{\delta}_i$, then $L$ takes the form \[ L = r^{-\ell}\sum_{i=0}^\ell D_{\ell-i} (r \partial_r)^{i} = \sum_{i=0}^\ell r^{i-\ell} \tilde{D}_{\ell-i} \partial_r^{i} \] where $D_{\ell-i}, \tilde{D}_{\ell-i} \col \Gamma((E_1)_{\vert S^5}) \to \Gamma(E_2)_{\vert S^5})$ are ($r$-independent) differential operators of order at most $\ell-i$ over $S^5$.
\end{remark}

The following are the two examples of conical differential operators appearing in this article.

\begin{example}
Let $\pi_0 \col P_0 \to S^5$ be a principal $G$-bundle and let $A_0 \in \mathcal{A}(P_0)$ be a connection satisfying~\eqref{equ: cone reduction of instanton equation}. Then $\mathfrak{g}_{\pr_{S^5}^*P_0}$ and $T^*(\mathbb{C}^3\setminus \{0\})$ are conical bundles and the (rough) Laplacian $\Delta_{\pr_{S^5}^*A_0} \coloneqq \diff_{\pr_{S^5}^*A_0}^* \diff_{\pr_{S^5}^*A_0}$ on $\mathfrak{g}_{\pr_{S^5}^*A_0}$ and the $\SU(3)$-instanton deformation operator $L_{\pr_{S^5}^*A_0}$ (as defined prior to \autoref{prop: adjoint of model operator}) are conical differential operators.
\end{example}

In order to ease notation we will in the following assume that $L$ is a differential operator which maps between sections of the same bundle. Since we are primarily interested in elliptic operators, this is only a minor restriction.

\begin{definition} 
A section $u \in \Gamma(E)$ of a real conical vector bundle $E$ is called homogeneous of degree $\lambda \in \mathbb{R}$ if it satisfies $\tilde{\delta}_r^* u = r^{\lambda} u$ for every $r \in \mathbb{R}_{>0}$. Similarly, a section $\tilde{u} \in \Gamma(F)$ of a complex conical vector bundle $F$ is called homogeneous of degree $\lambda_c \in \mathbb{C}$ if it satisfies $\tilde{\delta}_r^* u = r^{\lambda_c} u$ for every $r \in \mathbb{R}_{>0}$.
\end{definition}
\begin{example}
Consider $T^*(\mathbb{R}^6\setminus \{0\})$ as a (real) conical bundle, where $\tilde{\delta}_r$ is given by radial parallel transport with respect to the Levi--Civita connection. A 1-form $\alpha \in \Omega^1(\mathbb{R}^6\setminus \{0\})$ is then homogeneous of degree $\lambda \in \mathbb{R}$ if and only if it is of the form \[ \alpha = \textstyle{\sum} f_i \diff x^{i},\] where $f_{i} \in C^\infty(\mathbb{R}^3\setminus \{0\})$ are homogeneous of degree $\lambda$ and $\diff x^1, \dots, \diff x^6$ are the canonical parallel 1-forms over $\mathbb{R}^6$. Note that this behaves under (ordinary) pullback (as a differential form) as $\delta_r^* \alpha = r^{\lambda+1} \alpha$. Conversely, if a 1-form $\alpha \in \Omega^1(\mathbb{R}^6\setminus \{0\})$ satisfies $\delta_r^* \alpha = r^{\lambda+1} \alpha$ for every $r \in \mathbb{R}_{>0}$, then it is homogeneous of degree $\lambda$.

More generally, if $E$ is a conical vector bundle with corresponding isomorphisms $\tilde{\delta}_r$, then $a \in \Omega^k(\mathbb{R}^6\setminus \{0\},E)$ is homogeneous of degree $\lambda$ if and only if $\tilde{\delta}_r^*a = r^{\lambda + k} a$ for every $r \in \mathbb{R}_{>0}$. Here, \[(\tilde{\delta}_r^* a)(v_1,\dots,v_k) \coloneqq \tilde{\delta}_r^{-1}\big(a(D\delta_r v_1,\dots, D\delta_r v_k)\big). \]
\end{example}

\begin{remark}
With the notion of homogeneous sections, one can give the following equivalent definition of a conical differential operator: Express $L = \sum_{i=0}^{\ell} p_i(\nabla^i_{\mathbb{R}^6})$ where $p_i \in \Gamma(\Hom(\underbrace{T^*\mathbb{R}^6\otimes \dots \otimes T^*\mathbb{R}^6}_{\text{$i$ times}}\otimes E,E))$ and the connection $\nabla_{\mathbb{R}^6}$ is the combination of the Levi--Civita connection acting on $T^*\mathbb{R}^6$ and the connection $\nabla$ on $E$ belonging to the conically admissible triple $(E,h,\nabla)$. Then $L$ is conical if and only if each $p_i$ is homogeneous of degree $i-\ell$. That is, if and only if the full symbol $p \coloneqq \sum_{i=0}^\ell p_i$ satisfies $\tilde{\delta}_r^*p = r^{-\ell} p$ where the pullback is to be understood similarly to the previous example via \[(\tilde{\delta}_r^* p_i)(\alpha_1,\dots,\alpha_i) \coloneqq \tilde{\delta}_r^{-1} \circ p_j(\delta_{1/r}^*\alpha_1,\dots,\delta_{1/r}^*\alpha_i) \circ \tilde{\delta}_r \in \End(E) \quad \text{for $\alpha_1,\dots,\alpha_i \in T^*\mathbb{R}^6$.} \]
\end{remark}

\begin{definition}\label{app-def: critical rates and homogeneous kernel}
For a conical differential operator $L \col \Gamma(E) \to \Gamma(E)$ (acting on sections of a real bundle $E$) we define \[ \mathcal{D}(L) \coloneqq \{ \Re (\lambda_c) \in \mathbb{R} \mid \exists \textup{ non-trivial homogeneous $u \in \Gamma(E_{\mathbb{C}})$ of degree $\lambda_c \in \mathbb{C}$ with $Lu=0$} \}, \] where $E_\mathbb{C}$ denotes the complexification $E \otimes \mathbb{C}$. Moreover, for every $\lambda_c \in \mathbb{C}$ we set 
\begin{align*}
V_{\lambda_c} \coloneqq \big\{ u= \textstyle{\sum}_{j=0}^m \log(r)^j u_j \in \Gamma(E_\mathbb{C}) \ \big\vert \  &\textup{$Lu=0$ and each $u_j \in \Gamma(E_{\mathbb{C}})$ is homogeneous} \\
& \qquad \qquad \qquad \qquad \qquad \qquad \qquad \quad \textup{of degree $\lambda_c$}\big\}
\end{align*}
and finally, for $\lambda \in \mathcal{D}(L)$ we define
\begin{align*}
\mathcal{K}(L)_\lambda \coloneqq \big\{ u= \textstyle{\sum}_{j=1}^m (u_{(\lambda_c)_j}+\bar{u}_{(\lambda_c)_j}) \in \Gamma(E) \ \big\vert \  &\textup{where all $(\lambda_c)_j \in \mathbb{C}$ satisfy} \\
&\qquad \textup{$\Re((\lambda_c)_j)=\lambda$ and $u_{(\lambda_c)_j} \in V_{(\lambda_c)_j}$}\big\}.
\end{align*}
\end{definition}

For the operators considered in this article, the sets $\mathcal{D}(L)$ and $\mathcal{K}(L)_\lambda$ are of the following simplified form:

\begin{proposition}\label{prop: condition for operator to have real roots and no log-terms}
Let $(E,h,\nabla) = \pr_{S^5}^*(E_0,h_0,\nabla_0)$ be conically admissible (where $E$ is a real vector bundle) and let $L \col \Gamma(E) \to \Gamma(E)$ be a conical differential operator. Following \autoref{rem: concrete form of conical operators} we express $L$ as \[ L = \tfrac{1}{r^\ell} \sum_{i=0}^\ell  D_{\ell-i} (r \partial_r)^{i} \] where $D_{\ell-i} \col \Gamma(E_0) \to \Gamma(E_0)$ are ($r$-independent) differential operators of order at most $\ell-i$ over $S^5$. Assume there exists an $L^2$-orthogonal basis $\{u_1,u_2,\dots \}$ of $\Gamma((E_0)_\mathbb{C})$ such that for every $i=0,\dots,\ell$ we have \[ D_{\ell-i} u_j = \nu_{\ell-i,j} u_j \quad \textup{for some $\nu_{\ell-i,j} \in \mathbb{C}$}.\] Assume further that for every $j \in \mathbb{N}$ the polynomial $p_j(z) \coloneqq \sum_{i=0}^\ell \nu_{\ell-i,j} z^{i} $ has only real roots of order one. Then \[ \mathcal{D}(L) = \{ \lambda \in \mathbb{R} \mid \exists \textup{ non-trivial homogeneous $u \in \Gamma(E)$ of degree $\lambda$ with $Lu=0$} \}, \] and for every $\lambda \in \mathcal{D}(L)$
\begin{align*}
\mathcal{K}(L)_\lambda &= \big\{ u \in \Gamma(E) \ \big\vert \  \textup{$Lu=0$ and $u$ is homogeneous of degree $\lambda$}\big\}.
\end{align*}
\end{proposition}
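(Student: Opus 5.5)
We reduce to a separation-of-variables computation on the eigenbasis $\{u_j\}$. Any section $u \in \Gamma(E_\mathbb{C})$, after identifying $E$ with $\mathbb{R}_{>0}\times E_0$ via $\tilde{\delta}$, can be expanded as $u(r,\cdot) = \sum_j c_j(r) u_j$ with $c_j(r) = \langle u(r,\cdot),u_j\rangle_{L^2}/\Vert u_j\Vert^2$. Since each $D_{\ell-i}$ acts diagonally on the $u_j$ with eigenvalues $\nu_{\ell-i,j}$ (and $u_j$ lies in the domain), pairing $Lu$ with $u_j$ gives
\[ \langle Lu, u_j\rangle = r^{-\ell}\, p_j(r\partial_r)\, c_j(r)\,\Vert u_j\Vert^2 \]
provided the adjoint relation $\langle D_{\ell-i}w,u_j\rangle = \nu_{\ell-i,j}\langle w,u_j\rangle$ holds. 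This is the step to check first: diagonalisability alone gives the action on $u_j$, not on a general $w$. I would handle it by noting that the $u_j$ form an $L^2$-orthogonal basis of eigenvectors, so $D_{\ell-i}$ restricted to the span is diagonal, and then use that on smooth sections the coefficient map is continuous and $D_{\ell-i}$ is closable, so the expansion of $D_{\ell-i}w$ is $\sum \nu_{\ell-i,j}\langle w,u_j\rangle u_j/\Vert u_j\Vert^2$. If this fails in general, I would restrict to the concrete situation in the paper, where the $D$'s are built from a single formally self-adjoint operator on $S^5$.

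Next, for a homogeneous $u$ of degree $\lambda_c$, or more generally $u = \sum_{k=0}^m \log(r)^k u_k$ with each $u_k$ homogeneous of degree $\lambda_c$, the coefficients take the form $c_j(r) = r^{\lambda_c} q_j(\log r)$ with $q_j$ a polynomial of degree $\le m$. In the variable $t = \log r$, the equation $p_j(r\partial_r)c_j = 0$ becomes the constant-coefficient ODE $p_j(\partial_t)(e^{\lambda_c t} q_j(t)) = 0$. Its solutions of this form exist with $q_j \neq 0$ only if $\lambda_c$ is a root of $p_j$, and $\deg q_j$ is strictly less than the multiplicity of that root. Since every root is real and simple, $\lambda_c \in \mathbb{R}$ and $q_j$ is constant. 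Hence every element of $V_{\lambda_c}$ is zero unless $\lambda_c$ is real, and in that case it is $\log$-free, homogeneous, and a combination $r^{\lambda_c}\sum_{j:\,p_j(\lambda_c)=0} a_j u_j$. Convergence of this sum is automatic, since it is the expansion of a given smooth section.

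Finally, we pass to real sections. Since $L$ is real, if $u$ is a complex homogeneous kernel element of real degree $\lambda$, then so are $\Re u$ and $\Im u$, and at least one of them is non-trivial. This gives the description of $\mathcal{D}(L)$ using real sections. For $\mathcal{K}(L)_\lambda$, the only $\lambda_c$ with $\Re\lambda_c = \lambda$ and $V_{\lambda_c}\neq 0$ is $\lambda_c = \lambda$ itself. So $\mathcal{K}(L)_\lambda$ consists of sums $u+\bar u$ with $u \in V_\lambda$, which are exactly the real homogeneous degree-$\lambda$ kernel elements; the converse inclusion is clear by taking $u = \tfrac{1}{2}w$. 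The main obstacle is the first step, justifying the mode-by-mode decoupling for arbitrary smooth sections.
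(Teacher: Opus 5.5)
Your proposal is correct and follows the same route as the paper's proof. You expand in the eigenbasis $\{u_j\}$, reduce $Lu=0$ to the decoupled Euler equations $p_j(r\partial_r)c_j=0$, and use that simple real roots force real degrees and exclude $\log$-terms; you are in fact more careful than the paper about complex degrees, the spaces $V_{\lambda_c}$, and the passage to real sections.

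The decoupling step you flag, which the paper simply asserts, is easier than your closability argument suggests. Closability by itself does not deliver it, since you do not know a priori that $\sum_j \nu_{\ell-i,j}c_j u_j$ converges in $L^2$. Instead, completeness of $\{u_j\}$ together with
\[
\langle D_{\ell-i}^*u_k,u_j\rangle = \langle u_k, D_{\ell-i}u_j\rangle = \bar\nu_{\ell-i,j}\langle u_k,u_j\rangle
\]
gives $D_{\ell-i}^*u_k=\bar\nu_{\ell-i,k}u_k$. Hence $\langle D_{\ell-i}w,u_k\rangle=\nu_{\ell-i,k}\langle w,u_k\rangle$ for every smooth $w$, and no fallback to the concrete self-adjoint case is needed.
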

\begin{remark}
The previous proposition applies in particular to operators of the form $L = \partial_r + \tfrac{P}{r}$, where $P$ is a formally self-adjoint elliptic operator over $S^5$.
\end{remark}
This result is well-known and its proof is a simple abstraction of the ideas in \cite[Chapter~3]{Donaldson-FloerHomology}. For the convenience of the reader we give a short sketch:
\begin{proof}
We can express any $u \in \Gamma(\pr_{S^5}^*(E_0)_{\mathbb{C}})$ as $u=\sum_j a_j(r) u_j$. The equation $Lu=0$ is then equivalent to the following decoupled system of ordinary differential equations: \[ r^{-\ell}\big(\textstyle{\sum_{i=0}^\ell} \nu_{\ell-i,j} (r \partial_r)^{i} \big) a_j = 0 \quad \textup{for every $j\in \mathbb{N}$}. \] If the polynomial $p_j(z) = \sum_{i=0}^\ell \nu_{\ell-i,j} z^{i} $ has only real roots of order one, then $a_j(r)$ is a linear combination of $\{r^{\lambda_{j,1}},\dots, r^{\lambda_{j,\ell}}\}$ where $\lambda_{j,1},\dots,\lambda_{j,\ell} \in \mathbb{R}$ are the roots of $p_j$. This implies the proposition.
\end{proof}

Let $\pi_0 \col P_0 \to S^5$ be a principal $G$-bundle with connection $A_0 \in \mathcal{A}(P_0)$ satisfying~\eqref{equ: cone reduction of instanton equation}. \cite[Section~2.2.1]{Wang-spectrum_of_operator_for_instantons} (see also the proof of \autoref{prop: duality between homogeneous kernels}) shows that the instanton deformation operator $L_{\pr_{S^5}^*A_0}$ can be written as $\tfrac{1}{r}\underline{J}(r\partial_r - P)$, where $P$ is a formally self-adjoint elliptic operator over $S^5$. Similarly, it is well known that the rough Laplacian $\Delta_{\pr_{S^5}^*A_0} \coloneqq \diff_{\pr_{S^5}^*A_0}^* \diff_{\pr_{S^5}^*A_0}$ on $\mathfrak{g}_{\pr_{S^5}^*A_0} \cong \mathbb{R}_{>0}\times\mathfrak{g}_{P_0}$ can be written as \[\Delta_{\pr_{S^5}^*A_0} = \tfrac{1}{r^2} \big( -(r\partial_r)^2 - 4 (r\partial_r) +  \Delta_{A_0} \big),\] where $\Delta_{A_0}$ denotes the Laplacian over $S^5$. Since $\Delta_{A_0}$ is formally self-adjoint and positive, we immediately obtain the following: 
\begin{corollary}
The results of \autoref{prop: condition for operator to have real roots and no log-terms} apply to the operators $L \equiv L_{pr_{S^5}^*A_0}$ and $L \equiv \Delta_{\pr_{S^5}^*A_0}$.
\end{corollary}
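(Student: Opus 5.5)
The statement is the final Corollary: the results of \autoref{prop: condition for operator to have real roots and no log-terms} apply to $L_{\pr_{S^5}^*A_0}$ and $\Delta_{\pr_{S^5}^*A_0}$. For each operator the plan is to put it in the form $r^{-\ell}\sum_i D_{\ell-i}(r\partial_r)^i$ of \autoref{rem: concrete form of conical operators}. We then produce an $L^2$-orthogonal basis of simultaneous eigensections of the coefficient operators $D_{\ell-i}$, and check that every resulting indicial polynomial $p_j$ has only simple real roots.

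For the Laplacian we use the stated decomposition
\[\Delta_{\pr_{S^5}^*A_0}=r^{-2}\big(-(r\partial_r)^2-4(r\partial_r)+\Delta_{A_0}\big).\]
Here $D_0=-\mathrm{Id}$, $D_1=-4\,\mathrm{Id}$ and $D_2=\Delta_{A_0}$. Since $\Delta_{A_0}$ is a formally self-adjoint, nonnegative elliptic operator on the compact manifold $S^5$, spectral theory gives an $L^2$-orthonormal basis of eigensections $u_j$ with eigenvalues $\kappa_j\geq 0$, obtained by complexifying a real eigenbasis. These $u_j$ are trivially eigensections of the constant operators $D_0,D_1$. The indicial polynomial is $p_j(z)=-z^2-4z+\kappa_j$, with discriminant $16+4\kappa_j>0$. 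Its roots $-2\pm\sqrt{4+\kappa_j}$ are therefore real and distinct, which is all that is required.

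For $L_{\pr_{S^5}^*A_0}$ we use the form $\tfrac1r\underline{J}(r\partial_r-P)$ from the proof of \autoref{prop: duality between homogeneous kernels}, with $P$ formally self-adjoint and elliptic. The factor $\underline{J}$ is a pointwise invertible $r$-independent bundle map. It does not change the kernel, and it does not change the notion of homogeneity once the pullback identification is used. So we may replace $L$ by $\underline{J}^{-1}L=r^{-1}\big((r\partial_r)-P\big)$, which has $D_0=\mathrm{Id}$ and $D_1=-P$; equivalently we apply the proposition to $\underline{J}^{-1}L$, whose sets $\mathcal{D}$ and $\mathcal{K}$ coincide with those of $L$. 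Self-adjointness and ellipticity of $P$ on the compact manifold $S^5$ give a real discrete spectrum with an orthonormal eigenbasis $u_j$, $Pu_j=\nu_j u_j$. The indicial polynomial is then $p_j(z)=z-\nu_j$, whose single root $\nu_j$ is real and simple.

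The main obstacle is the bookkeeping around $\underline{J}$ and the rescaling isomorphism to the bundle $\mathfrak g\oplus\mathfrak g\oplus\mathfrak g\oplus\mathfrak g\oplus H^*\otimes\mathfrak g$ that is used to write $L$ in this form. We must check that this isomorphism, including its $1/r$ factors, maps sections homogeneous in the paper's sense to sections homogeneous of the same degree. Since the $1/r$ factors exactly compensate the scaling of $\diff r$, $r\theta$ and $r\,\cdot$ horizontal forms, I expect this to be a direct check. Once it holds, the proof of \autoref{prop: condition for operator to have real roots and no log-terms} (a mode-by-mode ODE decoupling) transfers verbatim.
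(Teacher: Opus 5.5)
Your proposal is correct and follows the same route as the paper. The paper also writes $L_{\pr_{S^5}^*A_0}=\tfrac1r\underline{J}(r\partial_r-P)$ and $\Delta_{\pr_{S^5}^*A_0}=r^{-2}\big(-(r\partial_r)^2-4(r\partial_r)+\Delta_{A_0}\big)$ and then relies on the self-adjointness of $P$ and the positivity of $\Delta_{A_0}$. Two things you supply are left implicit there: passing to $\underline{J}^{-1}L$, which is genuinely needed because $\underline{J}$ maps the $\lambda$-eigenspace of $P$ to the $(-5-\lambda)$-eigenspace and so does not act diagonally on an eigenbasis of $P$, and the discriminant check $16+4\kappa_j>0$.
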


\subsection{Conically singular operators}

Let now $(Z,g)$ be a Riemannian 6-manifold, $S \coloneqq \{s_1,\dots,s_N\} \subset Z$ be a finite set, and $\pi \col E \to Z\setminus S$ be a vector bundle together with an inner product $h$ and a metric connection $\nabla \col \Gamma(E) \to \Omega^1(Z\setminus S,E)$. 

\begin{remark}
As already noted in the previous section, the results discussed in this section also hold for general conically singular $n$-manifolds.
\end{remark}

\begin{definition}\label{def: framed cs bundle}
We call $(E,h,\nabla)$ framed conically singular, if for every $s \in S$ we have fixed the following data:
\begin{enumerate}
\item a coordinate system $\Upsilon_s \col B_R(0) \to Z$, where $B_R(0) \subset \mathbb{R}^6$ denotes the open ball, which satisfies $\Upsilon_s(0) = s$ and which pulls back the metric $\Upsilon_s^*g_s$ over $s$ to the standard flat metric $g_0$ on $\mathbb{R}^6$,
\item A conically admissible triple $(\pi_s \col E_s \to \mathbb{R}^6\setminus \{0\}, h_s, \nabla_s)$ in the sense of \autoref{def: conical bundle},
\item an isomorphism $\tilde{\Upsilon}_s \col E_s \to E$ covering $\Upsilon_s$ that satisfies \[ \big\vert \nabla_s^k (\tilde{\Upsilon}_s^* h-h_s) \big\vert_{h_s} = \mathcal{O}(r^{1-k}) \quad \text{and} \quad \big\vert \nabla_s^k (\tilde{\Upsilon}_s^* \nabla-\nabla_s ) \big\vert_{h_s} = \mathcal{O}(r^{-1+\varepsilon-k}) \] for every $k\in \mathbb{N}_0$, where $\varepsilon>0$ is fixed.
\end{enumerate}
Moreover, we call an isomorphism $\tilde{\Upsilon}_s$ as above a framing of $(E,h,\nabla)$ at $s \in S$.
\end{definition}

As in \autoref{def: weighted Hölder norms} we now define the following weighted Hölder spaces:

\begin{definition}
Let $k\in \mathbb{N}_0$, $\alpha \in (0,1)$ be a Hölder coefficient, and $\lambda = (\lambda_1,\dots,\lambda_N) \in \mathbb{R}^N$ be a fixed set of rates. Furthermore, let $\rho \col Z \setminus S \to (0,\infty)$ and $w_\lambda \col Z \to \mathbb{R}$ be the distance and rate functions of \autoref{def: C^infty_mu topology}. For $x,y\in Z\setminus S$ we set $\rho(x,y)\coloneqq \min\{\rho(x),\rho(y)\}$. For any $u \in C^{k,\alpha}_{\textup{loc}}(Z \setminus S,E)$ we define the following weighted Hölder (semi-) norms:
\begin{align*}
[u\WsH{0}{\lambda}{} &\coloneqq \sup_{2\diff(x,y) < \rho(x,y)} \rho(x,y)^{w_{\lambda-\alpha}(x)} \frac{\vert u (x)- u(y)\vert}{\textup{dist}(x,y)^\alpha} \\
\Vert u \VertWH{0}{\lambda}{} & \coloneqq \Vert \rho^{-w_{\lambda}} u \VertC{0}{} + [u\WsH{0}{\lambda}{}  \\
\Vert u \VertWH{k}{\lambda}{} &\coloneqq \sum_{i=0}^k  \Vert \nabla^i u \VertWH{0}{\lambda-i}{},
\end{align*} 
where $\lambda-i \coloneqq (\lambda_1-i,\dots,\lambda_N-i)$ and where all covariant derivatives are taken with respect to $\nabla$ and the Levi--Civita connection on $T^*Z$. To compare $u(x)$ and $u(y)$ which lie over different fibers, we use parallel transport over the shortest geodesic connecting $x$ and $y$. 

With these norms at hand, we now define $C^{k,\alpha}_\lambda(Z\setminus S, E)$ as the normed vector space consisting of all sections $u\in C^{k,\alpha}_{\textup{loc}}(Z \setminus S,E)$, for which $\Vert u \VertWH{k}{\lambda}{}$ is finite, equipped with the norm $\Vert \cdot \VertWH{k}{\lambda}{}$. Similarly, we define the weighted  $C^k_\lambda(Z\setminus S,E)$-space.
\end{definition}

\begin{remark}
Weighted $C^{k,\alpha}_\lambda$ and $C^k_\lambda$-norms for section of the conical bundle $E_s$ over $\mathbb{R}^6\setminus \{0\} \cong (0,\infty) \times S^5$ can be defined analogously. Our assumptions in \autoref{def: framed cs bundle} implies that all weighted $C^{k,\alpha}_\lambda$ and $C^k_\lambda$-norms over the truncated cone $(0,R)\times S^5$ taken with respect to $(h_s,\nabla_s)$ and $\tilde{\Upsilon}_s^*(h,\nabla)$ are equivalent.
\end{remark}

The following results are straight forward extensions of their respective counterparts for unweighted Hölder-spaces:

\begin{proposition}\label{app-prop: weighted Hölder spaces are Banach and compact embedding}
The normed vectorspaces $C^{k,\alpha}_\lambda (Z\setminus S,E)$ and $C^k_\lambda (Z\setminus S,E)$ are complete and therefore Banach. Moreover, for any $k,\ell \in \mathbb{N}$, $\alpha,\beta \in (0,1)$, and $\lambda,\nu \in \mathbb{R}^N$, with $k+\alpha > \ell + \beta$ and $\lambda_i> \nu_i$ for every $i=1,\dots,N$ the natural embedding $C^{k,\alpha}_\lambda \subset C^{\ell,\beta}_\nu$ is compact.
\end{proposition}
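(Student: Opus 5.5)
The plan is to reduce both assertions to their unweighted counterparts by localising with the weight function $\rho$. The two regions are the compact part $Z\setminus \cup_i \Upsilon_i(B_{R/2}(0))$ and the punctured balls around each $s_i$; on each punctured ball I will cover by dyadic annuli $\{2^{-j-1}R<r<2^{-j}R\}$.

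\textbf{Completeness.} Let $(u_n)$ be Cauchy in $C^{k,\alpha}_\lambda$.
\begin{itemize}
\item On every compact $K\subset Z\setminus S$ the weight $\rho^{-w_\lambda}$ is bounded above and below. Hence $(u_n)$ is Cauchy in the ordinary $C^{k,\alpha}(K)$, and it converges in $C^{k,\alpha}_{\textup{loc}}$ to some $u$.
\item The weighted quantities $\rho^{-w_{\lambda-i}}|\nabla^i(u_n-u_m)|$ and the weighted Hölder quotients are pointwise expressions in $u_n-u_m$. Letting $m\to\infty$ pointwise and then taking the supremum gives $\Vert u_n-u\VertWH{k}{\lambda}{}\le \liminf_m \Vert u_n-u_m\VertWH{k}{\lambda}{}$, which tends to zero.
\item In particular $u\in C^{k,\alpha}_\lambda$, so the space is complete. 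The $C^k_\lambda$ case is identical.
\end{itemize}

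\textbf{Compactness.} Let $(u_n)$ be bounded in $C^{k,\alpha}_\lambda$.
\begin{itemize}
\item Rescale each annulus at radius $\sim 2^{-j}$ to unit size via $\tilde{\delta}$ and the framing. The framing assumptions ensure the pulled-back metric and connection are uniformly $C^\infty$-close to the conical ones. The rescaled sections $2^{j\lambda_i}\tilde\delta^*u_n$ are then uniformly bounded in unweighted $C^{k,\alpha}$ of a fixed annulus.
\item By Arzelà--Ascoli, $C^{k,\alpha}\subset C^{\ell,\beta}$ is compact on each fixed compact region. A diagonal argument over an exhaustion of $Z\setminus S$ gives a subsequence converging in $C^{\ell,\beta}_{\textup{loc}}$ to some $u$, with $u\in C^{k,\alpha}_\lambda$ by lower semicontinuity.
\item For the tail, fix $\delta>0$. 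On $\{\rho<\delta\}$ use the boundedness in $C^{k,\alpha}_\lambda$. Each weighted $C^{\ell,\beta}_\nu$ seminorm of $u_n-u$ there is bounded by $C\delta^{\min_i(\lambda_i-\nu_i)}$, because $\rho^{-w_{\nu-i}}=\rho^{\lambda_i-\nu_i}\rho^{-w_{\lambda-i}}$.
\item The Hölder seminorm mixes points, but only pairs with $2d(x,y)<\rho(x,y)$ occur, so both points lie in comparable annuli. The same rescaling then bounds the $C^{\ell,\beta}$ difference quotients by the $C^{k,\alpha}$ data; for $\ell=k$ with $\beta<\alpha$ this uses interpolation.
\item Hence $\limsup_n \Vert u_n-u\Vert_{C^{\ell,\beta}_\nu}\le C\delta^{\min(\lambda_i-\nu_i)}$ plus the norm on $\{\rho\ge\delta\}$. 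The latter tends to zero by local convergence. Letting $\delta\to0$ proves compactness.
\end{itemize}

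\textbf{Main obstacle.} The delicate point is the uniform control of the rescaled Hölder seminorms across annuli. This needs the framing estimates of \autoref{def: framed cs bundle} to make the parallel transports along short geodesics comparable to the conical ones, uniformly in $j$. I would handle it by first proving everything for the exact conical model $(E_s,h_s,\nabla_s)$ and then invoking the norm equivalence noted in the preceding remark.
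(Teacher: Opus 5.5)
Your argument is correct and is the standard localisation-plus-Arzel\`a--Ascoli argument the paper has in mind when it calls this a straightforward extension of the unweighted results; the paper itself gives no further proof. The rescaling and framing estimates you flag as the main obstacle are not actually needed. The norms are defined intrinsically via $\nabla$ and geodesics of $g$, and admissible pairs satisfy $2d(x,y)<\rho(x,y)$. So the tail bound $C\delta^{\min_i(\lambda_i-\nu_i)}$ follows pointwise from the mean value inequality along the connecting geodesic, together with $d^{\alpha-\beta}\le\rho^{\alpha-\beta}$ when $\ell=k$.
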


\begin{definition}\label{def: cs differential operators}
Let $(\pi \col E \to Z\setminus S, h, \nabla)$ together with $\{(\pi_s \col E_s \to \mathbb{R}^6\setminus \{0\}, h_s, \nabla_s, \tilde{\Upsilon}_s)_{s\in S}\}$ be a framed conically singular bundle. A differential operator $L \col \Gamma(E) \to \Gamma(E)$ of order $\ell$ is called conically singular with asymptotic limits $\{(L_s \col \Gamma(E_s) \to \Gamma(E_s))_{s\in S}\}$ if for every $s\in S$ one of the following two equivalent conditions is satisfied:
\begin{enumerate}
\item Using the connection $\nabla_s$ and the Levi--Civita connection on $\mathbb{R}^6\setminus \{0\}$ we express the differential operators as $\tilde{\Upsilon}_s^*L= \sum_{i=0}^\ell p_i(\nabla_{\mathbb{R}^6}^i)$ and $L_s= \sum_{i=0}^\ell p_i^\infty(\nabla_{\mathbb{R}^6}^i)$, where $p_i,p_i^\infty \in \Gamma(\Hom(\underbrace{T^*\mathbb{R}^6\otimes \dots \times T^*\mathbb{R}^6}_{\text{$i$ times}}\otimes E_s,E_s))$. Then \[ \big\vert \nabla^k_{\mathbb{R}^6}(p_i - p_i^\infty) \big\vert = o(r^{i-\ell-k}) \quad \text{for every $k \in \mathbb{N}_0$.}\]
\item For any $k \in \mathbb{N}$ and $\alpha \in (0,1)$ we have \[ \Vert \tilde{\Upsilon}_s^*L - L_s \Vert_{k,\alpha,r} = o(1) \quad \text{as $r \to 0$} \] where \[ \Vert \tilde{\Upsilon}_s^*L - L_s \Vert_{k,\alpha,r} \coloneqq \sup_{\substack{u \in C^{k+\ell,\alpha}_0 \\ \Vert u \VertWH{k+\ell}{0}{}=1 \\ \textup{supp}(u) \subset (0,r)\times S^5 }} \Vert (\tilde{\Upsilon}_s^*L -L_s)u \VertWH{k}{-\ell}{}. \]
\end{enumerate}
Moreover, we call $L$ conically singular of rate $1+\mu\in \mathbb{R}^N$, where  $\mu_i>-1$ for all $i=1,\dots,N$, if any (and therefore all) of the previous conditions is satisfied with $\mathcal{O}(r^{\dots+(1+\mu_{s_i})})$ instead of $o(r^{\dots})$.\footnote{The appearance of the rate $1+\mu$ with $\mu_i>-1$ (instead of simply $\nu \in \mathbb{R}^N$ with $\nu_i>0$) in this definition is due to the following example.}
\end{definition}

\begin{example}
Let $\pi \col P \to Z \setminus S$ and $A \in \mathcal{A}(P)$ be framed conically singular of rate $\mu \in (-1,0)^{\vert S\vert}$ in the sense of \autoref{def: framed CS connections}. The instanton deformation operator $L_A$ of $A$ as defined prior to \autoref{prop: adjoint of model operator} is conically singular of rate $1+\mu$. 
\end{example}

From now on we assume that $L$ is an elliptic conically singular differential operator of order $\ell$ acting on a fixed framed conically singular bundle $E$, which is asymptotic to the conical differential operators $L_{s}$ for $s \in S$. Note that this implies that all the asymptotic limits $L_s$ are also elliptic. The following proposition follows from the (ordinary) interior Schauder estimate and scaling (see for example \cite[Lemma~12.1]{Pacard-lecture_notes_connected_sums} or \cite[Proposition~1.6]{Bartnik-mass_of_ALF} for proofs of similar statements):

\begin{proposition}\label{app-prop: weighted Schauder estimate}
Let $\lambda \in \mathbb{R}^N$ and $u \in C^0_\lambda(Z\setminus S,E)$ satisfy $Lu \in C^{k,\alpha}_{\lambda-\ell}(Z\setminus S,E)$. Then $u \in C^{k+\ell,\alpha}_\lambda(Z\setminus S,E)$ and there exists a $c>0$ (independent of $u$) such that \[ \Vert u \VertWH{k+\ell}{\lambda}{} \leq c \big( \Vert Lu \VertWH{k}{\lambda-\ell}{} + \Vert u \VertWC{0}{\lambda}{} \big).\]
\end{proposition}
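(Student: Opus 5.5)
The statement to prove is \autoref{app-prop: weighted Schauder estimate}: the weighted Schauder estimate for an elliptic conically singular operator $L$. I would prove it by covering $Z\setminus S$ with two kinds of regions and rescaling each to a fixed size. Away from $S$, i.e. on $Z\setminus \cup_i \Upsilon_i(B_{R/2}(0))$, the weight $\rho$ is bounded above and below. There the claim is just the ordinary interior Schauder estimate on a compact region; a slightly enlarged region serves to absorb the interior cutoff. Near each $s_i$, I would work on dyadic annuli $\mathcal{A}_r \coloneqq \Upsilon_i(\{r/2<\vert z\vert<2r\})$ together with the slightly larger annuli $\mathcal{A}^\prime_r \coloneqq \Upsilon_i(\{r/4<\vert z\vert<4r\})$, for $r<R/8$.

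On each annulus I pull back by $\tilde{\Upsilon}_i$ and rescale with the lifted dilation $\tilde{\delta}_r$. This identifies $\mathcal{A}^\prime_r$ with the fixed annulus $\{1/4<\vert z\vert<4\}$, and I set $u_r \coloneqq \tilde{\delta}_r^*\tilde{\Upsilon}_i^*u$. By the conical scaling $L_s\circ\tilde{\delta}_r^* = r^\ell\,\tilde{\delta}_r^*\circ L_s$ and the asymptotic condition of \autoref{def: cs differential operators}, the operators $L^{(r)} \coloneqq r^\ell\,\tilde{\delta}_r^*\circ \tilde{\Upsilon}_i^*L\circ(\tilde{\delta}_r^*)^{-1}$ converge in every $C^k$ norm of their coefficients to $L_{s_i}$ as $r\to0$. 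Their coefficients are therefore uniformly bounded in $C^{k,\alpha}$, and they are uniformly elliptic, because $L_{s_i}$ is elliptic and ellipticity is an open condition. Likewise, by \autoref{def: framed cs bundle}, the rescaled metrics and connections converge to the conical ones, so all Hölder norms on the fixed annulus are uniformly equivalent.

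The interior Schauder estimate on the fixed pair of annuli then gives, with a constant $c$ independent of $r$,
\[ \Vert u_r\Vert_{C^{k+\ell,\alpha}(\{1/2<\vert z\vert<2\})} \leq c\big(\Vert L^{(r)}u_r\Vert_{C^{k,\alpha}(\{1/4<\vert z\vert<4\})}+\Vert u_r\Vert_{C^0(\{1/4<\vert z\vert<4\})}\big). \]
The same local estimate also gives the regularity claim: $u\in C^0_\lambda$ with $Lu\in C^{k,\alpha}_{\mathrm{loc}}$ implies $u\in C^{k+\ell,\alpha}_{\mathrm{loc}}$. One bootstraps through $L^p$ or Hölder theory, using the fact that a continuous weak solution is regular.

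Next I unwind the scaling. One has $L^{(r)}u_r = r^\ell\,\tilde{\delta}_r^*(Lu)$, and unwinding gives the factor $r^{j}$ on the $j$-th derivative and $r^{j+\alpha}$ on its Hölder seminorm. Multiplying the estimate by $r^{-\lambda_i}$ therefore turns it into
\[ \Vert u\Vert_{C^{k+\ell,\alpha}_\lambda(\mathcal{A}_r)} \leq c\big(\Vert Lu\Vert_{C^{k,\alpha}_{\lambda-\ell}(\mathcal{A}^\prime_r)}+\Vert u\Vert_{C^0_\lambda(\mathcal{A}^\prime_r)}\big), \]
because $\rho\sim r$ on $\mathcal{A}^\prime_r$. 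Taking the supremum over $r$ and combining with the compact region yields the global estimate.

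The one point needing care is the Hölder seminorm $[\cdot\WsH{k}{\lambda}{}$. It is defined with the restriction $2\,\mathrm{dist}(x,y)<\rho(x,y)$, and that restriction guarantees that any admissible pair $x,y$ lies in a single enlarged annulus $\mathcal{A}^\prime_r$, so the local seminorm controls it. I expect this bookkeeping, together with checking the uniform convergence of the rescaled operators, to be the main (but routine) obstacle. The parallel-transport identifications used in the seminorm are comparable under rescaling, because the connections converge as well.
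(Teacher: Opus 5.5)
Your proposal is correct and follows the approach the paper has in mind. The paper gives no separate proof: it only says the estimate follows from the ordinary interior Schauder estimate and scaling, citing Pacard's Lemma~12.1 and Bartnik's Proposition~1.6. Your argument supplies exactly that, by rescaling annuli near each $s_i$ to a fixed annulus, on which the rescaled operators are uniformly elliptic with uniformly bounded coefficients.

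One piece of bookkeeping needs adjusting. As written, you say an admissible pair lies in the enlarged annulus $\mathcal{A}^\prime_r$, but your local $C^{k+\ell,\alpha}$ bound only holds on the inner annulus $\mathcal{A}_r$. Take the annulus centred at $r=\rho(x)$ rather than only dyadic ones; then any pair $x,y$ with $2\,\mathrm{dist}(x,y)<\rho(x,y)$ lies in the inner annulus, where the Hölder quotient is actually controlled.
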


\begin{corollary}\label{cor:kernel of cs-operator increases with weight}
The kernel $\ker(L \col C^{k+\ell,\alpha}_\lambda \to C^{k,\alpha}_{\lambda-\ell})$ is independent of $k\in \mathbb{N}_0$ and $\alpha\in(0,1)$. In the following we will therefore simply denote it by $\ker(L)_\lambda$. Moreover, $\ker(L)_\lambda \subset \ker(L)_\nu$ for any $\lambda,\nu \in \mathbb{R}^N$ with $\lambda_i \geq \nu_i$ for every $i\in \mathbb{N}$.
\end{corollary}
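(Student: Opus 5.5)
The first assertion (independence of $k$ and $\alpha$) follows from \autoref{app-prop: weighted Schauder estimate} by elliptic bootstrapping. Suppose $u \in C^{k+\ell,\alpha}_\lambda(Z\setminus S,E)$ satisfies $Lu=0$. Then in particular $u \in C^0_\lambda(Z\setminus S,E)$. Moreover, $Lu = 0$ lies in $C^{k',\alpha'}_{\lambda-\ell}$ for every $k'\in\mathbb{N}_0$ and every $\alpha' \in (0,1)$. Applying \autoref{app-prop: weighted Schauder estimate} with these $(k',\alpha')$ gives $u \in C^{k'+\ell,\alpha'}_\lambda$. Hence the kernel at $(k,\alpha)$ is contained in the kernel at every $(k',\alpha')$, and by symmetry the two kernels coincide. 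The only point to check is that the weighted Schauder estimate is applied with the same weight $\lambda$, so no change of rate occurs; this is immediate from its statement. We may therefore write $\ker(L)_\lambda$ unambiguously.

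For the monotonicity, suppose $\lambda_i \geq \nu_i$ for all $i$. I would show the continuous inclusion $C^0_\lambda \subset C^0_\nu$ and then apply the first part. The weight enters through $\rho^{-w_\lambda}$. Near $s_i$ we have $\rho \le R$, so $\rho^{-\nu_i} = \rho^{\lambda_i-\nu_i}\rho^{-\lambda_i} \le R^{\lambda_i-\nu_i}\rho^{-\lambda_i}$. Away from $S$ both weights equal $-1$, so the norms agree there. Thus $\Vert u\VertWC{0}{\nu}{} \le C\Vert u\VertWC{0}{\lambda}{}$.

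Now take $u \in \ker(L)_\lambda$. By the above, $u \in C^0_\nu$ and $Lu=0$, so \autoref{app-prop: weighted Schauder estimate} at weight $\nu$ yields $u \in C^{k+\ell,\alpha}_\nu$. Hence $u\in\ker(L)_\nu$. The derivative terms do not need to be compared directly, because the Schauder estimate regenerates them from the $C^0_\nu$ bound. The statement contains no real obstacle; the only mild care needed is the sign convention of the weight, namely that a larger rate means faster decay at $S$ and therefore a smaller space.
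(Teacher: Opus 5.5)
Your proof is correct and follows the paper's intended route: the corollary is stated as a direct consequence of the weighted Schauder estimate, which you use both to bootstrap in $(k,\alpha)$ at fixed weight and, after the elementary inclusion $C^0_\lambda \subset C^0_\nu$ for $\lambda_i \geq \nu_i$, to upgrade a kernel element to $C^{k+\ell,\alpha}_\nu$. Comparing only the $C^0$ weights, rather than the full weighted H\"older norms, is a sensible choice, since it spares you checking the weights in the H\"older seminorms separately.
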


We now define the set of critical rates of $L$ by \[\mathcal{D}(L) \coloneqq\big\{(\lambda_1,\dots,\lambda_N) \in \mathbb{R}^N \ \big\vert \ \lambda_i \in \mathcal{D}(L_{s_i}) \textup{ for some $i \in \{1,\dots,N\}$} \big\},\] where $\mathcal{D}(L_{s_i})$ is as in \autoref{app-def: critical rates and homogeneous kernel}. If the weight $\lambda$ does not lie in $\mathcal{D}(L)$, then the previous proposition can be strengthened as in \cite[Theorem~1.10]{Bartnik-mass_of_ALF} using \cite[Theorem~5.1]{Mazya-weighted_Lp_and_Hölder_estimates} (see also \cite[Proposition~12.2.1]{Pacard-lecture_notes_connected_sums}):

\begin{proposition}\label{app-prop: scalebroken Schauder estimate}
Let $\lambda \in \mathbb{R}^N\setminus \mathcal{D}(L)$ and $u \in C^{k+\ell,\alpha}_\lambda(Z\setminus S,E)$. Then there exists an $\varepsilon>0$ and an open $\varepsilon$-neighbourhood $B_\varepsilon(S)$ of $S$ such that \[ \Vert u \VertWH{k+\ell}{\lambda}{} \leq c \big( \Vert Lu \VertWH{k}{\lambda-\ell}{} + \Vert u \VertWC{0}{}{(Z\setminus B_{\varepsilon}(S))} \big).\]
\end{proposition}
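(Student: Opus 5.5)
The plan is to reduce the global estimate to a model problem on the cone near each singular point, using a contradiction argument that relies on the invertibility of the model operator at non-critical rates. By \autoref{app-prop: weighted Schauder estimate} it suffices to prove
\[ \Vert u \VertWC{0}{\lambda}{} \leq c\big(\Vert Lu \VertWH{k}{\lambda-\ell}{} + \Vert u \VertWC{0}{}{(Z\setminus B_\varepsilon(S))}\big), \]
since the higher weighted Hölder norms are then controlled by the right-hand side. Using a cutoff $\chi$ supported in $\cup_s \Upsilon_s(B_{2\varepsilon}(0))$ and equal to one on $B_\varepsilon(S)$, we split $u = \chi u + (1-\chi)u$. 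The piece $(1-\chi)u$ is supported away from $S$, so the unweighted Schauder estimate bounds it by $\Vert Lu\Vert + \Vert u\VertWC{0}{}{(Z\setminus B_{\varepsilon/2}(S))}$; the radius can be adjusted at the end. The commutator $[L,\chi]u$ is supported in an annulus where weights are irrelevant, so it too is controlled by $\Vert Lu\Vert$ plus an interior norm of $u$. We may therefore assume $u$ is supported in a single small ball $\Upsilon_s(B_{2\varepsilon}(0))$ and pull it back to the cone via $\tilde{\Upsilon}_s$.

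On the cone, the first step is a model estimate for $L_s$: for $\lambda_s\notin\mathcal{D}(L_s)$ and $v$ compactly supported in $B_{2\varepsilon}(0)\setminus\{0\}$ (or more generally $v\in C^{k+\ell,\alpha}_{\lambda_s}$ on the whole cone), we want
\[ \Vert v\VertWH{k+\ell}{\lambda_s}{}\leq c\,\Vert L_s v\VertWH{k}{\lambda_s-\ell}{}. \]
We intend to prove this by contradiction with rescaling. Suppose there are $v_n$ with $\Vert v_n\VertWC{0}{\lambda_s}{}=1$ and $\Vert L_s v_n\VertWH{k}{\lambda_s-\ell}{}\to0$. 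Choose points $x_n$ where $r^{-\lambda_s}|v_n|\geq 1/2$ and rescale by $\tilde\delta_{|x_n|}$ multiplied by $|x_n|^{-\lambda_s}$. Both the weighted norms and the equation $L_s$ are invariant under this dilation. \autoref{app-prop: weighted Schauder estimate} supplies uniform $C^{k+\ell,\alpha}_{\mathrm{loc}}$ bounds on the rescaled sections, so Arzelà--Ascoli yields a limit $w$ on $\mathbb{R}^6\setminus\{0\}$ with $L_s w=0$, $|w|\leq r^{\lambda_s}$, and $w\neq0$ on the unit sphere.

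Expanding $w$ in the eigenbasis as in the proof of \autoref{prop: condition for operator to have real roots and no log-terms}, each coefficient is a combination of $r^{\lambda_c}\log(r)^j$. A bound $O(r^{\lambda_s})$ at both $0$ and $\infty$ forces every exponent to have real part exactly $\lambda_s$, and this is impossible because $\lambda_s\notin\mathcal{D}(L_s)$. Hence $w=0$, which is a contradiction. This Liouville-type step, in particular making the separation of variables rigorous in Hölder spaces, is where we expect the main difficulty; at this point we would invoke \cite[Theorem~5.1]{Mazya-weighted_Lp_and_Hölder_estimates}.

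Finally, we pass from $L_s$ to $L$. By \autoref{def: cs differential operators}, $\Vert\tilde\Upsilon_s^*L-L_s\Vert_{k,\alpha,2\varepsilon}=o(1)$, so for $\varepsilon$ small the perturbation term $\Vert(\tilde\Upsilon_s^*L-L_s)v\VertWH{k}{\lambda_s-\ell}{}$ is absorbed into the left-hand side of the model estimate. Reassembling the cutoff pieces then gives the claimed inequality.
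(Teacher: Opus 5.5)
Your argument is correct and follows essentially the route the paper indicates by citation (Bartnik's Theorem~1.10, with Maz'ya--Plamenevskii's Theorem~5.1 supplying the model estimate): cut off near $S$, apply the estimate $\Vert v\Vert_{C^{k+\ell,\alpha}_{\lambda_s}}\leq c\,\Vert L_s v\Vert_{C^{k,\alpha}_{\lambda_s-\ell}}$ for the conical model at the non-critical weight, and absorb $\tilde{\Upsilon}_s^*L-L_s$ by taking $\varepsilon$ small. Your blow-up/Liouville derivation of the model estimate is a standard substitute for quoting Maz'ya directly, but, as you note, for a general $L_s$ without a decoupling eigenbasis the Liouville step still relies on that reference's Mellin-transform theory.
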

 
This can now be used as in \cite[Section~2 and Section~6]{LockhardMcOwen-ellipticOperators_on_noncompact_mfds} and \cite[Theorem~1.10]{Bartnik-mass_of_ALF} to prove\footnote{In order to show that the cokernel is finite-dimensional, one can, for example, embed the weighted Hölder spaces into weighted Sobolev spaces (of slightly decreased weight) and then use the Fredholm property of $L$ as a map between these Sobolev spaces (as proven in \cite[Theorem~6.1]{LockhardMcOwen-ellipticOperators_on_noncompact_mfds}).}

\begin{proposition}[{\cite[Proposition~2.4]{HaskinsHeinNordstroem--ACylCalabiYaus}}]\label{app-prop: Fredholm away from critical rates}
If $\lambda \in \mathbb{R}^N \setminus \mathcal{D}(L)$, then $L \col C^{k+\ell,\alpha}_\lambda(Z\setminus S,E) \to C^{k,\alpha}_{\lambda-\ell}(Z\setminus S,E)$ is Fredholm for every $k\in \mathbb{N}_0$ and $\alpha \in (0,1)$.
\end{proposition}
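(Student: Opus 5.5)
The plan is to reduce to the classical Lockhart--McOwen Fredholm theory for translation-invariant operators on cylinders. Away from a neighbourhood of $S$, the operator $L$ is an ordinary elliptic operator on a compact region. Near each $s_i$ I pass to the cylindrical picture $t=\log r$, $(t,\sigma)\in(-\infty,\log R)\times S^5$ and multiply by $r^\ell=\e^{\ell t}$. Then the asymptotic limit $r^\ell L_{s_i}$ becomes translation-invariant, and $r^\ell\tilde{\Upsilon}_{s_i}^*L$ differs from it by an operator whose coefficients are $o(1)$ as $t\to-\infty$; this is condition 2 of \autoref{def: cs differential operators}. Under this change of variables, the weighted space $C^{k,\alpha}_\lambda$ corresponds to an exponentially weighted Hölder space $\e^{\lambda t}C^{k,\alpha}$ on the cylinder, with uniform (translation-invariant) Hölder norms. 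Under the identification, $\mathcal{D}(L_{s_i})$ is exactly the set of real parts of the indicial roots.

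First, \emph{closed range and finite kernel}. The scale-broken estimate \autoref{app-prop: scalebroken Schauder estimate}, valid because $\lambda\notin\mathcal{D}(L)$, gives $\Vert u\VertWH{k+\ell}{\lambda}{}\le c(\Vert Lu\VertWH{k}{\lambda-\ell}{}+\Vert u\VertWC{0}{}{(Z\setminus B_\varepsilon(S))})$. The restriction map $C^{k+\ell,\alpha}_\lambda\to C^0(Z\setminus B_\varepsilon(S))$ is compact by Arzelà--Ascoli, or by \autoref{app-prop: weighted Hölder spaces are Banach and compact embedding}. The standard Peetre-type argument then yields that $\ker L$ is finite dimensional and that the range is closed. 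Any failure of this argument would be located in the scale-broken estimate itself, which I take as given from the earlier proposition.

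Second, and this is the main obstacle, \emph{finite cokernel}. Here Hölder spaces are awkward for duality, so I embed into weighted Sobolev spaces, as the footnote suggests. The steps are as follows.
\begin{enumerate}
\item Choose $\delta>0$ so small that $[\lambda_i-\delta,\lambda_i]\cap\mathcal{D}(L_{s_i})=\emptyset$ for all $i$. This is possible because $\mathcal{D}(L_{s_i})$ is discrete, being the real parts of the eigenvalues of a polynomial pencil of elliptic operators on compact $S^5$.
\item Note the continuous inclusion $C^{k,\alpha}_{\lambda-\ell}\subset W^{k,2}_{\lambda-\ell-\delta}$. The weight loss $\delta$ makes the $L^2$-integrals in $\log r$ converge.
\item Invoke Lockhart--McOwen [Theorem~6.1]: since $\lambda-\delta\notin\mathcal{D}(L)$, the map $L\colon W^{k+\ell,2}_{\lambda-\delta}\to W^{k,2}_{\lambda-\ell-\delta}$ is Fredholm. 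Hence there is a finite-dimensional space $F\subset C^\infty_c(Z\setminus S,E)$, which by density can be taken smooth and compactly supported, with $W^{k,2}_{\lambda-\ell-\delta}=\image L+F$.
\item Given $f\in C^{k,\alpha}_{\lambda-\ell}$, write $f=Lu+\phi$ with $u\in W^{k+\ell,2}_{\lambda-\delta}$ and $\phi\in F$. Then $Lu=f-\phi\in C^{k,\alpha}_{\lambda-\ell}$, and it remains to show $u\in C^{k+\ell,\alpha}_\lambda$.
\end{enumerate}

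For the last step I would use local elliptic regularity on dyadic annuli, which \autoref{app-prop: weighted Schauder estimate} supplies after a Sobolev embedding, to get $u\in C^{k+\ell,\alpha}_{\lambda-\delta'}$. I would then improve the rate from $\lambda-\delta'$ to $\lambda$ by the standard decay argument: on the cylinder, invert the translation-invariant model operator on the strip of weights $[\lambda-\delta,\lambda]$. This inversion is possible because no indicial root lies in the strip, via the Fourier/Laplace transform in $t$ and the Mellin representation with the contour shifted across a root-free strip. The $o(1)$ perturbation is absorbed by restricting to small $r$. Combining, $\image(L|_{C^{k+\ell,\alpha}_\lambda})+F\supset C^{k,\alpha}_{\lambda-\ell}$, so the cokernel is finite dimensional and $L$ is Fredholm for all $k,\alpha$.
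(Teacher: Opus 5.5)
Your proposal is correct and takes the same route as the paper. The scale-broken Schauder estimate, together with the compact restriction to $Z\setminus B_\varepsilon(S)$, gives a finite-dimensional kernel and closed range, and finiteness of the cokernel comes from embedding $C^{k,\alpha}_{\lambda-\ell}$ into a weighted Sobolev space of slightly smaller weight and invoking the Lockhart--McOwen Fredholm theorem, exactly as in the paper's footnote. You also spell out the return step the paper leaves to its references (local regularity, then decay improvement across the root-free strip $[\lambda_i-\delta,\lambda_i]$). One small correction: for low $k+\ell$ that local regularity should come from interior elliptic estimates with an $L^2$ lower-order term on unit cylinder pieces, since a Sobolev embedding into $C^0$ is not available there.
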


The following proposition relates the kernels, cokernels, and the Fredholm indices of the operator $L \col C^{k+\ell,\alpha}_\lambda(Z\setminus S,E) \to C^{k,\alpha}_{\lambda-\ell}(Z\setminus S,E)$ for different uncritical rates $\lambda$. It is the analogue of \cite[Theorem~6.5 and Lemma~7.1]{LockhardMcOwen-ellipticOperators_on_noncompact_mfds}  (or \cite[Proposition~1.14]{Bartnik-mass_of_ALF}) for weighted Hölder spaces and can be proven as for weighted Sobolev spaces.

\begin{proposition}\label{app-prop: ker and coker are locally constrant + index change formula}
The functions 
\begin{align*}
\dim \ker(L) \col \mathbb{R}^N\setminus \mathcal{D}(L) &\to \mathbb{N}_0\\
\dim \coker(L) \col \mathbb{R}^N\setminus \mathcal{D}(L) &\to \mathbb{N}_0 \\
\textup{index}(L)\col \mathbb{R}^N\setminus \mathcal{D}(L) &\to \mathbb{Z}
\end{align*}
that assign to each weight $\lambda$ the respective dimensions of the kernel, the cokernel, and the index $\textup{index}(L)_\lambda \coloneqq \dim \ker(L)_\lambda - \dim \coker(L)_\lambda$ of $L\col C^{k+\ell,\alpha}(Z\setminus S,E)_\lambda \to C^{k,\alpha}_{\lambda-\ell}(Z\setminus S,E)$ are locally constant (and independent of $k\in \mathbb{N}_0$ and $\alpha\in (0,1)$). Moreover, if $\lambda,\nu\in \mathbb{R}^N\setminus \mathcal{D}(L)$ are such that $\lambda_i \geq \nu_i$, then \[\textup{index}(L)_\nu - \textup{index}(L)_\lambda = \sum_{i=1}^N \sum_{\tilde{\nu}_i \in \mathcal{D}(L_{s_i})\cap(\nu_i,\lambda_i)} \hspace{-20pt} \dim \mathcal{K}(L_{s_i})_{\tilde{\nu}_i}\] with $\mathcal{K}(L_{s_i})_{\tilde{\nu}_i}$ as in \autoref{app-def: critical rates and homogeneous kernel}.
\end{proposition}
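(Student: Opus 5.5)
The proof reduces, rate by rate, to separation of variables near each singular point, the local constancy of the Fredholm data already established, and a kernel/cokernel duality for formally self-adjoint operators. The last statement is \autoref{app-prop: ker and coker are locally constrant + index change formula}: local constancy of $\dim\ker$, $\dim\coker$ and index on $\mathbb{R}^N\setminus\mathcal{D}(L)$, together with the jump formula. I would follow the Lockhart--McOwen/Bartnik strategy, transplanted from weighted Sobolev to weighted Hölder spaces.

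First, I would establish a localisation. Since $\lambda$ varies per singular point and the definitions of all objects are local near each $s_i$, it suffices to move one coordinate $\lambda_i$ at a time while keeping the others fixed, and to work in a neighbourhood $\Upsilon_i(B_R(0))$. Independence of $k$ and $\alpha$ follows from the weighted Schauder estimate \autoref{app-prop: weighted Schauder estimate}. By that estimate, kernels are independent of $(k,\alpha)$ (\autoref{cor:kernel of cs-operator increases with weight}). For cokernels I would identify $\coker(L)_\lambda$ with $\ker(L^*)_{-n+\ell-\lambda}$ via the $L^2$-pairing, which is the duality proved separately as \autoref{app-prop: cokernel kernel pairing}. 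Alternatively, I would note that a cokernel complement can be chosen from smooth compactly supported sections.

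Second, local constancy on a connected component of non-critical rates. For $\nu<\lambda$ both in an interval of $\mathbb{R}\setminus\mathcal{D}(L_{s_i})$, I would show $\ker(L)_\nu=\ker(L)_\lambda$. Given $u\in\ker(L)_\nu$, I would cut off near $s_i$ and write $\tilde\Upsilon_i^* u$ as the solution of $L_{s_i}u = (L_{s_i}-\tilde\Upsilon_i^*L)u$. The right-hand side lies at rate $\nu-\ell+(1+\mu)$, which is strictly better than $\nu-\ell$. I would then invert $L_{s_i}$ on the model cone at non-critical rates, using the separation of variables of \autoref{prop: condition for operator to have real roots and no log-terms}: the ODE in $r$ for each eigenmode has only the explicit power solutions. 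This improves the decay of $u$ until the next critical rate, and iterating gives rate $\lambda$. Applying the same argument to $L^*$ gives constancy of the cokernel, and hence of the index.

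Third, the jump formula. The central step is to cross a single critical rate $\tilde\nu\in(\nu_i,\lambda_i)$. I would compare $L\colon C^{k+\ell,\alpha}_\lambda\to C^{k,\alpha}_{\lambda-\ell}$ with $L\colon C^{k+\ell,\alpha}_\nu\to C^{k,\alpha}_{\nu-\ell}$ by constructing the finite-dimensional extension
\[
C^{k+\ell,\alpha}_\lambda\oplus \chi\,\tilde\Upsilon_{i*}\mathcal{K}(L_{s_i})_{\tilde\nu}\to C^{k,\alpha}_{\nu-\ell},
\]
where $\chi$ is a cutoff. Improving decay as above, but now across $\tilde\nu$, shows that every $u$ with $Lu\in C^{k,\alpha}_{\lambda-\ell}$ and $u\in C_\nu$ splits as a $C_\lambda$ part plus a homogeneous element of $\mathcal{K}(L_{s_i})_{\tilde\nu}$. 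This is the expansion result of \autoref{app-prop: cs operators asymptotic expansions of kernel elements}, which is proved by Mellin transform or by the same ODE decomposition with residues picking out the critical mode. Hence $C_\nu$-solutions are exactly this extension, and a short exact sequence argument, keeping track of the target spaces $C_{\lambda-\ell}\subset C_{\nu-\ell}$ and the fact that this inclusion has range whose cokernel matches, gives the index difference $\dim\mathcal{K}(L_{s_i})_{\tilde\nu}$. I expect the main obstacle to be the rigorous decay-improvement across the critical rate in Hölder spaces. Logarithmic terms and complex rates force the use of the full $V_{\lambda_c}$ spaces, which is why $\mathcal{K}$ is defined with logs, and the weighted Hölder inversion of $L_{s_i}$ on the cone at non-critical rates must be justified. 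For that inversion I would first pass to weighted Sobolev spaces, where Lockhart--McOwen applies directly, and then return to Hölder spaces by the scale-broken estimate \autoref{app-prop: scalebroken Schauder estimate}.
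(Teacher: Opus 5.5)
Your overall route is the Lockhart--McOwen/Bartnik argument that the paper simply cites. As written, however, it only proves the proposition for operators that are conically singular \emph{of some rate} $1+\mu$, and the statement does not assume this. The appendix only requires the coefficients of $\tilde\Upsilon_{s_i}^*L$ to approach those of $L_{s_i}$ at $o(r^{i-\ell-k})$ (\autoref{def: cs differential operators}), and it introduces the rate only for the crossing results stated afterwards. Both of your main steps depend on the rate:
In the kernel-constancy step, $(L_{s_i}-\tilde\Upsilon_i^*L)(\chi u)$ is in general only $o(r^{\nu_i-\ell})$, so the iteration gains no definite power.
In the jump step, you split $\mathcal{F}_\nu\coloneqq\{u\in C^{k+\ell,\alpha}_\nu \mid Lu\in C^{k,\alpha}_{\lambda-\ell}\}$ into $C^{k+\ell,\alpha}_\lambda$ plus a leading term in $\mathcal{K}(L_{s_i})_{\tilde\nu}$ (\autoref{app-prop: decay improvement CS operators when crossing rate}, \autoref{app-prop: cs operators asymptotic expansions of kernel elements}). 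Without a rate this splitting fails. Take the model $r\partial_r-1/\log(1/r)$, an $o(1)$-perturbation of $r\partial_r$ with indicial root $0$. Its solution $1/\log(1/r)$ lies in every $C_\nu$ with $\nu<0$, in no $C_\lambda$ with $\lambda>0$, and has no homogeneous leading term.
For the operators the paper actually feeds into this proposition, $L_A$ and $\Delta_A$, the rate $1+\mu$ does hold, so there your argument suffices.

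The missing idea for the general case is a perturbation reduction. The operator $L'\coloneqq L+\chi_\delta(L_{s_i}-L)$ equals $L_{s_i}$ near $s_i$. By \autoref{def: cs differential operators}, $L'-L$ has norm $o(1)$ as $\delta\to0$ from $C^{k+\ell,\alpha}_{\lambda'}$ to $C^{k,\alpha}_{\lambda'-\ell}$, uniformly for $\lambda'$ in compact sets. Hence $L$ and $L'$ have the same index at $\nu$ and at $\lambda$, and for $L'$ your separation-of-variables/Mellin argument runs with no error terms. Local constancy of the index follows likewise by conjugating with powers of a smoothed $\rho$. Since $\dim\ker$ is not stable under perturbation, its constancy needs a separate argument. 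For example, combine $\ker(L)_\lambda\subset\ker(L)_\nu$, the inequality $\dim\coker(L)_\nu\le\dim\coker(L)_\lambda$ (from the identity below), and equality of the indices.

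There are also smaller points in the jump step.
The extended operator on $C^{k+\ell,\alpha}_\lambda\oplus\chi\,\tilde\Upsilon_{i*}\mathcal{K}(L_{s_i})_{\tilde\nu}$ must take values in $C^{k,\alpha}_{\lambda-\ell}$, not $C^{k,\alpha}_{\nu-\ell}$. Into the larger space the image of $C_\lambda$ has infinite codimension, so the map you wrote is not Fredholm.
With the correct target, $L\colon\mathcal{F}_\nu\to C^{k,\alpha}_{\lambda-\ell}$ has the same kernel as $L\colon C^{k+\ell,\alpha}_\nu\to C^{k,\alpha}_{\nu-\ell}$. It has the same cokernel precisely when $L(C^{k+\ell,\alpha}_\nu)+C^{k,\alpha}_{\lambda-\ell}=C^{k,\alpha}_{\nu-\ell}$. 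This identity is the actual content of your remark that the cokernels ``match'', and it must be proved. One way is to solve $L_{s_i}w=\chi f$ (or its small perturbation) on the cone at the non-critical rate $\nu_i$ and check that $f-L(\chi'w)\in C^{k,\alpha}_{\lambda-\ell}$.
In the rated case you should first shrink $[\nu_i,\lambda_i]$ so that $\lambda_i-\nu_i<1+\mu_i$. Only then is $\chi\kappa\in\mathcal{F}_\nu$ for every $\kappa\in\mathcal{K}(L_{s_i})_{\tilde\nu}$ without correction terms. That gives $\mathcal{F}_\nu/C^{k+\ell,\alpha}_\lambda\cong\mathcal{K}(L_{s_i})_{\tilde\nu}$ and hence the jump $\dim\mathcal{K}(L_{s_i})_{\tilde\nu}$.
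Finally, \autoref{prop: condition for operator to have real roots and no log-terms} does not cover a general conical $L_{s_i}$. The model inversion therefore has to come from the Mellin/Lockhart--McOwen theory, as you anticipate at the end.
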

\begin{remark}
From the formula of the index-change given in the previous proposition for $\lambda,\nu \in \mathbb{R}^N \setminus \mathcal{D}(L)$ with $\lambda_i \geq \nu_i$ for every $i=1,\dots,N$, one can easily deduce the formula for two general $\lambda,\nu \in \mathbb{R}^N\setminus \mathcal{D}(L)$
\end{remark}

Together with \autoref{cor:kernel of cs-operator increases with weight} this implies:

\begin{corollary}\label{app-cor: kernel is locally constant}
Let $\lambda,\nu \in \mathbb{R}^N \setminus \mathcal{D}(L)$ be such that for every $i=1,\dots,N$ we have $\lambda_i \geq \nu_i$ and $[\nu_i,\lambda_i]\cap \mathcal{D}(L_{s_i}) = \emptyset$. Then \[ \ker(L)_\lambda = \ker(L)_\nu.\]
\end{corollary}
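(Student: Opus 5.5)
The statement to prove is \autoref{app-cor: kernel is locally constant}: if $\lambda_i\ge\nu_i$ and $[\nu_i,\lambda_i]$ avoids $\mathcal{D}(L_{s_i})$ for every $i$, then $\ker(L)_\lambda=\ker(L)_\nu$. The plan is to get one inclusion for free, then obtain equality by matching dimensions.

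First, \autoref{cor:kernel of cs-operator increases with weight} gives the inclusion $\ker(L)_\lambda\subset\ker(L)_\nu$, since $\lambda_i\ge\nu_i$. So it suffices to show $\dim\ker(L)_\lambda=\dim\ker(L)_\nu$. Both dimensions are finite by \autoref{app-prop: Fredholm away from critical rates}, because $\lambda,\nu\notin\mathcal{D}(L)$.

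Next, I would compare indices. The index change formula in \autoref{app-prop: ker and coker are locally constrant + index change formula} gives
\[
\textup{index}(L)_\nu-\textup{index}(L)_\lambda=\sum_i\sum_{\tilde\nu_i\in\mathcal{D}(L_{s_i})\cap(\nu_i,\lambda_i)}\dim\mathcal{K}(L_{s_i})_{\tilde\nu_i}=0,
\]
since each sum is empty by hypothesis. Equal indices alone do not give equal kernels, so the argument also needs the cokernels to match.

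For the cokernels I would use the local constancy in \autoref{app-prop: ker and coker are locally constrant + index change formula}. The straight segment $t\mapsto \nu+t(\lambda-\nu)$, $t\in[0,1]$, stays inside $\mathbb{R}^N\setminus\mathcal{D}(L)$: each coordinate lies in $[\nu_i,\lambda_i]$, which avoids $\mathcal{D}(L_{s_i})$, and a point is critical only if some coordinate is critical. The segment is connected and $\dim\ker(L)$ is a locally constant integer-valued function on $\mathbb{R}^N\setminus\mathcal{D}(L)$, so it is constant along the segment. Hence $\dim\ker(L)_\lambda=\dim\ker(L)_\nu$. Combined with the inclusion, this gives equality, and in fact makes the index computation unnecessary.

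The only real obstacle is the segment argument: one has to check that no intermediate rate becomes critical. This holds coordinatewise by the hypothesis $[\nu_i,\lambda_i]\cap\mathcal{D}(L_{s_i})=\emptyset$ and the definition of $\mathcal{D}(L)$ as the set of tuples with at least one critical coordinate.
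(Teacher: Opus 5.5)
Your proposal is correct and is essentially the paper's argument. The paper likewise combines the inclusion $\ker(L)_\lambda\subset\ker(L)_\nu$ from \autoref{cor:kernel of cs-operator increases with weight} with the local constancy of $\dim\ker(L)$ on $\mathbb{R}^N\setminus\mathcal{D}(L)$ from \autoref{app-prop: ker and coker are locally constrant + index change formula}; your segment argument, which checks coordinatewise that the path from $\nu$ to $\lambda$ avoids $\mathcal{D}(L)$, and your remark that the index computation is superfluous are both right.
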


The following can be deduced by embedding $C^{k,\alpha}_\lambda$ into a weighted Sobolev space (of slightly decreased weight) and then using \cite[Theorem~4.25]{Marshal-deformations_special_Lagrangians} together with (a slightly strengthened version of) \autoref{app-prop: scalebroken Schauder estimate} (cf. \cite[Proposition~12.2.1]{Pacard-lecture_notes_connected_sums}).

\begin{proposition}\label{app-prop: cokernel kernel pairing}
Let $\lambda \in \mathbb{R}^N\setminus \mathcal{D}(L)$. Since, $\ker(L^*)_{-6-(\lambda-\ell)}=\ker(L^*)_{-6-(\lambda-\ell)+\varepsilon}$ for any sufficiently small $\varepsilon>0$, the natural $L^2$-pairing induces a well-defined map \[ \int \col C^{k,\alpha}_{\lambda-\ell} \otimes \ker(L^*)_{-6-(\lambda-\ell)} \to \mathbb{R}. \] This pairing is non-degenerate on the right and induces therefore a surjective map $C^{k,\alpha}_{\lambda-\ell} \to (\ker(L^*)_{-6-(\lambda-\ell)})^*$. The kernel of the latter map is precisely $\textup{image}(L \col C^{k+\ell,\alpha}_\lambda \to C^{k,\alpha}_{\lambda-\ell})$ and therefore $\coker(L)_\lambda \cong (\ker(L^*)_{-6-(\lambda-\ell)})^*$.
\end{proposition}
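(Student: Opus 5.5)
This is the standard duality between the cokernel at rate $\lambda$ and the kernel of the adjoint at the dual rate, transplanted from weighted Sobolev spaces to weighted H\"older spaces. I would prove it by reducing to the Sobolev theory.

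\textbf{Step 1: the pairing is well defined.} Fix $\lambda\notin\mathcal{D}(L)$ and take $v\in\ker(L^*)_{-6-(\lambda-\ell)}$. The critical rates of $L^*_{s}$ are reflections of those of $L_s$ about $-(6-\ell)/2$, so $-6-(\lambda-\ell)$ is non-critical for $L^*$. By \autoref{app-cor: kernel is locally constant} we may therefore improve $v$ to rate $-6-(\lambda-\ell)+\varepsilon$. For $f\in C^{k,\alpha}_{\lambda-\ell}$ we then have $|\langle f,v\rangle|=\mathcal{O}(r^{-6+\varepsilon})$ near each $s_i$. This is integrable in dimension $6$, so $\int\langle f,v\rangle$ is finite.

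\textbf{Step 2: the image lies in the kernel of the pairing.} Take $u\in C^{k+\ell,\alpha}_\lambda$ and compute $\int\langle Lu,v\rangle$ on $Z\setminus\bigcup B_r(s_i)$. Integrating by parts produces boundary terms on $S^5_r$. Each boundary term is bounded by $r^5\cdot r^{\lambda-j}\cdot r^{-6-\lambda+\ell+\varepsilon-(\ell-1-j)}=\mathcal{O}(r^{\varepsilon})$. Letting $r\to0$ gives $\int\langle Lu,v\rangle=\int\langle u,L^*v\rangle=0$. Hence $\textup{image}(L)$ is contained in the kernel of the induced map $C^{k,\alpha}_{\lambda-\ell}\to(\ker L^*)^*$.

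\textbf{Step 3: non-degeneracy on the right.} If $\int\langle f,v\rangle=0$ for all $f$, take $f$ compactly supported in $Z\setminus S$ to conclude $v=0$. The induced map $C^{k,\alpha}_{\lambda-\ell}\to(\ker L^*)^*$ is then surjective, since $\ker L^*$ is finite dimensional and compactly supported sections already separate its points.

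\textbf{Step 4 (main obstacle): the image is exactly the kernel.} Let $f\in C^{k,\alpha}_{\lambda-\ell}$ annihilate $\ker(L^*)_{-6-(\lambda-\ell)}$. Embed $C^{k,\alpha}_{\lambda-\ell}$ into a weighted Sobolev space $W^{k,2}_{\lambda-\ell-\delta}$ with $\delta>0$ small. Choose $\delta$ so that the interval $[\lambda-\delta,\lambda]$ contains no critical rates. The Lockhart--McOwen / Marshall duality (\cite[Theorem~4.25]{Marshal-deformations_special_Lagrangians}) identifies the Sobolev cokernel at weight $\lambda-\delta$ with $\ker L^*$ at the dual weight. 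By Step 1, this kernel coincides with $\ker(L^*)_{-6-(\lambda-\ell)}$. Hence $f=Lu$ for some $u\in W^{k+\ell,2}_{\lambda-\delta}$.

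It remains to upgrade $u$ to $C^{k+\ell,\alpha}_\lambda$.
\begin{itemize}
\item Weighted Sobolev embedding and \autoref{app-prop: weighted Schauder estimate} put $u$ in $C^{k+\ell,\alpha}_{\lambda-\delta'}$ for some small $\delta'>0$.
\item The decay-improvement argument, i.e.\ the strengthened \autoref{app-prop: scalebroken Schauder estimate}, then moves $u$ to rate $\lambda$. This is possible because no critical rate is crossed and $Lu=f$ already has rate $\lambda-\ell$.
\end{itemize}

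I expect this final upgrade to be the delicate point. It requires the absence of critical rates in $[\lambda-\delta,\lambda]$ and Pacard's version of the scale-broken estimate.

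Combining Steps 2–4, the kernel of the map in Step 3 is precisely $\textup{image}(L)$. Hence $\coker(L)_\lambda\cong(\ker(L^*)_{-6-(\lambda-\ell)})^*$.
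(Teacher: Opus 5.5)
Your proposal is correct and follows the paper's route. The paper only sketches this result: it embeds $C^{k,\alpha}_{\lambda-\ell}$ into a weighted Sobolev space of slightly smaller weight, applies \cite[Theorem~4.25]{Marshal-deformations_special_Lagrangians}, and returns to H\"older spaces via a strengthened version of \autoref{app-prop: scalebroken Schauder estimate} (cf.\ \cite[Proposition~12.2.1]{Pacard-lecture_notes_connected_sums}). That is exactly your Step~4, and your Steps~1--3 fill in the elementary parts the sketch leaves implicit. One caution: in the final upgrade to rate $\lambda$ you must use that strengthened estimate directly, not \autoref{app-cor: decay improvement CS operators not crossing rate}, because the paper derives that corollary from this very proposition.
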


The characterisation of $\textup{image}(L\col C^{k+\ell,\alpha}_\lambda \to C^{k,\alpha}_{\lambda-\ell})$ as the annihilator of $\ker(L^*)_{-6-(\lambda-\ell)}$ under the $L^2$-pairing together with the fact that this kernel is locally constant (when varying $\lambda$) implies:

\begin{corollary}\label{app-cor: decay improvement CS operators not crossing rate}
Let $\lambda,\nu \in \mathbb{R}^N\setminus \mathcal{D}(L)$ be such that $\lambda_i \geq \nu_i$ for every $i=1,\dots,N$ and $[\nu_i,\lambda_i]\cap \mathcal{D}(L_{s_i}) = \emptyset$. Assume that $u \in C^{k+\ell,\alpha}_\nu(Z\setminus S,E)$ and $Lu \in C^{k,\alpha}_{\lambda-\ell}(Z\setminus S,E)$. Then $u \in C^{k+\ell,\alpha}_\lambda(Z\setminus S,E)$. 
\end{corollary}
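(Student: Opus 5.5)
The statement is \autoref{app-cor: decay improvement CS operators not crossing rate}. I would derive it from the cokernel characterisation of \autoref{app-prop: cokernel kernel pairing} together with the constancy of kernels in \autoref{app-cor: kernel is locally constant}. The strategy is to solve $Lw = Lu$ with $w$ already at the better rate $\lambda$, and then show that $u-w$ lies in $\ker(L)_\nu = \ker(L)_\lambda$.

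First I show that $f \coloneqq Lu \in C^{k,\alpha}_{\lambda-\ell}$ lies in $\textup{image}(L \col C^{k+\ell,\alpha}_\lambda \to C^{k,\alpha}_{\lambda-\ell})$. By \autoref{app-prop: cokernel kernel pairing} it suffices to check that $\int\langle f,\phi\rangle = 0$ for every $\phi\in\ker(L^*)_{-6-(\lambda-\ell)}$.

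The critical rates of $L^*$ are the reflections $-6+\ell-\mathcal{D}(L_{s_i})$ of those of $L$ (the asymptotic limit of $L^*$ is the formal adjoint of $L_s$). Hence the interval $[-6-(\lambda_i-\ell),-6-(\nu_i-\ell)]$ contains no critical rate of $L^*$. Applying \autoref{app-cor: kernel is locally constant} to $L^*$ gives
\[\ker(L^*)_{-6-(\lambda-\ell)} = \ker(L^*)_{-6-(\nu-\ell)}.\]
So every such $\phi$ also lies in $\ker(L^*)_{-6-(\nu-\ell)}$. Since $f = Lu$ with $u\in C^{k+\ell,\alpha}_\nu$, the function $f$ is in $\textup{image}(L)_\nu$. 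By \autoref{app-prop: cokernel kernel pairing} at rate $\nu$, this image is exactly the annihilator of $\ker(L^*)_{-6-(\nu-\ell)}$, so $\int\langle f,\phi\rangle=0$. The pairing is well defined because the weights sum to the borderline rate $-6$ plus a small positive excess. This is the point I expect to need the most care: one has to use the small $\varepsilon$-room mentioned in \autoref{app-prop: cokernel kernel pairing}, which exists because neither $\lambda$ nor $\nu$ is critical.

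With this, there is $w\in C^{k+\ell,\alpha}_\lambda$ such that $Lw=f$. Then $u-w\in C^{k+\ell,\alpha}_\nu$ (using $C_\lambda\subset C_\nu$ since $\lambda_i\ge\nu_i$) and $L(u-w)=0$, so $u-w\in\ker(L)_\nu$. By \autoref{app-cor: kernel is locally constant} applied to $L$, $\ker(L)_\nu=\ker(L)_\lambda$. Hence $u = w + (u-w)\in C^{k+\ell,\alpha}_\lambda$.
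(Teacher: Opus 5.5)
Your proof is correct and follows the paper's argument. The paper likewise characterises $\textup{image}(L)_\lambda$ as the annihilator of $\ker(L^*)_{-6-(\lambda-\ell)}$ and notes that this kernel equals $\ker(L^*)_{-6-(\nu-\ell)}$ because no critical rates are crossed. Your final step, $u-w\in\ker(L)_\nu=\ker(L)_\lambda$, makes explicit what the paper's one-line justification leaves implicit.
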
 

If $L$ is an elliptic differential operator which is conically singular of rate $1+\mu \in \mathbb{R}^N$ (i.e. it satisfies any of the conditions in \autoref{def: cs differential operators} with right-hand side of $\mathcal{O}(r^{\dots+(1+\mu_i)})$ for $\mu_i>-1$ instead of $o(r^{\dots})$), then one can extend the previous corollary to the situation when one crosses critical rates:

\begin{proposition}[{\cite[Proposition~4.21]{KarigiannisLotay-conifolds}}]\label{app-prop: decay improvement CS operators when crossing rate}
Let $L$ be an elliptic differential operator which is conically singular of rate $1+\mu\in \mathbb{R}^N$ (with $\mu_i>-1$). Let $\lambda,\nu \in \mathbb{R}^N\setminus \mathcal{D}(L)$ be such that $\lambda_i \geq \nu_i$ for every $i=1,\dots,N$ and such that there exists at most one critical rate of $L_{s_i}$ in $[\nu_i,\lambda_i]$. For a critical rate  $\tilde{\nu} \in \mathcal{D}(L)$ with $\nu_i\leq \tilde{\nu}_i \leq \lambda_i$ for every $i$ we then define \[\mathcal{F}_{\nu} \coloneqq \big\{ u \in C^{k+\ell,\alpha}_{\nu}(Z\setminus S,E) \ \big\vert \ Lu \in C^{k,\alpha}_{\lambda-\ell}(Z\setminus S,E) \big\}. \] (In particular, if $u \in \mathcal{F}_{\nu}$, then $Lu$ decays faster / blows up slower than expected.) For every $s_i\in S$ there are linear functions \[ \gamma_{s_i} \col \mathcal{F}_{\nu} \to \mathcal{K}(L_{s_i})_{\tilde{\nu}_i} \quad \text{and} \quad \vartheta_{s_i} \col \mathcal{K}(L_{s_i})_{\tilde{\nu}_i} \to C^\infty_{\tilde{\nu}_i+(1+\mu_i)}(\mathbb{R}^6\setminus \{0\},E_s) \] such that for every $u\in \mathcal{F}_{\nu}$ \[ u - \textstyle{\sum}_i (\tilde{\Upsilon}_{s_i})_* \big( \chi \cdot \big(\gamma_{s_i}(u) - \vartheta_{s_i}(\gamma_{s_i}(u)) \big)\big) \in C^{k,\alpha}_{\lambda}(Z\setminus S,E), \] where $\chi$ is a cut-off function which is 1 for $r<\tfrac{R}{2}$ and 0 for $r>\tfrac{3R}{2}$.
\end{proposition}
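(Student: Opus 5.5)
The plan is to reduce the statement to Corollary (kernel/cokernel pairing) together with the index-jump formula, in the spirit of the conically singular Fredholm theory. Fix $\lambda,\nu\in\mathbb{R}^N\setminus\mathcal{D}(L)$ with $\lambda_i\ge\nu_i$ and with at most one critical rate $\tilde\nu_i$ of $L_{s_i}$ in $[\nu_i,\lambda_i]$. Take $u\in\mathcal{F}_\nu$. The first step is to localise near each $s_i$. Using the framing, write $\tilde\Upsilon_{s_i}^*(\chi u)$ as a section on the truncated cone, and split $L=L_{s_i}+(\tilde\Upsilon_{s_i}^*L-L_{s_i})$. The assumption that $L$ is conically singular of rate $1+\mu$ shows that the error term maps $C^{k+\ell,\alpha}_\nu$ into $C^{k,\alpha}_{\nu-\ell+1+\mu_i}$. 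It follows that $f:=L_{s_i}(\chi u)\in C^{k,\alpha}_{\min(\lambda_i,\nu_i+1+\mu_i)-\ell}$ near the cone point.

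The second step is to analyse the model operator $L_{s_i}$ on the cone by separation of variables. Using Remark (concrete form of conical operators) and Proposition (real roots/no log terms) in the cases we need, or the general Mellin-transform argument otherwise, expand in eigensections on $S^5$. I will solve $L_{s_i}w=f$ on $(0,R)\times S^5$ with $w$ of the improved rate, choosing the integration constants mode-by-mode. All modes whose indicial root lies below $\nu_i$ vanish, since $u$ already has rate $\nu$. The only obstruction to improving is then the single critical rate $\tilde\nu_i$, and the difference $\chi u-w$ lies in $\mathcal{K}(L_{s_i})_{\tilde\nu_i}$. This defines $\gamma_{s_i}(u)$, and it is linear in $u$ by uniqueness modulo faster-decaying terms.

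The third step builds the correction $\vartheta_{s_i}$. For $\kappa\in\mathcal{K}(L_{s_i})_{\tilde\nu_i}$, the section $\tilde\Upsilon_{s_i*}(\chi\kappa)$ is not in $\ker L$. Its error satisfies $L(\chi\kappa)\in C^\infty_{\tilde\nu_i+1+\mu_i-\ell}$ near $s_i$ by the rate-$(1+\mu)$ assumption. I will solve $L_{s_i}\vartheta=(\tilde\Upsilon^*L-L_{s_i})\kappa$ on the cone in $C^\infty_{\tilde\nu_i+1+\mu_i}$, again by the mode-by-mode ODE. If $\tilde\nu_i+1+\mu_i$ is critical, I will perturb slightly, or iterate finitely many times, each step gaining $1+\mu_i>0$. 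This gives the linear map $\vartheta_{s_i}$, with sign conventions matching the statement.

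Finally, I will subtract $\sum_i\tilde\Upsilon_{s_i*}(\chi(\gamma_{s_i}(u)-\vartheta_{s_i}(\gamma_{s_i}(u))))$ from $u$. The remainder $u'$ has a rate strictly above $\tilde\nu_i$ locally and satisfies $Lu'\in C^{k,\alpha}_{\min(\lambda,\tilde\nu+1+\mu)-\ell}$. Corollary (decay improvement without crossing) then applies repeatedly, each application gaining $\min(1+\mu_i,\cdot)$ until $\lambda$ is reached. Because no further critical rate lies in $(\tilde\nu_i,\lambda_i]$, we get $u'\in C^{k,\alpha}_\lambda$.

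The main obstacle is the second step. There I need a Hölder version of the model-cone solvability, meaning the weighted Schauder estimates hold uniformly across modes, so that the residual after removing the $\tilde\nu_i$-mode genuinely lies in the better weighted Hölder space. Handling possible log-terms or complex roots in the general case is part of this. I would first try to prove it in weighted Sobolev spaces via Lockhart–McOwen's asymptotic expansion theorem, and then upgrade it with Proposition (weighted Schauder estimate).
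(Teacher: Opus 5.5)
The paper does not prove this statement; it imports it from Karigiannis--Lotay. Your overall strategy is the standard one: compare with the model operator $L_{s_i}$, invert $L_{s_i}$ at non-critical rates to read off the $\tilde\nu_i$-component, correct for $\tilde\Upsilon_{s_i}^*L-L_{s_i}$, and finish with the non-crossing decay-improvement corollary. As written, however, Step 4 does not close.

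The non-crossing corollary does not gain $1+\mu_i$ per application. It raises the rate of $u'$ in one go to the rate of $Lu'$ plus $\ell$, and applying it again changes nothing because $Lu'$ is fixed. So you need $Lu'\in C^{k,\alpha}_{\lambda-\ell}$, i.e.\ $L\big(\chi(\kappa-\vartheta_{s_i}(\kappa))\big)\in C^{k,\alpha}_{\lambda-\ell}$ near $s_i$ for every $\kappa\in\mathcal{K}(L_{s_i})_{\tilde\nu_i}$. A single correction $L_{s_i}\vartheta_1=(\tilde\Upsilon_{s_i}^*L-L_{s_i})\kappa$ only gives $L(\kappa-\vartheta_1)=-(\tilde\Upsilon_{s_i}^*L-L_{s_i})\vartheta_1=\mathcal{O}(r^{\tilde\nu_i+2(1+\mu_i)-\ell})$; your Step 4 even records only $\tilde\nu_i+1+\mu_i$. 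Hence whenever $\tilde\nu_i+2(1+\mu_i)<\lambda_i$, the remainder genuinely contains a term of that rate and the conclusion fails.

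The iteration therefore has to be built in, not used as a fallback. Set $L_{s_i}\vartheta_{j+1}=-(\tilde\Upsilon_{s_i}^*L-L_{s_i})\vartheta_j$, with right-hand sides cut off near $0$. Then take $\vartheta_{s_i}(\kappa)=\vartheta_1+\dots+\vartheta_J$ with $J$ minimal such that $\tilde\nu_i+(J+1)(1+\mu_i)\geq\lambda_i$. Each $\vartheta_j$ is solved at rate $\tilde\nu_i+j(1+\mu_i)\in(\tilde\nu_i,\lambda_i)$. This rate is automatically non-critical, because $\tilde\nu_i$ is the only critical rate in $[\nu_i,\lambda_i]$, so no rate ever needs to be perturbed. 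This finite sum is exactly why the statement only asserts rate $\tilde\nu_i+(1+\mu_i)$ for $\vartheta_{s_i}$.

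Step 2 has a similar ordering problem. The one-step improvement to $\min(\lambda_i,\nu_i+1+\mu_i)$ crosses $\tilde\nu_i$ only if $\nu_i+1+\mu_i>\tilde\nu_i$.
\begin{itemize}
\item If $\nu_i+1+\mu_i<\tilde\nu_i$, you get $\gamma_{s_i}=0$, and the claim in Step 4 that $u'$ has rate above $\tilde\nu_i$ is false.
\item If $\nu_i+1+\mu_i=\tilde\nu_i$, you would be inverting $L_{s_i}$ at a critical rate.
\end{itemize}
To fix this, first use the non-crossing corollary to raise $\nu_i$ to $\tilde\nu_i-\epsilon$ with $0<\epsilon<1+\mu_i$. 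Then set $\beta_i=\min(\lambda_i,\tilde\nu_i-\epsilon+1+\mu_i)\in(\tilde\nu_i,\lambda_i]$.

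It is also cleaner to work on the whole cone $\mathbb{R}^6\setminus\{0\}$ after cutting off, where $L_{s_i}\col C^{k+\ell,\alpha}_{\beta}\to C^{k,\alpha}_{\beta-\ell}$ is an isomorphism for non-critical $\beta$. Then $\chi u-L_{s_i}^{-1}f$ is a model kernel element that is $\mathcal{O}(r^{\tilde\nu_i-\epsilon})$ at $0$ and $\mathcal{O}(r^{\beta_i})$ at $\infty$. It therefore lies exactly in $\mathcal{K}(L_{s_i})_{\tilde\nu_i}$, which defines $\gamma_{s_i}$ canonically and linearly. On the truncated cone the model kernel is infinite dimensional, so only the $\tilde\nu_i$-component is unique there.

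With these repairs the argument goes through, given the model-cone invertibility in weighted H\"older spaces that you defer to Lockhart--McOwen plus the weighted Schauder estimate.
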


We also need the following consequence of the previous proposition:

\begin{proposition}[{\cite[Corollary~4.22]{KarigiannisLotay-conifolds}}]\label{app-prop: cs operators asymptotic expansions of kernel elements}
Let $L$ be an elliptic differential operator which is conically singular of rate $1+\mu\in \mathbb{R}^N$ (with $\mu_i>-1$). For any fixed $\nu \in \mathbb{R}^N\setminus \mathcal{D}(L)$ and $s_i \in S$ let $\tilde{\nu}_{i,1},\tilde{\nu}_{i,2} \in \mathcal{D}(L_{s_i})$ be the first two elements in $\mathcal{D}(L_{s_i})$ satisfying $\nu_i < \tilde{\nu}_{i,1}< \tilde{\nu}_{i,2}$. Then there exist linear functions 
\begin{align*}
\gamma_{s_i,1} \col \ker(L)_\nu &\to \mathcal{K}(L_{s_i})_{\tilde{\nu}_{i,1}}, \qquad \qquad
\gamma_{s_i,2} \col \ker(L)_\nu \to \mathcal{K}(L_{s_i})_{\tilde{\nu}_{i,2}}
\end{align*}
and \[\eta_{s_i} \col \mathcal{K}(L_{s_i})_{\tilde{\nu}_{i,1}} \to C^\infty_{\tilde{\nu}_{i,1}+(1+\mu_i)}(\mathbb{R}^6\setminus \{0\}, E_{s_i})\] such that \[ \big\vert \tilde{\Upsilon}_{s_i}^*u - \gamma_{s_i,1}(u) - \eta_{s_i}(\gamma_{s_i,1}(u)) - \gamma_{s_i,2}(u) \big\vert = \mathcal{O}(r^{\tilde{\nu}_{i,2}+\varepsilon}) \quad \text{as $r \to 0$} \] for every $u \in \ker(L)_\nu$ and some $\varepsilon>0$.
\end{proposition}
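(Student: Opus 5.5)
The plan is to deduce the statement from the two results it refines: the decay-improvement result \autoref{app-prop: decay improvement CS operators when crossing rate}, applied twice, and the kernel characterisation of \autoref{app-prop: Fredholm away from critical rates} together with \autoref{app-cor: kernel is locally constant}.

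Fix $u\in\ker(L)_\nu$ and $s_i\in S$, and work near $s_i$ via $\tilde{\Upsilon}_{s_i}$. Choose non-critical weights $\lambda^{(1)},\lambda^{(2)}$ with $\tilde{\nu}_{i,1}<\lambda^{(1)}_i<\min\{\tilde{\nu}_{i,2},\tilde{\nu}_{i,1}+(1+\mu_i)\}$ and $\tilde{\nu}_{i,2}<\lambda^{(2)}_i<\tilde{\nu}_{i,2}+\varepsilon$. At the other singular points nothing is crossed, so the weights there can be chosen freely (or handled identically).

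\textbf{Step 1.} Since $Lu=0$, certainly $u\in\mathcal{F}_\nu$ for the target weight $\lambda^{(1)}$. \autoref{app-prop: decay improvement CS operators when crossing rate} then yields $\gamma_{s_i,1}(u)\in\mathcal{K}(L_{s_i})_{\tilde{\nu}_{i,1}}$ and a correction $\vartheta_{s_i}$ of rate $\tilde{\nu}_{i,1}+1+\mu_i$ such that
\[ w_1\coloneqq u-(\tilde{\Upsilon}_{s_i})_*\big(\chi(\gamma_{s_i,1}(u)-\vartheta_{s_i}(\gamma_{s_i,1}(u)))\big)\in C^{k+\ell,\alpha}_{\lambda^{(1)}}. \]
Set $\eta_{s_i}\coloneqq-\vartheta_{s_i}$. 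This choice is compatible with the sign in the statement.

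\textbf{Step 2, the main obstacle.} The difficulty is that $w_1$ is no longer in the kernel, so we need $Lw_1$ to decay well enough to apply the proposition a second time. We have
\[ Lw_1=-L\big(\chi(\gamma-\vartheta(\gamma))\big). \]
Write $L=L_{s_i}+(L-L_{s_i})$. Because $L_{s_i}\gamma=0$, the term $(L-L_{s_i})\gamma$ has rate $\tilde{\nu}_{i,1}+1+\mu_i-\ell$, which is exactly what the correction $\vartheta$ was built to cancel to leading order. Iterating the construction, or enlarging $\eta$ by finitely many further correction terms of rates $\tilde{\nu}_{i,1}+m(1+\mu_i)$, makes $Lw_1\in C^{k,\alpha}_{\lambda^{(2)}-\ell}$. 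Cutoff errors are supported away from $s_i$ and so are harmless.

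If $\tilde{\nu}_{i,1}+1+\mu_i>\tilde{\nu}_{i,2}+\varepsilon$, the first correction already suffices. Otherwise I would instead define $\eta_{s_i}$ as the solution of the iterated model equations $L_{s_i}\eta=-(L-L_{s_i})(\gamma+\eta)$ up to the needed order, solved on the cone by Mellin/separation of variables as in \autoref{prop: condition for operator to have real roots and no log-terms}, avoiding critical rates by shrinking $\varepsilon$. Linearity of $\eta_{s_i}$ in $\gamma$ is clear from this construction.

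\textbf{Step 3.} Apply \autoref{app-prop: decay improvement CS operators when crossing rate} to $w_1\in\mathcal{F}_{\lambda^{(1)}}$ with target $\lambda^{(2)}$. This gives $\gamma_{s_i,2}(u)\in\mathcal{K}(L_{s_i})_{\tilde{\nu}_{i,2}}$ with
\[ w_1-\chi\big(\gamma_{s_i,2}-\vartheta'(\gamma_{s_i,2})\big)\in C_{\lambda^{(2)}}. \]
The term $\vartheta'(\gamma_{s_i,2})$ is $\mathcal{O}(r^{\tilde{\nu}_{i,2}+1+\mu_i})$ and can therefore be absorbed into the error. Near $s_i$ this gives
\[ \big|\tilde{\Upsilon}_{s_i}^*u-\gamma_{s_i,1}(u)-\eta_{s_i}(\gamma_{s_i,1}(u))-\gamma_{s_i,2}(u)\big|=\mathcal{O}(r^{\tilde{\nu}_{i,2}+\varepsilon}), \]
with $\varepsilon=\min\{\lambda^{(2)}_i-\tilde{\nu}_{i,2},\,1+\mu_i\}>0$.

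Linearity of $\gamma_{s_i,1}$ and $\gamma_{s_i,2}$ in $u$ is inherited from the linearity of the maps in the cited proposition.
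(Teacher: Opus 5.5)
Your argument is correct and follows the route the paper intends: the paper gives no proof of its own, only citing \cite[Corollary~4.22]{KarigiannisLotay-conifolds} and calling the statement a consequence of \autoref{app-prop: decay improvement CS operators when crossing rate}, applied once for each critical rate crossed. The black-box statement of that proposition does not control $L$ applied to the first remainder, and you correctly supply this yourself in two ways. You build $\eta_{s_i}$ from the iterated model equations so that $L\big(\chi\,(\gamma_{s_i,1}(u)+\eta_{s_i}(\gamma_{s_i,1}(u)))\big)$ decays past $\tilde{\nu}_{i,2}-\ell$, and you choose $\lambda^{(1)}_i<\tilde{\nu}_{i,1}+1+\mu_i$ so that replacing $\vartheta_{s_i}$ by your $\eta_{s_i}$ costs nothing; together these make the second application legitimate. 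One small correction: to solve the model equations for a general conical $L_{s_i}$, use invertibility of $L_{s_i}$ between weighted spaces on the cone at non-critical weights, rather than \autoref{prop: condition for operator to have real roots and no log-terms}.
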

\begin{remark}
Note that in the previous proposition $\gamma_{s_i,1}(u) \in \mathcal{K}(L_{s_i})_{\tilde{\nu}_{i,1}}$ gives the leading order contribution of $u \in \ker(L)_\nu$ close to $s_i \in S$. Similarly, $\gamma_{s_i,2}(u)\in \mathcal{K}(L_{s_i})_{\tilde{\nu}_{i,2}}$ gives the leading order contribution of $\tilde{\Upsilon}_{s_i}^*u -\gamma_{s_i,1}(u) - \eta_{s_i}(\gamma_{s_i,1}(u))$ and therefore depends on the construction of $\eta_{s_i}$ (which is neither unique nor canonical). Note however that if $\tilde{\nu}_{i,1}+(1+\mu_i) > \tilde{\nu}_{i,2}$, then the leading order contribution of $\tilde{\Upsilon}_{s_i}^*u -\gamma_{s_i,1}(u) - \eta_{s_i}(\gamma_{s_i,1}(u))$ depends only on $u$. That is, in this situation $\gamma_{s_i,2}$ is defined independently of $\eta_{s_i}$.
\end{remark}
\begin{remark}
This proposition can be generalised to obtain contributions $\gamma_{s_i,1}(u),\dots,\gamma_{s_i,K}(u)$ of the first $K$ indicial roots $\nu_i < \tilde{\nu}_{i,1} < \dots < \tilde{\nu}_{i,K}$ for any $K\in \mathbb{N}$. As in the previous remark, if $\tilde{\nu}_{i,K}-\tilde{\nu}_{i,1} < 1+\mu_i$ holds, then these depend only on $u \in \ker(L)_\nu$ and not on the choices leading to the construction of the corresponding functions $\eta_{s_i,j}$ for $j=1,\dots,K$.
\end{remark}

We end this section with the following proposition which is stated in the exact form needed for the proof of \autoref{thm: non-degenerateness of pairing}.

\begin{proposition}\label{app-prop: for every expansion there is a kernel element}
Assume that $L$ is an elliptic differential operator which is conically singular of rate $1+\mu\in \mathbb{R}^N$ (with $\mu_i>-1$) and that $\lambda \in \mathbb{R}^N \setminus \mathcal{D}(L)$ is such that $\dim \ker(L)_{\lambda} = 0 = \dim \coker(L)_{\lambda}$ (and hence $\textup{index}(L)_{\lambda} = 0$). Let $\nu \in \mathbb{R}^N\setminus \mathcal{D}(L)$ be such that for every $i=1,\dots,N$ there are precisely two indicial roots $\tilde{\nu}_{i,1}, \tilde{\nu}_{i,2}\in \mathcal{D}(L_{s_i})$ contained in $[\nu_i,\lambda_i]$. We assume that these are ordered as $\nu_i < \tilde{\nu}_{i,1}< \tilde{\nu}_{i,2} < \lambda_i$. For any collection of $u_{s_i,1} \in \mathcal{K}(L_{s_i})_{\tilde{\nu}_{i,1}}$ and $u_{s_i,2} \in \mathcal{K}(L_{s_i})_{\tilde{\nu}_{i,2}}$ for $i=1,\dots,N$ there exists a $u \in \ker(L)_\lambda$ with $\gamma_{s_i,j}(u)=u_{s_i,j}$ for $j=1,2$ and $i=1,\dots,N$, where $\gamma_{s_i,j}$ are the functions appearing in the previous proposition.
\end{proposition}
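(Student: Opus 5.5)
The plan is to build the kernel element in two stages: first write down an approximate solution with the prescribed leading terms, then correct it using the invertibility of $L$ at rate $\lambda$, and finally read off its expansion coefficients with \autoref{app-prop: cs operators asymptotic expansions of kernel elements}.

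\textbf{Approximate solution.} Fix $i$ and the given $u_{s_i,1}\in\mathcal{K}(L_{s_i})_{\tilde{\nu}_{i,1}}$, $u_{s_i,2}\in\mathcal{K}(L_{s_i})_{\tilde{\nu}_{i,2}}$. Define
\[ w \coloneqq \textstyle{\sum}_i (\tilde{\Upsilon}_{s_i})_*\big(\chi\cdot(u_{s_i,1}+u_{s_i,2})\big), \]
with $\chi$ the cut-off of \autoref{app-prop: decay improvement CS operators when crossing rate}. Since $L_{s_i}u_{s_i,j}=0$ and $L$ is conically singular of rate $1+\mu$, the error $Lw$ splits into two pieces. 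Near $s_i$ it equals $(\tilde{\Upsilon}_{s_i}^*L-L_{s_i})(u_{s_i,1}+u_{s_i,2})$, which lies in $C^{k,\alpha}_{\tilde{\nu}_{i,1}+(1+\mu_i)-\ell}$. Away from $s_i$ the error is compactly supported, coming from derivatives of $\chi$.

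\textbf{Improving the decay of the error.} If $\tilde{\nu}_{i,1}+1+\mu_i$ already exceeds $\lambda_i$, the error lies in $C^{k,\alpha}_{\lambda-\ell}$ directly. In general it does not, and this is the main obstacle. I would remove the slowly decaying part of the error iteratively, exactly as in the construction of $\eta_{s_i}$ and $\vartheta_{s_i}$ in \autoref{app-prop: decay improvement CS operators when crossing rate} and \autoref{app-prop: cs operators asymptotic expansions of kernel elements}. At each step one solves the model equation $L_{s_i}v=f$ on the cone, where $f$ is the current error, homogeneous of some non-critical rate. This can be done by separation of variables because $L_{s_i}$ has the form in \autoref{prop: condition for operator to have real roots and no log-terms}. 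Each step gains $1+\mu_i>0$ in rate, so finitely many steps suffice. The result is corrections $\eta_i(u_{s_i,1})$ of rate $>\tilde{\nu}_{i,1}$, and similarly corrections for $u_{s_i,2}$ of rate $>\tilde{\nu}_{i,2}$. After these corrections $Lw\in C^{k,\alpha}_{\lambda-\ell}$. If some intermediate rate hits a critical rate, I would perturb that rate slightly or allow a logarithmic term; since these corrections only affect orders strictly above $\tilde{\nu}_{i,1}$ (respectively $\tilde{\nu}_{i,2}$), it is harmless. Care is needed so that the rate-$\tilde{\nu}_{i,2}$ part of $\eta_i(u_{s_i,1})$ does not spoil the prescribed $\gamma_{s_i,2}$. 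I would handle this by choosing $\eta_{s_i}$ in \autoref{app-prop: cs operators asymptotic expansions of kernel elements} to be the very same map used here; the remark following that proposition allows this freedom.

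\textbf{Correction and conclusion.} By hypothesis $L\colon C^{k+\ell,\alpha}_\lambda\to C^{k,\alpha}_{\lambda-\ell}$ is an isomorphism, since its index is zero and its kernel and cokernel both vanish. Hence there is a unique $v\in C^{k+\ell,\alpha}_\lambda$ with $Lv=-Lw$. Set $u\coloneqq w+v$. Then $Lu=0$ and $u\in C^{k+\ell,\alpha}_\nu$, so $u\in\ker(L)_\nu$. (I read the statement's $\ker(L)_\lambda$ as $\ker(L)_\nu$, since $\ker(L)_\lambda=0$.) Because $v$ decays at rate $\lambda_i>\tilde{\nu}_{i,2}$, it contributes nothing at orders $\tilde{\nu}_{i,1}$ and $\tilde{\nu}_{i,2}$. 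Comparing the expansion of $u$ with the one in \autoref{app-prop: cs operators asymptotic expansions of kernel elements}, and using that the leading homogeneous pieces are unique, gives $\gamma_{s_i,1}(u)=u_{s_i,1}$. Given the matched choice of $\eta_{s_i}$ from the previous step, it also gives $\gamma_{s_i,2}(u)=u_{s_i,2}$.
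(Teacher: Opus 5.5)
Your argument is correct in outline, but it takes a different route from the paper. The paper's proof is a pure dimension count:
\begin{itemize}
\item Since $\dim\coker(L)$ can only drop when the weight decreases, $\coker(L)_\nu=0$.
\item The index-jump formula of \autoref{app-prop: ker and coker are locally constrant + index change formula} then gives $\dim\ker(L)_\nu=\sum_i\big(\dim\mathcal{K}(L_{s_i})_{\tilde{\nu}_{i,1}}+\dim\mathcal{K}(L_{s_i})_{\tilde{\nu}_{i,2}}\big)$.
\item The map $\oplus_i(\gamma_{s_i,1},\gamma_{s_i,2})$ is injective on $\ker(L)_\nu$. An element with vanishing $\gamma$'s is $\mathcal{O}(r^{\tilde{\nu}_{i,2}+\varepsilon})$, so by \autoref{app-cor: kernel is locally constant} it lies in $\ker(L)_\lambda=0$.
\item Being injective between spaces of equal dimension, the map is an isomorphism.
\end{itemize}
You instead build the kernel element by hand: an approximate solution, iterative removal of the error with the model operators, and a final correction. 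This is constructive and uses only $\coker(L)_\lambda=0$, i.e. surjectivity at rate $\lambda$. The cost is that it re-derives the parametrix argument behind the index-jump formula and \autoref{app-prop: decay improvement CS operators when crossing rate}. The paper obtains the result in a few lines from those black boxes, and also gets uniqueness of $u$.

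Two points in your write-up need adjusting.

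First, the errors you solve away are not homogeneous. The term $(\tilde{\Upsilon}_{s_i}^*L-L_{s_i})u_{s_i,1}$ is only $\mathcal{O}(r^{\tilde{\nu}_{i,1}+1+\mu_i-\ell})$, because the coefficients of $\tilde{\Upsilon}_{s_i}^*L-L_{s_i}$ merely decay. So you cannot separate variables on homogeneous data. What you actually need is that $L_{s_i}\colon C^{k+\ell,\alpha}_\beta(\mathbb{R}^6\setminus\{0\},E_{s_i})\to C^{k,\alpha}_{\beta-\ell}(\mathbb{R}^6\setminus\{0\},E_{s_i})$ is an isomorphism for every non-critical $\beta$. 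This is standard via the Mellin transform (Lockhart--McOwen). It holds for general conical elliptic operators, which matters because the special structure of \autoref{prop: condition for operator to have real roots and no log-terms} is not available for a general $L$.

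Second, the maps $\gamma_{s_i,j}$ are those already fixed in \autoref{app-prop: cs operators asymptotic expansions of kernel elements}, so you should not rely on re-choosing $\eta_{s_i}$. The repair is short. Write $\eta'_{s_i}$ for your corrections. Your $u$ satisfies $\gamma_{s_i,1}(u)=u_{s_i,1}$ and $\gamma_{s_i,2}(u)=u_{s_i,2}+h_i(u_{s_i,1})$. Here $h_i(u_{s_i,1})\in\mathcal{K}(L_{s_i})_{\tilde{\nu}_{i,2}}$ is the degree-$\tilde{\nu}_{i,2}$ leading part of $\eta'_{s_i}(u_{s_i,1})-\eta_{s_i}(u_{s_i,1})$, and it depends linearly on $u_{s_i,1}$ alone. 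The corrections generated by $u_{s_i,2}$ and the final correction in $C^{k+\ell,\alpha}_\lambda$ only contribute at rates strictly above $\tilde{\nu}_{i,2}$. Running your construction with $u_{s_i,2}-h_i(u_{s_i,1})$ in place of $u_{s_i,2}$ therefore yields exactly the prescribed coefficients.

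Your reading of $\ker(L)_\lambda$ as $\ker(L)_\nu$ is right; the paper's proof also works in $\ker(L)_\nu$.
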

\begin{proof}
The formula for the index change given in \autoref{app-prop: ker and coker are locally constrant + index change formula} together with the observation that $\dim \coker(L)$ is non-increasing when going from $\lambda$ to $\nu< \lambda$ implies that \[\dim \ker(L)_\nu = \sum_{i=1}^N \dim K(L_{s_i})_{\tilde{\nu}_{i,1}} + \dim K(L_{s_i})_{\tilde{\nu}_{i,2}}. \] Thus \[\oplus_i (\gamma_{s_i,1},\gamma_{s_i,2}) \col \ker(L)_\nu \to \oplus_i \big(\mathcal{K}(L_{s_i})_{\tilde{\nu}_{i,1}} \oplus \mathcal{K}(L_{s_i})_{\tilde{\nu}_{i,2}}\big)\] is a linear map between vector spaces of the same dimension. A moment's thought reveals that it is injective, hence an isomorphism.
\end{proof}

\bibliography{references}{}
\bibliographystyle{alpha}

\end{document}